\newcommand{\bel}{\begin{equation} \label}
\newtheorem{thm}{Theorem}[section]
\newtheorem{lem}[thm]{Lemma}
\newtheorem{prop}[thm]{Proposition}
\newtheorem{rem}{Remark}[section]
\numberwithin{equation}{section}
\newcommand{\ep}{\varepsilon}
\newcommand\be{\begin{equation}}
\newcommand\ee{\end{equation}}
\newcommand\R{\mathbb R}
\newcommand\N{\mathbb N}
\newcommand\ds{\displaystyle}
\def\eps{\varepsilon}
\newcommand\rn{\R^n}
\newcommand\lcal{\mathcal{L}}
\newcommand\ld{\mathcal{L}}
\newcommand\lu{\lcal[u]}
\newcommand\ldu{\ld u}
\title[Elliptic regularity estimates with optimized constants]
{Elliptic regularity estimates with optimized constants and applications}
\author[Sirakov]{Boyan Sirakov}
\address{PUC-Rio, Departamento de Matematica \\
Rua Marqu\^es de S\~ao Vicente 225 \\
G\'avea, Rio de Janeiro - CEP 22451-900, Brazil}
\email{bsirakov@puc-rio.br}
\author[Souplet]{Philippe Souplet}%
\address{Universit\'e Sorbonne Paris Nord,
CNRS UMR 7539, Laboratoire Analyse, G\'{e}om\'{e}trie et Applications,
93430 Villetaneuse, France}
\email{souplet@math.univ-paris13.fr}
\begin{document}

\begin{abstract}  We revisit the classical theory of linear second-order uniformly elliptic equations in divergence form whose solutions have H\"older continuous gradients, and prove versions of the generalized maximum principle, the $C^{1,\alpha}$-estimate, the Hopf-Oleinik lemma, the boundary weak Harnack inequality and the differential Harnack inequality,  in which the constant is optimized with respect to the norms of the coefficients of the operator and the size of the domain.
 Our estimates are complemented by counterexamples which show their optimality.
We also give applications
to the Landis conjecture and spectral estimates.
\end{abstract}

\maketitle

\section{Introduction}
We consider equations governed by a uniformly elliptic second order operator in general divergence form
\begin{equation}\label{defdiv}
\ldu:=\mathrm{div}(A(x)Du +  b_1(x)u) + b_2(x){\cdot} Du +c(x) u,
\end{equation}
in a bounded domain $\Omega\subset\rn$ ($n\ge 2$ unless otherwise specified) with $C^{1,\bar\alpha}$-boundary
for some $\bar\alpha\in (0,1]$, and study weak (sub-, super-)solutions $u\in H^1(\Omega)$   of
\begin{equation}\label{defeq}
\ldu(x)=f(x),\quad x\in\Omega.
\end{equation}
In most of our results we make the following standard assumptions on the coefficients:
\begin{eqnarray}
&\hskip 1cm\hbox{$A(x)$ is a  matrix such that $\Lambda I \ge A\ge\lambda I$ in $\Omega$, for some $\Lambda\ge \lambda>0$,} \label{hyp1} \\
\noalign{\vskip 1mm}
&\hskip 1cm\hbox{$A, b_1\in C^{\alpha}(\Omega)$} , \quad\hbox{$ b_2, c,f \in L^q(\Omega)$, for some $q\in(n,\infty], \alpha\in(0,1)$,} \label{hyp2}
\end{eqnarray}
which guarantee that the solutions of \eqref{defeq} are continuously differentiable.

The following (a priori, regularity) classical estimates for \eqref{defeq} are fundamental in the theory of elliptic PDE:
\begin{itemize}
\item for (sub-)solutions: \begin{itemize} \item $L^\infty$/Lipschitz estimate;
   \item $L^\infty$-to-$C^1$ (and $C^{1,\alpha}$) estimates; \end{itemize}
\item and for positive (super-)solutions:
\begin{itemize} \item (quantitative) strong maximum principle/weak Harnack inequality; \item (quantitative) Hopf-Oleinik lemma;
\item logarithmic gradient estimates/differential Harnack inequalities.
\end{itemize}
\end{itemize}

The main goal of this work is to obtain versions of these estimates in which {\it the constant is optimized with respect to the norms in \eqref{hyp2} of the coefficients of $\ld$ and to the size of $\Omega$.}

First, we prove a Lipschitz estimate which in particular provides an optimized constant in the classical  Stampacchia-Trudinger generalized maximum principle, and strongly improves the already available bound. As a consequence we obtain optimized gradient and $C^{1,\alpha}$-bounds for solutions of the Dirichlet problem.

Then we proceed to optimize estimates for positive solutions such as  the Hopf-Oleinik lemma and the  global Harnack inequality, for which no such optimal results were available at all. Even without the optimality considerations, our results on positive solutions are new in the generality considered here. Furthermore, we prove a new logarithmic gradient bound for equations with right-hand side.

Our research was initially triggered by a series of recent works  (details on the available literature will be given below) on the so-called Landis conjecture, in which deep connections between this old and hard conjecture and optimized $C^1$ and logarithmic $C^1$-estimates were uncovered; however, these estimates were proved only for  particular cases of \eqref{defdiv}, and under the additional assumption that the coefficients are bounded or $n=2$. As an application of our results we extend the estimates from these papers to the general setting above, with a simpler proof, and get new quantitative estimates related to the Landis conjecture.

Another application are optimized upper and lower bounds for the first eigenvalue of the elliptic operator.
Finally, our theorems are complemented by  counterexamples which prove their optimality.

The next three sections contain our main results. In Section \ref{sec-main} we state our theorems for the important case when the domain is a ball -- then the optimal estimates have simple forms in terms of the norms of the coefficients and the size of the ball. These estimates are extended to arbitrary $C^{1,\bar\alpha}$-domains in Section \ref{sec-ext} where, as we will see, the geodesic diameter of the domain plays an important role. The statements and a discussion on the optimality of our results is given in Section \ref{sec-optim}. Sections \ref{sec-prelim}-\ref{sec-app} are devoted to proofs and auxiliary results.

\bigskip

 We fix a few notations to be used throughout the paper. We always assume that
 $$\bar\alpha\in (0,1], \quad n<q\le \infty, \quad\mbox{ and }\quad  0<\alpha< \min\{1-n/q,\bar\alpha\}.$$
 We denote with $c,C>0$ (possibly with indices) generic positive constants which may depend only on $n,\lambda,\Lambda, q,\alpha,\bar\alpha$, and may change from line to line. The distance to the boundary of $\Omega$ is denoted by $d=d(x)=\mathrm{dist}(x,\partial\Omega)$.
We  denote by $\|\cdot\|_{L^q(\Omega)}$ the usual Lebesgue norm,
by $\|\cdot\|_{L^q_d(\Omega)}$ the Lebesgue norm weighted by $d$,
and by $[\cdot]_{\alpha,\Omega}$ the usual H\"older seminorm on~$\Omega$.
We will use the following  uniformly local Lebesgue norm: for  $q\in[1,\infty]$, $r>0$,
\begin{equation}\label{deful}
\|h\|_{q,r,\Omega}:=\sup_{x\in \Omega} \|h\|_{L^q(\Omega\cap B_{{r}}(x))},
\quad h\in L^q(\Omega),
\end{equation}
(of course $\|h\|_{\infty,r,\Omega}=\|h\|_{L^{\infty}(\Omega)}$ for any $r>0$), and the uniformly local H\"older bracket
\begin{equation}\label{defHbracket}
[\psi]_{\alpha, r,\Omega} := \sup_{x\in\overline{\Omega}}\ \sup_{y,z\in B_{r}(x)\cap \Omega} |y-z|^{-\alpha}|\psi(y)-\psi(z)|,
\quad \psi\in C(\overline\Omega),\quad  \alpha\in(0,1).
\end{equation}
As an advantage on the usual $L^q$-norms for finite $q$, uniformly local $L^q$-norms essentially measure
the local integrability features of a function and do not deteriorate in large domains for functions which do not decay for large $|x|$
(one may think of a periodic function).
These norms have been used in the study of global existence
of solutions of Navier-Stokes or parabolic equations, e.g.~in the classical papers \cite{Ka,GV} (for $r=1$)
and more recently in \cite{HOS,IsSa,MaTe} where the possibility to vary $r$ was also exploited.

Our estimates depend on the quantity
\begin{equation}\label{defM}
M=M(\ld,\Omega) =
[A]_{\alpha,r_0,\Omega}^{\frac{1}{\alpha}} + \bigl\|b_1\bigr\|_{L^\infty(\Omega)} + [b_1]_{\alpha,r_0,\Omega}^{\frac{1}{1+\alpha}}+ \bigl\|b_2\bigr\|^{\beta_q}_{q,r_0,\Omega}+\|c\|^{\gamma_q}_{q,r_0,\Omega},
\end{equation}
$$\mathrm{where}\quad\beta_q=\frac{1}{1-{n}/{q}},\quad \gamma_q=\frac{1}{2-{n}/{q}} \qquad \left(\beta_\infty=1, \quad \gamma_\infty = {1}/{2}\right).$$
To avoid heavy notations at this stage we postpone the definition and properties of the number $r_0=r_0({ \ld,\Omega})$ to  Section~\ref{sec-ext} below;  to facilitate readers' understanding of the  theorems in the following section we note that they
remain valid if $r_0$
is replaced by $1$ and $M$ by $\max(M,1)$, which should be sufficient for most applications
(see Remark~\ref{r01M}; however the optimality of the estimates is guaranteed only with $r_0$).

\section{Main results. The case of a ball}\label{sec-main}

For readability, we will first state our theorems in the important case when $\Omega=B_R$ is a ball.
In the next section we will extend these results to general domains, and in Section \ref{sec-optim} we will show their optimality with respect to the coefficients and the radius of the ball.

\subsection{Optimized estimates for unsigned solutions}

Our first main result has two parts. First, we obtain a Lipschitz estimate which in particular establishes the optimal constant in the classical  generalized maximum principle, \cite[Theorem 8.16]{GT}. Second, we specify the optimal constant, in terms of $R$ and the norms in \eqref{hyp2}, in the classical $C^{1,\alpha}$-estimates (\cite[Section~8.11]{GT}, \cite[Section 5.5]{Mo}) for the Dirichlet problem.

\begin{thm} \label{thm1}
Let $R>0$.
Assume \eqref{hyp1}-\eqref{hyp2} with $\Omega=B_R$,
${\rm div}(b_1)+c\le 0$ in $\mathcal{D}'(B_R)$,
and let $M=M(\ld,B_R)$ {\rm(}cf.~\eqref{defM}{\rm)}.
\smallskip

(i) For each $f\in L^q(B_R)$, any solution  $v\in H^1(B_R)$ of
\be\label{ineqR}
-\mathcal{L}v\le f \; \mbox{ in } B_R,\qquad
v\le0 \; \mbox{on } \partial B_R,
\ee
satisfies
\be\label{ineqG0}
 \sup_{B_R} \frac{v}{d} \le C_0\,e^{C_0MR}\,R^{1-\frac{n}{q}} \|f^+\|_{L^q(B_R)}.
\ee

(ii) The unique solution $u\in H^1_0(B_R)$ of the problem
\be\label{solvR}
-\mathcal{L} u=f \; \mbox{ in } B_R,\qquad
u=0 \; \mbox{on } \partial B_R,
\ee
satisfies
\begin{equation}\label{sharpC1alphapositeig}
 R^{-1}\|u\|_{L^\infty(B_R)}+  \|\nabla u\|_{L^\infty(B_R)}+ R^{\alpha}[\nabla u]_{\alpha,B_R}
\le C_0\,e^{C_0MR}\, R^{1-\frac{n}{q}}\|f\|_{L^q(B_R)}.
\end{equation}
\end{thm}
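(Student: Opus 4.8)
The plan is to reduce both parts to a scaling-invariant statement on the unit ball and then to exploit the exponential structure of the constant by a covering/iteration argument, following the classical pattern but tracking all constants.

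First I would normalize. By the scaling $x \mapsto Rx$, the ball $B_R$ becomes $B_1$; under this change $A$ is unchanged (hence $[A]_{\alpha,\cdot}$ scales correctly), $b_1 \mapsto R b_1$, $b_2 \mapsto R b_2$, $c \mapsto R^2 c$, and $f \mapsto R^2 f$. One checks that each term in $M R$ is scaling invariant with the chosen exponents $\beta_q,\gamma_q$ (this is precisely why those exponents appear), so it suffices to prove \eqref{ineqG0} and \eqref{sharpC1alphapositeig} for $R = 1$, with the right-hand side $C_0 e^{C_0 M}\|f^+\|_{L^q(B_1)}$ (resp.\ $\|f\|_{L^q(B_1)}$), where now $M = M(\ld,B_1)$. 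Note also the structural condition ${\rm div}(b_1)+c\le 0$ is scale invariant in the sense that it is preserved (the transformed operator satisfies ${\rm div}(R b_1) + R^2 c = R^2({\rm div} b_1 + c)\le 0$ after accounting for the Jacobian), so the maximum principle remains available.

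For part (i), the heart of the matter is the dependence $e^{C_0 M}$ rather than something worse like $e^{C_0 M^2}$ or a non-exponential blow-up. I would first establish a \emph{local} estimate on a small ball $B_\rho(x_0)$ whose radius $\rho \sim M^{-1}$ is chosen so that on that scale the lower-order coefficients contribute only an $O(1)$ perturbation: on $B_\rho$ one has $\rho\|b_1\|_\infty + \rho\|b_2\|_{q,\rho} + \rho^2\|c\|_{q,\rho}$ of order $1$, so the standard boundary weak Harnack / local maximum principle for operators with small lower-order terms (Section~\ref{sec-prelim} presumably) gives a clean bound with an absolute constant. Then I would propagate this bound across $B_1$ via a Harnack chain of $O(1/\rho) = O(M)$ overlapping balls, each step costing a multiplicative constant $C$, producing $C^{O(M)} = e^{O(M)}$. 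The distance weight $d(x)$ is handled by the Hopf-type bound near $\partial B_1$ on the boundary balls of the chain. The main obstacle here is bookkeeping: making the covering argument genuinely uniform, controlling the weighted norm $\|f^+\|_{L^q}$ over the chain (each ball in the chain sees only a piece of $f$, and one must sum the pieces correctly — here the uniformly local norm $\|\cdot\|_{q,r_0}$ is exactly what makes the sum behave, since the number of balls times the local contribution stays controlled), and ensuring the H\"older norms of $A$ enter only through $[A]_{\alpha,\cdot}^{1/\alpha}$ and not through $\|A\|_{C^\alpha}$.

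For part (ii), I would combine part (i) with interior and boundary $C^{1,\alpha}$ Schauder-type estimates. Existence and uniqueness of $u \in H^1_0(B_1)$ follows from (i) (applied to $\pm u$) plus Fredholm/Lax--Milgram once the maximum principle gives uniqueness. For the gradient and H\"older-gradient bounds: rescale to unit balls, apply on each small ball $B_\rho$ the standard $C^{1,\alpha}$ estimate for divergence-form operators with $C^\alpha$ principal part, which bounds $\rho\|\nabla u\|_{L^\infty(B_{\rho/2})} + \rho^{1+\alpha}[\nabla u]_{\alpha,B_{\rho/2}}$ by $\|u\|_{L^\infty(B_\rho)} + (\text{lower order}) + \rho^{2-n/q}\|f\|_{L^q(B_\rho)}$; then feed in the $L^\infty$ bound for $u$ coming from part (i) (which already carries the $e^{C_0 M}$), and absorb the $\rho$-powers using $\rho \sim M^{-1}$ and $e^{C_0 M} \ge M^k$ for any fixed $k$ (up to changing $C_0$). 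The boundary piece uses the flatness of $\partial B_1$ and the zero boundary data. The subtlety to watch is that the constant in the local $C^{1,\alpha}$ estimate must depend on $A$ only through $[A]_{\alpha,r_0}$ at the right scale, which is why one works at scale $\rho$ where $[A]_{\alpha,\rho}\rho^\alpha$ is $O(1)$; then the only surviving large factor is the $L^\infty$-norm of $u$, already estimated in (i), and the final constant is again of the form $C_0 e^{C_0 M}$, which unscales to $C_0 e^{C_0 M R} R^{1-n/q}$.
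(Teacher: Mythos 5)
Your reduction to $R=1$ by scaling is correct (this is Proposition~\ref{scaleinvtilde} in the paper), and your plan for part (ii) — rescale to balls of radius $\rho\sim M^{-1}$ on which the coefficients are normalized, apply the local interior/boundary $C^{1,\alpha}$ estimates there, and feed in the $L^\infty$ bound from part (i) — is essentially the paper's proof of Theorem~\ref{thm2gen} combined with Theorem~\ref{thm3gen}(ii). The problem is part (i), where your proposal has a genuine gap.

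A Harnack chain of $O(M)$ balls, each costing a multiplicative constant, is exactly how the paper proves the optimized \emph{Harnack inequalities} \eqref{sharpWBHI}--\eqref{sharpBHI} (Theorem~\ref{BHIoptim}). But a Harnack-type statement only \emph{compares} values of a positive (super)solution at different points: it bounds $\sup_\Omega(w/d)$ by $e^{C_0MR}\bigl(\inf_\Omega(w/d)+R^{1-n/q}\|f\|_{L^q}\bigr)$. It does not, by itself, produce the \emph{absolute} bound \eqref{ineqG0}, because after reducing (via the maximum principle) to the solution $w\ge 0$ of $-\ld w=f^+$ with $w=0$ on $\partial B_R$, the quantity $\inf_{B_R}(w/d)=\inf_{\partial B_R}|\partial_\nu w|$ is strictly positive and must still be estimated in terms of $\|f^+\|$. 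Your chain has no mechanism to close this loop: the local maximum principle takes an $L^\epsilon$ average to a sup, not a sup on one ball to a sup on a neighbouring ball, so there is no starting absolute bound to propagate. The classical way to obtain the absolute bound is the Stampacchia--Trudinger global iteration of \cite[Theorem 8.16]{GT}, but — as the paper points out in the Introduction — that route yields a constant of order $e^{M^\sigma}$ with $\sigma>2$, i.e.\ it destroys precisely the linear-in-$M$ exponent that is the content of the theorem. The missing ingredient is the paper's duality argument: pair $w$ against the first eigenfunction $\phi_1>0$ of the \emph{adjoint} operator on a larger ball, integrate by parts (Lemma~\ref{weakdiv2}) to get $\lambda\inf\phi_1\int_{\partial\Omega}|\partial_\nu w|\,d\sigma\le \|f^+\|_{L^1}\sup\phi_1$, control $\sup\phi_1/\inf\phi_1$ by the interior Harnack inequality (costing only $e^{C_0MR}$), and thereby bound $\inf_{\partial\Omega}|\partial_\nu w|$ as in \eqref{estimwnu0}; only then does the boundary Harnack inequality \eqref{sharpBHI} convert this into the bound on $\sup(w/d)$. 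Without this step (or an equivalent one), your argument for \eqref{ineqG0} does not go through with the claimed constant.
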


\begin{rem}
 Theorem \ref{thm1} is  a consequence of either of  Theorems \ref{thm3} and \ref{thm1gen} below, which contain more general and precise statements. We  display the result in its simplest form above for the readers' convenience and since it may be sufficient for applications.
\end{rem}

To illustrate the improvement that Theorem \ref{thm1} (i) brings over \cite[Theorem 8.16]{GT}, let us  take $R=1$, $\mathcal{L}_0u = \mathrm{div}(A(x)Du) + b\cdot\nabla u$ in $B_1$ with $b\in L^q(B_1)$, $n<q\le\infty$. An examination of the proofs in \cite[Section~8.5]{GT} shows they produce a constant $C$ in \cite[(8.39)]{GT} whose dependence in $M_0=\|b\|_{L^q(B_1)}$ grows like $e^{(M_0)^\sigma}$ for $\sigma=1+\frac{\bar{n}(q-2)}{2(q-\bar{n})}\ge 1+\bar{n}/2>2$ ($\bar n>2$ if $n=2$, else $\bar n =n$), while Theorem \ref{thm1} (i) says we may take $ \sigma=1$; and this is optimal
by~Proposition~\ref{Prop-Optim-Linfty}(ii) below. The proof of \cite[Theorem 8.16]{GT} uses a reduction to \cite[Theorem 8.15]{GT} which in turn is based on  an iteration procedure; in contrast, our proof of \eqref{ineqG0} relies on  a different and new idea which involves an optimized version of the global Harnack inequality from \cite{GSS} and a duality argument.

In addition, in $M$ above only locally uniform Lebesgue  norms of the coefficients appear, which to our knowledge is completely new. In particular, the dependence in $R$ is further considerably improved  (note for instance $\|1\|_{L^q(B_R)} = c_0(n,q) R^{n/q}$ while $\|1\|_{q,1,B_R} = c_0(n,q) $). We also observe that the estimates in this section are exact both for small and large values of $R$ (they are actually scale invariant with respect to $R$).

As for Theorem \ref{thm1} (ii), a result of this type appeared only recently in \cite{LeB} for the particular case $A=Id$, $b_1=0$, $b_2, c, f$ bounded, $q=\infty$, $c\le 0$, where the weaker estimate $\|u\|_{C^{1}(B_R)}
\le  e^{C_0{KR}}\,\|f\|_{L^{\infty}(B_{R})}$, $R\ge 1$,  $K=1+\|b_2\|_{L^{\infty}(B_{R})}+ \|c\|_{L^{\infty}(B_{R})}^{1/2}$ for \eqref{solvR} was proved (in this very particular case of Theorem \ref{thm1} (ii) the operator is also in non-divergence form).
The restrictions on the operator come from the somewhat involved proof in~\cite{LeB}, which uses tools such as doubling of variables and existence of a positive ``multiplier" with log-gradient bounds (see below). Our proof of  \eqref{sharpC1alphapositeig} combines \eqref{ineqG0} with Theorem \ref{thm2} below, which in turn relies on a scaling argument.

\begin{rem} \label{remGT}
We observe  that \cite[Theorem 8.16]{GT} is not a Lipschitz estimate like \eqref{ineqG0}, but only an $L^\infty$-estimate (that is, in the left-hand side of \cite[(8.39)]{GT} there is no $d(x)$). The Lipschitz estimate requires some smoothness of the principal coefficients of the operator, such as H\"older continuity close to the boundary, while the  $L^\infty$-estimate
without optimized constants is proved in \cite[Theorem 8.16]{GT} only assuming that $A$ is uniformly elliptic and bounded. It is an open problem whether an optimized estimate such as \eqref{ineqG0} (without $d(x)$) can be proved under that hypothesis only. On the other hand, we will see below in Theorem~\ref{thm1gen} that the hypotheses on the lower-order coefficients can be relaxed in the first part of Theorem \ref{thm1}, which is valid if $b_1, b_2\in L^q(B_R)$, $c\in L^{q/2}(B_R)$ (this is the optimal Lebesgue integrability assumption for an $L^\infty$-estimate), with a  simplification of the quantity $M$. Also, the dependence of the constant in the gradient estimate \eqref{sharpC1alphapositeig} in $[b_1]_{\alpha,r_0,B_R}^{1/(1+\alpha)}$ and $\|c\|^{\gamma_q}_{q,r_0,B_R}$ will be improved to linear rather than exponential. 
\end{rem}

The second part of Theorem \ref{thm1} is a consequence of the first part and the following optimized $C^0$-to-$C^{1,\alpha}$ estimate.

\begin{thm} \label{thm2}
Assume \eqref{hyp1}-\eqref{hyp2} with $\Omega=B_R$,
and that $u\in H^1_0(B_R)$ solves \eqref{solvR}.
Then $u\in C^{1,\alpha}(B_R)$ and
\begin{equation}\label{sharpC1}
\|\nabla u\|_{L^\infty(B_R)}
\le C_0\Bigl({ (M+R^{-1})}\, \|u\|_{L^\infty(B_R)}
+ (M+R^{-1})^{\frac{n}{q}-1}\|f\|_{ q,r_0,B_R}\Bigr),\end{equation}
\begin{equation}\label{sharpC1alpha} [\nabla u]_{\alpha, B_R}
\le  C_0\Bigl({(M+R^{-1})}^{1+\alpha}\,\|u\|_{L^\infty(B_R)}
+ (M+R^{-1})^{\frac{n}{q}-1+\alpha}\|f\|_{ q,r_0,B_R}\Bigr),
\end{equation}
where $M=M(\ld,B_R)$ and $r_0=r_0(\ld,B_R)$ {\rm(}cf.~\eqref{defM}{\rm)}.
\end{thm}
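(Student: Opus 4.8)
\textbf{Proof proposal for Theorem \ref{thm2}.}

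The plan is to reduce the estimate to an interior $C^{1,\alpha}$ estimate on unit balls by a scaling argument, carefully tracking how the normalization interacts with the quantity $M$ and the locally uniform norms. First I would introduce the length scale $\rho = (M+R^{-1})^{-1}$, which is $\le R$, and localize: fix $x_0\in B_R$ and work on the ball $B_\rho(x_0)\cap B_R$. After rescaling $x\mapsto x_0 + \rho y$, the operator $\mathcal L$ is transformed into a new operator $\tilde{\mathcal L}$ on a unit-size ball, and the point of the choice of $\rho$ is precisely that the transformed coefficients satisfy $M(\tilde{\mathcal L})\le C$: the H\"older seminorm of $A$ scales like $\rho^\alpha [A]_{\alpha,r_0,B_R}\le (\rho M)^\alpha\le 1$, the $L^\infty$ norm of $b_1$ like $\rho\|b_1\|_\infty\le \rho M\le 1$, the seminorm of $b_1$ like $\rho^{1+\alpha}[b_1]_{\alpha}\le (\rho M^{1/(1+\alpha)})^{1+\alpha}\le 1$, and likewise $\rho^{1-n/q}\|b_2\|_{q,\cdot}$ and $\rho^{2-n/q}\|c\|_{q,\cdot}$ are bounded by powers of $\rho M\le 1$; here the use of the \emph{uniformly local} norms $\|\cdot\|_{q,r_0,\cdot}$ rather than full $L^q$ norms is essential so that the rescaling of the domain does not spoil the bound. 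The radius $r_0$ is chosen (in Section \ref{sec-ext}) precisely so that $\rho\le r_0$, so that a ball of radius $\rho$ sees only the locally uniform part of the norm.

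Next I would invoke the classical interior and boundary $C^{1,\alpha}$ Schauder-type estimates for divergence-form equations with such normalized coefficients (as in \cite[Section~8.11]{GT}, \cite[Section~5.5]{Mo}); since all relevant coefficient norms of $\tilde{\mathcal L}$ are now bounded by a dimensional constant, these estimates give, for the rescaled solution $\tilde u(y)=u(x_0+\rho y)$ on the unit ball (using $\tilde u = 0$ on the part of the boundary lying on $\partial B_R$ to handle the boundary case),
\[
\|\nabla \tilde u\|_{L^\infty} + [\nabla \tilde u]_{\alpha} \le C\bigl(\|\tilde u\|_{L^\infty} + \|\tilde f\|_{L^q}\bigr),
\]
where $\tilde f(y) = \rho^2 f(x_0+\rho y)$ and $\|\tilde f\|_{L^q(\text{unit ball})} = \rho^{2-n/q}\|f\|_{L^q(B_\rho(x_0)\cap B_R)} \le \rho^{2-n/q}\|f\|_{q,r_0,B_R}$. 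Undoing the scaling, $\nabla\tilde u = \rho\nabla u$ and $[\nabla\tilde u]_\alpha = \rho^{1+\alpha}[\nabla u]_\alpha$, so dividing by the appropriate power of $\rho$ and taking the supremum over $x_0\in B_R$ yields
\[
\|\nabla u\|_{L^\infty(B_R)} \le C\rho^{-1}\|u\|_{L^\infty(B_R)} + C\rho^{n/q-1}\|f\|_{q,r_0,B_R},
\]
and similarly for $[\nabla u]_{\alpha,B_R}$ with the exponent shifted by $\alpha$. Substituting $\rho^{-1} = M+R^{-1}$ gives exactly \eqref{sharpC1} and \eqref{sharpC1alpha}. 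One technical point to settle is that the H\"older seminorm $[\nabla u]_{\alpha,B_R}$ is a global quantity on $B_R$ whereas the localized estimates control it only on each ball of radius $\rho$; but a standard covering/patching argument recovers the global seminorm from the local ones at scale $\rho$ (pairs of points at distance $\ge\rho$ are controlled by the $L^\infty$ bound on $\nabla u$, at the cost of the same power of $\rho^{-1}$ already present).

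The main obstacle is the bookkeeping in the scaling step: one must verify that \emph{every} term in the definition \eqref{defM} of $M$ rescales with an exponent that makes it bounded by a power of $\rho M\le 1$ (not merely controlled by $M$ itself), and that the definition of $r_0$ in Section \ref{sec-ext} indeed guarantees $\rho\le r_0$ so the uniformly local norms behave correctly under the rescaling, including near the boundary where $\tilde u$ vanishes on a curved portion of $\partial B_R$ that flattens only as $\rho/R\to 0$. Handling the boundary case uniformly — i.e.\ choosing the right flat-boundary or curved-boundary Schauder estimate depending on whether $B_\rho(x_0)$ touches $\partial B_R$, and checking the geometry is admissible at scale $\rho\le R$ — is where the care is required; the interior case and the final algebra are routine.
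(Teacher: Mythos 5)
Your proposal is correct and follows essentially the same route as the paper's proof of Theorem \ref{thm2gen} (of which Theorem \ref{thm2} is the special case $\Omega=B_R$): rescale around each point at the length scale $r_0\sim(M+R^{-1})^{-1}$ so that all coefficient norms of the rescaled operator become $O(1)$ by the very definition of $r_0$, apply the classical interior or boundary gradient and $C^{1,\alpha}$ estimates of \cite[Section~8.11]{GT}, \cite[Section~5.5]{Mo}, scale back, and patch. The only small inaccuracy is the claim that $\rho=(M+R^{-1})^{-1}\le r_0$; by \eqref{relMr0} and $r_{B_R}=c(n)R$ one actually has $r_0\le\rho\le C(n)r_0$, so a $\rho$-ball must be covered by a bounded number of $r_0$-balls to control $\|f\|_{L^q(B_\rho(x_0)\cap B_R)}$ by $\|f\|_{q,r_0,B_R}$ — this costs only a dimensional constant (the paper avoids the issue by working directly at scale $r\le r_0$ and converting at the end).
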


We next state a more general result than Theorem \ref{thm1}, in which
the assumption ${\rm div}(b_1)+c\le 0$ in $\mathcal{D}'(B_R)$ is replaced by the uniform positivity of the first eigenvalue $\lambda_1=\lambda_1(-\ld,\Omega)$ of the operator $\ld$ (see the appendix for properties of $\lambda_1$). We recall  that the positivity of $\lambda_1$ is equivalent to the validity of the maximum principle for $\ld$ in $\Omega$ and ensures the general solvability of the Dirichlet problem for \eqref{defeq}.

\begin{thm} \label{thm3}
{ Let $R>0$, $\kappa\in(0,R]$.}
Assume \eqref{hyp1}-\eqref{hyp2} with $\Omega=B_{R+\kappa}$,
\be\label{hypLambda1}
\lambda_1(-\mathcal{L},B_{R+\kappa})\ge 0
\ee
and let $M=M(\ld,B_{R+\kappa})$ {\rm(}cf.~\eqref{defM}{\rm)}.
\smallskip

(i) For each $f\in L^q(B_R)$, any solution  $v\in H^1(B_R)$ of \eqref{ineqR}
satisfies
\be\label{estimzinftysubs}
{\sup_{B_R} \frac{v}{d}} \le e^{C_0(M+\kappa^{-1})R}\,R^{1-\frac{n}{q}}\|f^+\|_{L^q(B_R)}.
\ee

\smallskip

(ii) The unique solution $u\in H^1_0(B_R)$ of \eqref{solvR} satisfies
$$
R^{-1}\|u\|_{L^\infty(B_R)}+\|\nabla u\|_{L^\infty(B_R)}+R^{\alpha}[\nabla u]_{\alpha,B_R}
\le e^{C_0(M+\kappa^{-1})R}\, R^{1-\frac{n}{q}}\|f\|_{L^q(B_R)}.
$$
\end{thm}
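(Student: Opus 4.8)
The plan is to deduce Theorem \ref{thm3} from Theorem \ref{thm1} by a localization/cut-off argument that converts the eigenvalue hypothesis \eqref{hypLambda1} on the larger ball $B_{R+\kappa}$ into the structural sign condition ${\rm div}(b_1)+c\le 0$ needed to apply Theorem \ref{thm1}. The key device is a positive supersolution (``multiplier'') $\varphi$ of $-\mathcal{L}\varphi\ge 0$ on $B_{R+\kappa}$, whose existence is guaranteed precisely by $\lambda_1(-\mathcal{L},B_{R+\kappa})\ge 0$ (cf.\ the appendix referred to in the excerpt); one takes $\varphi$ to be, say, the first eigenfunction of $-\mathcal{L}$ on the slightly larger ball $B_{R+\kappa}$, or a perturbation thereof, normalized so that $\varphi\ge c>0$ on the closed ball $\overline{B_R}$ with quantitative control $c^{-1}\le Ce^{C(M+\kappa^{-1})R}$ and $\|\varphi\|_\infty + R\|\nabla\varphi\|_\infty\le Ce^{C(M+\kappa^{-1})R}$ on $\overline{B_R}$. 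These bounds on $\varphi$ are themselves instances of the Harnack inequality and $C^{1,\alpha}$ estimates on $B_R$ in terms of values on $B_{R+\kappa}$, and this is where the parameter $\kappa$ (the ``gap'' between the two balls) enters: the Harnack constant on a ball of radius $R$ contained at distance $\kappa$ inside $B_{R+\kappa}$ scales like $e^{C(M+\kappa^{-1})R}$.

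Next I would perform the substitution $v=\varphi w$ (resp.\ $u=\varphi w$) and compute that $w$ solves an equation $-\tilde{\mathcal{L}}w\le \varphi^{-1}f$ on $B_R$ with a new operator $\tilde{\mathcal{L}}$ in divergence form whose coefficients are built from those of $\mathcal{L}$ and from $\varphi,\nabla\varphi$. The crucial algebraic point — standard in this circle of ideas — is that the conjugated operator $\tilde{\mathcal{L}}$ automatically satisfies ${\rm div}(\tilde b_1)+\tilde c\le 0$ in $\mathcal{D}'(B_R)$ (in fact $\tilde c=0$ and $\tilde b_1$ is chosen so that the zeroth-order term vanishes, using that $\varphi$ is a supersolution), so Theorem \ref{thm1} applies to $w$ on $B_R$. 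One then has to track how the quantity $M$ transforms: the new coefficients involve $\nabla\varphi/\varphi$, whose $L^\infty$ and Hölder norms on $B_R$ are controlled, via the $C^{1,\alpha}$ estimate for $\varphi$, by $Ce^{C(M+\kappa^{-1})R}$; hence $\tilde M R\le Ce^{C(M+\kappa^{-1})R}$, and plugging this into the conclusion \eqref{ineqG0}--\eqref{sharpC1alphapositeig} of Theorem \ref{thm1} (applied with $\tilde M$) gives a bound of the form $\exp\big(C_0 e^{C(M+\kappa^{-1})R}\big)$ — which is worse than claimed. So a naive conjugation is not enough; the exponent must come out linear in $M+\kappa^{-1}$.

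Therefore the main obstacle, and the part requiring genuine care, is to keep the exponential \emph{linear} in $M+\kappa^{-1}$ rather than doubly exponential. I expect this is handled not by a single conjugation on all of $B_R$ but by a covering/iteration argument: one covers $B_R$ by $\sim (MR+R/\kappa)^n$ balls of radius $\sim (M+\kappa^{-1})^{-1}$ on each of which $M\cdot(\text{radius})\lesssim 1$, applies the multiplier trick and Theorem \ref{thm1} locally (where the local exponential factor is then $O(1)$), and chains the resulting local estimates along paths of length $\lesssim R$, each step costing a bounded multiplicative factor; the product over $\sim(M+\kappa^{-1})R$ steps yields exactly $e^{C_0(M+\kappa^{-1})R}$. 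The role of the larger ball $B_{R+\kappa}$ is to provide the room needed for the multiplier near $\partial B_R$ and to make the chaining work up to the boundary, which is also what upgrades the interior estimate to the boundary Lipschitz bound $\sup_{B_R} v/d$. Part (ii) then follows from part (i) exactly as in Theorem \ref{thm1}: combine the boundary Lipschitz/$L^\infty$ bound \eqref{estimzinftysubs} with the $C^0$-to-$C^{1,\alpha}$ estimate of Theorem \ref{thm2}, noting that $M+R^{-1}\le M+\kappa^{-1}$ since $\kappa\le R$, so that the right-hand sides of \eqref{sharpC1}--\eqref{sharpC1alpha} are absorbed into $e^{C_0(M+\kappa^{-1})R}$.
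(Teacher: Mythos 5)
Your overall strategy for part (i) — conjugate by a positive supersolution on $B_{R+\kappa}$ to reduce to the case ${\rm div}(b_1)+c\le 0$ and invoke Theorem~\ref{thm1} — is not the paper's route, and as written it has two serious problems.

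First, it is circular relative to the paper: Theorem~\ref{thm1} is itself \emph{derived} from Theorem~\ref{thm3} (via Theorems~\ref{thm1gen} and \ref{thm3gen}); there is no independent proof of the drift/divergence-free case with the optimal constant that you could feed your conjugated problem into. Second, and more importantly, neither of your two mechanisms closes. The global conjugation $v=\varphi w$ fails not only for the quantitative reason you identify, but also because the conjugated principal part is $\varphi A$ (or $\varphi A^T$), and $\varphi$ oscillates over $B_R$ by the Harnack factor $e^{C(M+\kappa^{-1})R}$, so the ellipticity ratio of the new operator degenerates by exactly the constant you are trying to control. Your proposed repair — cover $B_R$ by small balls, apply the estimate locally, and ``chain'' — is a Harnack-chain mechanism, and Harnack chains work only for \emph{nonnegative} (super)solutions, where the weak Harnack inequality supplies the two-sided link $\inf_{B_k}u\ge c\,\|u\|_{L^\eps(B_k\cap B_l)}$. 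For a sign-changing subsolution of $-\mathcal{L}v\le f$ the only local tool is the local maximum principle $\sup v^+\le C(\|v^+\|_{L^\eps}+\dots)$, which has no companion lower bound to chain against; iterating it over a covering is essentially the Gilbarg--Trudinger 8.15/8.16 scheme, which the paper explicitly notes produces the worse exponent $e^{CM^\sigma}$ with $\sigma>2$. So the step ``the product over $\sim(M+\kappa^{-1})R$ steps yields $e^{C_0(M+\kappa^{-1})R}$'' is precisely the missing idea, not a routine verification.

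The paper's actual proof (of Theorem~\ref{thm3gen}(i), of which Theorem~\ref{thm3} is the case $\Omega=B_R$, $\hat\Omega=B_{R+\kappa}$) is a duality argument. One reduces to $w$ solving $-\mathcal{L}w=f^+$, $w=0$ on $\partial B_R$, tests against the first eigenfunction $\phi_1>0$ of the \emph{adjoint} $\mathcal{L}^*$ on $B_{R+\kappa}$, and integrates by parts (Lemma~\ref{weakdiv2}): since $\lambda_1\ge 0$, this yields $\lambda\inf\phi_1\int_{\partial B_R}|\partial_\nu w|\,d\sigma\le \|f^+\|_{L^1}\sup\phi_1$, and the interior Harnack inequality on $B_R\subset B_{R+\kappa}$ (Proposition~\ref{sharpharn}) controls $\sup\phi_1/\inf\phi_1$ by $e^{C_0(M+\kappa^{-1})R}$ — this is the only place $\kappa$ enters. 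Hence $\inf_{\partial B_R}|\partial_\nu w|\le e^{C_0(M+\kappa^{-1})R}\|f^+\|_{L^1}/|\partial B_R|$. The key final step, which your proposal has no substitute for, is the optimized global boundary Harnack inequality \eqref{sharpBHI}: $\sup_{B_R}(w/d)\le e^{C_0(1+MR)}\bigl(\inf_{B_R}(w/d)+R^{1-n/q}\|f^+\|_{L^q}\bigr)$, combined with $\inf_{B_R}(w/d)\le\inf_{\partial B_R}|\partial_\nu w|$. Your part (ii) — combining (i) with Theorem~\ref{thm2} and absorbing $M+R^{-1}\le M+\kappa^{-1}$ into the exponential — does match the paper.
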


 The optimality in $M$ and $R$ of the estimates in Theorem~\ref{thm3} (and Theorem~\ref{thm1})
is established in Proposition~\ref{Prop-Optim-Linfty}.

A discussion on the hypothesis \eqref{hypLambda1} is in order. The validity of a gradient estimate as in \cite{LeB} (see above) was conjectured in \cite[Conjecture 4.3]{LeB} only assuming uniqueness for the Dirichlet problem for $\ld$ in $B_R$ (which follows from the  maximum principle, that is, $\lambda_1(-\mathcal{L},B_{R})>0$).
 Such a result cannot hold, nor can \eqref{ineqG0} be true,
since it {can be seen}
that the solution of $-\Delta u  -\lambda u = 1$ in $B_1$, $u=0$ on $\partial B_1$ explodes together with its gradient as $\lambda\nearrow \lambda_1(-\Delta, B_1)$, while  $\lambda_1(-\Delta-\lambda, B_1) = \lambda_1(-\Delta, B_1) - \lambda >0$. Some ``uniform positivity" of the first eigenvalue is necessary and this is what \eqref{hypLambda1} expresses,  in terms of nonnegativity of the eigenvalue in a  larger {\it fixed} domain. An interesting consequence of Theorem \ref{thm3} is a quantification of the gap between the eigenvalues in two such domains,
Theorem~\ref{thm4} below.

\goodbreak

\subsection{Applications to the Landis conjecture and bounds for the first eigenvalue}

\subsubsection{Estimates for the first eigenvalue of $\ld$}

Theorem \ref{thm3} easily implies a lower bound on the first eigenvalue in~$B_R$ under \eqref{hypLambda1}. Thus we quantify the increase of the first eigenvalue of the general operator in \eqref{defdiv} when a neighbourhood of the boundary of the domain is removed. A more general result is given in Theorem \ref{thm3gen} (iii) below.

\begin{thm} \label{thm4}
Let $R>0$, $\kappa\in(0,R]$.
Assume \eqref{hyp1}-\eqref{hyp2}  with $\Omega=B_{R+\kappa}$ and~\eqref{hypLambda1}.
Then we have
\be\label{estimLambdaR}
\lambda_1(-\mathcal{L},B_R) \ge e^{-C_0(M+\kappa^{-1})R}R^{-2},
\ee
where $M=M(\ld,B_{R+\kappa})$ {\rm(}cf.~\eqref{defM}{\rm)}.
\end{thm}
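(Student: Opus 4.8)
The plan is to deduce Theorem~\ref{thm4} directly from Theorem~\ref{thm3}(ii) by a variational/duality argument. Let $\mu=\lambda_1(-\mathcal{L},B_R)$ and let $\varphi>0$ be the associated principal eigenfunction, so that $-\mathcal{L}\varphi=\mu\varphi$ in $B_R$, $\varphi=0$ on $\partial B_R$. The idea is to test the solvability estimate \eqref{sharpC1alphapositeig} (or rather its consequence \eqref{estimzinftysubs}) against a suitable right-hand side and extract a lower bound on $\mu$ from the fact that $\varphi$ cannot be too small in the interior while vanishing on the boundary. Concretely, I would first record that under \eqref{hypLambda1} the solution operator $f\mapsto u$ of \eqref{solvR} is well-defined and bounded (this is exactly Theorem~\ref{thm3}(ii)), and that the eigenvalue admits the representation $\mu^{-1}=\|(-\mathcal{L})^{-1}\|$ in an appropriate sense, or more elementarily that for the eigenfunction one has $\varphi=\mu\,(-\mathcal{L})^{-1}\varphi$.

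The cleanest route: apply Theorem~\ref{thm3}(i) to $v=\varphi$, which solves $-\mathcal{L}\varphi=\mu\varphi\le f$ in $B_R$ with $f=\mu\varphi^+=\mu\varphi$ and $\varphi=0$ on $\partial B_R$. Then \eqref{estimzinftysubs} gives
\be
\sup_{B_R}\frac{\varphi}{d}\le e^{C_0(M+\kappa^{-1})R}\,R^{1-\frac{n}{q}}\,\mu\,\|\varphi\|_{L^q(B_R)}.
\ee
Since $\varphi\le d\sup_{B_R}(\varphi/d)$ pointwise, we get $\|\varphi\|_{L^q(B_R)}\le \|d\|_{L^q(B_R)}\sup_{B_R}(\varphi/d)\le C R^{n/q+1}\sup_{B_R}(\varphi/d)$ (using $d\le R$). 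Combining these two inequalities and cancelling the nonzero quantity $\sup_{B_R}(\varphi/d)>0$ (positive because $\varphi>0$ in $B_R$ by the strong maximum principle / Hopf lemma), one obtains $1\le C_0 e^{C_0(M+\kappa^{-1})R} R^{2}\mu$, which after adjusting constants is precisely \eqref{estimLambdaR}. The scale-invariant structure of the estimates makes the powers of $R$ bookkeep correctly; the factor $R^{-2}$ on the right of \eqref{estimLambdaR} matches the homogeneity ($\lambda_1$ scales like length$^{-2}$).

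The one point requiring a little care — and the main (minor) obstacle — is justifying that the principal eigenfunction and eigenvalue of the \emph{divergence-form, non-self-adjoint} operator $\mathcal{L}$ on $B_R$ are available with the properties used above: existence of a principal eigenpair $(\mu,\varphi)$ with $\varphi>0$ in $B_R$, $\varphi\in C^1(\overline{B_R})$, vanishing exactly to first order at the boundary, and $\mu>0$. This is standard Krein–Rutman theory for such operators, and the paper promises ``properties of $\lambda_1$'' in the appendix, so I would simply cite that. Also needed is that $\varphi$ is an admissible competitor in \eqref{ineqR}, i.e.\ $\varphi\in H^1(B_R)$ with zero boundary trace — immediate. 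One should double-check that $\|f^+\|_{L^q(B_R)}=\mu\|\varphi\|_{L^q(B_R)}$ is finite, which follows from $\varphi\in C^1(\overline{B_R})$. With these ingredients the proof is a two-line computation; essentially all the work is hidden in Theorem~\ref{thm3}, and Theorem~\ref{thm4} is a corollary. (Alternatively, one can phrase the argument via the dual eigenfunction and a test-function computation against the solution of \eqref{solvR} with $f\equiv 1$, using $R^{-1}\|u\|_\infty+\|\nabla u\|_\infty\le e^{C_0(M+\kappa^{-1})R}R^{1-n/q}\|1\|_{L^q(B_R)}=C e^{C_0(M+\kappa^{-1})R}R$, but the eigenfunction-as-subsolution route above is shorter and avoids introducing the adjoint problem.)
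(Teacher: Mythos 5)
Your proof is correct, but it takes a different route from the paper's. The paper deduces Theorem~\ref{thm4} from Theorem~\ref{thm3gen}(iii), whose proof solves the auxiliary problem $-\mathcal{L}u=1$, invokes the $L^\infty$ bound \eqref{estforuh} to get $\|u\|_{L^\infty}\le e^{C_0(\cdots)D}D^{2}$, and then observes that $-\mathcal{L}u=1\ge \|u\|_{L^\infty}^{-1}u$, so that $u$ itself is an admissible competitor in the characterization \eqref{charlambda1}, yielding $\lambda_1\ge \|u\|_{L^\infty}^{-1}$. You instead feed the principal eigenfunction $\varphi$ into the a~priori estimate \eqref{estimzinftysubs} with $f=\mu\varphi$ and close a self-referential inequality by cancelling $\sup(\varphi/d)\in(0,\infty)$. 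Both arguments rest on exactly the same key input (the optimized $L^\infty$/Lipschitz estimate of Theorem~\ref{thm3}), and your bookkeeping of the powers of $R$ is right ($R^{1-n/q}\cdot R^{1+n/q}=R^2$, with the harmless multiplicative constant absorbed into the exponential since $(M+\kappa^{-1})R\ge 1$). The trade-off is that your route requires the existence, strict positivity, boundedness (hence $L^q$-integrability) of the principal eigenfunction of the non-self-adjoint divergence-form operator, plus the finiteness of $\sup(\varphi/d)$ needed for the cancellation — all available from the appendix and from \eqref{estimzinftysubs} itself, but it is extra machinery; the paper's version needs only the characterization \eqref{charlambda1} as a supremum over positive supersolutions, tested with a function it constructs explicitly, and so avoids the eigenfunction altogether. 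Do make the cancellation step explicit: \eqref{estimzinftysubs} guarantees $\sup(\varphi/d)<\infty$ because its right-hand side is finite, and $\mu>0$ (from the strict domain monotonicity $\lambda_1(-\mathcal{L},B_R)>\lambda_1(-\mathcal{L},B_{R+\kappa})\ge 0$) ensures $f^+=\mu\varphi$ as you use it.
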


Next, we prove an upper bound for the first eigenvalue in $B_R$, which is optimized with respect to the coefficients. Apart from being interesting per se, it plays an important role in the proof of Theorem  \ref{thm3}.

\begin{prop} \label{upperboundlambda1}
Let $R>0$, $\Omega=B_R$ and assume \eqref{hyp1}-\eqref{hyp2}.
Then
\be\label{upperbdeig1}
\lambda_1(-\ld,B_R) \le C_0\bigl(M+R^{-1}\bigr)^2,
\ee
where $M=M(\ld,B_R)$ (cf. \eqref{defM}).
\end{prop}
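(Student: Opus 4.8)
The plan is to reduce \eqref{upperbdeig1}, by scaling and a localization argument, to a uniform upper bound for $\lambda_1$ on the unit ball in the regime of \emph{small} lower‑order coefficients, where it will follow from the stability of the principal eigenvalue under small perturbations. First, since $\lambda_1$ scales like $R^{-2}$, since $(M+R^{-1})^2=R^{-2}(1+MR)^2$, and since — by the very choice of the exponents $\beta_q,\gamma_q$ in \eqref{defM} — the quantity $MR$ is invariant under $x\mapsto x/R$, it is enough to prove $\lambda_1(-\ld,B_1)\le C_0(1+M)^2$ with $M=M(\ld,B_1)$; throughout I take $r_0=1$ (cf.\ Remark~\ref{r01M}).

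\emph{Localization.} Fix a small $c_*=c_*(n,\lambda,\Lambda,q,\alpha)>0$ and set $\rho:=\min(1,c_*/M)$. If $\rho=1$ there is nothing to localize; if $\rho<1$, pick $x_0$ with $B_\rho(x_0)\subset B_1$. By domain monotonicity of the generalized principal eigenvalue (a positive supersolution on $B_1$ restricts to one on $B_\rho(x_0)$) and by rescaling $B_\rho(x_0)$ onto $B_1$,
\[
\lambda_1(-\ld,B_1)\ \le\ \lambda_1(-\ld,B_\rho(x_0))\ =\ \rho^{-2}\,\lambda_1(-\ltil,B_1),
\]
where $\ltil$ is the rescaled operator, with coefficients $\tilde A(y)=A(x_0+\rho y)$, $\tilde b_1=\rho\,b_1(x_0+\rho\cdot)$, $\tilde b_2=\rho\,b_2(x_0+\rho\cdot)$, $\tilde c=\rho^2c(x_0+\rho\cdot)$. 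Each of the five terms of $M(\ltil,B_1)$ equals the corresponding term of $M$ times $\rho$ — precisely what the powers $1/\alpha$, $1/(1+\alpha)$, $\beta_q$, $\gamma_q$ in \eqref{defM} are designed for (for instance $\|\tilde b_2\|_{q,1,B_1}^{\beta_q}\le(\rho^{1-n/q}\|b_2\|_{q,1,B_1})^{\beta_q}=\rho\|b_2\|_{q,1,B_1}^{\beta_q}$) — so $M(\ltil,B_1)\le\rho M=c_*$, as small as desired.

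\emph{Small coefficients, and conclusion.} It now suffices to show: there is $\delta_0=\delta_0(n,\lambda,\Lambda,q,\alpha)>0$ such that $M(\ltil,B_1)\le\delta_0$ implies $\lambda_1(-\ltil,B_1)\le K$, with $K=K(n,\lambda,\Lambda)$. Write $\ltil=\mathcal A_0+\mathcal B$, where $\mathcal A_0u:=\mathrm{div}(A(x_0)Du)$ has its coefficients frozen at the center (hence, its antisymmetric part acting trivially, it is self-adjoint, nonnegative, with compact resolvent and simple positive lowest eigenvalue $\lambda_1(-\mathcal A_0,B_1)\in[\lambda\,\lambda_1(-\Delta,B_1),\Lambda\,\lambda_1(-\Delta,B_1)]$), and $\mathcal B u=\mathrm{div}((\tilde A-A(x_0))Du)+\mathrm{div}(\tilde b_1u)+\tilde b_2{\cdot}Du+\tilde cu$ collects the remainder. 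Using $\|\tilde A-A(x_0)\|_{L^\infty(B_1)}\le[\tilde A]_{\alpha,1,B_1}\le\delta_0^\alpha$ and, since $q>n$, subcritical Sobolev embeddings for the lower‑order terms, one gets $\|\mathcal B\|_{H^1_0(B_1)\to H^{-1}(B_1)}\le C\big(\delta_0^\alpha+\|\tilde b_1\|_{L^\infty}+\|\tilde b_2\|_{L^q(B_1)}+\|\tilde c\|_{L^q(B_1)}\big)\le C\delta_0^{\,\theta}$ for some $\theta=\theta(n,q,\alpha)>0$. Thus $\mathcal B$ is $\mathcal A_0$‑bounded with arbitrarily small relative bound, so for $\delta_0$ small the principal eigenvalue of $-\ltil$ — which exists, is real and simple with positive eigenfunction by Krein--Rutman — stays within $1$ of $\lambda_1(-\mathcal A_0,B_1)$, whence $\lambda_1(-\ltil,B_1)\le\Lambda\,\lambda_1(-\Delta,B_1)+1=:K$. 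Choosing $c_*\le\delta_0$, this gives $\lambda_1(-\ld,B_1)\le\rho^{-2}K=\max(1,M/c_*)^2K\le C_0(1+M)^2$, and undoing the scaling of the first step yields \eqref{upperbdeig1}.

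\emph{Main difficulty.} The crux is the small‑coefficient step: making the eigenvalue‑stability estimate quantitative and uniform over the admissible coefficient class. This is where $q>n$ is essential (it is what renders $\|\tilde b_2\|_{L^q(B_1)}$ and $\|\tilde c\|_{L^q(B_1)}$ genuinely small after rescaling and puts $\mathcal B$ into $H^{-1}$ with controlled norm), together with the basic spectral properties of $\lambda_1$ — simplicity, isolation, continuity — which are presumably the content of the appendix announced in the introduction. If one prefers to avoid invoking abstract perturbation theory, the same bound can be reached by hand: assuming (as one may, the claim being trivial otherwise) $\lambda_1(-\ltil,B_1)>0$, introduce the torsion function $u\in H^1_0(B_1)$ of $-\ltil u=1$ and the adjoint principal eigenfunction $\phi^*>0$, use the identity $\lambda_1(-\ltil,B_1)\int_{B_1}u\phi^*=\int_{B_1}\phi^*$, bound $\inf_{B_{1/2}}u$ below by comparison with the torsion function of $\mathrm{div}(A(x_0)D\cdot)$, and bound $\int_{B_1}\phi^*\big/\int_{B_{1/2}}\phi^*$ above by a Harnack chain for $-\ltil^*$; closing the latter estimate uniformly (the Harnack constant depending a priori on $\lambda_1$ itself) is the point where the smallness of the coefficients must be exploited carefully.
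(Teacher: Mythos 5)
Your strategy is genuinely different from the paper's. The paper argues by contradiction and blow-up at the \emph{spectral} scale: assuming \eqref{upperbdeig1} fails along a sequence, it rescales the principal eigenfunction around its maximum point by $r_j=\mu_j^{-1/2}$, shows that the rescaled lower-order coefficients vanish in the limit (this is where the reformulation \eqref{upperbdeig0}, with $r_0$ replaced by $K/\sqrt{\lambda_1}$, is needed to untangle the nonlinear dependence of $r_0$ on the coefficients), and obtains in the limit a positive solution of $-\mathrm{div}(A^0D\psi^0)=\psi^0$ on $\R^n$ or a half-space, which contradicts \eqref{charlambda1} because constant-coefficient operators have arbitrarily small eigenvalues on large balls. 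Your route --- localize at the \emph{coefficient} scale $\rho\sim\min(1,1/M)$ so that the rescaled lower-order terms become small, then bound the eigenvalue of the small-coefficient operator by a universal constant --- is the natural contrapositive and is viable in outline; it is direct rather than by compactness, at the price of having to make the small-coefficient step quantitative by hand. One repair is needed in the localization: invoking Remark~\ref{r01M} to ``take $r_0=1$'' is not legitimate here, because for an \emph{upper} bound on $\lambda_1$ replacing the $r_0$-based norms in $M$ by the larger radius-$1$ norms weakens the conclusion (by a factor that degenerates as $q\downarrow n$). The fix is immediate: take $\rho\le r_0$ (which holds anyway up to constants, since $\rho\sim c_*/M$ and $r_0^{-1}=r_{B_1}^{-1}+M$ by \eqref{relMr0}) and estimate $\|b_2\|_{L^q(B_\rho(x_0))}\le\|b_2\|_{q,r_0,B_1}$, etc., so that the rescaled coefficients are small in terms of the correct $M$.

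The genuine gap is the small-coefficient step, which you yourself identify as the crux but do not close. The assertion that small relative boundedness of $\mathcal B$ forces the principal eigenvalue of $-\ltil$ to ``stay within $1$'' of $\lambda_1(-\mathcal A_0,B_1)$ is not a citation-ready fact for this non-self-adjoint class. What you actually need is weaker --- that $-\ltil$ has \emph{some} eigenvalue with real part $\le K$, since $\lambda_1$ minimizes the real part over the spectrum --- and this can be extracted from a Riesz-projection argument; but that requires a contour enclosing part of $\sigma(-\mathcal A_0)$ on which the $H^{-1}\to H^1_0$ resolvent is bounded \emph{uniformly over all constant matrices} $\lambda I\le A(x_0)\le\Lambda I$. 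Since $\sigma(-\mathcal A_0)$ is real and its higher eigenvalues can sit anywhere in $[\lambda\lambda_1(-\Delta,B_1),\infty)$, one must locate a quantified spectral gap in a bounded window to place the right edge of the contour, and then prove the resolvent bounds there; none of this is in the proposal. Your fallback duality argument has the circularity you flag: the Harnack/local maximum principle constants for $\phi^*$ and for the torsion function depend a priori on the zero-order coefficient $\tilde c+\lambda_1$, i.e.\ on the quantity being estimated --- this is precisely why the paper's Lemma~\ref{LemOptimizedHopf2} \emph{uses} Proposition~\ref{upperboundlambda1} at \eqref{upperlam2} rather than the other way around, so you cannot borrow that machinery without circularity. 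The paper's blow-up at scale $\lambda_1^{-1/2}$ sidesteps both obstructions at once. In short: the architecture of an alternative proof is sound, but the load-bearing step is asserted rather than proved.
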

For operators {\it in non-divergence form with bounded coefficients and $R\le1$}, estimate \eqref{upperbdeig1} was proved in \cite[Lemma 1.1]{BNV}.
Both the lower and the upper estimates for the first eigenvalue are optimal  with respect to $M$ and $R$ -- see Proposition~\ref{Optim-spectral1}.

\subsubsection{Quantitative estimates related to  the Landis conjecture}
In \cite{KL}, Kondratiev and Landis conjectured that a nontrivial solution of a uniformly elliptic PDE with bounded coefficients in
the whole space or in an exterior domain cannot decay faster than exponentially at infinity, that is,
\be\label{limsupexp}
\limsup_{|x|\to\infty} e^{K|x|}|u(x)|>0
\ee
for some $K>0$ depending on bounds on the coefficients.
This property is known as "the Landis conjecture", also in its sharper form where the optimal $K$ is sought for,
or in a weaker form  brought up by Kenig in \cite{K2},
where the decay to rule out is $\exp(-|x|^{1+\epsilon})$, $\epsilon>0$.
Here we will restrict our attention to the whole space case.

The Landis conjecture has a long history, in particular in the case $n=2$, but is still largely unsolved for $n\ge3$ and operators which do not satisfy the maximum principle.  We refer to \cite{BK}, \cite{EK}, \cite{K2}, \cite{DW}, \cite{B1}.
\cite{KSW}  \cite{DKW}, \cite{KW}, \cite{DZ},  \cite{R},
\cite{ABG},  \cite{LM}, \cite{LeB}, \cite{LeBS}, \cite{DP}.
In  \cite{SS2},
under the assumption that  $\mathcal{L}$ satisfies the maximum principle in all bounded subdomains of $\rn$,
we proved the Landis conjecture  in any dimension, with the stronger estimate \begin{equation}\label{concllandis2}
 \sup_{|x|=R} |u(x)|\ge e^{-C_0KR},\quad R\ge R_0.
\end{equation}
The result in \cite{SS2} is valid
for both divergence and non-divergence operators,
with unbounded lower-order coefficients which are only uniformly locally integrable (and thus bounded coefficients are a very particular case).
To prove \eqref{concllandis2} we introduced a new approach based on a Harnack inequality
with sharp dependence on the coefficients and the size of the domain.

Here we are interested in the possibility to obtain a  more precise quantitative estimate than \eqref{concllandis2}, namely
\be\label{intexp}
\int_{\partial B_R}\, |u| d\sigma \ge e^{-C_0 KR},\quad R\ge R_0.
\ee
It was observed in \cite{LeB} that such an estimate can be obtained via a duality argument;
however, the results and proofs in \cite{LeB} are restricted to  $A=Id$, $b_2=0$, $b_1, c$ bounded with~$c\le 0$.

We will combine the Harnack inequality approach from \cite{SS2}, the duality method from \cite{LeB} and Theorem \ref{thm3}
to prove \eqref{intexp} for the general class of divergence operators \eqref{defdiv} with (even locally) unbounded coefficients,
under
the assumption that the first eigenvalue of $\mathcal{L}$ is positive on all bounded subdomains (which is of course much more general than $ b_2=0, c\le0$).
Apart from applying to very general equations, the Harnack inequality approach considerably simplifies the duality method from \cite{LeB}. In particular, we dissociate the proof of the Landis conjecture and \eqref{intexp} from the
use of the so-called ``multiplier'', which appears as an essential step in many previous works on the Landis conjecture for real solutions (see for instance \cite{KSW}, \cite{DW}, \cite{LeB}). The main property of that multiplier is  the log-gradient estimate below (we study that estimate separately, because of its importance,
although we do not rely on multiplier techniques in our result on the Landis conjecture).

In view of the statement, we denote
$$\|h\|_{q,ul}:=\displaystyle\sup_{x\in \R^n} \|h\|_{L^q(B_{1}(x))},\quad
[\psi]_{\alpha, ul} :=\displaystyle \sup_{y,z\in \R^n,\ 0<|y-z|<1} \textstyle\frac{|\psi(y)-\psi(z)|}{|y-z|^\alpha}.$$

\begin{thm} \label{ThmLandisDuality}
Let $q\in(n,\infty]$ and $\alpha\in(0,1)$.
Assume \eqref{hyp1}, $A, b_2\in C^\alpha(\R^n)$, $b_1, c,f \in L^q_{loc}(\R^n)$.
If  $\lambda_1(-\mathcal{L},\R^n)\ge 0$
and
\begin{equation}\label{defK}
K:=1+[A]_{\alpha,ul}^{\frac{1}{\alpha}} + \bigl\|b_2\bigr\|_{L^\infty(\R^n)} + [b_2]_{\alpha,ul}^{\frac{1}{1+\alpha}}+ \bigl\|b_1\bigr\|^{\beta_q}_{q,ul}+\|c\|^{\gamma_q}_{q,ul}<\infty,
\end{equation}
then any  weak solution  of $\mathcal{L}u=0$ in $\R^n$ satisfies the lower estimate
$$\int_{\partial B_R}\, |u| d\sigma \ge e^{-C_0KR} \int_{B_R} |u|,\quad R\ge 1.$$
\end{thm}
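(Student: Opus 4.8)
The plan is to derive the integral lower bound $\int_{\partial B_R}|u|\,d\sigma \ge e^{-C_0KR}\int_{B_R}|u|$ from the pointwise/weighted estimate of Theorem~\ref{thm3} via a duality argument, exactly in the spirit alluded to in the introduction. First I would fix $R\ge 1$ and reduce to an estimate on a fixed-size collar: take $\kappa=1$ and work on the pair of balls $B_R\subset B_{R+1}$, noting that $\lambda_1(-\ld,\R^n)\ge 0$ implies $\lambda_1(-\ld,B_{R+1})\ge 0$ (monotonicity of the first eigenvalue with respect to the domain), so hypothesis \eqref{hypLambda1} holds. One checks that the quantity $M=M(\ld,B_{R+1})$ is controlled by the global quantity $K$ from \eqref{defK}: the uniformly local norms over $B_{R+1}$ of $A,b_1,b_2,c$ with window $r_0$ (or window $1$, by Remark~\ref{r01M}) are bounded by the corresponding $\|\cdot\|_{q,ul}$, $[\cdot]_{\alpha,ul}$ quantities over all of $\R^n$ (up to the harmless switch between the roles of $b_1,b_2$ — in $M$ it is $b_1$ that is Hölder and $b_2$ that is $L^q$, whereas in \eqref{defK} it is the reverse, so I would first make sure Theorem~\ref{thm3} is being applied to an operator written in the form matching \eqref{defdiv}, with the right coefficient in each slot, or invoke the more general Theorem~\ref{thm3gen}/\ref{thm1gen} that allows the relaxed integrability). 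Thus $M+\kappa^{-1}\le C_0 K$, and Theorem~\ref{thm3}(i) gives, for every $g\in L^q(B_R)$ and every $w\in H^1(B_R)$ with $-\ld w\le g$ in $B_R$, $w\le 0$ on $\partial B_R$,
\be\label{pointwise-dual}
\sup_{B_R}\frac{w}{d}\le e^{C_0KR}R^{1-\frac nq}\|g^+\|_{L^q(B_R)}.
\ee

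Next comes the duality step. Let $\ld^*$ denote the formal adjoint of $\ld$ (also a divergence-form uniformly elliptic operator whose coefficient norms are the same as those of $\ld$, so the same bound \eqref{pointwise-dual} applies to $\ld^*$). Given a nonnegative $g\in L^q(B_R)$, let $w\ge 0$ solve the adjoint Dirichlet problem $-\ld^* w=g$ in $B_R$, $w=0$ on $\partial B_R$ — solvable because $\lambda_1(-\ld^*,B_R)=\lambda_1(-\ld,B_R)>0$ (strict, since removing the collar from a domain with $\lambda_1\ge 0$ makes it positive; this is Theorem~\ref{thm4}). Testing $\ld u=0$ against $w$ and $-\ld^* w=g$ against $u$ and subtracting, the bulk integrals cancel and one is left with a boundary term: $\int_{B_R} u\,g = -\int_{\partial B_R}u\,(A Du\cdot\nu + \dots)$ — more precisely $\int_{B_R}ug\,dx = \int_{\partial B_R} u\,\partial_{\nu_A} w\,d\sigma$ where $\partial_{\nu_A}w$ is the (inner) conormal derivative of $w$, up to lower-order boundary contributions that vanish because $w=u\!\restriction$ are handled by the boundary condition on $w$. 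Since $w=0$ on $\partial B_R$ and $w\ge0$ inside, $\partial_{\nu_A}w\le 0$ pointwise, and $|\partial_{\nu_A}w|\le \Lambda |Dw|\le C\,\|w/d\|_{L^\infty(B_R)}$ near the boundary by the conormal/gradient estimate — here I would use the gradient part of Theorem~\ref{thm2} or a boundary $C^1$ bound, together with $w\le \|w/d\|_\infty\, d$; in fact one only needs $|\partial_\nu w|\le C\sup_{B_R}(w/d)$, which follows from the Hopf-type/barrier comparison. Combining with \eqref{pointwise-dual} applied to $\ld^*$ (with $g$ in place of $f^+$), $\bigl|\int_{B_R}ug\bigr|\le \bigl(\sup_{\partial B_R}|u|\bigr)\,\|\partial_\nu w\|_{L^1(\partial B_R)}\le \bigl(\sup_{\partial B_R}|u|\bigr)\,\cdot C\, e^{C_0KR}R^{1-\frac nq}\|g\|_{L^q(B_R)}\,|\partial B_R|$. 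Actually, to land on $\int_{\partial B_R}|u|\,d\sigma$ rather than $\sup_{\partial B_R}|u|$, I would instead choose $g$ to be (an $L^q$ mollification of) $\mathrm{sgn}(u)\,\mathbf 1_{B_R}$, bound $\int_{B_R}|u| = \int ug$, and estimate the boundary term by $\|u\|_{L^1(\partial B_R)}\,\|\partial_\nu w\|_{L^\infty(\partial B_R)}$; the sup of the conormal derivative of the adjoint torsion-type function $w$ (solving $-\ld^* w=1$) is precisely what \eqref{pointwise-dual}/Theorem~\ref{thm3} controls by $e^{C_0KR}$, since $\|1\|_{L^q(B_R)}=c_0R^{n/q}$ cancels the $R^{1-n/q}$ and absorbing the polynomial factor $R$ from $w\le \|w/d\|_\infty d$ into the exponential (for $R\ge 1$) yields $\int_{B_R}|u|\le e^{C_0KR}\int_{\partial B_R}|u|\,d\sigma$, which is the claim.

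The main obstacle I anticipate is the rigorous justification of the boundary integration-by-parts identity and the conormal-derivative bound in this low-regularity divergence-form setting: $u$ is only $H^1\cap C^1$, the domain boundary is smooth ($B_R$) so that is fine, but one must (a) make sense of $\int_{\partial B_R}u\,\partial_{\nu_A}w$ — here $w\in C^{1,\alpha}(\overline{B_R})$ by Theorem~\ref{thm2} applied to the adjoint problem, so the conormal derivative is classical and the Green-type identity is valid by approximating $g$ in $L^q$ and passing to the limit; (b) absorb the extra boundary terms coming from the $b_1$ and $b_2$ parts of $\ld$ (the divergence-form lower-order term contributes $b_1 u$ to the conormal flux), which requires $w=0$ on $\partial B_R$ to kill the $u\,(b_1\cdot\nu)w$-type terms and $u=\,$? — actually $u$ does not vanish on $\partial B_R$, so the terms $\int_{\partial B_R} u(b_1 w\cdot\nu)$ vanish only because $w=0$ there, which is fine; and (c) control $\|\partial_{\nu_A} w\|_{L^\infty(\partial B_R)}$ by $\sup_{B_R}(w/d)$ — this is the step where Hölder continuity of $A$ near the boundary (guaranteed by \eqref{hyp2}) is essential, via an interior-and-boundary $C^{1,\alpha}$ estimate comparing $w$ to a multiple of the distance barrier; the scale-invariant form of Theorem~\ref{thm2} makes the $R$-dependence here only polynomial, hence harmless. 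A secondary technical point is the bookkeeping matching the coefficient slots of \eqref{defdiv} to those in \eqref{defK} so that $M(\ld^*,B_{R+1})\le C_0K$; this is routine once one writes $\ld^*v=\mathrm{div}(A^TDv + b_2 v) + b_1\cdot Dv + (c - \mathrm{div}\,\dots)v$ carefully and uses that $[A^T]_\alpha=[A]_\alpha$ and the uniformly local norms are unchanged under transposition, together with the relaxed-integrability version of Theorem~\ref{thm3} noted in Remark~\ref{remGT}.
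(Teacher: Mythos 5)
Your proposal is correct and follows essentially the same route as the paper: solve the adjoint Dirichlet problem $-\ld^* v_R=\mathrm{sgn}(u)$ in $B_R$, bound $\sup_{B_R}(v_R/d)$ via the optimized Lipschitz estimate of Theorem~\ref{thm3}(i) applied to $\ld^*$ (with the coefficient slots $b_1\leftrightarrow b_2$ swapped, which is exactly why \eqref{defK} reverses the regularity assumptions), and then use the Green-type identity of Lemma~\ref{weakdiv2} together with $|\nabla v_R|\le\sup(v_R/d)$ on $\partial B_R$ to convert $\int_{B_R}|u|$ into a boundary integral. The only cosmetic differences are your choice $\kappa=1$ versus the paper's $\kappa=R$ and the (unnecessary) mollification of $\mathrm{sgn}(u)$, neither of which affects the argument.
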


We note that the H\"older regularity assumptions on $b_1, b_2$ in Theorem \ref{ThmLandisDuality} are reversed with respect to~\eqref{hyp2}.
This is due to the use of a duality argument.
We will  actually establish a slightly more precise result in  Theorem~\ref{ThmLandisDualityB} below.

Another remarkable fact about \eqref{intexp} is that the positivity of the first eigenvalue turns out to be {\it necessary} for its validity. In other words, we prove that without assumption on the validity of the maximum principle, the Landis conjecture
can only be expected to hold with the weaker lower bound  \eqref{limsupexp}.

\begin{prop} \label{PropOptimality}
For every $\lambda<0$, there exist bounded coefficients $b, c \in C^\infty(\R^n)$
such that
the operator $\mathcal{L}:=\Delta+b\cdot\nabla+c$ satisfies
$\lambda_1(-\mathcal{L},\R^n)=\lambda$
and $\mathcal{L}u=0$ admits a classical solution $u$ on $\R^n$ such that,
for some sequence $R_i\to\infty$ and some  $ K>0$,
$$u(x)\equiv 0\quad\hbox{on $|x|=R_i$}, \qquad
\mbox{and}\qquad \limsup_{|x|\to\infty} e^{K|x|}|u(x)|=1.$$
\end{prop}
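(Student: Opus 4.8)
The plan is to build an explicit radial example on $\R^n$ for each $\lambda<0$. Since we want a solution vanishing on a sequence of spheres together with the prescribed eigenvalue, the natural choice is to look for $u$ of the form $u(x)=\phi(|x|)$ with $\phi$ oscillating, e.g.\ $\phi(r)=e^{-Kr}\sin(r)$ or a smooth variant thereof, so that $u\equiv 0$ on $|x|=R_i$ with $R_i=i\pi\to\infty$ and $\limsup_{|x|\to\infty}e^{K|x|}|u(x)|=1$ (after normalizing the amplitude). The idea is then to \emph{prescribe} $u$ and \emph{solve for the coefficients}, rather than the other way around. Concretely, I would first choose a smooth, radial, everywhere positive ``multiplier'' $w$ on $\R^n$ — for instance $w(x)=e^{-K|x|}$ away from the origin, smoothed near $0$ — and set $c(x):=-\Delta w/w$ for the pure Schr\"odinger operator $\Delta+c$; a short computation shows $c$ is bounded and smooth when $K$ is fixed and $w$ is chosen as above, and by construction $(\Delta+c)w=0$ with $w>0$, so $\lambda_1(-\Delta-c,\R^n)\le 0$. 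One then adjusts a constant (adding a negative constant to $c$) to make the principal eigenvalue exactly equal to the given $\lambda<0$: since the multiplier $w$ does not decay like an $L^2$-eigenfunction, the bottom of the spectrum is governed by the generalized principal eigenvalue, and shifting $c\mapsto c+\lambda_0$ shifts $\lambda_1$ by $-\lambda_0$, so we can hit any prescribed value $\lambda<0$.

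Next I would incorporate the oscillation. Having fixed $c$ so that $\lambda_1=\lambda$, I would take $u(x)=w(x)\,\psi(|x|)$ where $\psi$ oscillates and vanishes on a sequence $r=R_i$; the factored operator then reads, in divergence form, as a Laplace–Beltrami-type operator for $\psi$ with weight $w^2$, and one is led to an ODE of the form $\psi''+\bigl(\tfrac{n-1}{r}+2\tfrac{w'}{w}\bigr)\psi'+\mu\psi=0$ for a suitable constant $\mu>0$. Choosing $\psi$ to be the (bounded, oscillatory) solution of this ODE with $\psi(R_i)=0$ gives the required zero set; the drift term $b$ in $\mathcal{L}=\Delta+b\cdot\nabla+c$ is absorbed into this conjugation, i.e.\ $b=2\nabla\log w$, which is bounded and smooth. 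Alternatively — and this may be cleaner — I would keep $b=0$ and instead modify $c$ on an annular region far from the origin so that a prescribed oscillatory $u=e^{-K|x|}\chi(|x|)$ (with $\chi$ $1$-periodic-ish, vanishing on $R_i$) solves $(\Delta+c)u=0$ there while $c$ remains bounded; one must then check that this local modification of $c$ does not change $\lambda_1$, which follows because the modification can be taken on thin shells and controlled, or because the generalized principal eigenvalue is determined by the existence of a positive supersolution, which $w$ still provides.

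The key steps, in order: (1) fix $K>0$ and construct the smooth positive multiplier $w\sim e^{-K|x|}$ on $\R^n$, compute $c_0:=-\Delta w/w$ and verify $c_0\in C^\infty\cap L^\infty$; (2) identify $\lambda_1(-\Delta-c_0,\R^n)$ with the generalized principal eigenvalue and use the shift $c:=c_0+(\,\lambda_1(-\Delta-c_0,\R^n)-\lambda\,)$ to arrange $\lambda_1(-\mathcal{L},\R^n)=\lambda$ exactly (using monotonicity/translation of $\lambda_1$ under adding constants, and the characterization of $\lambda_1$ via positive supersolutions, cf.\ the appendix); (3) produce the oscillatory solution, either by conjugation $u=w\psi$ and solving the reduced radial ODE for a bounded $\psi$ vanishing on $R_i=R_0+i$, or by a far-field modification of $c$, and normalize so that $\limsup e^{K|x|}|u|=1$; (4) collect the properties and conclude. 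The main obstacle I anticipate is step (2): making the principal eigenvalue land on a \emph{prescribed} negative value while simultaneously keeping the solution genuinely oscillatory requires care, because the oscillation frequency, the decay rate $K$, and the constant added to $c$ are linked through the reduced ODE (the discriminant condition that forces oscillation versus monotone decay). I would resolve this by treating $K$ and the added constant as two free parameters and verifying that the map to (decay rate, principal eigenvalue) is onto the relevant range; equivalently, one shows directly that for the explicit $u=e^{-K|x|}\sin(|x|)$ the function $c:=-\Delta u/u - b\cdot\nabla u/u$ (with $b=0$, say) is bounded away from the zero set of $u$, smooth everywhere after a harmless modification near the spheres $R_i$ and near $0$, and that the resulting operator has generalized principal eigenvalue equal to a computable expression in $K$ which can be normalized to any $\lambda<0$ by a final additive shift of $c$.
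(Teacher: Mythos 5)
Your starting point coincides with the paper's: prescribe an explicit radial function $u=e^{-K r}\times(\hbox{oscillation})$ for $r=|x|\ge 1$, introduce a drift of the form $b=\bigl(2K-\frac{n-1}{r}\bigr)\frac{x}{|x|}$ to reduce the radial Laplacian to a constant-coefficient ODE, read off a constant $c$ in the far field, and smooth everything near the origin. (Your ``$b=0$'' alternative, by contrast, cannot work as written: $c=-\Delta u/u$ blows up on the zero spheres of $u$, and no ``harmless modification'' of $c$ alone can fix this while keeping $u$ a solution vanishing there.) The genuine gap is in your step (2), the pinning of $\lambda_1(-\mathcal{L},\R^n)$ to the \emph{prescribed} value $\lambda$. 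Your proposed mechanism is an additive shift $c\mapsto c+\delta$, but this is incompatible with keeping $\mathcal{L}u=0$: if $u$ solves the unshifted equation, it solves $(\mathcal{L}+\delta)u=\delta u\ne 0$ for the shifted one, so you lose the oscillatory solution the moment you adjust the eigenvalue. Your fallback (``treat $K$ and the added constant as two free parameters'') runs into the fact that the generalized principal eigenvalue of the actual operator is \emph{not} a computable expression in $K$: the smoothing of the coefficients on $\{|x|\le1\}$ is a compact perturbation that does affect $\lambda_1(-\mathcal{L},B_R)$ for every $R$, hence the limit, and you give no way to control it. A further sign error compounds this: from a positive solution $(\Delta+c_0)w=0$, $w>0$, the supersolution characterization gives $\lambda_1\ge 0$, not $\le 0$, and in any case only a non-strict inequality, whereas you need to know that the base eigenvalue is \emph{strictly} negative.

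The paper resolves both difficulties at once with two ingredients you are missing. First, strict negativity of $\Lambda:=\lambda_1(-\mathcal{L}_1,\R^n)$ for the base operator is deduced from Theorem~\ref{ThmLandisDuality}: if $\Lambda\ge 0$ held, the lower bound $\int_{\partial B_R}|u|\,d\sigma\ge e^{-C_0KR}\int_{B_R}|u|>0$ would contradict the fact that $u$ vanishes identically on the spheres $|x|=(k+\frac12)\pi$. Second, instead of shifting $c$, the paper rescales: setting $\tilde b_\eps(x)=\eps b_1(\eps x)$, $\tilde c_\eps(x)=\eps^2 c_1(\eps x)$, one has $\lambda_1(-\mathcal{L}_\eps,\R^n)=\eps^2\Lambda$ while $u_\eps(x)=v(\eps x)$ still solves $\mathcal{L}_\eps u_\eps=0$, vanishes on the dilated spheres, and keeps the correct exponential decay rate. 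Choosing $\eps=\sqrt{\lambda/\Lambda}$ then hits the target eigenvalue exactly without ever computing $\Lambda$. You should replace your step (2) by this scaling argument (or supply an equally robust substitute); as it stands, the proposal does not produce an operator satisfying both required properties simultaneously.
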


A simple  one-dimensional example  is given by  $u(x)=e^{-x}\cos x$, $\mathcal{L}u=u''+2u'+2u$, whose first eigenvalue in $\mathbb{R}$ is $-2$.

\goodbreak

\subsection{Optimized estimates for positive solutions}

\subsubsection{Optimized quantitative Hopf lemma}

A fundamental property of superharmonic functions is that positivity entails a quantitative version of itself.  Specifically, if $u\ge0$ is $\ld$-superharmonic in a bounded $C^{1,\bar\alpha}$-domain $\Omega\subset \rn$, then for some $c_0=c_0(u,\Omega)>0$
\begin{equation}\label{hopf1}
u(x)\ge c_0\, d(x)\quad x\in\Omega, \qquad (\mbox{unless }\; u\equiv0).
\end{equation}
In other words, if $u$ attains a zero minimum inside $\Omega$ then $u\equiv0$ and if $u$ attains a zero minimum at a boundary point, then the normal derivative of $u$ at this point does not vanish. The latter is the famous Zaremba-Hopf-Oleinik lemma, also called boundary point lemma or boundary point principle. We refer to the recent survey \cite{AN} for a thorough review of the history and ramifications of this important result, and many references.

There are two types of quantifications of \eqref{hopf1}. The first expresses the dependence of $c_0$ in $u$ through the positivity of $u$ itself (specifically, an integral norm of $u$), and leads to estimates of ``weak Harnack" type or ``growth lemmas", which play a fundamental role in the regularity theory of elliptic PDE. Recently, global variants of the classical De Giorgi-Moser weak Harnack inequality (\cite[Theorem 8.18]{GT}) were obtained in \cite{Sir3}, \cite{GSS}. In Section~\ref{sec-prelim}
we present a version of the result in \cite{GSS} with an optimized constant in a general domain, which plays a pivotal role in  our results here.

The second way of quantifying \eqref{hopf1} expresses the dependence of $c_0$ in $u$ through an integral norm of $-\ldu$ instead of $u$. This type of inequality for $\ld = \Delta$ (due to Morel and Oswald, unpublished work) gained a lot of popularity and was used  in various contexts after the work by Brezis and Cabr\'e \cite{BrC}. Specifically, their result states that if $u\ge0$ in a smooth domain $\Omega$, $-\Delta u \ge f \ge0$ in $\Omega$, then
\be\label{HopfLapl}
u(x) \ge C(n, \Omega){\|f\|_{L^1_d(\Omega)}}\,d(x), \quad x\in \Omega, \qquad \mbox{where }\; \|f\|_{L^1_d(\Omega)}:=\int_\Omega |f(y)|d(y)dy.
\ee
This inequality also follows from the properties of the Green function of the Laplacian, see \cite{Zh}.
For extensions to operators div$(A(x)Du)$ with $A$ Lipschitz in a neighborhood of $\partial \Omega$, see \cite{EDR}, to very weak solutions of Schr\"odinger operators, see~\cite{OP}.
Some related results for integro-differential operators and operators in non-divergence form can be found in \cite{KK} and \cite{Sir1}.
We refer also to the references in these works.

As for more general divergence-form operators, to our surprise we did not find a reference for the Morel-Oswald inequality for operators as in \eqref{defdiv}, with first-order coefficients or less regularity of the leading coefficients. No previous results on optimality in the dependence of the constant in the coefficients of the operator  are available at all.

The following theorem establishes the validity of the Morel-Oswald estimate for the general operator in \eqref{defdiv}, and provides an optimal constant in terms of the norms of the coefficients of $\ld$ and the size of the domain.

\begin{thm} \label{OptimizedHopf}
Let $R>0$ and assume \eqref{hyp1}-\eqref{hyp2} with $\Omega=B_R$.
If $u\in H^1(B_R)$ is a weak supersolution
of $-\mathcal{L}u\ge f$ and $f,u\ge 0$ in $B_R$,
we have
\be\label{conclHopf}
u(x)
\ge {e^{-C_0(1+MR)}\,R^{-n}}{\|f\|_{L^1_d(B_R)}} \,
d(x),\quad x\in B_R,
\ee
where $M=M(\ld,B_R)$ {\rm(}cf.~\eqref{defM}{\rm)}.
\end{thm}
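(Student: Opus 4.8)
The plan is to derive the Morel--Oswald estimate from the optimized boundary weak Harnack inequality of \cite{GSS} (in the version with optimized constant announced in Section~\ref{sec-prelim}) by a duality argument, exactly paralleling the strategy described for \eqref{intexp} and Theorem~\ref{ThmLandisDuality}. First I would fix $x_0\in B_R$ and test the supersolution inequality $-\ld u\ge f$ against a carefully chosen nonnegative function. The natural candidate is the solution $w$ of the \emph{adjoint} Dirichlet problem $-\ld^* w=\delta_{x_0}$ (or a mollified version, $-\ld^* w_\eps=\rho_\eps(\cdot-x_0)$) in $B_R$ with $w=0$ on $\partial B_R$; here $\ld^*$ is the formal adjoint of $\ld$, which is again a divergence-form operator of the same type, with the roles of $b_1$ and $b_2$ interchanged (hence the reversal of the regularity hypotheses, as already flagged after Theorem~\ref{ThmLandisDuality}). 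Pairing gives, at least formally,
\be\label{pairing-plan}
u(x_0)=\int_{B_R} u\,(-\ld^* w)=\int_{B_R} (-\ld u)\, w\ge \int_{B_R} f\,w,
\ee
so the theorem reduces to the lower bound $w(y)\ge e^{-C_0(1+MR)}R^{-n}\, d(y)$ for a.e.\ $y\in B_R$.

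The lower bound on the adjoint Green-type function $w$ is where the optimized estimates enter. Since $w\ge0$ solves $-\ld^* w\ge0$ in $B_R\setminus B_\sigma(x_0)$ for small $\sigma$, and since $\ld^*$ satisfies $\lambda_1(-\ld^*,B_R)=\lambda_1(-\ld,B_R)$ together with the quantitative bound $\lambda_1(-\ld,B_R)\le C_0(M+R^{-1})^2$ of Proposition~\ref{upperboundlambda1} (so the hypothesis \eqref{hypLambda1}-type control is automatic once we work on a slightly larger ball, or is supplied by the assumptions here), we can apply the optimized boundary weak Harnack inequality from Section~\ref{sec-prelim} to $w$ on $B_R$: this yields
\be\label{wH-plan}
\inf_{B_R}\frac{w}{d}\ \ge\ e^{-C_0(1+MR)}R^{-?}\ \Bigl(\,\fint_{B_{R/2}(\bar x)} w\,\Bigr)
\ee
for a suitable interior ball, with the exponential dependence on $MR$ being precisely what the optimized weak Harnack provides. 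The remaining ingredient is a lower bound on the interior mass of $w$, i.e.\ $\int w\gtrsim$ something, which follows from testing $-\ld^*w=\rho_\eps(\cdot-x_0)$ against the torsion-type function (the solution of $-\ld \phi=1$, $\phi=0$ on $\partial B_R$), whose size is controlled from below by a Hopf bound of the weak-Harnack type again; tracking the $R$-powers through the scaling $x\mapsto x/R$ (the estimates being scale-invariant, as noted after Theorem~\ref{thm1}) produces the factor $R^{-n}$ in \eqref{conclHopf}. The weighted norm $\|f\|_{L^1_d(B_R)}$ appears because \eqref{pairing-plan} becomes $u(x_0)\ge \int f w\ge (\inf w/d)\int f\,d$.

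I would make this rigorous by regularization: replace $\delta_{x_0}$ by $\rho_\eps(\cdot-x_0)$, so $w_\eps\in C^{1,\alpha}$ is a genuine weak solution to which Section~\ref{sec-prelim} and the $C^{1,\alpha}$-theory apply, establish \eqref{pairing-plan}--\eqref{wH-plan} for $w_\eps$ with constants uniform in $\eps$, and pass to the limit $\eps\to0$ using that $w_\eps(y)\to$ the Green function pointwise a.e.\ and that $u$ is continuous (by \eqref{hyp2}) so that $u(x_0)$ is well defined. One technical point is that $u\in H^1(B_R)$ is only assumed a supersolution, not to vanish on the boundary, so the integration by parts in \eqref{pairing-plan} uses $w_\eps\in H^1_0$ and the definition of weak supersolution directly — no boundary terms arise. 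The main obstacle I anticipate is \textbf{bookkeeping the constant}: one must verify that \emph{each} application of the optimized weak Harnack to $w_\eps$ (interior chaining to reach a fixed interior ball, plus the boundary step) contributes only an $e^{C_0 MR}$-type factor and the correct power of $R$, so that the composition still gives $e^{-C_0(1+MR)}R^{-n}$ rather than a worse power or a worse exponent; this requires that the number of Harnack steps be bounded by a universal constant (forcing the chaining to be done at scale comparable to $R$, not at a fixed scale $1$), which is exactly the reason the optimized, scale-invariant form of the weak Harnack from \cite{GSS} is essential and the classical one would not suffice. A secondary subtlety is the reversed regularity in passing to $\ld^*$: one checks that $M(\ld^*,B_R)$ is controlled by $M(\ld,B_R)$ up to the constants $C_0$ under the hypotheses \eqref{hyp2}, using $\|b_1\|_{L^\infty}+[b_1]_{\alpha}$ on the $b_1$ side and the $L^q$-norms on the $b_2,c$ side.
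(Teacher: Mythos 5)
Your overall strategy (duality against a nonnegative solution of an adjoint problem, combined with the optimized weak Harnack inequality) is indeed the skeleton of the paper's argument, but running it \emph{globally} on $B_R$, as you propose, breaks down at three points, and the paper's localization to balls of radius $\delta r_0$ is precisely what repairs them. First, Theorem \ref{OptimizedHopf} assumes neither the maximum principle nor $\lambda_1(-\ld,B_R)>0$ (the paper stresses this right after the statement), so your adjoint problem $-\ld^* w_\eps=\rho_\eps(\cdot-x_0)$ in $B_R$, $w_\eps=0$ on $\partial B_R$, need not be solvable, and even when it is, $w_\eps$ need not be nonnegative; the upper bound of Proposition~\ref{upperboundlambda1} that you invoke goes in the wrong direction for this purpose. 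The paper recovers solvability and positivity only after rescaling to balls of radius $\delta r_0$, where the lower-order coefficients become small and Proposition~\ref{lowerbdeig} gives $\lambda_1>0$. Second, under \eqref{hyp2} the adjoint $\ld^*$ carries $-b_2$ in the divergence slot, and $b_2$ is merely $L^q$; hence $M(\ld^*,B_R)$ is not finite in general, solutions of the adjoint Dirichlet problem are not $C^1$, and neither the $C^{1,\alpha}$ theory nor the optimized inequality \eqref{sharpWBHI} (which requires $A,b_1\in C^\alpha$) applies to $w_\eps$. Your claim that $M(\ld^*,B_R)$ is ``controlled by $M(\ld,B_R)$'' is therefore false; this is exactly the obstruction that Step~3 of Lemma~\ref{LemOptimizedHopf2} removes by the multiplier change of unknown $w=\phi w_1$, which reduces $\ld^*$ to a pure drift operator — and again this only works at the small scale where the perturbation is small.

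Third, your lower bound on the interior mass of $w_\eps$ via the torsion function is circular as written: it requires a pointwise lower bound on the solution of $-\ld\phi=1$, which is an instance of the very Hopf estimate being proved. The paper's mechanism (Step~1 of Lemma~\ref{LemOptimizedHopf2}) instead tests against the first eigenfunction of the adjoint of the \emph{principal part} on a normalized ball and uses the upper eigenvalue bound of Proposition~\ref{upperboundlambda1} together with the boundary Harnack inequality for that eigenfunction, which avoids circularity. Finally, the architecture of the constant is different from what you describe: $e^{-C_0(1+MR)}R^{-n}$ does not come from chaining the weak Harnack around the adjoint Green function at scale $R$, but from a single global application of \eqref{sharpWBHI} to $u$ itself, combined with universal-constant local Hopf estimates on $N\le C(R/r_0)^n\le e^{CMR}$ balls and a pigeonhole choice of a ball carrying at least $N^{-1}\|f\|_{L^1_d(B_R)}$. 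You would need to supply all of these missing ingredients for your global duality to close.
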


Note that we do not assume the solvability of the Dirichlet problem for $\ld$ (or the maximum principle), and there is no assumption for $u$ on the boundary.
The optimality of estimate~\eqref{conclHopf}
is established in Proposition~\ref{Prop-Optim-Hopf1}.

The proof of Theorem \ref{OptimizedHopf} is somewhat delicate. It involves splitting the domain into a controlled number of small balls with controlled radius such that a suitable rescaling of the operator in each ball satisfies the maximum principle in the rescaled ball, then using  ideas from \cite{BrC}
(where the case $\mathcal{L}=\Delta$ was treated),
as well as additional arguments based on variants of the boundary Harnack inequality (which ensure the uniformity of the estimates with respect
to the norm of the nonprincipal coefficients and the domain) and a reduction to operators in simpler form, in order to obtain a variant of the theorem in each ball, and finally observing that the $L^1_d$-norm of $f$ in some  ball is comparable (with a constant depending on the right quantities) to its norm in $B_R$.

\subsubsection{Optimized logarithmic gradient estimate}

Here we study bounds for $|\nabla \log(u)|$, for a positive solution of \eqref{defeq}. Such estimates are often referred to as differential Harnack inequalities (DHI), since in many cases they can be integrated to obtain the usual Harnack inequality. A fundamental DHI for the heat equation is due to Li-Yau \cite{LY}; it has generated huge number of extensions, the method has been successfully adapted
and generalized to various nonlinear geometric heat flows (see for instance the book \cite{RM}). For the elliptic case, the Li-Yau estimate has been extended to semi-linear equations of Lane-Emden type (see for instance \cite{LiJ}, \cite{MZ}, and the references there).  Most frequently DHI are proved by variations of the Bernstein method, which requires smoothness of the coefficients in the equation and is not applicable under our weaker hypotheses.

In our setting, an upper bound for $d|\nabla u|/u$ if $u$ is a positive solution of $\ldu=0$ follows immediately from \eqref{sharpC1} in Theorem \ref{thm2} and \eqref{sharpBHI2} in Theorem \ref{BHIoptim} below. However, the resulting exponential in~$M$ constant is not optimal (even though each of the estimates \eqref{sharpC1} and \eqref{sharpBHI2} is optimal per se). The optimal dependence in $M$ of the upper bound for $d|\nabla u|/u$ is actually linear. That was an important observation in  \cite{KSW} where, as an important step to the proof of the Landis conjecture in two dimensions, it was proved that  if  $-\Delta u + Vu = 0$, $u>0$ in $B_2\subset \mathbb{R}^2$ then the interior estimate $\|\nabla \log(u)\|_{L^\infty(B_1)}\le C_0 \|V\|_{L^\infty(B_2)}^{1/2} (=C_0M)$ is valid.
Extensions to more general two-dimensional operators are given in \cite{B1}, \cite{DW}. The same type of interior estimate for the $n$-dimensional equation $\ldu=0$ is obtained in \cite{LeB},  under the restrictions $A=Id$, $b_1=0$, $b_2, c\in L^\infty$.

In the following theorem we first obtain a log-gradient estimate  with an optimal constant for positive solutions of the full equation $\ldu=0$ with unbounded coefficients, which is also up-to-the-boundary in the domain. Then we proceed to give an estimate for the non-homogeneous equation  $\ldu=f$, which seems to be the first of its kind.
\begin{thm}
\label{lgeu}
Assume \eqref{hyp1}-\eqref{hyp2} with $\Omega=B_R$,
and set $M=M(\ld,B_R)$ {\rm(}cf.~\eqref{defM}{\rm)}.
\smallskip

(i) For any weak solution $u$ of $-\mathcal{L}u=0$, $u>0$ in $B_R$, we have the estimate
\be\label{concllgeu}
\frac{d(x)|\nabla u(x)|}{u(x)}\le C_0\max\{1, M d(x)\},\quad x\in B_R.
\ee
\smallskip

(ii) If $f\in L^q(B_R)$ with $f\ge 0$, $f\not\equiv 0$, then
for any weak solution $u$ of $-\mathcal{L}u=f$, $u>0$ in $B_R$, we have the
estimate
\be\label{concllgeu2}
\frac{d(x)|\nabla u(x)|}{u(x)}\le C_0\Bigl(\max\{1, Md(x)\} + R^n e^{ C_0MR}
\frac{\|f\|_{L^q(B_R)}}{\|f\|_{L^1_d(B_R)}}\, d(x)^{1-\frac{n}{q}}\Bigr),\quad x\in B_R.
\ee
\end{thm}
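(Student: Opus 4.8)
The plan is to derive Theorem~\ref{lgeu} by combining the optimized interior gradient and $C^{1,\alpha}$ estimates (Theorem~\ref{thm2}), the optimized boundary weak Harnack inequality from Section~\ref{sec-prelim} (the optimized version of \cite{GSS}), and the optimized quantitative Hopf lemma (Theorem~\ref{OptimizedHopf}), all applied on suitable interior balls and then scaled. The governing idea is the classical one: to bound $d(x)|\nabla u(x)|/u(x)$ at a point $x_0\in B_R$, work on the ball $B_\rho(x_0)$ with $\rho=d(x_0)/2$, on which $u>0$ solves the equation, apply the interior gradient bound to get $|\nabla u(x_0)|\lesssim \rho^{-1}\sup_{B_\rho(x_0)}u + (\text{forcing term})$, then divide by $u(x_0)$ and use a Harnack-type comparison $\sup_{B_\rho(x_0)}u\lesssim C\, u(x_0)$ together with a lower bound for $u(x_0)$ coming from the Hopf lemma. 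The subtlety, and the reason the raw combination of Theorems~\ref{thm2} and~\ref{BHIoptim} only gives an exponential-in-$M$ constant, is that the interior Harnack constant on a ball of radius $\rho$ comparable to $d(x_0)$ degrades exponentially in $M\rho$; the fix is to chain Harnack along a Harnack chain of \emph{small} balls of radius $\sim\min\{d(x_0), 1/M\}$, so that on each link the operator (after rescaling) has $M\cdot(\text{radius})\lesssim 1$ and the Harnack constant is a universal $C_0$, while the number of links is controlled, giving the correct $\max\{1,Md(x_0)\}$ dependence rather than $e^{C_0Md(x_0)}$.

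For part~(i), I would fix $x_0$, set $\delta=d(x_0)$, and distinguish two regimes. If $M\delta\le 1$, rescale $u$ to the unit ball $B_{\delta/2}(x_0)\to B_1$: the rescaled operator has $M$-quantity $\le C_0$, so Theorem~\ref{thm2} with $f=0$ gives $\|\nabla u\|_{L^\infty(B_{\delta/4}(x_0))}\lesssim \delta^{-1}\sup_{B_{\delta/2}(x_0)}u$, and the interior Harnack inequality (with universal constant, since $M\delta\lesssim 1$) gives $\sup_{B_{\delta/2}(x_0)}u\le C_0 u(x_0)$; dividing yields $d(x_0)|\nabla u(x_0)|/u(x_0)\le C_0$. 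If $M\delta>1$, I instead work on $B_r(x_0)$ with $r=1/(2M)\le\delta/2$; here the rescaled operator again has bounded $M$-quantity, Theorem~\ref{thm2} gives $|\nabla u(x_0)|\lesssim r^{-1}\sup_{B_r(x_0)}u\lesssim M\sup_{B_r(x_0)}u$, and interior Harnack on this small ball gives $\sup_{B_r(x_0)}u\le C_0 u(x_0)$, so $\delta|\nabla u(x_0)|/u(x_0)\le C_0 M\delta=C_0\max\{1,Md(x_0)\}$. In both cases we never pay an exponential, because every Harnack/gradient estimate is invoked on a ball whose radius times $M$ is bounded; this is exactly the mechanism by which the optimal linear dependence in $M$ is recovered.

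For part~(ii), the equation is inhomogeneous and the lower bound for $u(x_0)$ must now come from the forcing: this is where Theorem~\ref{OptimizedHopf} enters. Applying it on $B_R$ gives $u(x)\ge e^{-C_0(1+MR)}R^{-n}\|f\|_{L^1_d(B_R)}\,d(x)$, so in particular $u(x_0)\ge e^{-C_0(1+MR)}R^{-n}\|f\|_{L^1_d(B_R)}\,d(x_0)$. For the gradient, split $u=u_1+u_2$ on $B_r(x_0)$ (with $r=\frac12\min\{d(x_0),1/M\}$ as above), where $u_1$ solves $-\mathcal{L}u_1=f$ in $B_r(x_0)$ with $u_1=0$ on $\partial B_r(x_0)$ and $u_2$ solves the homogeneous equation with $u_2=u$ on the boundary; then $u_2>0$ by the maximum principle (valid on this small ball since its $M$-quantity is bounded). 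For $u_2$ the argument of part~(i) applies verbatim, giving $|\nabla u_2(x_0)|\lesssim (1/r)\sup_{B_r(x_0)}u_2\lesssim (1/r)\sup_{B_r(x_0)}u\lesssim (1/r) C_0 u(x_0)$ via interior Harnack, contributing the $\max\{1,Md(x_0)\}$ term. For $u_1$, the $C^{1,\alpha}$-estimate \eqref{sharpC1} (or \eqref{sharpC1alphapositeig}) on $B_r(x_0)$ gives $|\nabla u_1(x_0)|\lesssim r^{n/q-1}\|f\|_{q,r_0,B_r(x_0)}\lesssim r^{n/q-1}\|f\|_{L^q(B_R)}$; dividing by $u(x_0)$ and using the Hopf lower bound $u(x_0)\gtrsim e^{-C_0(1+MR)}R^{-n}\|f\|_{L^1_d(B_R)} d(x_0)$, then multiplying by $d(x_0)$, produces the term $R^n e^{C_0MR}\frac{\|f\|_{L^q(B_R)}}{\|f\|_{L^1_d(B_R)}}\,d(x_0)^{1-n/q}$ after absorbing $r\sim\min\{d(x_0),1/M\}$ into constants (one checks $d(x_0)\cdot r^{n/q-1}\lesssim d(x_0)^{n/q}\lesssim d(x_0)^{1-n/q}\cdot d(x_0)^{2n/q-1}$ and that the remaining powers of $d(x_0)$, $R$ are harmless up to the stated constants — this is the one routine book-keeping point). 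Adding the two contributions gives \eqref{concllgeu2}.

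The main obstacle I anticipate is organizing the two-regime / small-ball scaling cleanly so that (a) the rescaled operator genuinely satisfies \eqref{hyp1}-\eqref{hyp2} with $M$-quantity $\le C_0$ and the maximum principle on the unit ball — here one must track how each of the five pieces of $M$ in \eqref{defM} scales (the $[A]_\alpha^{1/\alpha}$, $\|b_1\|_\infty$, $[b_1]_\alpha^{1/(1+\alpha)}$, $\|b_2\|_{q}^{\beta_q}$, $\|c\|_q^{\gamma_q}$ all scale \emph{linearly} under $x\mapsto x_0+rx$ precisely because of the choice of exponents, which is the whole point of the definition of $M$), and (b) the $L^1_d$-norm bookkeeping in part~(ii): $\|f\|_{L^1_{d}(B_R)}$ appears in the denominator via the global Hopf bound, while the gradient of $u_1$ is controlled by a local $L^q$-norm of $f$, and one needs the elementary comparison $d(x_0)^{1-n/q}$ to emerge with the right power after the substitution — this requires a short but careful estimate rather than anything deep. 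Everything else is a direct citation of Theorems~\ref{thm2}, \ref{OptimizedHopf}, \ref{BHIoptim} and the interior De~Giorgi–Nash–Moser Harnack inequality on balls with bounded $M r$.
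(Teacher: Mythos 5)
Your proposal is correct and follows essentially the same route as the paper's first proof of Theorem~\ref{lgeu}: rescale to a ball of radius $r\sim\min\{d(x_0),1/M\}$ (the paper takes $r=\min\{r_0,d(x_0)\}$, which subsumes your two-regime split since $r_0^{-1}=r_\Omega^{-1}+M$), apply the interior $C^1$-to-$C^0$ estimate and the interior Harnack inequality on the rescaled unit ball where all coefficient norms are $O(1)$, and in case (ii) bound $u(x_0)/d(x_0)$ from below via the optimized Hopf lemma. The paper avoids your decomposition $u=u_1+u_2$ by using the inhomogeneous forms of both the gradient estimate and the Harnack inequality directly on the small ball; your splitting is an equivalent variant (note only that for part~(i) the relevant tool is the interior gradient estimate, not Theorem~\ref{thm2} itself, which is stated for the Dirichlet problem, and that Harnack applied to $u$ rather than $u_2$ carries an extra forcing term $r^{2-n/q}\|f\|_{L^q(B_r(x_0))}$, harmless but to be absorbed).

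Two points deserve correction. First, the ``Harnack chain'' framing in your opening paragraph is a red herring: chaining over $\sim Md(x_0)$ small balls would multiply constants and yield $e^{cMd(x_0)}$, not $\max\{1,Md(x_0)\}$; the true mechanism, which your detailed argument correctly uses, is that $|\nabla u(x_0)|$ is controlled by $u$ on a \emph{single} ball of radius $\min\{d(x_0),1/M\}$ centered at $x_0$, so no chaining occurs at all. Second, the factor multiplying $\|f\|$ in \eqref{sharpC1} applied on $B_r(x_0)$ is $(M'+r^{-1})^{\frac nq-1}\sim r^{1-\frac nq}$, not $r^{\frac nq-1}$ as you wrote; as written your bookkeeping chain $d\cdot r^{n/q-1}\lesssim d^{1-n/q}d^{2n/q-1}$ does not close when $q>2n$, whereas with the correct exponent it is immediate: since $r\le d(x_0)$ and $1-n/q>0$,
$$\frac{d(x_0)\,r^{1-n/q}\|f\|_{L^q(B_R)}}{u(x_0)}\le e^{C_0MR}R^n\,\frac{\|f\|_{L^q(B_R)}}{\|f\|_{L^1_d(B_R)}}\;d(x_0)^{1-n/q},$$
which is exactly the second term of \eqref{concllgeu2}.
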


The optimality of estimate~\eqref{concllgeu}
is established in Proposition~\ref{Optimality_Harnack}.
As for \eqref{concllgeu2}, we only have partial optimality results; see the end of Section~\ref{sec-optim} and Proposition \ref{PropLoggradOptimality}.

\begin{rem}\label{remloggrad1d}
For conciseness we have not included the case $n=1$ in the results above,  since this case is of course not the most challenging, while would require some changes in the definitions.
However, when $n=1$, there are some interesting phenomena in connection with Theorem \ref{lgeu}. Specifically,
for any nonnegative $f\in L^1$ (modifying accordingly the definition of a weak solution), estimate \eqref{concllgeu} is true. In other words, for $n=1$ the maximum of $d|u^\prime|/u$ is bounded above independently of $f$,
see Proposition \ref{propn1}.
On the other hand, \eqref{concllgeu} fails for  $n\ge2$ and $f\gneqq0$, as a consequence of Proposition~\ref{PropLoggradOptimality} below.
\end{rem}

\begin{rem}\label{remloggrad1d2}
Estimate \eqref{concllgeu2} is easily seen to be false without the positivity assumption $f\ge 0$, even for $n=1$, see \eqref{counterfpositive} below.
\end{rem}

We will give two proofs of Theorem \ref{lgeu}. The first one uses the more or less standard method of combining $C^1$ estimates and the Harnack inequality to prove a gradient bound in terms of the quantity $u/d$ (see for instance \cite[p.2864]{Sir3}),
in which we do a careful rescaling to optimize the resulting constant. The second proof is longer but more versatile, since it avoids the use of the Harnack inequality for general operators. Instead, it relies on a contradiction argument and the doubling lemma from \cite{PQS}. This second proof adapts to yield the statement in Remark \ref{remloggrad1d} above.

\medskip

The rest of the paper is organized as follows.
In Section~\ref{sec-ext}, we give our precise definitions and some extensions of the main results,
especially to general $C^{1,\bar\alpha}$ domains.
In Section~\ref{sec-optim},
we present our results which guarantee the optimality of the main theorems
in terms of the norms of the lower order coefficients of the operator
(see Propositions~\ref{Prop-Optim-Linfty}--\ref{Optimality_Harnack};
for simplicity we restrict ourselves to the coefficients $b_2$ and $c$,
which covers the most classical cases of Schr\"odinger and drift operators).
In Section~\ref{sec-prelim} we provide some auxiliary results and important ingredients of our proofs,
especially optimized Harnack inequalities,
as well as properties associated with uniformly local norms.
The proofs of the optimized $L^\infty$ and $C^1$ estimates (Theorems
\ref{thm1}-\ref{thm3} and their extensions) are given in Section~\ref{sec-proof1}.
Our results concerning the Landis conjecture (Theorem~\ref{ThmLandisDuality} and Proposition~\ref{PropOptimality})
and the spectral estimate (Theorem~\ref{thm4})
are proved in Section~\ref{sec-proof2}.
In Sections~\ref{proofHopf} and \ref{proofloggrad}, we respectively prove the optimized quantitative Hopf lemma (Theorems~\ref{OptimizedHopf} and \ref{thm4gen}(i))
and the logarithmic gradient estimate (Theorems~\ref{lgeu} and \ref{thm4gen}(ii)(iii)).
The results about the optimality of {our main estimates
(Propositions~\ref{Prop-Optim-Linfty}--\ref{Optimality_Harnack})
are proved in Sections~\ref{sec-proofoptim} and \ref{sec-proofoptim2}. The latter
also contains} additional results about the optimality of global Harnack and log-grad estimates,
mentioned in Section~\ref{sec-optim}. As we will see, almost each of the main theorems above requires a different, taylor-made, example for its optimality. Finally,  the proofs of the auxiliary results in Section~\ref{sec-prelim}
and of some eigenvalue properties (including Proposition~\ref{upperboundlambda1}),
as well as a suitable integration by parts formula,
are collected in an appendix.

\section{Precise definitions and extensions to general domains} \label{sec-ext}

 Throughout this section  $\Omega$ is a bounded domain with $C^{1,\bar\alpha}$ boundary.
Set
$$D_0:={\rm diam}(\Omega),\quad D:=\mbox{ geodesic diameter of }\Omega \;= \sup_{x,y\in \Omega} \inf_{\sigma\in\Sigma(x,y)} l(\sigma),$$
where $\Sigma(x,y)$ is the set of all paths in $\Omega$ connecting $x$ and $y$, and $l(\sigma)$ is the length of $\sigma$.

We will now define a constant $r_\Omega$ which quantifies the well-known facts that each point of the boundary of a $C^{1,\bar\alpha}$-domain: (i) has a uniform neighborhood in which the domain can be ``flattened", and (ii) can be touched by a $C^{1,\bar\alpha}$-paraboloid with fixed size and opening.

Let $\rho_\Omega>0$ be the supremum of all
$r\le D_0$
such that, for each $\xi\in \partial\Omega$, there is a $C^{1,\bar\alpha}$-diffeomorphism $\Phi$ between the domain $(\Omega\cap B_{r}(\xi) - \xi)/r$ (resp. the surface $(\partial \Omega\cap B_{r}(\xi) - \xi)/r$) and $B_1^+=B_1(0)\cap\{x_n>0\}$ (resp. $B_1^0 = B_1(0)\cap\{x_n=0\}$) such that $D\Phi(0)=I$, $ |D\Phi| \le 2$, $ |D\Phi^{-1}| \le 2$, $[D\Phi]_\alpha\le1$. The existence of such $r>0$ is well known, see for instance \cite{Liebe}.
Also, there exist $\bar\rho_\Omega, k_\Omega>0$ with the following property:
for each $\xi\in \partial\Omega$,
there exists an orthonormal coordinate system $y=(y',y_n)\in\R^{n-1}\times\R$ with origin $\xi$ and a $C^{1,\bar\alpha}$ function
$\varphi$ on $B'_0=\{|y'|<\bar\rho_\Omega\}$, such that,
setting $\Sigma_0=B'_0\times(-\bar\rho_\Omega,\bar\rho_\Omega)$, we have
$\Sigma_0\cap\Omega=\Sigma_0\cap \{y_n>\varphi(y')\}$,
$\Sigma_0\cap\partial\Omega=\Sigma_0\cap \{y_n=\varphi(y')\}$,
$\varphi(0)=\xi$, $D\varphi(0)=0$ and $[D\varphi]_{\bar\alpha}\le k_\Omega$; hence
$$\bigl\{y=(y';y_n)\in \R^{n-1}\times\R:\ |y'|< \bar\rho_\Omega\ \hbox{and}\ k_\Omega |y'|^{1+\bar\alpha}<y_n< \bar\rho_\Omega\bigr\}\subset \Omega.$$
For a very general result on these geometrical considerations see \cite{AM}, in particular Corollary~3.14 in that paper.
We set
\be\label{def_romega}
r_\Omega=\min\bigl(\rho_\Omega, \bar\rho_\Omega/4,(120 k_\Omega)^{-1/\bar\alpha})\bigr).
\ee
 It is easy to see that if $\Omega=B_R$ then we can take $\bar\alpha=1$, and we have
$$
r_{B_R}= c(n)R, \quad \mbox{and}\quad D=2R.
$$
Given an operator $\mathcal{L}$ as in \eqref{defdiv}
satisfying \eqref{hyp1}, \eqref{hyp2}, and recalling definitions \eqref{deful}, {\eqref{defHbracket},
we define the number $r_0 \,=r_0(\ld,\Omega) \in (0,r_\Omega]$ by
\begin{equation}\label{defr0}
r_0:= \sup\Bigl\{r>0:\: r\bigl(r^{-1}_\Omega+ [A]^{\frac{1}{\alpha}}_{\alpha, r, \Omega} + \|b_1\|_{L^\infty(\Omega)} + [b_1]^{\frac{1}{\alpha+1}}_{\alpha, r, \Omega}+\|b_2\|^{\beta_q}_{q,r,\Omega} + \|c\|^{\gamma_q}_{q, r, \Omega}\bigr)\le 1\Bigr\},
\end{equation}
which} in turn gives the precise definition of the constant $M$ from \eqref{defM} and the locally uniform spaces
from \eqref{deful}, \eqref{defHbracket},
that appear in the theorems above.
We warn the reader that $\|\cdot\|_{q,r_0,\Omega}$
does {\it not} behave as a norm when applied to the coefficients of the operator themselves,
due to the dependence of $r_0$ on these coefficients (see Proposition \ref{lemr0ul}
below.)

\smallskip

Next we give an extension of Theorem~\ref{thm1} which uses the following variant of $M, r_0$:
\be\label{defMstar}
M_*(\ld,\Omega)=[A]^{\frac{1}{\alpha}}_{\alpha, r_*, \Omega} +
\|b_1+b_2\|^{\beta_q}_{q,r_*,\Omega},
\ee
\be\label{defrstar}
r_*=r_*(\ld,\Omega)= \sup\bigl\{r>0:\: r\bigl(r^{-1}_\Omega+ [A]^{\frac{1}{\alpha}}_{\alpha, r, \Omega} +
\|b_1+b_2\|^{\beta_q}_{q,r,\Omega}\bigr)\le 1\bigr\}.
\ee

\begin{thm} \label{thm1gen}
(i) Assume
$A\in  C^{\alpha}(\Omega)$ satisfies \eqref{hyp1},
$$ b_1, b_2 \in L^q(\Omega),\ c\in L^{q/2}(\Omega),\qquad{\rm div}(b_1)+c\le 0\quad\hbox{ in $\mathcal{D}'(\Omega)$.}$$
Then for each $f\in L^q(\Omega)$, any solution  $v\in H^1(\Omega)$ of
\be\label{ineqRR0}
-\mathcal{L}v\le f \; \mbox{ in } \Omega,\qquad
v\le0 \; \mbox{on } \partial \Omega,
\ee
satisfies (with $M_*=M_*(\ld,\Omega)$ defined in \eqref{defMstar})
\be\label{ineqG02}
\sup_{\Omega} \frac{v}{d}
\le e^{C_0(r^{-1}_\Omega+M_*)D} D^{1-\frac{n}{q}} \|f^+\|_{L^q(\Omega)}.
\ee

(ii) Assume \eqref{hyp1}-\eqref{hyp2} and div$(b_1)+c\le 0$ in $\mathcal{D}'(\Omega)$. Then the solution of
\be\label{solvOmega}
-\mathcal{L} u=f \; \mbox{ in } \Omega,\qquad
u=0 \; \mbox{on } \partial\Omega
\ee
satisfies (with $M=M(\ld,\Omega)$ defined in \eqref{defM} and $M_*=M_*(\ld,\Omega)$ defined in \eqref{defMstar})
$$
D^{-1}\|u\|_{L^\infty(\Omega)}+\|\nabla u\|_{L^\infty(\Omega)}+D^\alpha[\nabla u]_{\alpha,\Omega}
\le  e^{C_0(r^{-1}_\Omega+ M_*)D} \bigl(r_\Omega^{-1}+M\bigr)D^{2-\frac{n}{q}}\|f\|_{L^q(\Omega)}.
$$
\end{thm}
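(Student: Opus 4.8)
The plan is to deduce Theorem~\ref{thm1gen} from the ball case (Theorems~\ref{thm1} and \ref{thm3}) by a covering/flattening argument, exploiting the constant $r_\Omega$ and the geodesic diameter $D$ introduced in this section. For part (i), first I would reduce the $L^q$-integrability hypotheses on $b_1,b_2,c$ to the form needed: the combination $b_1+b_2$ enters only through the divergence-structure term $\mathrm{div}((b_1+b_2)v)$ after an integration by parts, but since here $b_1$ is merely $L^q$ we must be careful and instead work directly with the distributional inequality; the quantity $M_*$ is designed precisely so that $\mathrm{div}(b_1)+c\le 0$ lets us absorb the zeroth-order part and the only genuinely present lower-order datum is $b_1+b_2$, in its uniformly-local $L^q$ norm at the natural scale $r_*$. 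The engine is the optimized boundary weak Harnack inequality from \cite{GSS} in the version stated in Section~\ref{sec-prelim}, combined with the duality argument used to prove \eqref{ineqG0}: one localizes near an arbitrary point $x_0\in\Omega$ to a ball of radius comparable to $r_\Omega$ (inside which, after the flattening diffeomorphism $\Phi$ with $|D\Phi|,|D\Phi^{-1}|\le 2$, $[D\Phi]_\alpha\le1$, the transformed operator again satisfies \eqref{hyp1}--\eqref{hyp2} with controlled constants), applies the ball estimate there, and then chains the local estimates along a near-geodesic path from $x_0$ to $\partial\Omega$. The number of balls in the chain is $O(D/r_\Omega)$, which is exactly what produces the factor $e^{C_0(r_\Omega^{-1}+M_*)D}$; the power $D^{1-n/q}$ comes from the scaling of the $L^q$ norm of $f$ and the fact that $\|f^+\|_{L^q(\Omega)}$ dominates each local $\|f^+\|_{L^q(\Omega\cap B_r(x))}$.

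More concretely, for the boundary behaviour I would use the paraboloid-touching property guaranteed by $r_\Omega$: at each $\xi\in\partial\Omega$ there is an interior ball-like region (a $C^{1,\bar\alpha}$ paraboloid of fixed size $\sim r_\Omega$) on which a barrier argument, or directly Theorem~\ref{thm3}(i) applied in a slightly larger ball after flattening, gives $v(x)\le C\,d(x)$ times the right-hand side, uniformly in $\xi$. For interior points far from the boundary, the geodesic-chaining of Harnack-type estimates transfers the boundary bound inward, each link costing a multiplicative constant $e^{C_0(r_\Omega^{-1}+M_*)\cdot(\text{length of link})}$, and summing the link lengths gives $D$. One technical point to handle cleanly: the diffeomorphism $\Phi$ changes the operator, and although $\|b_1+b_2\|_{q,r,\Omega}$ and $[A]_{\alpha,r,\Omega}$ are preserved up to universal constants under such a map (because $|D\Phi|$ and the Hölder seminorm of $D\Phi$ are bounded by absolute constants on the relevant scale), one must verify that $r_*$ transforms compatibly — this is where the scale-invariant definition \eqref{defrstar}, with the $r_\Omega^{-1}$ term built in, does the bookkeeping for us.

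Part (ii) then follows by combining part (i) with the optimized $C^0$-to-$C^{1,\alpha}$ estimate of Theorem~\ref{thm2} in its general-domain form (or, equivalently, by localizing \eqref{sharpC1}--\eqref{sharpC1alpha} to interior and boundary balls of radius $\sim r_\Omega$ and patching). Given $u\in H^1_0(\Omega)$ solving \eqref{solvOmega}, part (i) applied to $v=\pm u$ yields $\|u/d\|_{L^\infty(\Omega)}\le e^{C_0(r_\Omega^{-1}+M_*)D}D^{1-n/q}\|f\|_{L^q(\Omega)}$, hence in particular the $D^{-1}\|u\|_{L^\infty}$ bound and, in any ball $B_r(x)\subset\Omega$ of radius $r=d(x)/2$, the bound $\|u\|_{L^\infty(B_r(x))}\le 2d(x)\cdot(\text{RHS})$; feeding this into the interior part of Theorem~\ref{thm2} at scale $r$ gives $|\nabla u(x)|$ and the local Hölder seminorm, with the extra factor $(r_\Omega^{-1}+M)D^{1-n/q}$ accounting for the worst case $d(x)\sim D$, and near $\partial\Omega$ one uses the boundary version of the $C^{1,\alpha}$ estimate after flattening. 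The main obstacle I anticipate is purely organizational rather than conceptual: carefully tracking how $M_*$, $r_*$, $M$, $r_0$, and $r_\Omega$ transform under the flattening diffeomorphisms and under the rescaling $x\mapsto (x-\xi)/r$, and confirming that the number of balls in the geodesic chain is genuinely $O(D/r_\Omega)$ with a constant depending only on $n,\lambda,\Lambda,q,\alpha,\bar\alpha$ — the geodesic diameter (as opposed to the Euclidean one) is essential here, since a thin curved domain can have $D\gg D_0$ and the estimate must, and does, see that.
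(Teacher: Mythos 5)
Your part (ii) is essentially the paper's argument: combine the sup bound from part (i) with the general-domain $C^0$-to-$C^{1,\alpha}$ estimate of Theorem~\ref{thm2gen}, exactly as in the proof of Theorem~\ref{thm3gen}(ii). Your remark that $\mathrm{div}(b_1)+c\le 0$ should let one replace the operator by one whose only lower-order datum is $b_1+b_2$ is also the right idea, and is realized in the paper by comparing $v$ with the solution $w\in H^1_0(\Omega)$ of $-\hat\ld w=f^+$ for the pure drift operator $\hat\ld w=\mathrm{div}(A\nabla w)+(b_1+b_2)\cdot\nabla w$, for which $r_0(\hat\ld,\Omega)=r_*$ and $M(\hat\ld,\Omega)=M_*$.

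The gap is in the engine you propose for part (i). You want to deduce the general-domain estimate from the ball case by covering $\Omega$ with balls of radius $\sim r_\Omega$, applying Theorem~\ref{thm1}(i)/\ref{thm3}(i) in each (after flattening), and chaining along near-geodesic paths. This cannot work as stated: the hypothesis $v\le 0$ on $\partial\Omega$ does not localize, so on an interior ball $B$ the subsolution $v$ is not $\le 0$ on $\partial B$ and the ball theorems simply do not apply there. More fundamentally, a Harnack chain can only \emph{compare} values of a nonnegative supersolution at different points (this is what produces \eqref{sharpWBHI}--\eqref{sharpBHI}); it cannot by itself produce an \emph{absolute} bound $\sup_\Omega v/d\le C\|f^+\|_{L^q}$, and your sketch contains no mechanism for converting the sup-to-inf comparison into such a bound. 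The paper supplies that missing mechanism by a duality step carried out directly in the general domain (Theorem~\ref{thm3gen}(i)): integrate $w$ against the first eigenfunction $\phi_1$ of the adjoint operator on the enlarged domain $\hat\Omega=\Omega+B_{r_\Omega}$ to bound $\inf_{\partial\Omega}|\partial_\nu w|$ by $e^{C_0(\kappa^{-1}+\hat M_1)D}\|f^+\|_{L^1(\Omega)}/|\partial\Omega|$ (using the interior Harnack inequality of Proposition~\ref{sharpharn} for $\phi_1$), and only then use the optimized boundary Harnack inequality \eqref{sharpBHI} to pass from $\inf_{\partial\Omega}|\partial_\nu w|$ to $\sup_\Omega w/d$. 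Note also that the logical order in the paper is the reverse of yours: Theorems~\ref{thm1} and \ref{thm3} for balls are \emph{corollaries} of Theorems~\ref{thm1gen} and \ref{thm3gen}, so they are not available as an ingredient; the flattening/geodesic-chaining machinery you describe lives entirely inside the proof of Theorem~\ref{BHIoptim}, not in the deduction of \eqref{ineqG02}.
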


 As mentioned in Remark~\ref{remGT},
Theorem~\ref{thm1gen} improves Theorem~\ref{thm1} not only with respect to the domain, but also at the level of the weaker assumptions on the coeffcients $b_1,c$.
This is a consequence of a comparison argument used in the proof of Theorem~\ref{thm1gen}
which relies on
a problem with pure drift.
Also no explicit dependence on the zero order coefficient appears; the latter is actually ``hidden'' in the
fact that $c\le -{\rm div}(b_1)$.  Furthermore, the constant in the gradient estimate is exponential only in $M_*$, while being just  linear in the larger quantity $M$.

Theorem~\ref{thm1gen} will be a consequence of the following Theorems \ref{thm2gen}-\ref{thm3gen}.
These are extensions of Theorems \ref{thm2}-\ref{thm4}, and contain an optimized $C^0$-to-$C^{1,\alpha}$ estimate and an optimized $L^\infty$/Lipschitz estimate under
a spectral assumption on a larger domain~$\Omega'$.

\begin{thm} \label{thm2gen}
Assume \eqref{hyp1}-\eqref{hyp2}.
If $u\in H^1_0(\Omega)$ solves \eqref{solvOmega} then $u\in C^{1,\alpha}(\Omega)$ and
\begin{equation}\label{sharpC1gen}
\|\nabla u\|_{L^\infty(\Omega)}\le
C_0\Bigl({ \bigl(M+r_\Omega^{-1}\bigr)} \|u\|_{L^\infty(\Omega)}
+ \bigl(M+r_\Omega^{-1}\bigr)^{\frac{n}{q}-1}\|f\|_{q,r_0,\Omega}\Bigr),
\end{equation}
\begin{equation}\label{sharpC1alphagen}
[\nabla u]_{\alpha, \Omega}
\le  C_0\Bigl({\bigl(M+r_\Omega^{-1}\bigr)}^{1+\alpha} \|u\|_{L^\infty(\Omega)}
+ \bigl(M+r_\Omega^{-1}\bigr)^{\frac{n}{q}-1+\alpha}\|f\|_{q,r_0,\Omega}\Bigr),
\end{equation}
where $M=M(\ld,\Omega)$, $r_0=r_0(\ld,\Omega)$ are defined in \eqref{defM}, \eqref{defr0}.
\end{thm}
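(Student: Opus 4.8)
The plan is to reduce the general $C^{1,\alpha}$-estimate to its scale-invariant local version via a covering and rescaling argument, exploiting the fact that on every ball of radius comparable to $r_0$ the rescaled operator has coefficients of controlled size. First I would recall the standard interior and boundary $C^{1,\alpha}$ Schauder-type estimates (as in \cite[Section 8.11]{GT}, \cite[Section 5.5]{Mo}) for the equation $-\mathcal{L}u=f$ in the specific normalized form: if $\mathcal{L}$ acts on a unit ball (or flattened half-ball) with $[A]_\alpha\le 1$, $\|b_1\|_\infty\le 1$, $[b_1]_\alpha\le 1$, $\|b_2\|_q\le 1$, $\|c\|_q\le 1$, then $\|\nabla u\|_{L^\infty}+[\nabla u]_\alpha\le C_0(\|u\|_{L^\infty}+\|f\|_{L^q})$, with $C_0$ depending only on $n,\lambda,\Lambda,q,\alpha,\bar\alpha$. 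Near the boundary one uses the flattening diffeomorphism $\Phi$ whose bounds are built into the definition of $\rho_\Omega$ (hence $r_\Omega$), so the transformed operator is still uniformly elliptic with the same structural constants and the homogeneous Dirichlet condition is preserved.

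The core of the argument is the choice of scale. Set $\mu = M + r_\Omega^{-1}$ and $\delta = c(n,\lambda,\Lambda,q,\alpha,\bar\alpha)\,\mu^{-1}$, chosen so that $\delta \le r_0$; this is exactly what the definition \eqref{defr0} of $r_0$ guarantees, since plugging $r=\delta$ into the bracket in \eqref{defr0} and using \eqref{defM} shows $\delta\bigl(r_\Omega^{-1}+[A]_{\alpha,\delta,\Omega}^{1/\alpha}+\|b_1\|_\infty+[b_1]_{\alpha,\delta,\Omega}^{1/(1+\alpha)}+\|b_2\|_{q,\delta,\Omega}^{\beta_q}+\|c\|_{q,\delta,\Omega}^{\gamma_q}\bigr)\le \delta\mu \le 1$ for $c$ small. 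For each $x_0\in\Omega$ I would then work on $B_\delta(x_0)\cap\Omega$: if $B_{\delta/2}(x_0)\subset\Omega$ use the interior estimate, otherwise (since $\delta\le r_\Omega$) use the boundary chart. Define $\tilde u(y)=u(x_0+\delta y)$, which solves $-\tilde{\mathcal{L}}\tilde u=\tilde f:=\delta^2 f(x_0+\delta\,\cdot)$ on $B_1$ (or $B_1^+$), where $\tilde{\mathcal{L}}$ has rescaled coefficients $\tilde A(y)=A(x_0+\delta y)$, $\tilde b_1=\delta b_1(x_0+\delta\cdot)$, $\tilde b_2=\delta b_2(x_0+\delta\cdot)$, $\tilde c=\delta^2 c(x_0+\delta\cdot)$. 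A direct check using \eqref{deful}, \eqref{defHbracket} and $\delta\le r_0$ shows each of $[\tilde A]_\alpha$, $\|\tilde b_1\|_\infty$, $[\tilde b_1]_\alpha$, $\|\tilde b_2\|_{L^q}$, $\|\tilde c\|_{L^q}$ is $\le 1$ up to the universal constant (using $\beta_q,\gamma_q\ge 1$ and $\delta\le 1$, so e.g. $\|\tilde b_2\|_{L^q(B_1)}=\delta^{1-n/q}\|b_2\|_{L^q(B_\delta(x_0))}\le \delta\|b_2\|_{q,\delta,\Omega}\le (\delta\|b_2\|_{q,\delta,\Omega}^{\beta_q})^{1/\beta_q}\cdot(\text{lower order})\le 1$). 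Applying the normalized Schauder estimate to $\tilde u$ and undoing the scaling gives $\delta\|\nabla u\|_{L^\infty(B_{\delta/2}(x_0)\cap\Omega)}+\delta^{1+\alpha}[\nabla u]_{\alpha,B_{\delta/2}(x_0)\cap\Omega}\le C_0\bigl(\|u\|_{L^\infty(\Omega)}+\delta^{2-n/q}\|f\|_{L^q(B_\delta(x_0)\cap\Omega)}\bigr)$. Taking the supremum over $x_0\in\Omega$, using $\|f\|_{L^q(B_\delta(x_0)\cap\Omega)}\le\|f\|_{q,r_0,\Omega}$ (since $\delta\le r_0$) and $\delta^{-1}\simeq \mu=M+r_\Omega^{-1}$, yields the gradient bound $\|\nabla u\|_{L^\infty(\Omega)}\le C_0\bigl((M+r_\Omega^{-1})\|u\|_{L^\infty(\Omega)}+(M+r_\Omega^{-1})^{n/q-1}\|f\|_{q,r_0,\Omega}\bigr)$; the factor $\delta^{2-n/q}\cdot\delta^{-1}=\delta^{1-n/q}=(M+r_\Omega^{-1})^{n/q-1}$ produces precisely the exponent in \eqref{sharpC1gen}. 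The Hölder seminorm estimate \eqref{sharpC1alphagen} follows the same way: the local bound $[\nabla u]_{\alpha}$ on each small ball carries a factor $\delta^{-1-\alpha}$ on the left, and combining local Hölder seminorms on overlapping balls into a global one on $\Omega$ is the standard patching lemma (using that two points at distance $>\delta/4$ contribute a trivially bounded difference quotient, via the already-established $L^\infty$ gradient bound).

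The main obstacle is bookkeeping rather than conceptual: one must verify carefully that the rescaled coefficient norms are controlled by $1$ with the right powers $\beta_q,\gamma_q$, and that the boundary flattening does not spoil the structural constants — this is where the precise definition of $r_\Omega$ via $|D\Phi|\le 2$, $|D\Phi^{-1}|\le 2$, $[D\Phi]_\alpha\le 1$ is used, ensuring the transformed $\tilde A$ stays in $[\lambda' I,\Lambda' I]$ and $[\tilde A]_\alpha$, together with the extra drift terms generated by the change of variables, remain $\le 1$. A secondary point is the patching of local $C^{1,\alpha}$ estimates into a global one on a domain that is only $C^{1,\bar\alpha}$ (not convex), which requires the geodesic diameter $D$ only implicitly here — for Theorem \ref{thm2gen} itself only $r_\Omega$ and $r_0$ enter, so the chaining is purely local and causes no trouble. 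I expect the whole argument to be short once the normalized Schauder estimate and the inequality $\delta\le r_0$ are in place.
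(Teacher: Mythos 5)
Your proposal is correct and follows essentially the same route as the paper's proof: rescale to balls of radius comparable to $r_0$ (note that by \eqref{relMr0} one has $r_0^{-1}=r_\Omega^{-1}+M$ exactly, so your $\delta\simeq(M+r_\Omega^{-1})^{-1}$ is just $r_0$ up to a constant), verify via the definition of $r_0$ that the rescaled coefficients have normalized norms, apply the interior/boundary $C^{1,\alpha}$ estimates from \cite{GT}, \cite{Mo}, and scale back. The only cosmetic difference is that the paper invokes $r\le r_0$ and \eqref{relMr0} directly rather than re-deriving $\delta\le r_0$ from the bracket in \eqref{defr0} (your derivation of that inequality is mildly circular since $M$ is measured at scale $r_0$, but the conclusion is immediate from Proposition~\ref{basicr0}).
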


The formulation of Theorem \ref{thm3gen} is slightly more technical.
  Given a domain $\hat\Omega$ in $\mathbb{R}^n$, numbers $\kappa>0$, $q>n$, $p=q/2$ and $b_1,b_2\in L^q(\hat\Omega)$, $c\in L^{q/2}(\hat\Omega)$,
 we define the following weaker analogues of the quantities $M,r_0$
\begin{equation}\label{defMhat}
\hat M=\hat M(\ld,\hat\Omega,\kappa) =
\bigl\|b_1\bigr\|^{\beta_q}_{q,\hat r_0,\hat\Omega}+\bigl\|b_2\bigr\|^{\beta_q}_{q,\hat r_0,\hat\Omega}+\|c\|^{\gamma_p}_{q,\hat r_0,\hat\Omega},
\end{equation}
\be\label{defr0hat}
\hat r_0 =\hat r_0(\ld,\hat\Omega,\kappa)
:= \sup\Bigl\{r>0:\: r\bigl(\kappa^{-1}+\|b_1\|^{\beta_q}_{q,r,\hat\Omega}+ \|b_2\|^{\beta_q}_{q,r,\hat\Omega}+\|c\|^{\gamma_p}_{p,r,\hat\Omega}\bigr)\le 1\Bigr\}.
\ee

\begin{thm} \label{thm3gen}
 Assume \eqref{hyp1}-\eqref{hyp2}. Let   $\kappa\in(0,D]$, and
 $\hat\Omega\supset \Omega$ be such that  $\kappa={\rm dist}(\overline\Omega,\hat\Omega^{c})$.
 Assume  that the coefficients of $\mathcal{L}$ can be extended to $\hat\Omega$ so that
 \be\label{hyp1a22}
A\in  L^\infty(\hat\Omega)\; \mbox{ with }\;\Lambda I \ge A\ge\lambda I, \qquad
  b_1, b_2 \in L^q(\hat\Omega),\ c\in L^p(\hat\Omega),
   \ee
 as well as
\be\label{hypLambda1b2}
\lambda_1(-\mathcal{L},\hat\Omega)\ge 0.
\ee
Let $M=M(\ld,\Omega)$, $r_0=r_0(\ld,\Omega)$, $\hat M=\hat M(\ld,\hat\Omega,\kappa)$ be defined in
 \eqref{defM},  \eqref{defr0},  \eqref{defMhat}. Then

(i) for each $f\in L^q(\Omega)$, any solution $v\in H^1(\Omega)$ of \eqref{ineqRR0}
 satisfies
\be\label{estimzinftysubsb}
\sup_{\Omega} \frac{v}{d}
\le e^{C_0(M+\hat M+r_\Omega^{-1}+\kappa^{-1})D} D^{1-\frac{n}{q}} \|f^+\|_{L^q(\Omega)};
\ee

(ii) the unique solution $u\in H^1_0(\Omega)$ of problem \eqref{solvOmega} satisfies
\begin{equation}\label{sharpC1alphapositeigGen}
D^{-1}\|u\|_{L^\infty(\Omega)}+\left\|\nabla u\right\|_{L^\infty(\Omega)}+D^\alpha [\nabla u]_{\alpha,\Omega}
\le e^{C_0(M+\hat M+r_\Omega^{-1}+\kappa^{-1})D} D^{1-\frac{n}{q}}\|f\|_{L^q(\Omega)}.
\end{equation}

(iii) we have
$$
\lambda_1(-\mathcal{L},\Omega)\ge e^{-C_0(M+\hat M+r_\Omega^{-1}+\kappa^{-1})D} D^{-2}.
$$
\end{thm}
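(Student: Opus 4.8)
The plan is to deduce all three parts from the machinery already prepared for the ball case, with part (i) as the core estimate, part (ii) following from (i) together with the $C^0$-to-$C^{1,\alpha}$ bound of Theorem~\ref{thm2gen}, and part (iii) following from (i) applied to the principal eigenfunction of $\Omega$.

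\medskip
\noindent\textbf{Part (i).} I would mimic the scheme used for Theorem~\ref{thm1}(i)/Theorem~\ref{thm3}(i) on the ball, namely an optimized boundary weak Harnack inequality combined with a duality argument, now carrying along the geometric quantities $r_\Omega$, $D$, $\kappa$ and the weaker constant $\hat M$. The hypothesis $\lambda_1(-\mathcal{L},\hat\Omega)\ge 0$, together with $\mathrm{dist}(\overline\Omega,\hat\Omega^c)=\kappa>0$, yields on $\overline\Omega$ a positive supersolution $h$ of $-\mathcal{L}h\ge 0$ with $0<c_\kappa\le h\le C$ (the lower bound via a boundary Harnack inequality across the collar $\hat\Omega\setminus\Omega$, which is where $\hat M$ and $\kappa^{-1}$ enter, only $A\in L^\infty$ being assumed there); in particular the adjoint Dirichlet problem $-\mathcal{L}^*w=g$, $w|_{\partial\Omega}=0$ is well posed with $w\ge 0$ when $g\ge 0$. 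To bound $v(x_0)/d(x_0)$ for fixed $x_0\in\Omega$, solve this adjoint problem with $g$ a mollified Dirac mass at $x_0$; the integration by parts formula from the appendix gives $\int v\,g=\int w\,(-\mathcal{L}v)+\int_{\partial\Omega}(-v)\,(A\nabla w\cdot\nu)\,d\sigma\le\int w\,f^+$, the boundary term being $\le 0$ since $v\le 0$ on $\partial\Omega$ and $A\nabla w\cdot\nu\le 0$ by the Hopf lemma. Passing to the limit in the mollification and using Hölder, $v(x_0)\le\|G^*(\cdot,x_0)\|_{L^{q'}(\Omega)}\,\|f^+\|_{L^q(\Omega)}$ for the adjoint Green function $G^*$. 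The crux is then the bound
$$\|G^*(\cdot,x_0)\|_{L^{q'}(\Omega)}\le e^{C_0(M+\hat M+r_\Omega^{-1}+\kappa^{-1})D}\,d(x_0)\,D^{1-n/q},$$
which is precisely what the optimized boundary/global weak Harnack inequality of Section~\ref{sec-prelim} delivers: the exponential factor arises from chaining the Harnack inequality along geodesic paths in $\Omega$, $O\bigl((M+\hat M+r_\Omega^{-1}+\kappa^{-1})D\bigr)$ elementary steps of size $\sim r_0$ sufficing by definition of $r_0$.

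\medskip
\noindent\textbf{Part (ii).} Apply (i) to the solution $u$ of \eqref{solvOmega} (a subsolution of $-\mathcal{L}\cdot\le f^+$, zero on $\partial\Omega$) and to $-u$ (a subsolution of $-\mathcal{L}\cdot\le f^-$), obtaining $\sup_\Omega|u|/d\le e^{C_0(M+\hat M+r_\Omega^{-1}+\kappa^{-1})D}D^{1-n/q}\|f\|_{L^q(\Omega)}$, hence $\|u\|_{L^\infty(\Omega)}\le D\sup_\Omega|u|/d\le e^{C_0(M+\hat M+r_\Omega^{-1}+\kappa^{-1})D}D^{2-n/q}\|f\|_{L^q(\Omega)}$ since $d\le D_0\le D$. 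Inserting this into \eqref{sharpC1gen}--\eqref{sharpC1alphagen}, and using $\|f\|_{q,r_0,\Omega}\le\|f\|_{L^q(\Omega)}$ and $r_0\le r_\Omega\le D$, one bounds $\|\nabla u\|_{L^\infty(\Omega)}$ and $[\nabla u]_{\alpha,\Omega}$; the polynomial prefactors $(M+r_\Omega^{-1})^\theta$ are swallowed by the exponential via $(M+r_\Omega^{-1})^\theta\le CD^{-\theta}e^{C_0(M+r_\Omega^{-1})D}$, which is legitimate because $(M+\hat M+r_\Omega^{-1}+\kappa^{-1})D\ge r_\Omega^{-1}D\ge 1$ (as $r_\Omega\le D_0\le D$). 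Collecting the powers of $D$ yields \eqref{sharpC1alphapositeigGen}.

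\medskip
\noindent\textbf{Part (iii).} Let $\varphi_1>0$ be a principal eigenfunction of $-\mathcal{L}$ in $\Omega$, which exists by the spectral theory recalled in the appendix; by elliptic regularity $\varphi_1\in C(\overline\Omega)$ with $\varphi_1=0$ on $\partial\Omega$, so after normalizing $\|\varphi_1\|_{L^\infty(\Omega)}=1$ there is $x_0\in\Omega$ with $\varphi_1(x_0)=1$ and $d(x_0)\le D_0\le D$. Since $\lambda_1:=\lambda_1(-\mathcal{L},\Omega)\ge\lambda_1(-\mathcal{L},\hat\Omega)\ge 0$ and $\varphi_1\le 1$, the function $\varphi_1$ satisfies $-\mathcal{L}\varphi_1=\lambda_1\varphi_1\le\lambda_1$ in $\Omega$ and $\varphi_1=0$ on $\partial\Omega$, so part (i) applies with $f\equiv\lambda_1\ge 0$:
$$\frac1D\le\sup_\Omega\frac{\varphi_1}{d}\le e^{C_0(M+\hat M+r_\Omega^{-1}+\kappa^{-1})D}\,D^{1-n/q}\,\lambda_1\,|\Omega|^{1/q}\le C\,e^{C_0(M+\hat M+r_\Omega^{-1}+\kappa^{-1})D}\,D\,\lambda_1,$$
using $|\Omega|^{1/q}\le CD^{n/q}$. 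This forces $\lambda_1>0$ and, after rearranging and enlarging $C_0$ to absorb $C$ (again using $(M+\hat M+r_\Omega^{-1}+\kappa^{-1})D\ge 1$), gives $\lambda_1\ge e^{-C_0(M+\hat M+r_\Omega^{-1}+\kappa^{-1})D}D^{-2}$.

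\medskip
The main obstacle is the Green-function bound in part (i): obtaining a constant merely exponential in $(M+\hat M+r_\Omega^{-1}+\kappa^{-1})D$ — i.e.\ linear in the coefficient norms inside the exponent — and uniform over general $C^{1,\bar\alpha}$ domains. This hinges on the sharp form of the boundary weak Harnack inequality from Section~\ref{sec-prelim} and on the geodesic Harnack chaining calibrated by $r_\Omega$ and the collar data $\hat M,\kappa$; by comparison, the duality identity, the sign of the boundary term, and the bookkeeping in (ii)--(iii) are routine.
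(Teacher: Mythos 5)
Your parts (ii) and (iii) are sound modulo part (i): (ii) is essentially the paper's argument (apply (i) to $\pm u$, feed the resulting $L^\infty$ bound into Theorem~\ref{thm2gen}, absorb the polynomial prefactors using $(M+r_\Omega^{-1})D\ge 1$), and (iii) takes a slightly different but valid route (the paper instead solves $-\mathcal{L}u=1$, bounds $\|u\|_\infty$ by (ii), and invokes the characterization \eqref{charlambda1}; your eigenfunction version works equally well once (i) is in hand).

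Part (i), however, has a genuine gap at the step you yourself flag as the crux. You reduce the estimate to the bound $\|G^*(\cdot,x_0)\|_{L^{q'}(\Omega)}\le e^{C_0(M+\hat M+r_\Omega^{-1}+\kappa^{-1})D}\,d(x_0)\,D^{1-n/q}$ and assert that this "is precisely what the optimized boundary/global weak Harnack inequality of Section~\ref{sec-prelim} delivers." It is not. Inequality \eqref{sharpWBHI} controls only the $L^\epsilon$ norm of $u/d$ for some small $\epsilon=\epsilon(n,q,\alpha,\lambda,\Lambda)$, and \eqref{sharpBHI} requires the function to solve the equation with an $L^q$ right-hand side in all of $\Omega$; neither applies to $G^*(\cdot,x_0)$, which is a solution only away from the pole and has a singularity $\sim|y-x_0|^{2-n}$ there, so its $L^{q'}$ norm is governed by pointwise two-sided Green-function estimates that are nowhere established in the paper (and are delicate for operators with merely $L^q$ lower-order coefficients). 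Worse, by duality $\|G^*(\cdot,x_0)\|_{L^{q'}}=\sup\{w_g(x_0):\|g\|_{L^q}\le 1,\ -\mathcal{L}w_g=g,\ w_g|_{\partial\Omega}=0\}$, which is exactly the quantity \eqref{estimzinftysubsb} is meant to control — so asserting the Green-function bound is essentially assuming the conclusion. There is also a secondary regularity obstruction: your integration by parts needs the solution of the \emph{adjoint} Dirichlet problem to be $C^1$ up to the boundary, but the adjoint operator carries $b_2\in L^q$ inside the divergence, so its solutions need not be $C^1$ (the paper explicitly works around this elsewhere, in Step~3 of the proof of Lemma~\ref{LemOptimizedHopf2}). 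The paper's actual argument avoids both problems: it solves the \emph{direct} problem $-\mathcal{L}w=f^+$ (whose solution is $C^1(\overline\Omega)$ under \eqref{hyp2}), pairs $w$ against the first eigenfunction $\phi_1$ of $\ld^*$ on the larger domain $\hat\Omega$ via Lemma~\ref{weakdiv2} to bound the boundary average of $|\partial_\nu w|$, controls $\sup_\Omega\phi_1/\inf_\Omega\phi_1$ by the interior Harnack inequality across the collar (which is where $\hat M$ and $\kappa^{-1}$ enter, and where Proposition~\ref{upperboundlambda1} is needed to absorb the eigenvalue shift $\lambda_1$ into the constant), and then upgrades $\inf_\Omega(w/d)\le\inf_{\partial\Omega}|\partial_\nu w|$ to $\sup_\Omega(w/d)$ via the full boundary Harnack inequality \eqref{sharpBHI}. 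To repair your proof you should replace the Green-function step by this eigenfunction duality.
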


Finally, we give extensions to general domains of the results on positive solutions in the previous section, which we gather in the following theorem.

\begin{thm}\label{thm4gen}
 Assume \eqref{hyp1}-\eqref{hyp2}
and let $M$, $r_0$ be defined in \eqref{defM}, \eqref{defr0}.

\smallskip

(i) If $u\in H^1(\Omega)$ is a weak supersolution
of $-\mathcal{L}u\ge f$ and $f,u\ge 0$ in $\Omega$, then
\be\label{conclHopfgen}
u(x)
\ge
e^{-C_0(r^{-1}_\Omega+ M)D}D^{-n}
 {\|f\|_{L^1_d(\Omega)}} \, d(x),\quad x\in \Omega.
\ee

(ii) For any weak solution $u$ of $-\mathcal{L}u=0$, $u>0$ in $\Omega$, we have the estimate
$$
\frac{d(x)|\nabla u(x)|}{u(x)}\le C_0\max\bigl\{1, (r_\Omega^{-1}+M)d(x)\bigr\},\quad x\in \Omega.
$$

(iii) If $f\in L^q(\Omega)$ with $f\ge 0$, $f\not\equiv 0$, then
for any weak solution $u$ of $-\mathcal{L}u=f$, $u>0$ in~$\Omega$, we have the
estimate
$$
\frac{d(x)|\nabla u(x)|}{u(x)}\le C_0\max\bigl\{1,(r_\Omega^{-1}+M)d\bigr\} +
e^{C_0(r^{-1}_\Omega+ M)D} D^n
\frac{\|f\|_{L^q(\Omega)}}{\|f\|_{L^1_d(\Omega)}}\, d^{1-\frac{n}{q}}(x),\quad x\in \Omega.
$$
\end{thm}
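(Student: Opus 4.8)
The plan is to deduce the general-domain statements from their ball counterparts (Theorems~\ref{OptimizedHopf} and~\ref{lgeu}) by the standard flattening/covering device encoded in the constant $r_\Omega$, together with the uniformly-local scaling properties of $M$ and $r_0$. For part~(i), I would first localize: near any point $x\in\Omega$ with $d(x)$ small, use the $C^{1,\bar\alpha}$-diffeomorphism $\Phi$ associated with $\rho_\Omega$ to flatten a boundary neighborhood, transfer the operator $\mathcal{L}$ (whose transformed coefficients keep uniformly-local norms comparable, since $|D\Phi|,|D\Phi^{-1}|\le 2$ and $[D\Phi]_\alpha\le 1$), and place a ball $B_\rho$ of radius $\rho\sim r_\Omega$ inside the flattened domain touching the flattened boundary; apply Theorem~\ref{OptimizedHopf} there to get $u(x)\gtrsim e^{-C_0(1+M\rho)}\rho^{-n}\|f\|_{L^1_d(B_\rho)}\,d(x)$. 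For $x$ away from the boundary one instead chains interior Harnack-type inequalities (or directly a ball version) along a path of length $\le D$ realizing the geodesic diameter, picking up a multiplicative factor $e^{-C_0(r_\Omega^{-1}+M)D}$ from the number $\sim D/r_\Omega$ of steps, each step contributing a bounded factor because the rescaled operators on balls of radius $\sim r_0$ have controlled $M r_0\le 1$. The last ingredient is to compare $\|f\|_{L^1_d(B_\rho)}$ (over the chosen small ball) with $\|f\|_{L^1_d(\Omega)}$: this is exactly the ``$L^1_d$-norm comparison'' step advertised in the discussion after Theorem~\ref{OptimizedHopf}, and it costs another factor of the same exponential type together with powers of $D$; collecting the powers of $r_\Omega\sim$ (length scale) and $D$ yields the stated $e^{-C_0(r_\Omega^{-1}+M)D}D^{-n}$.

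For parts~(ii) and~(iii) the mechanism is lighter because the log-gradient estimate is local in nature once $d(x)$ is fixed. I would fix $x\in\Omega$, set $\delta=d(x)$, and work in $B_{\delta/2}(x)\subset\Omega$ (an interior ball). If $\delta\le c\,r_\Omega$ the rescaled operator on $B_{\delta/2}(x)$ has all relevant coefficient norms under control, and applying the ball estimate \eqref{concllgeu} (resp.\ \eqref{concllgeu2}) on $B_{\delta/2}(x)$—after translating/rescaling so that $d(x)$ in that ball is comparable to $\delta$—gives $\delta|\nabla u(x)|/u(x)\le C_0\max\{1,M\delta\}$, which is the claimed bound since $r_\Omega^{-1}+M\ge M$ and the $\max$ with $1$ absorbs the rest. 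If $\delta$ is comparable to or larger than $r_\Omega$ (equivalently comparable to $D$), then $(r_\Omega^{-1}+M)d(x)\gtrsim 1$, so it suffices to prove the bound with $d(x)$ replaced by a fixed scale $\sim r_\Omega$; near the boundary one again flattens with $\Phi$ and applies the boundary version of \eqref{concllgeu} on a half-ball, while the nonhomogeneous term in (iii) is handled by the same path-chaining as in (i) to convert the local $\|f\|_{L^q}$ weight into $e^{C_0(r_\Omega^{-1}+M)D}D^n\|f\|_{L^q(\Omega)}/\|f\|_{L^1_d(\Omega)}$. Note that part~(iii) in particular recovers the homogeneous case by dropping $f$, so (ii) is formally a corollary of (iii) with the last term absent.

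The main obstacle, as the authors themselves flag, is the $L^1_d$-comparison in part~(i): because $r_0$ depends on the coefficients and $\|\cdot\|_{q,r_0,\Omega}$ is not subadditive, one cannot naively split $\|f\|_{L^1_d(\Omega)}$ over a covering; instead one must argue that for a \emph{single} well-chosen ball $B_\rho$ (located where $\|f\|_{L^1_d}$ carries a definite fraction of its total mass, found by a pigeonhole over a covering of $\Omega$ by $\sim (D/r_\Omega)^n$ such balls) the Hopf estimate on $B_\rho$ already sees enough of $f$, and then propagate positivity from $B_\rho$ to the faraway point $x$ via chained Harnack inequalities with the optimized $M$-dependence—this propagation, and the bookkeeping of how the constant degrades as $\sim e^{C_0(r_\Omega^{-1}+M)D}$ through $\sim D/r_\Omega$ steps, is where all the care goes. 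A secondary technical point is verifying that under the diffeomorphism $\Phi$ the quantities $M$, $r_0$, and the geodesic diameter change only by constants depending on $n,\lambda,\Lambda,q,\alpha,\bar\alpha$; this is routine given the normalizations built into the definition \eqref{def_romega} of $r_\Omega$, but must be stated once and reused.
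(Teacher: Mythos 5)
Your overall architecture (normalized local Hopf lemma, covering plus pigeonhole to locate a ball carrying a definite fraction of $\|f\|_{L^1_d}$, Harnack propagation with exponential bookkeeping, and deduction of (iii) from (i) via a local gradient bound) matches the paper's, but there are three concrete gaps. First, in (i) the local boundary estimate cannot be obtained by ``applying Theorem~\ref{OptimizedHopf} on a ball $B_\rho$ inside the flattened domain touching the boundary'': such a ball has its own boundary cutting through the interior of $\Omega$, so the resulting bound $u\gtrsim \mathrm{dist}(\cdot,\partial B_\rho)$ degenerates at interior points of $\Omega$ and does not control $u/d$; moreover, in the paper's logic Theorem~\ref{OptimizedHopf} is itself a corollary of Theorem~\ref{thm4gen}(i), so invoking it would be circular. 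What is actually needed is a Hopf lemma on the flattened half-ball-like domain itself with \emph{normalized, small} lower-order coefficients — this is Lemma~\ref{LemOptimizedHopf2}(ii), whose six-step proof (auxiliary adjoint problems, a multiplier reduction to a pure drift operator, boundary Harnack) is the technical core of the section and is entirely absent from your proposal. Related to this, the covering balls must have radius $\sim r_0\le\min(r_\Omega,M^{-1})$, not $\sim r_\Omega$: on balls of radius $r_\Omega$ the rescaled coefficients need not be small when $Mr_\Omega\gg1$, so no normalized local lemma applies there. Second, the propagation step in the paper is not a chain of \emph{interior} Harnack inequalities along a geodesic (which would lose all control of $u/d$ near $\partial\Omega$); it is the global boundary weak Harnack inequality \eqref{sharpWBHI} for the quotient $u/d$, which is the tool that produces both the correct boundary weight and the optimal constant $e^{-C_0(r_\Omega^{-1}+M)D}$ in one stroke.

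Third, in (iii) your plan to apply the ball estimate \eqref{concllgeu2} on the interior ball $B_{\delta/2}(x)$ fails: that estimate would involve the ratio $\|f\|_{L^q(B)}/\|f\|_{L^1_{d_B}(B)}$ computed on the small ball, and the local $L^1_{d_B}$ mass of $f$ there can be arbitrarily small (even zero) compared with $\|f\|_{L^1_d(\Omega)}$, so this ratio is not controlled by the global one appearing in the statement. The correct (and simpler) route, which you gesture at but do not pin down, is: rescale to radius $r=\min\{r_0,d(x)\}$, apply only the standard local $C^1$-to-$C^0$ estimate and the interior Harnack inequality to get $r|\nabla u(x)|\le C_0u(x)+C_0r^{2-n/q}\|f\|_{q,r_0,\Omega}$, then divide by $u(x)/d(x)$ and bound this quotient from below by the already-proved global estimate of part~(i); the case $f\equiv0$ of the same computation gives (ii) directly, with no flattening or case distinction on the size of $d(x)$ needed.
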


\section{Optimality of the main theorems} \label{sec-optim}

We begin with the optimality of the $L^\infty$-estimate in Theorems \ref{thm1} and  \ref{thm3}
with respect to the coefficients and the size of the domain.
It is verified for both pure Schr\"odinger type operators and pure drift operators.
We restrict to the case $\kappa=R$ for simplicity.

\begin{prop} \label{Prop-Optim-Linfty}
(i) For each $R>0$ there exist sequences $c_i\in C^\infty(\R^n)$ and $f_i\in C^\infty(\overline B_R)$, $f_i\not\equiv0$,
such that $\|c_i\|_{q,r_i,B_{2R}}\to\infty$ with $r_i=r_0(\ld_i,B_{2R})$,
the operator $\ld_i:=-\Delta+c_i$ satisfies
$\lambda_1(-\ld_i,\R^n)\ge 0$,
and the classical solution $u_i$ of
\be \label{optimLinfty1}
\left\{\hskip 2mm\begin{aligned}
-\Delta u_i+c_iu_i&=f_i, &\quad&x\in B_R,\\
u_i&=0, &\quad&x\in \partial B_R
\end{aligned}
\right.
\ee
satisfies, for some $C=C(n)>0$,
\be \label{optimLinfty2}
{\Bigl(\frac{u_i}{d}\Bigr)(0)\ge CR^{1-n/q}}e^{CM_iR}\|f_i\|_{L^q(B_R)}
\quad\hbox{with $M_i:=M(\ld_i,B_{2R})={\|c_i\|^{\gamma_q}_{q,r_i,B_{2R}}}$}.
\ee

(ii) For each $R>0$ there exist sequences $b_i\in C^\infty(\R^n)$ and $0\not\equiv f_i\in C^\infty(\overline B_R)$
such that $\|b_i\|_{q,r_i,B_{2R}}\to\infty$ with {$r_i=r_0(\ld_i,B_{2R})$},
{the operator $\ld_i:=-\Delta+b_i\cdot\nabla$ satisfies} $\lambda_1(-\ld_i,\R^n)\ge 0$,
and the classical solution of
\be \label{optimLinfty11}
\left\{\hskip 2mm\begin{aligned}
-\Delta u_i+b_{i}\cdot \nabla u_i&=f_i, &\quad&x\in B_R,\\
u_i&=0, &\quad&x\in \partial B_R
\end{aligned}
\right.
\ee
satisfies \eqref{optimLinfty2} with $M_i=\|b_i\|^{\beta_q}_{q,r_i,B_{2R}}$. This also shows the estimate in Theorem \ref{thm1} is optimal.
\end{prop}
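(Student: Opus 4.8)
The plan is to build, for each fixed $R>0$, explicit radial solutions on $B_R$ whose value at the origin (divided by $d(0)=R$) beats the right-hand side of \eqref{ineqG0}/\eqref{estimzinftysubs} by the needed exponential factor, and to do so separately for a pure Schr\"odinger perturbation (part (i)) and a pure drift (part (ii)). For part (i), I would look for a one-parameter family $c_i$ of smooth nonnegative (or carefully signed) potentials that are, roughly, constant $\equiv k_i^2$ on a large portion of $B_R$ with $k_i\to\infty$, but cut off near $\partial B_R$ and extended to all of $\R^n$ so as to keep $\lambda_1(-\Delta+c_i,\R^n)\ge0$; taking $c_i\ge0$ globally trivially forces $\lambda_1\ge0$, so the real point is to control the \emph{size} of $c_i$ in the uniformly local norm. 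The key computation is to identify $r_i=r_0(\ld_i,B_{2R})$ from \eqref{defr0}: since the only nonzero coefficient contribution is $\|c_i\|^{\gamma_q}_{q,r,B_{2R}}$ and $\|c_i\|_{q,r,B_{2R}}\approx k_i^2 r^{n/q}$ on balls of radius $r$ inside the "bulk", the defining condition $r(r_{B_{2R}}^{-1}+\|c_i\|^{\gamma_q}_{q,r,B_{2R}})\le1$ becomes $r\cdot (k_i^2 r^{n/q})^{1/(2-n/q)}\lesssim 1$, i.e. $r\lesssim k_i^{-2/(2-n/q)}\cdot$(harmless) for large $i$, so $r_i\sim k_i^{-2\gamma_q}$ and hence $M_i=\|c_i\|^{\gamma_q}_{q,r_i,B_{2R}}\sim (k_i^2 r_i^{n/q})^{\gamma_q}\sim k_i$ (up to constants depending only on $n,q$). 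Thus $M_iR\sim k_iR$, and we must produce a solution with $(u_i/d)(0)\gtrsim e^{ck_iR}\|f_i\|_{L^q(B_R)}$.

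For the solution itself I would solve the radial ODE $-u_i''-\tfrac{n-1}{\rho}u_i'+k_i^2 u_i=f_i$ on $(0,R)$ with $u_i(R)=0$; choosing $f_i$ to be, say, a fixed bump supported near $\rho=R$ (or a suitable smooth profile), the Green's function of $-\Delta+k_i^2$ on $B_R$ grows like $e^{k_i(R-|x|)}$ away from the source region, so $u_i(0)$ picks up a factor $\sim e^{ck_iR}$ while $\|f_i\|_{L^q(B_R)}$ stays bounded (independently of $i$); normalizing $f_i$ appropriately gives \eqref{optimLinfty2}. One has to be slightly careful that the cutoff region where $c_i$ is not exactly $k_i^2$ only contributes a bounded (in $i$) multiplicative error to both sides, which follows because that region has fixed size $\sim R$ and $c_i$ is bounded there by a constant multiple of $k_i^2$ — but the \emph{sign} and the global extension are what keep $\lambda_1\ge0$, so I would simply take $c_i\ge0$ everywhere, constant outside a large ball, which makes $\lambda_1(-\Delta+c_i,\R^n)=\inf c_i\ge0$ automatic. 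The main obstacle is the bookkeeping matching $r_i$, $M_i$ and the Green's-function growth rate so that the exponent in \eqref{optimLinfty2} is genuinely of the form $cM_iR$ with $c$ independent of $i$ and $R$ (scale invariance in $R$ should be checked by the substitution $x\mapsto Rx$, reducing to $R=1$).

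For part (ii) the strategy is parallel but with a pure drift $\ld_i=-\Delta+b_i\cdot\nabla$. The cleanest choice is a radial inward (or outward) drift $b_i(x)=k_i\,\omega(|x|)\,\tfrac{x}{|x|}$ with $\omega$ a fixed profile equal to $1$ on the bulk and cut off smoothly, extended to a bounded field on $\R^n$; since a pure drift operator has $\lambda_1(-\Delta+b_i\cdot\nabla,\R^n)=0$ (constants are in the kernel, and nonnegativity of $\lambda_1$ for operators with no zero-order term is standard — this is exactly the kind of statement recalled in the appendix), the spectral constraint is free. Again $\|b_i\|_{q,r,B_{2R}}\approx k_i r^{n/q}$ gives, from \eqref{defr0} with $\beta_q=1/(1-n/q)$, the scale $r_i\sim k_i^{-\beta_q}$ and $M_i=\|b_i\|^{\beta_q}_{q,r_i,B_{2R}}\sim k_i$. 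Solving $-u_i''-\tfrac{n-1}{\rho}u_i'+k_i u_i'=f_i$ (note the sign of the drift term; an inward drift $b_i\cdot\nabla = -k_i\partial_\rho$ acting on a radial decreasing profile behaves like $+k_i u_i'$ with $u_i'\le0$ near the source), the associated Green's function again grows exponentially in $k_i$ as one moves from $\partial B_R$ toward the origin, because the homogeneous equation $-u''+k_i u' =0$ has the solution $u=$const and $u=e^{k_i\rho}$, and the decaying-at-the-source/boundary-vanishing combination produces the factor $e^{ck_iR}$. Choosing $f_i$ a fixed bump near $\rho=R$ as before and normalizing yields \eqref{optimLinfty2} with $M_i=\|b_i\|^{\beta_q}_{q,r_i,B_{2R}}$, and since $e^{C_0(M+\kappa^{-1})R}$ with $\kappa=R$ is exactly $e^{C_0(M_i+R^{-1})R}\sim e^{C_0M_iR}$, this matches the upper bound in Theorems~\ref{thm1} and~\ref{thm3}, proving their optimality. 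The one point requiring genuine care here, rather than routine ODE manipulation, is verifying that the direction of the drift is chosen so that the exponential growth is picked up at $x=0$ (the wrong sign would give decay), and that the uniformly local norm $\|b_i\|_{q,r_i,B_{2R}}$ — computed on the \emph{larger} ball $B_{2R}$ as the statement requires, which is where the extension of $b_i$ to a bounded field matters — is still comparable to $k_i r_i^{n/q}$ and not inflated by the transition or exterior region.
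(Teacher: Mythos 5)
Part (i) of your proposal has a fatal flaw. You commit to taking $c_i\ge 0$ everywhere (so that $\lambda_1\ge 0$ is ``automatic''), with $c_i\approx k_i^2$ on the bulk of $B_R$, and you claim that the Green's function of $-\Delta+k_i^2$ on $B_R$ ``grows like $e^{k_i(R-|x|)}$''. Both points are wrong, and they are linked. The Green's function of $-\Delta+k^2$ is the Yukawa potential: it \emph{decays} like $e^{-k|x-y|}$, so with $f_i$ supported near $\partial B_R$ you would get $u_i(0)\lesssim e^{-ck_iR}\|f_i\|$, the opposite of what you need. More structurally, no counterexample can exist in the class $c_i\ge 0$: if $w$ solves $-\Delta w=f_i^+$ in $B_R$, $w=0$ on $\partial B_R$, then $z=u_i-w$ satisfies $-\Delta z+c_iz=f_i-f_i^+-c_iw\le 0$ with $z=0$ on the boundary, so $u_i\le w$ by the maximum principle, and hence $(u_i/d)(0)\le C(n,q)R^{1-n/q}\|f_i\|_{L^q(B_R)}$ with a constant \emph{independent of $c_i$}. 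The exponential lower bound \eqref{optimLinfty2} therefore forces the potential to be negative somewhere. This is exactly what the paper does: it takes $v=e^{\lambda\phi}$ with $\phi=\frac12(1-|x|^2)$ and reverse-engineers $c=\lambda^2|x|^2-\lambda n$ (negative near the origin), so that $-\Delta v+cv=0$ on all of $\R^n$; the condition $\lambda_1(-\ld,\R^n)\ge 0$ then follows not from a sign condition on $c$ but from the existence of the global positive solution $v$ via the characterization \eqref{charlambda1}. Setting $u=v-1$, $f=-c$ gives $u(0)/\|f\|_{L^q}\gtrsim \lambda^{-2}e^{\lambda/2}$ while $M\le C\lambda$ by \eqref{culinfty}. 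Your proposal contains no mechanism to replace this, so part (i) does not go through.

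Part (ii) is closer to workable: an \emph{outward} radial drift of size $k_i$ does produce a Green's function gain of order $e^{ck_i R}$ at the origin (the 1-D model $-u''+ku'=f$ has $G(0,s)\sim k^{-1}e^{k(R-s)}$), your identification $M_i\sim k_i$ via \eqref{defr0} and Proposition~\ref{lemr0ul} is the same as the paper's, and $\lambda_1\ge 0$ from $(-\Delta+b\cdot\nabla)1=0$ is exactly the paper's argument. But two points you flag as ``requiring care'' are left genuinely unresolved: the sign of the drift (your parenthetical about the inward drift is incorrect as written — the gain requires the outward direction), and the placement of $f_i$ (a bump literally concentrated at $\rho=R$ gives $G(0,s)\sim R-s$, no exponential gain; the source must sit at distance comparable to $R$ from $\partial B_R$). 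The paper avoids the ODE/Green's function analysis entirely by writing the solution explicitly, $u=e^{\lambda\sqrt 2}-e^{\lambda\sqrt{1+|x|^2}}$, choosing $b$ so that $f:=-\Delta u+b\cdot\nabla u$ is supported in $\{|x|\le 3/4\}$ with $\|f\|_\infty\le C\lambda^2e^{5\lambda/4}$, and comparing with $u(0)\ge Ce^{\lambda\sqrt 2}$; the gain comes from $\sqrt 2>5/4$. Either route can be made rigorous, but as written your argument for (ii) is incomplete and your argument for (i) cannot be repaired within the class of potentials you chose.
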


 We will also see that the same example as in Proposition \ref{Prop-Optim-Linfty} (ii) can be used to show the optimality of the gradient bounds in Theorem \ref{thm2} for \eqref{optimLinfty11}. For more details and examples for problems as in \eqref{optimLinfty1}, see Remark \ref{gradopt} below.

Next, we turn to the optimality of the spectral bounds in Theorem~\ref{thm4} and Proposition~\ref{upperboundlambda1}.

\begin{prop} \label{Optim-spectral1} (i) For each $R>0$ there exist sequences $b_i, c_i\in C^\infty(\overline{B_{2R}})$
such that the operator $\ld_i:=-\Delta+c_i$ (resp. $-\Delta+b_i\cdot\nabla$) satisfies
\be \label{optim-spectralineq1}
0\le \lambda_1(-\mathcal{L},B_{2R}) \le\lambda_1(-\mathcal{L},B_R) \le \tilde C e^{-CM_iR}R^{-2},
\ee
with $M_i:=M(\ld_i,B_{2R})=\|c_i\|^{\gamma_q}_{q,r_i,B_{2R}}$ (resp., $\|b_i\|^{\beta_q}_{q,r_i,B_{2R}}$),
where $r_i=r_0(\ld_i,B_{2R})$, $M_i\to\infty$, and $C, \tilde C>0$ depend only on $n$.

\smallskip

(ii) Assertion (i) remains true with $B_{2R}$ replaced by $B_R$ and \eqref{optim-spectralineq1} replaced by
$$\lambda_1(-\mathcal{L},B_R) \ge  C(M_i+R^{-1})^2.$$
\end{prop}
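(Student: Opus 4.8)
\emph{Overview and part (ii).} The plan is to prove (ii) by an explicit constant–coefficient computation and (i) by a radial example; (i) is the substantive part. For (ii), on $B_R$ take $c_i\equiv\mu_i$ with $\mu_i\to\infty$ in the Schr\"odinger case and $b_i\equiv\beta_i e_1$ with $\beta_i\to\infty$ in the drift case. Then $\lambda_1(-\Delta+\mu_i,B_R)=\mu_i+\lambda_1(-\Delta,B_R)\ge\mu_i$ and, after the change of unknown $u=e^{-\beta_ix_1/2}w$, $\lambda_1(-\Delta+\beta_ie_1\!\cdot\!\nabla,B_R)=\tfrac14\beta_i^2+\lambda_1(-\Delta,B_R)\ge\tfrac14\beta_i^2$. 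The only real point is to read off $M_i$: for a constant‐size coefficient $a$ one has $\|a\|_{q,r,B_R}=|B_1|^{1/q}a\,r^{n/q}$ for $r\le R$, and since $1+\tfrac nq\gamma_q=2\gamma_q$ and $1+\tfrac nq\beta_q=\beta_q$, solving \eqref{defr0} makes all the $q$‑dependent exponents cancel, giving $r_0\sim a^{-1/2}$ (resp. $a^{-1}$) and then $M_i\sim c(n)a^{1/2}$ (resp. $c(n)a$) with $c(n)$ depending on $n$ only (up to the harmless bounded factor $|B_1|^{1/(2q)}$); moreover $r_0\le r_{B_R}=c(n)R$ forces $M_i\gtrsim c(n)R^{-1}$. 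Hence $\lambda_1(-\mathcal{L},B_R)\ge c(n)M_i^2\ge C(n)(M_i+R^{-1})^2$, which is (ii) and establishes the optimality of Proposition~\ref{upperboundlambda1}.

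\emph{Part (i), the radial drift example.} Fix $R$; for $\beta>0$ large, let $b_\beta\in C^\infty(\overline{B_{2R}})$ be a mollification of the outward radial field $\beta x/|x|=\nabla(\beta|x|)$, equal to it on $\{|x|\ge1/\beta\}$ and bounded by $\beta$. With $g:=\beta|x|$,
\be\label{rad-selfadj}
-\Delta u+b_\beta\!\cdot\!\nabla u=-e^{g}\,\mathrm{div}\bigl(e^{-g}\nabla u\bigr),
\ee
so this operator is self-adjoint in $L^2(e^{-g}\,dx)$, with principal Dirichlet eigenvalue on a domain $\Omega$ equal to $\inf\{\int_\Omega e^{-g}|\nabla\phi|^2/\int_\Omega e^{-g}\phi^2:\ \phi\in H^1_0(\Omega)\}$. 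On $\Omega=B_{2R}$ this infimum is positive by the weighted Poincar\'e inequality, so $\lambda_1(-\mathcal{L},B_{2R})\ge0$ holds for free, and $\lambda_1(-\mathcal{L},B_{2R})\le\lambda_1(-\mathcal{L},B_R)$ since $H^1_0(B_R)\subset H^1_0(B_{2R})$. On $\Omega=B_R$, testing with a fixed cutoff $\phi=\zeta(|x|/R)$ ($\zeta\equiv1$ near $0$, $\mathrm{supp}\,\zeta\subset[0,1)$) and using $e^{-g}\le e^{-\beta R/2}$ on $\mathrm{supp}\,\nabla\phi$ together with $\int_{B_R}e^{-g}\phi^2\ge\int_{B_{1/\beta}}e^{-g}\ge c(n)\beta^{-n}$ (the weight concentrates near the origin), one gets $\lambda_1(-\mathcal{L},B_R)\le c(n)\beta^{n}R^{n-2}e^{-\beta R/2}$. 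As in (ii), $\|b_\beta\|_{q,r,B_{2R}}\sim|B_1|^{1/q}\beta r^{n/q}$ gives $r_0(\mathcal{L},B_{2R})\sim c(n)\beta^{-1}$ and $M:=M(\mathcal{L},B_{2R})\sim c(n)\beta\to\infty$, with $c(n)$ depending on $n$ only. Finally $c(n)\beta^{n}R^{n-2}e^{-\beta R/2}\le\tilde C(n)\,e^{-C(n)MR}R^{-2}$: writing $t=\beta R$ and using $\sup_{t>0}t^{n}e^{-\epsilon t}<\infty$, this holds with $n$‑dependent $C,\tilde C$ once $C(n)c(n)<\tfrac12$. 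Taking $\beta=\beta_i\to\infty$ finishes the drift case.

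\emph{Part (i), the Schr\"odinger example, and the main obstacle.} The Schr\"odinger example follows from the previous one by the ground‑state transformation $u\mapsto e^{g/2}u$, which carries $-\Delta+b_\beta\!\cdot\!\nabla$ into $-\Delta+W_\beta$ with $W_\beta=\tfrac14|\nabla g|^2-\tfrac12\Delta g=\tfrac14\beta^2-\tfrac{(n-1)\beta}{2|x|}$, a Schr\"odinger operator with the same Dirichlet spectrum on every ball; one takes $c_i$ to be a regularization of $W_{\beta_i}$ near $0$ and checks that this perturbs $\lambda_1(-\Delta+c_i,B_{2R})\ge0$ and $M_i\sim c(n)\beta_i$ only negligibly. (A self-contained alternative is a deep narrow well inside $B_R$ together with a barrier $c_i\equiv\nu_i$ on $\{R<|x|<2R\}$, $\nu_i\to\infty$ so $M_i\sim c(n)\sqrt{\nu_i}$, with the well depth tuned so that $\lambda_1(-\Delta+c_i,B_{2R})=0$; the gap $\lambda_1(-\Delta+c_i,B_R)-\lambda_1(-\Delta+c_i,B_{2R})$ is then $\le Ce^{-c\sqrt{\nu_i}R}$ by comparison with the exponential supersolution $e^{-\sqrt{\nu_i/2}\,\mathrm{dist}(\cdot,\partial B_R)}$ in the barrier.) The delicate point is all in part (i): one must exhibit an operator whose principal eigenvalue drops from $\ge0$ on $B_{2R}$ to something exponentially small on $B_R$, while the coefficient norm $M$ measured on the \emph{larger} ball is large and the constants $C,\tilde C$ depend on $n$ only. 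The radial drift makes the eigenvalue behaviour transparent, so the real work is the two accountancy steps where the $n$‑only constants actually come from — solving the implicit relation \eqref{defr0} to see that all $q$‑dependence cancels for constant‑size coefficients, and absorbing the polynomial prefactor $\beta^nR^{n-2}$ into $e^{-\beta R/2}$ — together with, in the direct Schr\"odinger construction, a tunneling estimate for the eigenvalue gap that is uniform in $\nu_i$ and in the width of the transition layer between the well and the barrier.
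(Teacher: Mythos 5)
Your route is genuinely different from the paper's. The paper proves (i) by the duality inequality $\lambda_1\le \sup_{B_1}f/\inf_{\{f>0\}}u$ (obtained by pairing with the adjoint eigenfunction), applied to the same explicit smooth pairs $(u,c)$ and $(u,b)$ already built for Proposition~\ref{Prop-Optim-Linfty}; this handles both the Schr\"odinger and drift cases uniformly, with no variational structure and no regularization. You instead exploit the self-adjointness of $-\Delta+\nabla g\cdot\nabla$ in $L^2(e^{-g}dx)$ and read off the eigenvalue collapse as an exponentially small weighted Poincar\'e constant; this is conceptually transparent and your drift case of (i) is correct, as is (ii) (the paper's (ii) uses $c\equiv-\lambda$ and the supersolution $e^{\lambda x_1}$ rather than the exact conjugation, but the computations are equivalent). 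Your bookkeeping for $M_i$ via the cancellations $1+\tfrac nq\beta_q=\beta_q$ and $1+\tfrac nq\gamma_q=2\gamma_q$ reproduces what the paper packages as Proposition~\ref{lemr0ul} and Lemma~\ref{Mitoinfty}. One small slip in (ii): $r_0\le r_{B_R}$ does \emph{not} force $M_i\gtrsim R^{-1}$ (by \eqref{relMr0}, $M=r_0^{-1}-r_{B_R}^{-1}$ can be arbitrarily small); but the conclusion survives because $\lambda_1(-\Delta,B_R)=\mu_1R^{-2}$ supplies the $R^{-2}$ term, so $\lambda_1\ge c(M_i^2+R^{-2})\ge \tfrac c2(M_i+R^{-1})^2$.

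The genuine gap is in the Schr\"odinger half of (i). You conjugate first and regularize second: you form $W_\beta=\tfrac14\beta^2-\tfrac{(n-1)\beta}{2|x|}$ and then take $c_i$ to be ``a regularization of $W_{\beta_i}$ near $0$'', asserting the perturbation is negligible. As written this does not close: a one-sided truncation $c_i\ge W_{\beta_i}$ preserves $\lambda_1(B_{2R})\ge 0$ but only \emph{increases} $\lambda_1(B_R)$, which is the quantity you need to keep exponentially small; a truncation $c_i\le W_{\beta_i}$ does the opposite. A two-sided perturbation bound is nontrivial here because the difference is of size $\beta/\epsilon$ on $B_\epsilon$ and the relevant eigenfunction ($e^{-g/2}$ times the drift eigenfunction) concentrates precisely at the origin, so $\sup_{B_\epsilon}\phi_1^2/\int\phi_1^2$ is not obviously harmless. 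The fix is to reverse the order of operations: mollify the \emph{drift} first (as you already do, with $b_\beta=\nabla g_\epsilon$ for $g_\epsilon=\beta|\cdot|\ast\psi_\epsilon$, $\epsilon=\beta^{-1}$), and then set $c_i:=\tfrac14|\nabla g_\epsilon|^2-\tfrac12\Delta g_\epsilon\in C^\infty$, $\|c_i\|_\infty\le C\beta^2$. The conjugation $u\mapsto e^{g_\epsilon/2}u$ is then an exact similarity on every ball, so $\lambda_1(-\Delta+c_i,\Omega)=\lambda_1(-\Delta+b_{\beta_i}\cdot\nabla,\Omega)$ for $\Omega=B_R,B_{2R}$ with no error term, and all bounds transfer verbatim from your drift case with $M_i\sim\|c_i\|_\infty^{1/2}\sim\beta_i$. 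Your alternative ``deep well plus barrier'' construction would require the tunneling estimate you mention, which is likewise not carried out, so I would drop it in favor of the exact conjugation.
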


We now state the optimality of the Hopf estimate in Theorem~\ref{OptimizedHopf}.

\begin{prop} \label{Prop-Optim-Hopf1}
For all $R, M>0$, there exist
$c\in C^\infty(\overline B_R)$ and $f\in C^\infty(\overline B_R)$ with $c\ge 0$, $f>0$,
such that the classical solution $u>0$ of
\be \label{optimHopf1}
\left\{\hskip 2mm\begin{aligned}
-\ld u:=-\Delta u+cu&=f, &\quad&x\in B_R,\\
u&=0, &\quad&x\in \partial B_R
\end{aligned}
\right.
\ee
satisfies, for some $C=C(n)>0$,
\be \label{optimHopf2}
\ds\inf_{\partial B_R} |u_\nu| \le C(n)R^{-n}e^{-MR/4} \int_{B_R}fd,
\ee
with $M=M(\ld,B_R)=\|c\|^{\gamma_q}_{q,r_0,B_R}$, $r_0=r_0(\ld,B_R)$, $u_\nu$ is the normal derivative of $u$ at $\partial B_R$.

(ii) Assertion (i) remains valid if problem \eqref{optimHopf1} is replaced by
$$
\left\{\hskip 2mm\begin{aligned}
-\ld u:=-\Delta u+b\cdot\nabla u&=f, &\quad&x\in B_R,\\
u&=0, &\quad&x\in \partial B_R
\end{aligned}
\right.
$$
and
$\|b\|^{\beta_q}_{q,r_0,B_R}=M.$
\end{prop}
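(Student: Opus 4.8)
The plan is to construct explicit radial examples on $B_R$ for which the quantitative Hopf constant in \eqref{conclHopf} cannot be improved. For part (i) (Schrödinger case), the idea is to take a potential $c\ge0$ which is large (of the order $M^2$ in an appropriate uniformly-local sense) on most of $B_R$ but {\it tapered off to zero near $\partial B_R$}, so that the solution $u$ of \eqref{optimHopf1} is pushed down exponentially in the interior while $f$ retains a sizable $L^1_d$-norm. Concretely, I would work with $c$ comparable to $m^2$ on, say, $B_{R/2}$ and smoothly cut off to $0$ on $B_R\setminus B_{3R/4}$, and take $f\equiv1$ (or a fixed smooth positive profile). By a radial ODE analysis — or by comparison with the one-dimensional problem $-w''+m^2w=1$ — the solution $u$ behaves like $m^{-2}$ in the bulk but its boundary slope $|u_\nu|$ is controlled by the behavior near $\partial B_R$, where $c$ is small; a barrier argument (comparing $u$ with a multiple of the torsion function of an annulus, or with $e^{-m\,\mathrm{dist}(x,B_{R/2})}$) gives $|u_\nu|\le C(n)R^{-n}e^{-mR/4}$ on $\partial B_R$, while $\int_{B_R}fd\ge c(n)R^{n+1}$. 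The last ingredient is to verify that for this choice, $M(\ld,B_R)=\|c\|_{q,r_0,B_R}^{\gamma_q}$ is comparable to $m$; this requires understanding how $r_0=r_0(\ld,B_R)$ is selected, but since $c$ is bounded by $m^2$ and $r_0\sim\min(R,m^{-1})$, one gets $\|c\|_{q,r_0,B_R}^{\gamma_q}\sim m$ by a direct computation from \eqref{defr0}, \eqref{defM}. Choosing $m=M$ then yields \eqref{optimHopf2}.

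For part (ii) (drift case) I would run the analogous construction with a {\it radial drift} $b(x)=\beta(|x|)\,x/|x|$, again large ($|b|\sim m$) in the bulk and cut off near $\partial B_R$. Here the relevant ODE is $-u''-\frac{n-1}{r}u'+\beta u'=f$, which can be written in divergence form $-(\rho u')'=\rho f$ with an integrating factor $\rho(r)=r^{n-1}e^{-\int_0^r\beta}$; since $\beta>0$ this weight decays like $e^{-mr}$, forcing $u$ to decay correspondingly and making $|u_\nu|$ exponentially small, while $\int fd$ stays of order $R^{n+1}$. One checks $\|b\|_{q,r_0,B_R}^{\beta_q}\sim m$ just as before (now the cap in \eqref{defr0} coming from $\|b_2\|_{q,r,\Omega}^{\beta_q}$ gives $r_0\sim\min(R,m^{-1})$), and sets $m=M$. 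I expect the proof to be short because the divergence structure makes the integrating-factor computation clean; one should double-check that the spectral/sign hypotheses of Theorem~\ref{OptimizedHopf} are met (for (i) we need $c\ge0$, which holds; for (ii) there is no sign condition on $b$), and that $u>0$, which follows from $f>0$ and the maximum principle (in case (ii) the drift operator with no zero-order term satisfies the maximum principle on any bounded domain).

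The main obstacle is the bookkeeping around $r_0$ and $M$: since $r_0$ is defined implicitly through \eqref{defr0} in terms of the very coefficients one is constructing, one must verify carefully — for the specific profiles chosen — that $r_0(\ld_i,B_R)$ is of order $\min(R,m^{-1})$ and hence that $M_i$ really is of order $m$ (and, crucially, tends to $\infty$ along the sequence, so that the exponential factor $e^{-M_iR/4}$ genuinely beats any fixed polynomial loss). A secondary technical point is producing a {\it lower} bound on $\int_{B_R}fd$ and an {\it upper} bound on $\min_{\partial B_R}|u_\nu|$ that are both explicit in $m$ and $R$; the cleanest route is to sandwich $u$ between two radial barriers on the annulus $B_R\setminus B_{R/2}$ — from above by a constant times a suitably translated exponential $e^{-m(R/2 - r)}$-type function vanishing on $\partial B_R$, and from below by the torsion function — using that the coefficients are under control ($c$ or $b$ small, of fixed size independent of $m$) in precisely that annulus. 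Once the barriers are in place, differentiating at $r=R$ gives the desired bound on $|u_\nu|$ and the proposition follows.
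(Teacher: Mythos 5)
There is a genuine gap: the mechanism you propose cannot produce the exponential factor in \eqref{optimHopf2}. With $f\equiv 1$ and a potential (or drift) that is large in the bulk but \emph{tapered to zero near $\partial B_R$}, the part of $\int_{B_R}f\,d$ supported near the boundary is transmitted to the boundary with no exponential loss: in case (i), on the outer annulus where $c=0$ you have $-\Delta u=1$, so $u$ dominates the torsion function of that annulus and hence $\inf_{\partial B_R}|u_\nu|\ge c(n)R$ \emph{independently of $m$}, while $\int_{B_R}f\,d\sim R^{n+1}$; the ratio is $\sim R^{-n}$ with no decay in $m$ at all. Your claimed upper barrier $e^{-m\,\mathrm{dist}(x,B_{R/2})}$ is only a supersolution of the homogeneous equation; it cannot lie above a solution with source $f\equiv1$. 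In case (ii) the situation is worse: from $(\rho u')'=-\rho f$ with $\rho(r)=r^{n-1}e^{-\int_0^r\beta}$ one gets $|u'(R)|=\rho(R)^{-1}\int_0^R\rho f$, and for an \emph{outward} drift ($\beta>0$) the prefactor $\rho(R)^{-1}\sim R^{1-n}e^{mR/2}$ makes $|u_\nu|$ exponentially \emph{large}; even keeping only the near-boundary contribution $\int_{3R/4}^R\rho\,ds\sim\rho(R)R$ gives $|u_\nu|\gtrsim R$. The underlying point is that the exponential constant in \eqref{conclHopf} does not express exponential damping of $u_\nu$ for unit sources; it expresses that $\|f\|_{L^1_d}$ can be exponentially larger than the boundary flux. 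Any counterexample must therefore make $f$ (equivalently $u$) exponentially large in the interior, or concentrate $f$ strictly inside the region shielded by the large coefficients — neither of which your construction does.

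The paper's proof is structurally different and short: after scaling to $R=1$, set $\phi=\frac12(1-|x|^2)$ and $u=e^{\lambda\phi}-1+K\phi$, and choose $c=\lambda^2|x|^2$ with $b=0$ (note: $c$ is large near the \emph{boundary}), resp.\ $b=-\lambda x$ (an \emph{inward} drift) with $c=0$. Then $f:=-\ld u\ge n\lambda e^{\lambda\phi}>0$, so $\int_{B_1}fd\ge C(n)\lambda e^{3\lambda/8}$ is exponentially large, while $|u_\nu|=\lambda+K\le\lambda(1+\lambda)$ is only polynomial in $\lambda$; combined with $M\le C\lambda$ (Proposition~\ref{lemr0ul}) this gives \eqref{optimHopf2}, and the continuity of $\lambda\mapsto M(\lambda)$ from Proposition~\ref{lemr0ul}(ii) is used to realize every prescribed value $M>0$, a point your proposal also leaves unaddressed (you only aim at a sequence $M_i\to\infty$). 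Your bookkeeping of $r_0$ and the estimate $\|c\|^{\gamma_q}_{q,r_0,B_R}\sim m$ is essentially fine; it is the analytic core — the claimed exponential smallness of $|u_\nu|$ — that fails.
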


 As we already noted, we will make essential use of optimized versions of the Harnack-type results in \cite{GSS}, which are stated below in \eqref{sharpWBHI}-\eqref{sharpBHI2}. Because of the importance of these estimates, we discuss their optimality too. First, the optimality of the constant in the global weak Harnack estimate \eqref{sharpWBHI2} is trivial and follows similarly to the optimality of the interior estimate (see \cite[Remark~2.3]{SS2}),  by considering one-dimensional solutions of $u^{\prime\prime} +cu = 0$ or $u^{\prime\prime} +bu^\prime = 0$. The same examples show the optimality of the  constant in the log-grad estimate \eqref{concllgeu} in the case $f=0$,  if we do not require a Dirichlet boundary condition.

Next, we will prove the
optimality of the constant in the global Harnack inequality \eqref{sharpBHI2} (when $u=0$ on $\partial\Omega$ is required). The same example
can be used to show that the log-grad estimate \eqref{concllgeu} in the case $f=0$ is optimal even for solutions which vanish on the boundary.

\begin{prop}\label{Optimality_Harnack}
Let $R>0$.
There exists a constant
$\lambda_0=\lambda_0(n)>0$,  sequences $b_i, c_i\in C^\infty(\overline{B_R})$,
and a classical solution $u_i>0$ of
\be \label{optimLinfty1ci}
\left\{\hskip 2mm\begin{aligned}
-\ld_iu_i:=-\Delta u_i+c_iu_i&=0, &\quad&x\in B_R,\\
u_i&=0, &\quad&x\in \partial B_R,
\end{aligned}
\right.
\ee
resp. of
\be \label{optimLinfty1bi}
\left\{\hskip 2mm\begin{aligned}
-\ld_iu_i:=-\Delta u_i + b_i\cdot\nabla u_i - \lambda_0R^{-2} u_i&=0, &\quad&x\in B_R,\\
u_i&=0, &\quad&x\in \partial B_R,
\end{aligned}
\right.
\ee
such that $\|b_i\|_{q,r_i,B_R}$, $\|c_i\|_{q,r_i,B_R}\to\infty$ with $r_i=r_0(\ld_i,B_R)$,
\be \label{optimLinfty23}
\frac{\sup_{B_R}(u_i/d)}{\inf_{B_R}(u_i/d)}\ge Ce^{CM_iR}
\ee
and
\be \label{optimLinfty2b}
\frac{|\nabla u_i|}{u_i}\ge CM_i\quad\hbox{on $\{|x|=R/2\}$},
\ee
where $C=C(n)>0$ and $M_i=\|c_i\|^{\gamma_q}_{q,r_i,B_R}$, resp. $M_i=\|b_i\|^{\beta_q}_{q,r_i,B_R}$.
\end{prop}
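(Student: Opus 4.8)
The plan is to construct explicit radial examples for which the solutions are given by elementary functions, so that the ratios in \eqref{optimLinfty23} and \eqref{optimLinfty2b} can be computed or estimated directly. By scaling it suffices to treat $R=1$, and then rescale at the end (note that all three quantities $\sup(u/d)/\inf(u/d)$, $|\nabla u|/u$ measured against $M_iR$, and the constraint $\|b_i\|_{q,r_i,B_R}\to\infty$ are compatible with the natural parabolic/elliptic scaling $x\mapsto x/R$, $M\mapsto MR$). For the Schr\"odinger case \eqref{optimLinfty1ci}, I would look for a radial potential $c_i=c_i(|x|)\ge 0$, supported (or concentrated) in a thin shell near $|x|=1/2$ of width $\sim 1/M_i$ and of height $\sim M_i^2$, so that $\|c_i\|^{\gamma_q}_{q,r_i,B_1}\sim M_i$ after checking how $r_0$ behaves (here $r_i=r_0(\ld_i,B_1)$ will itself be of order $1/M_i$, which one must verify from \eqref{defr0}). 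Across such a shell the radial ODE $-(r^{n-1}u')' + r^{n-1}c_i u = 0$ forces $u$ to pick up a multiplicative factor $\sim e^{cM_i}$ (a barrier/ODE-comparison argument: on the shell $u$ behaves essentially like $\cosh$ of $\sqrt{c_i}$ times arclength), which produces \eqref{optimLinfty23}, and on $\{|x|=1/2\}$ itself, where $c_i$ is large, the logarithmic derivative $u'/u$ is of order $\sqrt{c_i}\sim M_i$, giving \eqref{optimLinfty2b}. One must also arrange $u_i>0$ on all of $B_1$ with $u_i=0$ on $\partial B_1$: this is automatic for the solution of the Dirichlet problem with $f=0$ provided $\lambda_1(-\ld_i,B_1)>0$, which holds since $c_i\ge 0$; positivity of such a solution means it is (a multiple of) the first eigenfunction only in the degenerate case, so instead I would prescribe $u_i$ itself piecewise (linear-ish near the center and near the boundary, exponential across the shell) and \emph{define} $c_i$ by $c_i := \Delta u_i / u_i$ on the shell and $c_i\equiv 0$ elsewhere, smoothing the junctions; then $u_i$ solves $-\Delta u_i + c_i u_i = 0$ by construction.

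For the drift case \eqref{optimLinfty1bi}, the same shell strategy applies but now with a radial drift $b_i = b_i(|x|)\,x/|x|$ of size $\sim M_i$ on a shell of width $\sim 1/M_i$ near $|x|=1/2$. The radial equation becomes $-(r^{n-1}u')' + r^{n-1} b_i u' - \lambda_0 r^{n-1} u = 0$; the first-order term again forces exponential growth/decay of $u$ of amplitude $e^{cM_i}$ across the shell (comparison with solutions of $u'' = b_i u'$, i.e. $u' \propto \exp(\int b_i)$), and $u'/u \sim M_i$ on $\{|x|=1/2\}$. The role of the extra zeroth-order term $-\lambda_0 R^{-2}u$ with $\lambda_0=\lambda_0(n)$ small is to guarantee that the relevant Dirichlet problem is still solvable with a \emph{positive} solution and, more importantly, that $\lambda_1$ of the shifted operator stays nonnegative (so that the operator is of the type the main theorems apply to, making the optimality statement meaningful); since for a pure drift $-\Delta + b\cdot\nabla$ one has $\lambda_1 > 0$ but possibly very small, subtracting a fixed $\lambda_0 R^{-2}$ keeps $\lambda_1(-\ld_i,B_1)\ge 0$ as long as $\lambda_0$ is below a dimensional lower bound for $\lambda_1(-\Delta+b\cdot\nabla, B_1)$ — but such a uniform lower bound does \emph{not} hold for arbitrary drifts, so one has to be careful: the point is that for our \emph{specific} radial drifts concentrated on a thin shell, $\lambda_1(-\Delta + b_i\cdot\nabla, B_1)$ is bounded below by a dimensional constant (intuitively, a thin-shell drift cannot destroy the spectral gap), and this is what must be checked.

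The technical backbone in both cases is the \textbf{same one-dimensional ODE comparison lemma}: on an interval $[a,b]$ with $b-a = \delta$, a positive solution of $-(r^{n-1}w')' + r^{n-1} P w' + r^{n-1}Q w = 0$ with $|P|,\sqrt{|Q|} \sim 1/\delta$ satisfies $w(b)/w(a) \le e^{C}$ and $\ge e^{-C}$ for a universal $C$, and its logarithmic derivative at interior points is $O(1/\delta)$; iterating or simply choosing the sign of $P$ (resp. the shape of $Q$) so that $w$ grows, one gets the lower bounds \eqref{optimLinfty23}, \eqref{optimLinfty2b}. I would phrase this cleanly once and apply it twice. The \textbf{main obstacle} I anticipate is \emph{not} the ODE analysis but the bookkeeping around $r_0(\ld_i,B_1)$: one must show that with the shell construction, $r_i$ is comparable to $1/M_i$ and hence $\|c_i\|^{\gamma_q}_{q,r_i,B_1}$ (resp. $\|b_i\|^{\beta_q}_{q,r_i,B_1}$) is comparable to the intended parameter $M_i$ — this is a self-referential computation (since $r_0$ depends on the norm which is evaluated at radius $r_0$), so one picks the shell width and height as functions of a free large parameter $L_i\to\infty$, computes $r_0$ as a function of $L_i$ from \eqref{defr0}, and then checks that $M_i := M(\ld_i,B_1)$ indeed tends to infinity and that all the asserted inequalities hold with it. A secondary subtlety is ensuring the constructed $c_i, b_i$ are genuinely $C^\infty$ (not merely continuous) and $c_i\ge 0$ after smoothing, and that $u_i$ remains strictly positive in the interior after the smoothing of the junctions — both handled by making the shell transitions smooth from the start and keeping the ``amplitude profile'' of $u_i$ monotone through the shell.
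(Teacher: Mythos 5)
There is a genuine quantitative gap at the heart of your construction: the thin-shell scaling does not produce the exponential Harnack ratio \eqref{optimLinfty23}. If the potential has height $\sim M_i^2$ on a shell of width $\delta\sim 1/M_i$, then by your own comparison ("$u$ behaves like $\cosh$ of $\sqrt{c_i}$ times arclength") the multiplicative gain of $u$ across the shell is $e^{\sqrt{c_i}\,\delta}=e^{M_i\cdot M_i^{-1}}=e^{O(1)}$, not $e^{cM_i}$. The same failure occurs for the drift: with $|b_i|\sim M_i$ on a shell of width $1/M_i$, one has $\int b_i\,dr\sim 1$, so $u'\propto e^{\int b_i}$ changes only by an $O(1)$ factor and $u$ itself even less. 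To get a factor $e^{cM_i}$ one needs $\sqrt{\|c_i\|_\infty}$ (resp.\ $\|b_i\|_\infty$) of order $M_i$ acting over an annulus of \emph{fixed} width; the bookkeeping then still gives $r_i\sim 1/M_i$ and $\|c_i\|^{\gamma_q}_{q,r_i,B_1}\sim M_i$ (via \eqref{culinfty}--\eqref{bulinfty}), because the uniformly local norm only sees a ball of radius $r_i$ at a time. This is exactly what the paper's proof does, in a way that avoids ODE comparison altogether: it prescribes $u_j=\varphi\,e^{jh}$ globally, with $\varphi$ the first Dirichlet eigenfunction and $h$ a \emph{fixed} radial nonincreasing profile with $|\nabla h|\ge c_0$ near $|x|=1/2$, and then reads off the coefficient ($c_j=-\Delta u_j/u_j$, resp.\ an explicit $b_j$ with $|b_j|\le Cj$). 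Then $\sup(u_j/d)/\inf(u_j/d)\ge Ce^{j}$, $|\nabla u_j|/u_j\ge j|\nabla h|\ge C_0 j$ on $|x|=1/2$, and $M_j\le Cj$, which is the whole point. Your log-gradient bound \eqref{optimLinfty2b} could survive a thin shell (a $O(1)$ change of $\log u$ over width $1/M_i$ forces $|\nabla\log u|\sim M_i$ somewhere), but \eqref{optimLinfty23} cannot.

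Two secondary issues. First, near $\partial B_1$ you must take $u_i$ exactly harmonic (or equal to the eigenfunction) rather than merely "linear-ish", since otherwise $c_i=\Delta u_i/u_i\sim d^{-1}$ is not even in $L^q$, let alone $C^\infty(\overline{B_1})$; the paper sidesteps this because $u_j/\varphi=e^{jh}$ is smooth and bounded away from $0$ up to the boundary. Second, the role of $\lambda_0$ is not to preserve a spectral gap: a positive solution of $-\ld_i u_i=0$ vanishing on $\partial B_R$ is necessarily a principal eigenfunction, so $\lambda_1(-\ld_i,B_R)=0$ by construction, and $\lambda_0$ (the first Dirichlet eigenvalue of $-\Delta$ in $B_1$ in the paper's choice) is the zeroth-order shift needed so that the factor $\varphi$ is compatible with an operator having no other zeroth-order term. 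Your claimed uniform lower bound on $\lambda_1(-\Delta+b_i\cdot\nabla,B_1)$ for thin-shell drifts is neither proved nor needed.
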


\begin{rem}\label{remoptloggrad}
We observe that, under the assumptions of Theorem \ref{lgeu}(i), we always have
\be \label{optimLinfty2b2}
\lim_{|x|\to R}\frac{d(x)|\nabla u(x)|}{u(x)}=1,
\ee
and \eqref{optimLinfty2b}-\eqref{optimLinfty2b2} thus
show the optimality of the log-grad estimate \eqref{concllgeu}.
\end{rem}

\begin{rem} We  will  see
that the exponential constant in front of the right-hand side $f$ in the Harnack inequality \eqref{sharpBHI2}
 below is optimal with respect to the coefficients too (see Proposition~\ref{Optimality_Harnack2}).
On the other hand, for the log-grad estimate \eqref{concllgeu2} in the non-homogeneous case (for which our result is the first of its kind), we do not know whether the exponential dependence in the coefficients in the constant in the second term of the estimate is optimal.
\end{rem}

\smallskip

Even though our focus here is on the optimal dependence of the constants in the coefficients and the size of the domain, it is also worth recalling  and discussing the much more classical question of the optimality of the Lebesgue norms which appear in the estimates. It is very well known that  $C^1$-estimates fail for coefficients or right-hand sides which are only in $L^n$ instead of $L^q$, $q>n$ (for instance $u(x)=|x|\log|\log|x||$ solves $\Delta u = f\in L^n$, and variations and smooth approximations of that function can be used to violate our estimates for right-hand sides in $L^n$).

\smallskip

Further, the norm $\|f\|_{L^1_d}$ in the right-hand side of the quantitative Hopf inequality \eqref{conclHopf} cannot be replaced by $\|f\|_{L^p_d}$ nor by $\|f\|_{L^1_{d^\epsilon}}$, for $p>1$ or $\epsilon<1$. This follows from classical estimates for the Green function of the Laplacian, see Remark~\ref{Hopf-optL1d}.
\smallskip

Finally, the question whether  the norm $\|f\|_{L^1_d}$ in the right-hand side of the log-grad estimate
\eqref{concllgeu2} can be improved appears to be more delicate. We have only partial results. In particular, we are able to prove that, in the case $n=2$, for any $p>1$ the estimate fails if $\|f\|_{L^1_d}$ is replaced by $\|f\|_{L^p_d}$ and $q$ is strictly larger but close to $n$. In other words, in this situation $C^1$-estimates are available but the log-grad estimate fails. See Proposition~\ref{PropLoggradOptimality} for  detailed statement.

\section{Auxiliary results} \label{sec-prelim}

\subsection{Optimized Harnack inequalities}
\label{precisedef}

We will make important use of the following optimized version of the recent global weak Harnack inequality, proved in \cite{GSS}.

\begin{thm}[Optimised BHI] \label{BHIoptim}
 Let $\Omega$ be a bounded $C^{1,\bar\alpha}$-domain of $\mathbb{R}^n$ with  geodesic diameter $D$, and let  $r_\Omega$ be defined in \eqref{def_romega}. Assume \eqref{hyp1}-\eqref{hyp2}.
There exist  constants $\epsilon, C_0>0$ depending only on $n,q,\alpha,\lambda,\Lambda$, such that if $u\ge0$ in $\Omega$ is a weak solution of $-\mathcal{L}u\ge f $ in $\Omega$, for some $f\in L^q(\Omega)$,
then
\begin{equation}\label{sharpWBHI}
\left(\int_{\Omega} \left(\frac{u}{d}\right)^\epsilon\right)^{1/\epsilon} \le
D^{n/\epsilon}
e^{C_0(r^{-1}_\Omega+ M)D} \left( \inf_{\Omega} \frac{u}{d} +
D^{1-\frac{n}{q}} \|f\|_{L^q(\Omega)}\right),
\end{equation}
where $M=M(\ld,\Omega)$ is defined in \eqref{defM}.
If $-\mathcal{L}u= f $, $u\ge0$ in $\Omega$, $u=0$ on $\partial \Omega$, then
\begin{equation}\label{sharpBHI}
\sup_{\Omega} \frac{u}{d}\le
e^{C_0(r^{-1}_\Omega+ M)D}\left( \inf_{\Omega} \frac{u}{d} +
D^{1-\frac{n}{q}} \|f\|_{L^q(\Omega)}\right).
\end{equation}
\end{thm}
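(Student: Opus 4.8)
The plan is to derive both \eqref{sharpWBHI} and \eqref{sharpBHI} from \emph{local} interior and boundary weak (resp.\ full) Harnack inequalities applied at the scale $r_0=r_0(\ld,\Omega)$ of \eqref{defr0}, then chained along paths of length at most the geodesic diameter $D$. The driving mechanism is the scaling of $\ld$: for $x_0\in\overline\Omega$ and $0<\rho\le r_0$, the function $v(y):=u(x_0+\rho y)$, defined on $(\Omega-x_0)/\rho$, solves $-\ld_\rho v\ge \rho^2 f(x_0+\rho\,\cdot)$ for an operator $\ld_\rho$ with unchanged $\lambda,\Lambda$ whose coefficients satisfy $[A_\rho]_\alpha^{1/\alpha}\le\rho\,[A]_{\alpha,\rho,\Omega}^{1/\alpha}$, $\|\rho (b_1)_\rho\|_{L^\infty}+[\rho(b_1)_\rho]_\alpha^{1/(1+\alpha)}\le\rho\bigl(\|b_1\|_{L^\infty(\Omega)}+[b_1]_{\alpha,\rho,\Omega}^{1/(1+\alpha)}\bigr)$, $\|\rho (b_2)_\rho\|_{L^q(B_1)}^{\beta_q}\le\rho\,\|b_2\|_{q,\rho,\Omega}^{\beta_q}$ and $\|\rho^2 c_\rho\|_{L^q(B_1)}^{\gamma_q}\le\rho\,\|c\|_{q,\rho,\Omega}^{\gamma_q}$. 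By monotonicity of the uniformly local (semi)norms in $\rho$ and the defining relation $r_0\bigl(r_\Omega^{-1}+M\bigr)=1$ --- immediate from \eqref{defM} and \eqref{defr0} once one notes that the relevant supremum is attained, those (semi)norms being continuous in $r$ --- the choice $\rho=r_0$ gives $M(\ld_{r_0},B_1)\le r_0 M\le1$. Moreover $r_0\le r_\Omega$ by construction, so by \eqref{def_romega} the rescaled portion of $\partial\Omega$ near $x_0$ is a $C^{1,\bar\alpha}$-graph of universally controlled norm; thus $(\Omega-x_0)/r_0$, seen near $0$, is a $C^{1,\bar\alpha}$-domain whose geometric scale is $\ge r_\Omega/r_0\ge1$.

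Consequently, applying at the unit scale the global boundary weak Harnack inequality of \cite{GSS} --- whose constant and the exponent $\epsilon$ become universal (depending only on $n,q,\alpha,\lambda,\Lambda$), since the rescaled operator has $M\le1$ --- together with the classical interior weak Harnack, and, when $-\ld u=f$ with $u=0$ on $\partial\Omega$, the full interior Harnack and the boundary Harnack inequality, and scaling back, one obtains local estimates with universal constants. Namely, for $B_{2r_0}(x_0)\subset\Omega$, using $d\simeq d(x_0)$ on $B_{r_0}(x_0)$,
\[
\Bigl(|B_{r_0}(x_0)|^{-1}\!\int_{B_{r_0}(x_0)}(u/d)^\epsilon\Bigr)^{1/\epsilon}\le C\Bigl(\inf_{B_{r_0}(x_0)}\tfrac ud + r_0^{1-\frac nq}\|f\|_{L^q(B_{2r_0}(x_0))}\Bigr),
\]
and, for $\xi\in\partial\Omega$,
\[
\Bigl(|\Omega\cap B_{r_0}(\xi)|^{-1}\!\int_{\Omega\cap B_{r_0}(\xi)}(u/d)^\epsilon\Bigr)^{1/\epsilon}\le C\Bigl(\inf_{\Omega\cap B_{r_0/2}(\xi)}\tfrac ud + r_0^{1-\frac nq}\|f\|_{L^q(\Omega\cap B_{2r_0}(\xi))}\Bigr),
\]
with averages replaced by suprema under the stronger hypotheses of \eqref{sharpBHI}.

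Then, fixing $x_*\in\Omega$ with $(u/d)(x_*)\le2\inf_\Omega(u/d)$, one connects any $x\in\Omega$ to $x_*$ by a path in $\Omega$ of length $\le D+1$ and covers it by $N_0\le CD/r_0=C\bigl(r_\Omega^{-1}+M\bigr)D$ successive regions, each a ball $B_{r_0}(x_j)\subset\Omega$ or a piece $\Omega\cap B_{r_0}(\xi_j)$ with $\xi_j\in\partial\Omega$, consecutive ones overlapping in a set where the local estimates compose --- such a chain exists in any $C^{1,\bar\alpha}$-domain precisely because its geodesic diameter equals $D$. Composing the $\inf$-bounds along the chain gives, for the terminal region $B'\ni x$, $\inf_{B'}(u/d)\le C^{N_0}\bigl(\inf_\Omega(u/d)+r_0^{1-n/q}\|f\|_{L^q(\Omega)}\bigr)$, hence (after replacing $r_0^{1-n/q}$ by $D^{1-n/q}$ using $r_0\le r_\Omega\le D$, the extra factor absorbed into the exponential, and using $N_0\le C(r_\Omega^{-1}+M)D$) the pointwise bound $(u/d)(x)\le e^{C_0(r_\Omega^{-1}+M)D}\bigl(\inf_\Omega(u/d)+D^{1-n/q}\|f\|_{L^q(\Omega)}\bigr)$; taking the supremum over $x$ and chaining the \emph{full} interior and boundary Harnack (legitimate since $-\ld u=f$ and $u=0$ on $\partial\Omega$) yields \eqref{sharpBHI}. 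For \eqref{sharpWBHI}, cover $\Omega$ by $\le C(D_0/r_0)^n$ such regions of bounded overlap; on each, the chained $\inf$-bound and the local $L^\epsilon$-estimate give $\int_{\mathrm{piece}}(u/d)^\epsilon\le Cr_0^n e^{C_0N_0\epsilon}\bigl(\inf_\Omega(u/d)+D^{1-n/q}\|f\|_{L^q(\Omega)}\bigr)^\epsilon$; summing, taking the $1/\epsilon$-th power, and absorbing $\bigl(C(D_0/r_0)^n r_0^n\bigr)^{1/\epsilon}=(CD_0^n)^{1/\epsilon}\le C'D^{n/\epsilon}$ (with $D_0\le D$) and $C'$ into $e^{C_0(r_\Omega^{-1}+M)D}$ gives \eqref{sharpWBHI}.

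The main obstacle, I expect, is securing the local Harnack-type inequalities at scale $r_0$ with constants depending \emph{only} on $n,q,\alpha,\lambda,\Lambda$: this bundles the scaling identity $r_0\bigl(r_\Omega^{-1}+M\bigr)=1$ and the ensuing $M(\ld_{r_0},B_1)\le1$, the verification that rescaling a $C^{1,\bar\alpha}$-domain by $r_0\le r_\Omega$ produces a domain on which the boundary weak Harnack of \cite{GSS} holds with a non-degenerate constant, and the construction of a Harnack chain of length $O(D/r_0)$ negotiating a possibly thin or winding domain --- the last point being exactly why the geodesic diameter $D$, rather than the Euclidean diameter $D_0$, governs the exponent. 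Once these are in hand, the accounting of the $f$-contributions along the chain and the absorption of polynomial factors such as $(D_0/r_0)^{n/\epsilon}$ and $Dr_0^{-n/q}$ into $e^{C_0(r_\Omega^{-1}+M)D}$ are routine.
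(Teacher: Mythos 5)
Your proposal is correct and follows essentially the same route as the paper: rescale at the scale $r_0$ (using the identity $r_0(r_\Omega^{-1}+M)=1$ and the boundary-flattening diffeomorphism from the definition of $r_\Omega$) so that the local interior and boundary weak Harnack inequalities of \cite{GSS} apply with universal constants, then chain these local estimates along Harnack chains of length $O(D/r_0)=O((r_\Omega^{-1}+M)D)$ furnished by a covering of $\Omega$ by $O((D_0/r_0)^n)$ balls, and finish \eqref{sharpBHI} with the local maximum principle. The only cosmetic difference is that the paper's iteration propagates $L^\epsilon$-norms over overlapping balls rather than infima, which is an equivalent bookkeeping of the same chain argument.
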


In the particular case $\Omega=B_R$, we trivially have
\begin{equation}\label{scaleinvtilde1b}
r_{B_R}=c(n)R,\quad D=D_0=2R,
\end{equation}
and estimates  \eqref{sharpWBHI}, \eqref{sharpBHI} become
\begin{equation}\label{sharpWBHI2}
\left(-\hskip -4.2mm\int_{B_R} \left(\frac{u}{d}\right)^\epsilon\right)^{1/\epsilon} \le
 R^{n/\epsilon} e^{C_0(1+MR)} \left( \inf_{B_R} \frac{u}{d} + R^{1-\frac{n}{q}} \|f\|_{L^q(B_R)}\right),
\end{equation}
\begin{equation}\label{sharpBHI2}
\sup_{B_R}\frac{u}{d}\le
e^{C_0(1+MR)} \left( \inf_{B_R} \frac{u}{d} +
R^{1-\frac{n}{q}}  \|f\|_{L^q(B_R)} \right).
\end{equation}

The proof of this theorem uses a Harnack chain argument similar to the one in the
proof of \cite[Theorem 2.1]{SS2}, replacing Theorem A there by \cite[Theorem 1.1]{GSS},
combined with a (sharp) estimate of the length of the chains in terms of the geodesic diameter (Proposition~\ref{geodes}).
For readers' convenience we list the technical differences and give a proof sketch of Theorem~\ref{BHIoptim} in the appendix.

We will also use a slightly more precise version of the interior Harnack inequality
with optimized constants from \cite[Theorem 2.1]{SS2}.
It follows from straightforward modifications of the proof in that paper, by adjusting the sizes and centers of the balls used to cover
$\Omega$ and by using Proposition~\ref{geodes}(i).

\begin{prop}\label{sharpharn}
 Let $\Omega_1\subset \Omega_2$ be bounded domains  such that  $\kappa={\rm dist}(\overline\Omega_1,\Omega_2^{c})\in(0,D]$ and $\Omega_1$ has geometric diameter $D>0$.
Assume that
$A(x)\in L^\infty(\Omega_2)$ is a matrix such that $\Lambda I \ge A\ge\lambda I$ in $\Omega_2$ and that
$b_1, b_2 \in L^q(\Omega_2)$, $c, g\in L^{q/2}(\Omega_2)$, for some $q>n$.
If $u\ge0$ satisfies $\lu= g$ in $\Omega_2$, then
\begin{equation}\label{sharpHarnack}
\sup_{\Omega_1} u\le e^{C_0(\kappa^{-1}+ \hat M)D}\left( \inf_{\Omega_1}u + D^{1-\frac{n}{q}} \|g\|_{L^q(\Omega_2)}\right),
\end{equation}
where $\hat M=\hat M(\ld,\Omega_2,\kappa)$ is defined in \eqref{defMhat}.
\end{prop}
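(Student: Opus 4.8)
The plan is to deduce the global statement from the classical interior Harnack inequality with optimized constants for a single ball, via a Harnack chain argument, exactly following the strategy of \cite[Theorem~2.1]{SS2} but with the geometric constants tracked through the \emph{geometric} diameter $D$ of $\Omega_1$ rather than through an a priori covering. First I would recall (or re-derive by rescaling) the basic building block: if $B_{2\rho}(x_0)\subset\Omega_2$ with $\rho\le\kappa/2$ and $\rho$ is chosen small enough that the rescaled lower-order coefficients over $B_{2\rho}$ have unit-size uniformly local norms --- concretely $\rho\sim\min\{\kappa,\hat M^{-1}\}$, where $\hat M=\hat M(\ld,\Omega_2,\kappa)$ is as in \eqref{defMhat} --- then for a nonnegative solution of $\lu=g$ one has the local inequality $\sup_{B_\rho(x_0)}u\le C_1\bigl(\inf_{B_\rho(x_0)}u+\rho^{1-n/q}\|g\|_{L^q(B_{2\rho}(x_0))}\bigr)$ with $C_1=C_1(n,q,\lambda,\Lambda)$. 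This is the standard De~Giorgi--Moser--Nash interior Harnack inequality applied after the scaling $x\mapsto x_0+2\rho x$, which normalizes the zeroth-order data; the only point requiring care is that the radius $\rho$ must be chosen to make the rescaled coefficients bounded in the relevant norm, and that $\|c\|^{\gamma_p}_{p,r,\Omega_2}$ (with $p=q/2$) enters $\hat M$ with exponent $\gamma_p$, matching the scaling of a zeroth-order term.

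The second step is the chaining. Fix $x,y\in\Omega_1$; by definition of the geometric diameter there is a path $\sigma\subset\Omega_1$ from $x$ to $y$ of length $\le D+\eta$ for arbitrary $\eta>0$, and every point of $\sigma$ is at distance $\ge\kappa$ from $\Omega_2^c$. Cover $\sigma$ by a chain of overlapping balls $B_\rho(z_1),\dots,B_\rho(z_N)$ with $z_1=x$, $z_N=y$, consecutive centers at distance $\le\rho$, and $B_{2\rho}(z_j)\subset\Omega_2$; the number of balls can be taken $N\le C(n)(D/\rho+1)$. Iterating the local inequality along the chain gives
\be\label{chainstep}
u(x)\le C_1^N\,u(y)+C_1^N\rho^{1-\frac nq}\sum_{j=1}^N\|g\|_{L^q(B_{2\rho}(z_j))}
\le C_1^N\Bigl(u(y)+C(n)\rho^{1-\frac nq}\|g\|_{L^q(\Omega_2)}\Bigr),
\ee
where in the last step the finite-overlap property of the chain bounds $\sum_j\|g\|_{L^q(B_{2\rho}(z_j))}$ by $C(n)N^{1/q}\|g\|_{L^q(\Omega_2)}\le C(n)(D/\rho+1)\|g\|_{L^q(\Omega_2)}$ (and one absorbs the extra factor $D/\rho$ into the exponential, or keeps it and notes $D/\rho\le e^{D/\rho}$). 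Since $\rho^{-1}\sim\kappa^{-1}+\hat M$, we get $N\le C(n)\bigl((\kappa^{-1}+\hat M)D+1\bigr)$, hence $C_1^N\le e^{C_0(\kappa^{-1}+\hat M)D}$ after adjusting $C_0$; and $\rho^{1-n/q}\le C(n)D^{1-n/q}$ up to absorbing into the exponential (using $\rho\le D$, which holds since the chain lives in $\Omega_1$). Taking the supremum over $x\in\Omega_1$ and the infimum over $y\in\Omega_1$, and letting $\eta\to0$, yields \eqref{sharpHarnack}.

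The main obstacle --- and the reason the statement says the result follows ``from straightforward modifications'' rather than being immediate --- is the bookkeeping of the geometric quantities: one must verify that the number $N$ of balls in the Harnack chain is genuinely controlled by $D/\rho$ with $\rho$ depending only on $\kappa$ and the uniformly local norms (not on the full domain geometry), and that the error term $\rho^{1-n/q}\sum_j\|g\|_{L^q(B_{2\rho}(z_j))}$ telescopes correctly with the $C_1^N$ prefactors so as to produce the single clean term $D^{1-n/q}\|g\|_{L^q(\Omega_2)}e^{C_0(\kappa^{-1}+\hat M)D}$. This is precisely where Proposition~\ref{geodes}(i) is invoked: it supplies the estimate on the length of admissible chains in terms of the geometric diameter, ensuring $N\lesssim D/\rho+1$. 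Once that combinatorial/geometric input is in place, the rest is the routine iteration displayed in \eqref{chainstep}, with all the coefficient dependence packaged into the single scale $\rho^{-1}\sim\kappa^{-1}+\hat M$. For full details we refer to the proof of \cite[Theorem~2.1]{SS2}, of which the above is the line-by-line adaptation.
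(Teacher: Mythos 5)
Your proposal is correct and follows essentially the same route the paper indicates: a local interior Harnack inequality on balls of radius comparable to $\hat r_0$ (i.e.\ $\rho^{-1}\sim\kappa^{-1}+\hat M$, which normalizes the rescaled coefficients), chained along paths of length at most the geodesic diameter via Proposition~\ref{geodes}(i), exactly the "straightforward modification" of \cite[Theorem~2.1]{SS2} that the paper invokes. The minor slips (the exponent $N^{1/q}$ in the overlap bound, the precise power of $\rho$ in front of $\|g\|$) are harmless since they are absorbed into the exponential factor.
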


\subsection{Properties associated with uniformly local norms}

In this subsection we give some useful properties of the
the uniformly local norms associated with the quantities $r_0, M$ and their variants.

\begin{prop}\label{basicr0}  Recalling definitions \eqref{def_romega}, \eqref{defr0}, \eqref{defrstar}, \eqref{defr0hat}, we have
\begin{equation}\label{relMr0}
r_0^{-1}=r^{-1}_\Omega+ M,
\end{equation}
\be\label{relMstarr0}
r_*^{-1}=r^{-1}_\Omega+M_*,
\ee
\be\label{relMhatr0}
\hat r_0^{-1}=\kappa^{-1}+\hat M.
\ee
In particular
$\textstyle\frac12 \min\bigl(r_\Omega,M^{-1}\bigr)\le r_0\le\min\bigl(r_\Omega,M^{-1}\bigr)$,
and similarly for $r_*, \hat r_0$.
\end{prop}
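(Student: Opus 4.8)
The plan is to read each of the three identities as the statement that a suitable monotone function of the radius meets the level $1$ exactly at the defining radius, and that $M$ (resp. $M_*$, $\hat M$) is precisely the value there of the variable part of that function. First I would write $r_0=\sup\{r>0:\ r\,\phi(r)\le 1\}$, where $\phi(r):=r_\Omega^{-1}+g(r)$ and $g(r):=[A]_{\alpha,r,\Omega}^{1/\alpha}+\|b_1\|_{L^\infty(\Omega)}+[b_1]_{\alpha,r,\Omega}^{1/(1+\alpha)}+\|b_2\|_{q,r,\Omega}^{\beta_q}+\|c\|_{q,r,\Omega}^{\gamma_q}$, so that $M=g(r_0)$ by \eqref{defM}. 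Each term of $g$ is nonnegative and nondecreasing in $r$, since enlarging the balls $B_r(x)$ can only enlarge the suprema in \eqref{deful}, \eqref{defHbracket}; each is also bounded by the corresponding global quantity ($\|h\|_{q,r,\Omega}\le\|h\|_{L^q(\Omega)}$, $[\psi]_{\alpha,r,\Omega}\le[\psi]_{\alpha,\Omega}$), which is finite under \eqref{hyp2}. Hence $g$, and therefore $\phi$, are nondecreasing and finite, and $h(r):=r\,\phi(r)$ is strictly increasing with $h(r)\to 0$ as $r\to0^+$ (using $g(r)\le g(r_\Omega)<\infty$ on $(0,r_\Omega]$) and $h(r)\ge r/r_\Omega\to+\infty$.

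The crux is that $r\mapsto g(r)$ is moreover continuous. For the $L^q$-type terms this is elementary: $r\mapsto\|h\|_{q,r,\Omega}$ is left-continuous by monotone convergence (as $B_r(x)=\bigcup_{r'<r}B_{r'}(x)$, together with a standard $\varepsilon$-argument to pass the supremum over $x$ to the limit), and right-continuous because, for $r'>r$ and any $x$,
\[
\int_{\Omega\cap B_{r'}(x)}|h|^q-\int_{\Omega\cap B_{r}(x)}|h|^q\le\int_{B_{r'}(x)\setminus B_{r}(x)}|h|^q\le\sup_{|E|\le\omega_n((r')^n-r^n)}\int_{E}|h|^q,
\]
which tends to $0$ as $r'\searrow r$ by absolute continuity of the finite measure $|h|^q\,dx$, uniformly in $x$. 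For the Hölder-bracket terms $[\psi]_{\alpha,r,\Omega}$ (with $\psi=A$ or $\psi=b_1$, on the $C^{1,\bar\alpha}$-domain $\Omega$): left-continuity is immediate, since a pair $y,z\in B_r(x)\cap\Omega$ already lies in $B_{r'}(x)\cap\Omega$ for all $r'\in(\max(|x-y|,|x-z|),r)$; and right-continuity follows from a compactness argument on $\overline\Omega$, using that any pair $(y,z)$ admitted at some radius $r'>r$ but not at radius $r$ satisfies $|y-z|\ge r$ — indeed $\tfrac12|y-z|\le\min_{\xi\in\overline\Omega}\max(|\xi-y|,|\xi-z|)\le|y-z|$, and non-admissibility at $r$ forces the middle quantity to be $\ge r$ — so such pairs stay away from the diagonal and a limiting pair has a well-defined finite quotient, which after approximating limiting boundary points by interior ones inside a $C^{1,\bar\alpha}$-chart is bounded by $[\psi]_{\alpha,r,\Omega}$. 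This last step, the continuity in $r$ of the uniformly local Hölder bracket on a general $C^{1,\bar\alpha}$-domain, is the only point that takes real work; everything else is routine.

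With $g$ (hence $\phi$ and $h$) continuous, strictly increasing, $h(0^+)=0$ and $h(+\infty)=+\infty$, there is a unique $\bar r>0$ with $h(\bar r)=1$ and $\{r>0:\ h(r)\le1\}=(0,\bar r]$; therefore $r_0=\bar r$ and $r_0\phi(r_0)=1$, i.e. $r_0^{-1}=\phi(r_0)=r_\Omega^{-1}+g(r_0)=r_\Omega^{-1}+M$, which is \eqref{relMr0}; in particular $r_0^{-1}\ge r_\Omega^{-1}$ gives $r_0\le r_\Omega$, consistent with $r_0\in(0,r_\Omega]$. Identities \eqref{relMstarr0} and \eqref{relMhatr0} follow in exactly the same way: for $r_*$ use $g_*(r):=[A]_{\alpha,r,\Omega}^{1/\alpha}+\|b_1+b_2\|_{q,r,\Omega}^{\beta_q}$ (so $M_*=g_*(r_*)$, cf. \eqref{defMstar}, \eqref{defrstar}); for $\hat r_0$ replace $\phi$ by $\kappa^{-1}+\hat g(r)$, with $\hat g$ the variable part of \eqref{defr0hat} and $\hat M=\hat g(\hat r_0)$ (cf. \eqref{defMhat}) — here $\hat g$ involves only $L^q$-type norms on $\hat\Omega$, so no Hölder-bracket continuity is needed and $\hat\Omega$ need not be smooth. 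Finally, since in each case the right-hand side is a sum of two nonnegative terms, $r_0^{-1}=r_\Omega^{-1}+M$ yields $\max(r_\Omega^{-1},M)\le r_0^{-1}\le 2\max(r_\Omega^{-1},M)$, i.e. $\tfrac12\min(r_\Omega,M^{-1})\le r_0\le\min(r_\Omega,M^{-1})$, and similarly for $r_*$ and $\hat r_0$.
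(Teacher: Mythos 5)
Your proof is correct and follows essentially the same route as the paper: establish that each uniformly local quantity is nondecreasing and continuous in $r$, deduce that $r\mapsto r\bigl(r_\Omega^{-1}+g(r)\bigr)$ is continuous and strictly increasing from $0$ past $1$, and identify $r_0$ as the unique root of $h(r)=1$, from which the identities follow by evaluating at $r_0$. The only (harmless) divergence is in the right-continuity of the H\"older bracket, where you rule out the diagonal case a priori via the observation that a pair admissible at $r'>r$ but not at $r$ must satisfy $|y-z|\ge r$, whereas the paper runs a subsequence argument with a case split on whether the limit points coincide; both need the same closure/approximation step at the end.
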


\begin{rem} \label{r01M}
As mentioned above, all the results in Section~\ref{sec-main} remain valid if $r_0$ is replaced by $1$ and $M$ by $\max(M,1)$.
Indeed, if $r_0\le 1$, then  $\|\cdot\|_{q,r_0,\Omega}\le \|\cdot\|_{q,1,\Omega}$
and $[\cdot]_{\alpha, r_0, \Omega}\le [\cdot]_{\alpha, 1, \Omega}$,
whereas if $r_0>1$, then $M<1$ owing to \eqref{relMr0}.
\end{rem}

We give a simple monotonicity property for general domains.

\begin{prop}\label{scaleinvtilde0}
Let $\omega\subset\Omega$ be bounded domains with $C^{1,\bar\alpha}$ boundaries,
$\mathcal{L}$ be an operator as in \eqref{defdiv} satisfying \eqref{hyp1}, \eqref{hyp2},
 and let $\theta\in(0,1]$. There exists $C(n,q,\alpha,\bar\alpha,\theta)>0$ such that,
 if $r_\Omega\ge\theta r_\omega$, then
$M(\ld,\omega)\le C(n,q,\alpha,\theta)M(\ld,\Omega)$.
\end{prop}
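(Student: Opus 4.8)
The plan is to establish Proposition~\ref{scaleinvtilde0} by comparing, term by term, the five quantities entering the definition \eqref{defM} of $M(\ld,\omega)$ with their counterparts in $M(\ld,\Omega)$, exploiting the fact that $r_\Omega\ge\theta r_\omega$ forces $r_0(\ld,\omega)$ to be comparable to $r_0(\ld,\Omega)$ up to a constant depending only on $n,q,\alpha,\bar\alpha,\theta$. The only quantity in \eqref{defM} that genuinely depends on the scale at which the seminorms and uniformly local norms are measured is $r_0$; the $L^\infty$-norm of $b_1$ is scale-free and monotone under $\omega\subset\Omega$, so $\|b_1\|_{L^\infty(\omega)}\le\|b_1\|_{L^\infty(\Omega)}$ is immediate.

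First I would record the two elementary monotonicity facts for the scale-dependent brackets: since $\omega\subset\Omega$, for any $r>0$ one has $\|h\|_{q,r,\omega}\le\|h\|_{q,r,\Omega}$ and $[\psi]_{\alpha,r,\omega}\le[\psi]_{\alpha,r,\Omega}$ (the supremum in \eqref{deful}, \eqref{defHbracket} is taken over fewer centers and the balls are intersected with a smaller set). Next, and this is the crux, I would show that $r_0(\ld,\omega)\ge c\,r_0(\ld,\Omega)$ for some $c=c(n,q,\alpha,\bar\alpha,\theta)>0$. From Proposition~\ref{basicr0} we have $r_0(\ld,\Omega)\le\min(r_\Omega,M(\ld,\Omega)^{-1})$ and $r_0(\ld,\omega)\ge\tfrac12\min(r_\omega,M(\ld,\omega)^{-1})$; the difficulty is that $M(\ld,\omega)$ is exactly the quantity we are trying to bound, so a circular dependence appears. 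The clean way around this is to test the defining supremum \eqref{defr0} directly: set $r:=\min\bigl(r_0(\ld,\Omega),\theta r_0(\ld,\Omega)\bigr)=\theta r_0(\ld,\Omega)$ (using $\theta\le1$) and check that this $r$ satisfies the constraint in the definition of $r_0(\ld,\omega)$, i.e. $r\bigl(r_\omega^{-1}+[A]_{\alpha,r,\omega}^{1/\alpha}+\|b_1\|_{L^\infty(\omega)}+[b_1]_{\alpha,r,\omega}^{1/(\alpha+1)}+\|b_2\|_{q,r,\omega}^{\beta_q}+\|c\|_{q,r,\omega}^{\gamma_q}\bigr)\le1$. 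Indeed $r_\omega^{-1}\le\theta^{-1}r_\Omega^{-1}$ by hypothesis, while each of the other four brackets at scale $r=\theta r_0(\ld,\Omega)\le r_0(\ld,\Omega)$ is dominated by the corresponding bracket on $\Omega$ at scale $r_0(\ld,\Omega)$; hence the whole expression in parentheses is at most $\theta^{-1}$ times $r_0(\ld,\Omega)^{-1}$ times a constant, and multiplying by $r=\theta r_0(\ld,\Omega)$ gives a bound by a constant depending only on $\theta$ and the structural parameters. Absorbing that constant (if it exceeds $1$, shrink $r$ by a further fixed factor, which is legitimate since the constraint is monotone in $r$) yields $r_0(\ld,\omega)\ge c(n,q,\alpha,\bar\alpha,\theta)\,r_0(\ld,\Omega)$.

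Once the comparison $r_0(\ld,\omega)\ge c\,r_0(\ld,\Omega)$ is in hand, the proposition follows by a routine scaling estimate for how the uniformly local norms grow when the radius is decreased: for $0<r'\le r$ one has $\|h\|_{q,r,\Omega}\le N(r/r')\,\|h\|_{q,r',\Omega}$ where $N$ is the number of balls of radius $r'$ needed to cover a ball of radius $r$, so $N(r/r')\le C(n)(r/r')^n$, and similarly $[\psi]_{\alpha,r,\Omega}\le C(n)(r/r')^{?}[\psi]_{\alpha,r',\Omega}$ can be handled by a chaining argument (or one simply uses that decreasing the radius only decreases the H\"older bracket, so in fact $[\psi]_{\alpha,r',\Omega}\le[\psi]_{\alpha,r,\Omega}$ trivially, and only the $L^q$ norms need the covering bound). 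Combining: $[A]_{\alpha,r_0(\ld,\omega),\omega}\le[A]_{\alpha,r_0(\ld,\omega),\Omega}\le[A]_{\alpha,r_0(\ld,\Omega),\Omega}$ once $r_0(\ld,\omega)\le r_0(\ld,\Omega)$ (true since $\omega\subset\Omega$ gives $r_\omega\le$ something—or simply assume WLOG), and $\|b_2\|_{q,r_0(\ld,\omega),\omega}\le\|b_2\|_{q,r_0(\ld,\omega),\Omega}\le C(n)\bigl(r_0(\ld,\Omega)/r_0(\ld,\omega)\bigr)^{n/q}\|b_2\|_{q,r_0(\ld,\Omega),\Omega}\le C(n,\theta)\|b_2\|_{q,r_0(\ld,\Omega),\Omega}$, and likewise for $c$ and for $[b_1]_\alpha$. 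Raising each to the appropriate power $\beta_q,\gamma_q,1/\alpha,1/(\alpha+1)$ and summing gives $M(\ld,\omega)\le C(n,q,\alpha,\bar\alpha,\theta)\,M(\ld,\Omega)$, as claimed.

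The main obstacle I anticipate is the apparent circularity in bounding $r_0(\ld,\omega)$ from below, since $r_0(\ld,\omega)$ and $M(\ld,\omega)$ are defined in terms of each other through Proposition~\ref{basicr0}; the resolution, as sketched, is to bypass $M(\ld,\omega)$ entirely and verify the defining inequality \eqref{defr0} for $r_0(\ld,\omega)$ using the explicit trial radius $r=\theta r_0(\ld,\Omega)$, relying only on the already-established $M$-free monotonicity of the individual brackets and the hypothesis $r_\Omega\ge\theta r_\omega$. A secondary, purely bookkeeping issue is to make sure the covering constant $N(n)$ for comparing $\|\cdot\|_{q,r,\cdot}$ at two comparable scales is tracked correctly and that its power $n/q<1$ combines harmlessly with $\beta_q,\gamma_q$; this is routine and produces no new difficulty.
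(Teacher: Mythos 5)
Your preliminary observations are correct (the monotonicity of the uniformly local norms and H\"older brackets under $\omega\subset\Omega$ and under decrease of $r$, and the lower bound $r_0(\ld,\omega)\ge\theta\,r_0(\ld,\Omega)$ obtained by testing the trial radius $r=\theta r_0(\ld,\Omega)$ in \eqref{defr0} -- that computation is clean and needs no further shrinking of $r$). But this lower bound is not the estimate your final step requires, and the point where you ``simply assume WLOG'' that $r_0(\ld,\omega)\le r_0(\ld,\Omega)$ is a genuine gap: that inequality can fail by an arbitrarily large factor. Take $\Omega=B_2$, $\omega=B_1$, $A=I$, $b_1=b_2=0$, and $c$ smooth with $c\equiv0$ on $B_{3/2}$ and $c\equiv\lambda^2$ near $\partial B_2$: then $r_0(\ld,\omega)=r_\omega\sim1$ while $r_0(\ld,\Omega)\sim\lambda^{-1}\to0$. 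In that regime your term-by-term comparison must control a bracket at the \emph{large} scale $r_0(\ld,\omega)$ on $\Omega$ by the same bracket at the \emph{small} scale $r_0(\ld,\Omega)$; the covering constant for the $L^q$ terms is $C(n)\bigl(r_0(\ld,\omega)/r_0(\ld,\Omega)\bigr)^{n/q}$ (you wrote the ratio upside down), which is unbounded, and the H\"older brackets would need a chaining argument with the same unbounded constant. Your lower bound only controls the reciprocal ratio, i.e.\ the harmless direction: when $r_0(\ld,\omega)\le r_0(\ld,\Omega)$ no covering is needed at all, since every term in \eqref{defM} is nondecreasing both in $r$ and in the domain, and one gets $M(\ld,\omega)\le M(\ld,\Omega)$ with constant $1$.

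The missing idea is a dichotomy on whether $r_0(\ld,\omega)$ exceeds $\theta^{-1}r_0(\ld,\Omega)$. If $r_0(\ld,\omega)\le\theta^{-1}r_0(\ld,\Omega)$, the covering argument works with constant $C(n)\theta^{-n}$ and your term-by-term comparison goes through (with the corrected ratio). If instead $r_0(\ld,\omega)\ge\theta^{-1}r_0(\ld,\Omega)$, one must abandon the norm comparison entirely and invoke the identity \eqref{relMr0} of Proposition~\ref{basicr0}:
$$M(\ld,\omega)=r_0^{-1}(\ld,\omega)-r_\omega^{-1}\le\theta\,r_0^{-1}(\ld,\Omega)-\theta\,r_\Omega^{-1}=\theta\,M(\ld,\Omega),$$
using $r_\omega^{-1}\ge\theta r_\Omega^{-1}$. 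This is exactly how the paper argues. Note that your lower bound alone cannot be upgraded into the conclusion: it only yields $M(\ld,\omega)\le\theta^{-1}r_0^{-1}(\ld,\Omega)-r_\omega^{-1}\le\theta^{-1}M(\ld,\Omega)+(\theta^{-1}-\theta)r_\Omega^{-1}$, and the additive term $r_\Omega^{-1}$ cannot be absorbed into $C\,M(\ld,\Omega)$ since $M(\ld,\Omega)$ may vanish.
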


We now consider the case of balls.
It is immediate that, if $u$ is a solution of $\mathcal{L}u=f$ in $\Omega=B_R$, then $\tilde u(x)=u(Rx)$ is a solution of
$\tilde{\mathcal{L}}u=\tilde f$ in $B_1$,
where $\tilde f$ and the coeffcients of $\tilde{\mathcal{L}}$ are given by
\begin{equation}\label{scaledcoeff}
\tilde A(x)=A(Rx),\quad \tilde b_i(x)=Rb_i(Rx),\quad \tilde c(x)=R^2c(Rx),\quad \tilde f(x)=R^2f(Rx).
\end{equation}
In the next proposition, after giving a basic monotonicity property of the quantity $M$ with respect to $R$,
we show that
$r_0, M$, as well as each of the (uniformly local)
norms and seminorms appearing in $M$
enjoy some natural invariance properties with respect to the scaling transformation \eqref{scaledcoeff}.
As a consequence, all main estimates in Section~\ref{sec-main}
are scale-invariant with respect to $R$, and
in particular the general case $R>0$ can be reduced to the case $R=1$.

\begin{prop}\label{scaleinvtilde}
Let $R>0$ and assume \eqref{hyp1}-\eqref{hyp2} with $\Omega=B_R$.
\smallskip

{(i) We have $M(\mathcal{L},B_\rho)\le M(\mathcal{L},B_R)$ for all $\rho\in(0,R)$.
\smallskip

(ii) Let $f\in L^q(B_R)$} and let $\tilde f$ and the coefficients of the operator $\tilde{\mathcal{L}}$ be given by \eqref{scaledcoeff}.
Let $\tilde r_0$ be defined by \eqref{defr0} with $\mathcal{L},B_R$
replaced by $\tilde{\mathcal{L}},B_1$,
Then
\begin{equation}\label{scaleinvtilde2}
\tilde r_0=r_0/R,
\end{equation}
\begin{equation}\label{scaleinvtilde2b}
[\tilde A]_{\alpha, \tilde r_0, B_1}= R^{\alpha}[A]_{\alpha, r_0, B_R},\,
[\tilde b_1]_{\alpha, \tilde r_0, B_1}= R^{1+\alpha}[b_1]_{\alpha, r_0, B_R},\,
 \|\tilde b_1\|_{L^\infty(B_1)}=R\|b_1\|_{L^\infty(B_R)},
\end{equation}
\begin{equation}\label{scaleinvtilde4}
\|\tilde b_2\|_{q,\tilde r_0,B_1}=R^{1-\frac{n}{q}}\|b_2\|_{q,r_0,B_R},\
\|\tilde c\|_{q,\tilde r_0,B_1}=R^{2-\frac{n}{q}}\|c\|_{q,r_0,B_R},\
\|\tilde f\|_{q,\tilde r_0,B_1}=R^{2-\frac{n}{q}}\|f\|_{q,r_0,B_R}.
\end{equation}
Consequently,
\begin{equation}\label{scaleinvtilde5}
M(\tilde{\mathcal{L}},B_1)=RM(\mathcal{L},B_R).
\end{equation}
\end{prop}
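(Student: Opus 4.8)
The plan is to prove Proposition~\ref{scaleinvtilde} in the order (ii) first, since (i) and the final identity \eqref{scaleinvtilde5} will either follow from the computation in (ii) or are elementary. For part (i), the inclusion $B_\rho\subset B_R$ together with $[A]_{\alpha,r,B_\rho}\le [A]_{\alpha,r,B_R}$, $\|b_i\|_{q,r,B_\rho}\le\|b_i\|_{q,r,B_R}$, etc., for every fixed $r$, and the fact that $r_{B_\rho}=c(n)\rho\le c(n)R=r_{B_R}$, show that the supremum defining $r_0(\ld,B_\rho)$ is taken over a set at least as large as the one for $r_0(\ld,B_R)$; comparing term by term in \eqref{defM} then gives $M(\ld,B_\rho)\le M(\ld,B_R)$. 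The point to be slightly careful about is that $r_0$ itself changes with the domain, so one should first note $r_0(\ld,B_\rho)\ge$ (something) or simply invoke the monotonicity of each seminorm in $r$ and in the domain simultaneously; I expect this to be routine.

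The core is part (ii). First I would record the effect of the dilation $x\mapsto Rx$ on each quantity appearing in \eqref{defr0}. For the uniformly local Hölder bracket, $[\tilde A]_{\alpha,r,B_1}=\sup_{x\in\overline{B_1}}\sup_{y,z\in B_r(x)}|y-z|^{-\alpha}|A(Ry)-A(Rz)|$; the substitution $y'=Ry$, $z'=Rz$, $x'=Rx$ turns this into $R^\alpha[A]_{\alpha,Rr,B_R}$, and similarly $[\tilde b_1]_{\alpha,r,B_1}=R^{1+\alpha}[b_1]_{\alpha,Rr,B_R}$ (the extra $R$ coming from $\tilde b_1(x)=Rb_1(Rx)$), while $\|\tilde b_1\|_{L^\infty(B_1)}=R\|b_1\|_{L^\infty(B_R)}$ with no radius involved. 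For the uniformly local $L^q$ norms one computes $\|\tilde b_2\|_{q,r,B_1}=\sup_x\bigl(\int_{B_r(x)}|Rb_2(Ry)|^q\,dy\bigr)^{1/q}=R\cdot R^{-n/q}\|b_2\|_{q,Rr,B_R}=R^{1-n/q}\|b_2\|_{q,Rr,B_R}$, and likewise $\|\tilde c\|_{q,r,B_1}=R^{2-n/q}\|c\|_{q,Rr,B_R}$, $\|\tilde f\|_{q,r,B_1}=R^{2-n/q}\|f\|_{q,Rr,B_R}$, using \eqref{scaledcoeff}. Also $r_{B_1}=c(n)=r_{B_R}/R$, so $r_{B_1}^{-1}=R\,r_{B_R}^{-1}$.

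Plugging these into the defining inequality for $\tilde r_0$ in \eqref{defr0}, the quantity inside the parentheses, evaluated at radius $r$ for $\tilde{\mathcal{L}}$ on $B_1$, equals $R$ times the corresponding quantity for $\mathcal{L}$ on $B_R$ evaluated at radius $Rr$: indeed each term picks up exactly one extra power of $R$ (the radius-$\Omega$ term, the two $b_1$ terms — here $[\tilde b_1]^{1/(\alpha+1)}_{\alpha,r,B_1}=R\,[b_1]^{1/(\alpha+1)}_{\alpha,Rr,B_R}$ — the $b_2$ term with $\beta_q(1-n/q)=1$, the $c$ term with $\gamma_q(2-n/q)=1$, and the $[A]^{1/\alpha}$ term). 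Hence $r\bigl(\cdots_{\tilde{\mathcal{L}},B_1,r}\bigr)\le 1\iff (Rr)\bigl(\cdots_{\mathcal{L},B_R,Rr}\bigr)\le 1$, so the set $\{r>0:\dots\}$ for $\tilde{\mathcal{L}}$ is exactly $R^{-1}$ times the one for $\mathcal{L}$, giving $\tilde r_0=r_0/R$, which is \eqref{scaleinvtilde2}. Substituting $r=\tilde r_0$ and $Rr=r_0$ into the scaling identities above yields \eqref{scaleinvtilde2b}--\eqref{scaleinvtilde4} verbatim. Finally, summing these five contributions according to \eqref{defM} — noting again $\beta_q(1-n/q)=1$ and $\gamma_q(2-n/q)=1$ so that $\|\tilde b_2\|^{\beta_q}_{q,\tilde r_0,B_1}=R\|b_2\|^{\beta_q}_{q,r_0,B_R}$ and $\|\tilde c\|^{\gamma_q}_{q,\tilde r_0,B_1}=R\|c\|^{\gamma_q}_{q,r_0,B_R}$, and $[\tilde A]^{1/\alpha}_{\alpha,\tilde r_0,B_1}=R[A]^{1/\alpha}_{\alpha,r_0,B_R}$, $[\tilde b_1]^{1/(1+\alpha)}=R[b_1]^{1/(1+\alpha)}$, $\|\tilde b_1\|_{L^\infty}=R\|b_1\|_{L^\infty}$ — gives $M(\tilde{\mathcal{L}},B_1)=RM(\mathcal{L},B_R)$, which is \eqref{scaleinvtilde5}. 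The main (mild) obstacle is purely bookkeeping: keeping track that the change of variables in the \emph{uniformly local} norms rescales both the point $x$ ranging over the base domain and the ball radius $r$ in a consistent way, and checking that the exponents $\beta_q,\gamma_q$ are precisely what makes every term in $M$ homogeneous of degree $1$ in $R$; there is no analytic difficulty.
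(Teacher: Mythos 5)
Your part (ii) is correct and is essentially the paper's own argument: the change of variables shows that each (uniformly local) seminorm at radius $r$ on $B_1$ equals the corresponding seminorm at radius $Rr$ on $B_R$ times the appropriate power of $R$; since $\beta_q(1-n/q)=\gamma_q(2-n/q)=1$ and $r_{B_1}^{-1}=R\,r_{B_R}^{-1}$, every term in the bracket of \eqref{defr0} is homogeneous of degree one in $R$ under $r\mapsto Rr$, whence $\tilde r_0=r_0/R$, and \eqref{scaleinvtilde2b}--\eqref{scaleinvtilde5} follow by substitution. No issues there.

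Part (i), however, rests on a claim that is false as stated. You assert that the supremum defining $r_0(\ld,B_\rho)$ is taken over a set at least as large as the one for $r_0(\ld,B_R)$, citing $r_{B_\rho}\le r_{B_R}$ in support. That inequality works \emph{against} you: the bracket in \eqref{defr0} contains the term $r\,r_\Omega^{-1}$, and $r_{B_\rho}^{-1}\ge r_{B_R}^{-1}$ makes the constraint \emph{harder} to satisfy on the smaller ball. For instance, with vanishing lower-order coefficients and constant $A$ one gets $r_0(\ld,B_\rho)=r_{B_\rho}<r_{B_R}=r_0(\ld,B_R)$, so the admissible set for $B_\rho$ is strictly smaller. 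Hence a direct term-by-term comparison of \eqref{defM} at the two (a priori incomparable) radii $r_0(\ld,B_\rho)$ and $r_0(\ld,B_R)$ does not go through. The correct, still short, argument is the dichotomy used in the paper's proof of Proposition~\ref{scaleinvtilde0} (here with $\theta=1$, which yields the constant $1$): if $r_0(\ld,B_\rho)\le r_0(\ld,B_R)$, then each seminorm entering $M(\ld,B_\rho)$ is bounded by the corresponding seminorm at the larger radius on the larger ball, so $M(\ld,B_\rho)\le M(\ld,B_R)$ termwise; if instead $r_0(\ld,B_\rho)\ge r_0(\ld,B_R)$, then \eqref{relMr0} gives $M(\ld,B_\rho)=r_0^{-1}(\ld,B_\rho)-r_{B_\rho}^{-1}\le r_0^{-1}(\ld,B_R)-r_{B_R}^{-1}=M(\ld,B_R)$. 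Replace your set-inclusion step by this case distinction and part (i) is complete.
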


We next
consider the properties of the quantity $\|\cdot\|_{q,r_0,B_R}$
(for $q\in(n,\infty)$)
when applied to the coefficients of the operators themselves:
in this situation it does not obey the scaling of a norm
but a {\it nonlinear} scaling (due to the dependence of $r_0$ on the coefficients of the operators).
Here we restrict to a smaller class of operators, for simplicity and since it will be enough for our purposes.

\begin{prop} \label{lemr0ul}
Let $R>0$ and $b, c\in L^\infty(B_R)$.
\smallskip

(i) Let $\mathcal{L}=\Delta+b(x)\cdot\nabla +c(x)$.
Then we have
\be\label{bulinfty}
\|b\|_{q,r_0,B_R} \le C(n) \|b\|^{1-n/q}_{L^\infty(B_R)}
\ee
and
\be\label{culinfty}
\|c\|_{q,r_0,B_R} \le C(n) \|c\|^{1-n/2q}_{L^\infty(B_R)}.
\ee

(ii) Consider the family of operators
$\mathcal{L}_\lambda=\Delta+\lambda b(x)\cdot\nabla$
(resp., $\Delta+\lambda c(x)$) for $\lambda>0$,
and
set $r_\lambda:=r_0(\mathcal{L}_\lambda,B_R)$ {\rm(}cf.~\eqref{defr0}{\rm)}.
Then we have
\be\label{culinfty2}
C_1(n)\lambda^{1-n/q} \|b\|_{q,r_1,B_R}\le \|\lambda b\|_{q,r_\lambda,B_R} \le
C_2(n) \lambda^{1-n/q}\|b\|^{1-n/q}_{L^\infty(B_R)} ,\quad \lambda\ge 1
\ee
$$({\rm resp.,}\quad C_1(n)\lambda^{1-n/2q} \|c\|_{q,{r_1},B_R}
\le \|\lambda c\|_{q,{r_\lambda},B_R} \le
C_2(n) \lambda^{1-n/2q}\|c\|^{1-n/2q}_{L^\infty(B_R)} ,\quad \lambda\ge 1).$$
Moreover the function $\lambda\mapsto
\|\lambda b\|_{q,r_\lambda,B_R}$
is continuous on $(0,\infty)$.
\end{prop}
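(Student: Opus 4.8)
The plan is to prove Proposition~\ref{lemr0ul} by exploiting the defining property \eqref{defr0} of $r_0$ together with the elementary inequality $\|h\|_{L^q(B_r(x))}\le \|h\|_{L^\infty}\,|B_r|^{1/q}=c(n)\|h\|_{L^\infty}r^{n/q}$, valid for any $r>0$. First I would establish part~(i). For $\mathcal{L}=\Delta+b\cdot\nabla+c$ we have $r_\Omega=r_{B_R}=c(n)R$, so \eqref{defr0} reads, with $A=I$ (hence $[A]_{\alpha,r,\Omega}=0$) and $b_1=0$, $b_2=b$, that $r_0$ is the supremum of those $r$ with $r\bigl(r_{B_R}^{-1}+\|b\|_{q,r,B_R}^{\beta_q}+\|c\|_{q,r,B_R}^{\gamma_q}\bigr)\le 1$. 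By Proposition~\ref{basicr0} (equation \eqref{relMr0}) this is just $r_0^{-1}=r_{B_R}^{-1}+M$; equivalently $r_0\bigl(\|b\|_{q,r_0,B_R}^{\beta_q}+\|c\|_{q,r_0,B_R}^{\gamma_q}\bigr)\le 1$, so both $r_0\|b\|_{q,r_0,B_R}^{\beta_q}\le 1$ and $r_0\|c\|_{q,r_0,B_R}^{\gamma_q}\le 1$. Combining the first with the crude bound $\|b\|_{q,r_0,B_R}\le c(n)\|b\|_{L^\infty}r_0^{n/q}$ gives $r_0\le c(n)^{-\beta_q}\|b\|_{L^\infty}^{-\beta_q}r_0^{-\beta_q n/q}$, i.e.\ $r_0^{1+\beta_q n/q}\le c(n)\|b\|_{L^\infty}^{-\beta_q}$; since $\beta_q(1-n/q)=1$ one computes $1+\beta_q n/q=\beta_q$, so $r_0\le c(n)\|b\|_{L^\infty}^{-1}$, and feeding this back into $\|b\|_{q,r_0,B_R}\le c(n)\|b\|_{L^\infty}r_0^{n/q}$ yields exactly $\|b\|_{q,r_0,B_R}\le C(n)\|b\|_{L^\infty}^{1-n/q}$, which is \eqref{bulinfty}. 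The estimate \eqref{culinfty} is identical with $\gamma_q$ replacing $\beta_q$: from $\gamma_q(2-n/q)=1$ one gets $1+\gamma_q n/q = 2\gamma_q$ after a short computation, hence $r_0^{2\gamma_q}\le c(n)\|c\|_{L^\infty}^{-\gamma_q}$, so $r_0\le c(n)\|c\|_{L^\infty}^{-1/2}$ and then $\|c\|_{q,r_0,B_R}\le c(n)\|c\|_{L^\infty}r_0^{n/q}\le C(n)\|c\|_{L^\infty}^{1-n/2q}$.

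For part~(ii), consider $\mathcal{L}_\lambda=\Delta+\lambda b\cdot\nabla$. The upper bound in \eqref{culinfty2} is immediate from \eqref{bulinfty} applied to the coefficient $\lambda b$, whose $L^\infty$ norm is $\lambda\|b\|_{L^\infty}$: $\|\lambda b\|_{q,r_\lambda,B_R}\le C(n)(\lambda\|b\|_{L^\infty})^{1-n/q}=C(n)\lambda^{1-n/q}\|b\|_{L^\infty}^{1-n/q}$. For the lower bound I would compare $r_\lambda$ with $r_1$. Since $\|\lambda b\|_{q,r,B_R}=\lambda\|b\|_{q,r,B_R}$ for fixed $r$, the defining condition $r\bigl(r_{B_R}^{-1}+\lambda^{\beta_q}\|b\|_{q,r,B_R}^{\beta_q}\bigr)\le 1$ is more restrictive for $\lambda\ge 1$ than for $\lambda=1$; monotonicity of the left side in $r$ then forces $r_\lambda\le r_1$, and more precisely one shows $r_\lambda\ge c(n)\lambda^{-\beta_q}r_1$ by checking that $r=c(n)\lambda^{-\beta_q}r_1$ satisfies the constraint (using $r_1 r_{B_R}^{-1}\le 1$ and $r_1\|b\|_{q,r_1,B_R}^{\beta_q}\le 1$, together with the monotonicity $\|b\|_{q,r,B_R}\le\|b\|_{q,r_1,B_R}$ for $r\le r_1$). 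Combining $r_\lambda\le r_1$ with the monotonicity of $r\mapsto\|b\|_{q,r,B_R}$ gives $\|b\|_{q,r_\lambda,B_R}\le\|b\|_{q,r_1,B_R}$; but I actually need the reverse direction, so I would instead argue via the lower Harnack-type comparison $\|b\|_{q,r_\lambda,B_R}\ge c(n)(r_\lambda/r_1)^{n/q}\|b\|_{q,r_1,B_R}$ when $r_\lambda\le r_1$ — this follows because for a ball $B_{r_1}(x)$ one can cover it by $\sim(r_1/r_\lambda)^n$ balls of radius $r_\lambda$ and use $\ell^q$-subadditivity, giving $\|b\|_{L^q(B_{r_1}(x))}^q\le (r_1/r_\lambda)^n\sup_y\|b\|_{L^q(B_{r_\lambda}(y))}^q$, i.e.\ $\|b\|_{q,r_1,B_R}\le c(n)(r_1/r_\lambda)^{n/q}\|b\|_{q,r_\lambda,B_R}$. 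Plugging $r_\lambda\ge c(n)\lambda^{-\beta_q}r_1$ and $\lambda\|b\|_{q,r_\lambda,B_R}=\|\lambda b\|_{q,r_\lambda,B_R}$ and using $\beta_q n/q=\beta_q-1$ yields the claimed lower bound $C_1(n)\lambda^{1-n/q}\|b\|_{q,r_1,B_R}\le\|\lambda b\|_{q,r_\lambda,B_R}$. The $c$-case is verbatim the same with $\gamma_q$, $\gamma_p$, $p=q/2$ in place of $\beta_q$.

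Finally, for the continuity of $\lambda\mapsto\|\lambda b\|_{q,r_\lambda,B_R}$ on $(0,\infty)$, I would argue that $\lambda\mapsto r_\lambda$ is continuous: for fixed $r$, the map $\lambda\mapsto\lambda^{\beta_q}\|b\|_{q,r,B_R}^{\beta_q}$ is continuous (indeed $\|b\|_{q,r,B_R}$ is itself continuous in $r$ for $b\in L^\infty$, by dominated convergence on the varying balls and continuity of the sup over $x$), so the threshold function $F(r,\lambda)=r(r_{B_R}^{-1}+\lambda^{\beta_q}\|b\|_{q,r,B_R}^{\beta_q})$ is jointly continuous and strictly increasing in $r$; since $r_\lambda$ is characterized by $F(r_\lambda,\lambda)=1$ (the supremum in \eqref{defr0} is attained, as $F$ is continuous and increasing in $r$ with $F(0,\lambda)=0$), the implicit function / monotone inversion argument gives continuity of $\lambda\mapsto r_\lambda$. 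Then $\|\lambda b\|_{q,r_\lambda,B_R}=\lambda\|b\|_{q,r_\lambda,B_R}$ is a composition of continuous maps. The main obstacle I anticipate is the covering/subadditivity step used for the lower bound in \eqref{culinfty2} — getting the correct $\lambda$-power requires tracking the exponents $\beta_q=1/(1-n/q)$ and $\gamma_q=1/(2-n/q)$ carefully through the identities $\beta_q n/q=\beta_q-1$ and $\gamma_q n/q = 2\gamma_q - 1$, and making sure the comparison $r_\lambda\ge c(n)\lambda^{-\beta_q}r_1$ is genuinely two-sided so that $(r_\lambda/r_1)^{n/q}$ produces precisely $\lambda^{-(\beta_q-1)}=\lambda^{n/q-\beta_q+\beta_q}$... more plainly $\lambda^{-\beta_q n/q}$, which cancels against the factor $\lambda$ to leave $\lambda^{1-n/q}$. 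The rest is routine.
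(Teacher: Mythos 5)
Your part (i), the upper bound in (ii), and the continuity argument are all correct and essentially match the paper's proof: the paper arranges the algebra in (i) slightly differently (substituting $r_0^{n/q}\le \|b\|_{q,r_0,B_R}^{-n/(q-n)}$ into the H\"older bound rather than first isolating $r_0\le C\|b\|_{L^\infty}^{-1}$), and for continuity it uses the observation that two nonincreasing functions with continuous product are both continuous, but your monotone-inversion argument is equally valid.

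The genuine gap is in the lower bound of \eqref{culinfty2}. Your comparison $r_\lambda\ge \lambda^{-\beta_q}r_1$ is true (and needs no constant: with $r=\lambda^{-\beta_q}r_1\le r_1$ one has $r\,r_{B_R}^{-1}\le r_1 r_{B_R}^{-1}$ and $r\,\lambda^{\beta_q}\|b\|^{\beta_q}_{q,r,B_R}\le r_1\|b\|^{\beta_q}_{q,r_1,B_R}$), but it is too weak, and the final exponent computation is incorrect: inserting it into the covering inequality gives $\|\lambda b\|_{q,r_\lambda,B_R}\ge c\,\lambda\cdot\lambda^{-\beta_q n/q}\|b\|_{q,r_1,B_R}=c\,\lambda^{2-\beta_q}\|b\|_{q,r_1,B_R}$, and $2-\beta_q=1-\beta_q n/q$ is \emph{strictly} smaller than $1-n/q$ (one checks $1+n/q\le q/(q-n)=\beta_q$ with gap $n^2/q(q-n)$), so for $q<2n$ you even obtain a negative power of $\lambda$; the identity $\beta_q n/q=\beta_q-1$ does not convert $\lambda^{1-\beta_q n/q}$ into $\lambda^{1-n/q}$. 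The route cannot be repaired by sharpening the bound on $r_\lambda/r_1$ alone: that ratio genuinely ranges from about $\lambda^{-1}$ (for $b\equiv{\rm const}$) down to $\lambda^{-\beta_q}$ (for $b$ a thin bump, in which case however $\|b\|_{q,r_\lambda,B_R}=\|b\|_{q,r_1,B_R}$ and the covering loss is vacuous), so no fixed power of $\lambda$ plugged into $(r_\lambda/r_1)^{n/q}$ works uniformly. The correct argument, which is the paper's, keeps $r_\lambda$ coupled to $K_\lambda:=\|\lambda b\|_{q,r_\lambda,B_R}$: from $r_\lambda(r_{B_R}^{-1}+K_\lambda^{\beta_q})=1=r_1(r_{B_R}^{-1}+K_1^{\beta_q})$ and $r_\lambda\le r_1$ one deduces $r_\lambda K_\lambda^{\beta_q}\ge r_1K_1^{\beta_q}$, i.e.\ $(r_1/r_\lambda)^{n/q}\le (K_\lambda/K_1)^{\beta_q n/q}$; substituting \emph{this} into your covering inequality $K_1\le C(r_1/r_\lambda)^{n/q}\lambda^{-1}K_\lambda$ yields $\lambda K_1^{1+\beta_q n/q}\le CK_\lambda^{1+\beta_q n/q}$, and since $1+\beta_q n/q=\beta_q$ this gives $K_\lambda\ge c\lambda^{1/\beta_q}K_1=c\lambda^{1-n/q}K_1$, as required.
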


\section{Proofs of optimized $L^\infty$ and $C^1$ estimates (Theorems \ref{thm1}-\ref{thm3} and \ref{thm1gen}-\ref{thm3gen})}

\label{sec-proof1}

We begin with the proof of Theorem~\ref{thm3gen}(i).

\begin{proof}[Proof of Theorem \ref{thm3gen}(i)]
We will use the adjoint operator of $\mathcal{L}$, whose expression is
\begin{equation}\label{adj}
\ld^*[u] = \mathrm{div}(A^T(x)Du -  b_2(x)u) - b_1(x){\cdot} Du +c(x) u.
\end{equation}
Set $\lambda_1:=\lambda_1(-\ld^*,{ \hat\Omega})=\lambda_1(-\ld,{ \hat\Omega})$.
We will use the following notation (cf.~\eqref{defM}, \eqref{defr0} and
\eqref{defr0hat}-\eqref{defMhat}):
$$M_0 =M(\ld,\Omega),\quad r_0 =r_0(\ld,\Omega),\quad
\hat M_0 =\hat M(\ld,\hat\Omega,\kappa),\quad \hat r_0 =\hat r_0(\ld,\hat\Omega,\kappa),$$
$$\hat M_1 =\hat M(\ld+\lambda_1,\hat\Omega,\kappa),\quad \hat r_1 =\hat r_0(\ld+\lambda_1,\hat\Omega,\kappa).$$
Note that $\hat M(\ld,\hat\Omega,\kappa)=\hat M(\ld^*,{ \hat\Omega},\kappa)$, and similarly we can replace $\ld$ by $\ld^*$ in $\hat r_0$, $\hat M_1$, $\hat r_1$. We have $\lambda_1\ge 0$ by assumption and
there exists a first eigenfunction $\phi_1\in H^1_0({ \hat\Omega})$, $\phi_1>0$, satisfying:
\be\label{supersolPhib}
\ld^*_1\phi_1:=(\ld^*+\lambda_1)\phi_1=0, \quad x\in { \hat\Omega}.
\ee
By \cite[Theorem 8.29]{GT} (see also the remark at the end of \cite[Section 8.10]{GT}) we know that $\phi_1\in C(\overline\Omega)$. By the interior Harnack inequality
with optimized constants in Proposition~\ref{sharpharn}, we have
\be\label{supersolPhi2b}
\sup_{\Omega} \phi_1 \le e^{C_0(\kappa^{-1}+\hat M_1)D} \inf_{\Omega}\phi_1.
\ee
Since $\lambda_1(-\ld,\Omega)>\lambda_1(-\ld,{ \hat\Omega})\ge 0$, the problem
\be\label{solwR}
\left\{\hskip 2mm\begin{aligned}
-\mathcal{L}w&=f^+, &\quad&x\in \Omega,\\
w&=0, &\quad&x\in \partial\Omega
\end{aligned}
\right.
\ee
has a unique solution, and it satisfies $w \ge 0$.
By  \cite[Theorem 8.33]{GT}, \cite[Theorem 5.5.5']{Mo}, in view of assumptions \eqref{hyp1}-\eqref{hyp2},
we have $w\in C^1(\overline\Omega)$.
We may thus integrate by parts, applying Lemma~\ref{weakdiv2}
in whose statement $\ld$ is replaced by $\ld^*$, $u=\phi_1$ and $v=w$,
to obtain
$$\begin{aligned}
\int_{\Omega} \phi_1 f^+
&= -\int_{\Omega} \phi_1\mathcal{L}w
=- \int_{\Omega} w \mathcal{L}^*\phi_1 -\int_{\partial\Omega}\nu\cdot A(x)\phi_1\nabla w\, d\sigma\\
&=\lambda_1 \int_{\Omega} w\phi_1 -\int_{\partial\Omega}\nu\cdot A(x)\phi_1\nabla w\, d\sigma
\ge -\int_{\partial\Omega}\nu\cdot A(x)\phi_1\nabla w\, d\sigma.
\end{aligned}$$
Observe that $\nabla w=(\partial_\nu w)\nu$, $\partial_\nu w\le 0$ on $\partial\Omega$, and $\nu\cdot A(x)\nu\ge \lambda$.
Consequently,
$$ \lambda \inf_{\Omega} \phi_1  \int_{\partial\Omega} |\partial_\nu w|\, d\sigma
\le \|f^+\|_{L^1(\Omega)}\sup_{\Omega} \phi_1.$$
Using \eqref{supersolPhi2b}, we deduce that
\be\label{estimwnu0}
\inf_{\partial\Omega} |\partial_\nu w|\le \frac{1}{|\partial\Omega|} \int_{\partial\Omega} |\partial_\nu w|\, d\sigma
\le  \frac{ \|f^+\|_{L^1(\Omega)}}{\lambda|\partial\Omega|} \frac{\sup_{\Omega} \phi_1}{\inf_{\Omega} \phi_1}
\le e^{C_0(\kappa^{-1}+\hat M_1)D}\frac{\|f^+\|_{L^1(\Omega)}}{|\partial\Omega|}.
\ee
Now, by the sharp boundary Harnack inequality \eqref{sharpBHI} we have
\be\label{estimwnu}
\sup_{\Omega} \frac{w}{d}\le
e^{C_0(r^{-1}_\Omega+ M_0)D}\left( \inf_{\Omega} \frac{w}{d} +D^{1-\frac{n}{q}} \|f^+\|_{L^q(\Omega)}\right).
\ee
Since $t^{-1}w(x_0-t\nu)\to |\partial_\nu w(x_0)|$ as $t\to 0^+$ for each $x_0\in \partial\Omega$,
it follows from \eqref{estimwnu0}, \eqref{estimwnu} that
\be\label{estimwd0}
\begin{aligned}
\sup_{\Omega} \frac{w}{d}
&\le e^{C_0(r^{-1}_\Omega+ M_0)D}\left(\inf_{\partial\Omega} |\partial_\nu w|+D^{1-\frac{n}{q}} \|f^+\|_{L^q(\Omega)}\right)\\
&\le e^{C_0(r^{-1}_\Omega+ M_0)D}
\left(e^{C_0(\kappa^{-1}+\hat M_1)D}\frac{\|f^+\|_{L^1(\Omega)}}{|\partial\Omega|} +D^{1-\frac{n}{q}} \|f^+\|_{L^q(\Omega)}\right).
\end{aligned}
\ee

We now proceed to simplify the quantity on the right hand side of \eqref{estimwd0}.
We first claim that
\be\label{estimhatM}
\hat M_1
\le C(M_0+\hat M_0+r_\Omega^{-1}+\kappa^{-1}).
\ee
If $\hat r_1\ge \min(r_0,\hat r_0)$, then
$\hat M_1=\hat r_1^{-1}-\kappa^{-1}\le \max(r_0^{-1},\hat r_0^{-1})\le M_0+r_\Omega^{-1}+\hat M_0+\kappa^{-1}$
by \eqref{relMr0} and \eqref{relMhatr0}.
We may thus assume
\be\label{estimhatr1}
\hat r_1\le \min(r_0,\hat r_0).
\ee
Owing to the definition of $r_\Omega$,
$\Omega$~contains some ball $B$ of radius $r=c(n)r_\Omega$, hence
$M(\mathcal{L},B)\le CM(\mathcal{L},\Omega)=CM_0$ by Proposition~\ref{scaleinvtilde0}
 with $\omega=B$ and \eqref{scaleinvtilde1b}.
It then follows from Proposition~\ref{upperboundlambda1} and \eqref{relMr0} that
$$\lambda_1=\lambda_1(\mathcal{L},{ \hat\Omega})\le \lambda_1(\mathcal{L},B)
\le C\bigl(M(\mathcal{L},B)+r^{-1}\bigr)^2
\le C\bigl(M_0+r_\Omega^{-1}\bigr)^2=Cr_0^{-2}.$$
Using \eqref{estimhatr1}, $1/\gamma_q=2-n/q$ and \eqref{relMr0} again, we deduce that
$$
\begin{aligned}
\hat M_1
&= \|b_1\|^{\beta_q}_{q,\hat r_1,{ \hat\Omega}}+ \|b_2\|^{\beta_q}_{q,\hat r_1,{ \hat\Omega}}+\|c+\lambda_1\|^{\gamma_q}_{q,\hat r_1,{ \hat\Omega}}\\
&\le  \|b_1\|^{\beta_q}_{q,\hat r_0,{ \hat\Omega}}+ \|b_2\|^{\beta_q}_{q,\hat r_0,{ \hat\Omega}}+C\|c\|^{\gamma_q}_{q,\hat r_0,{ \hat\Omega}}
+C(r_0^{n/q}\lambda_1)^{\gamma_q}
\le C\hat M_0+C\bigl(M_0+r_\Omega^{-1}\bigr),
\end{aligned}$$
hence \eqref{estimhatM}.
On the other hand, by the definition of $r_\Omega$, it is easy to see that $|\partial\Omega|\ge c(n)r_\Omega^{n-1}$
which, combined with H\"older's inequality and $r_0\le r_\Omega\le D_0\le D$, yields
\be\label{estimhatM2}
\frac{\|f^+\|_{L^1(\Omega)}}{|\partial\Omega|}
\le C(n)r_\Omega^{1-n}D_0^{n(1-\frac{1}{q})}\|f^+\|_{L^q(\Omega)}
\le C(n)(Dr_0^{-1})^n D^{1-\frac{n}{q}}\|f^+\|_{L^q(\Omega)}.
\ee
Combining \eqref{estimwd0}, \eqref{estimhatM}, \eqref{estimhatM2} and observing that
$(Dr_0^{-1})^n=(D(M_0+r^{-1}_\Omega))^n\le e^{C_0(M_0+r^{-1}_\Omega)D}$ in view of \eqref{relMr0}, we obtain
\be\label{estimwnu2}
\sup_{\Omega} \frac{w}{d}
\le e^{C_0(M_0+\hat M_0+r^{-1}_\Omega+\kappa^{-1})D} D^{1-\frac{n}{q}} \|f^+\|_{L^q(\Omega)}.
\ee
Finally, for any solution $u\in H^1(\Omega)$ of \eqref{ineqRR0},
since $\lambda_1(\mathcal{L},\Omega)> \lambda_1(\mathcal{L},{ \hat\Omega})\ge 0$,
we may apply the maximum principle to get $u\le w$,
hence \eqref{estimwnu2} yields estimate \eqref{estimzinftysubsb}.
\end{proof}

We next prove Theorem~\ref{thm2gen}, of which Theorem \ref{thm2} is a special case.

\begin{proof}[Proof of Theorem~\ref{thm2gen}]
Fix any $x_0\in \overline\Omega$, $r\in (0,r_0]$,
{where $r_0$ is the number from \eqref{defr0},} and let $v(y)=u(x_0+ry)$.
The function $v$ satisfies
$$\tilde{\mathcal{L}}v=\tilde{f}\quad\hbox{ in $\omega:=B_1\cap r^{-1}(\Omega-x_0)$,}$$
where the coefficients of the modified operator $\tilde{\mathcal{L}}$ are $\tilde A(y)= A(x_0+r y)$,
$\tilde b_i(y)=r
b_i(x_0+r y)$, $\tilde c(x)=r^2c(x_0+ry)$, and $\tilde{f}(x)=r^2 f(x_0+ry)$. We compute
\be\label{rescaledLq}
\begin{aligned}
\|\tilde b_2\|_{L^q(\omega)}
&=r\Bigl(\int_{\omega} |b_2(x_0+ry)|^q\,dy\Bigr)^{1/q}\\
&= r^{1-n/q}\Bigl(\int_{\Omega\cap B_{r}(x_0)} |b_2(x)|^q\,dx\Bigr)^{1/q}
\le r^{1-n/q}\|b_2\|_{q,r_0,\Omega}
\end{aligned}
\ee
and similarly $\|\tilde c\|_{L^p(\omega)} \le r^{2-n/p}\|c\|_{p,r_0,\Omega}$,
$\|\tilde g\|_{L^p(\omega)} \le r^{2-n/p}\|g\|_{p,r_0,\Omega}$. Hence, by the definition of $r_0$,
$$\|\tilde b_2\|_{L^q(\omega)}\le 1,\quad \|\tilde c\|_{L^p(\omega)}\le 1$$
and, similarly,
$$[\tilde A]_{\alpha, 1, \omega}\le 1,\quad \|\tilde b_1\|_{L^\infty(\omega)}\le 1,\quad  [\tilde b_1]_{\alpha, 1, \omega}\le 1.$$

If $x_0\in \partial\Omega$, then we take $r=r_0$ and we have
$v=0$ on $T:=B_1\cap \partial(r^{-1}(\Omega-x_0))$.
Moreover, the local charts defining $T$ are bounded in $C^{1,\alpha}$,
uniformly with respect to $x_0\in \partial \Omega$
(due to $r\le r_0\le r_\Omega$ and the definition of $r_\Omega$ at the beginning of Section~\ref{sec-ext}).
By the interior-boundary gradient estimate in \cite[Chapter 8.11]{GT}, \cite[Chapter 5.5]{Mo}, we deduce that
$$\sup_{B_{1/2}\cap  r^{-1}(\Omega-x_0)}
|\nabla v|\le C_0\left(\sup_\omega |v|+ \|\tilde g\|_{L^p(\omega)}\right).$$
Scaling back to $u$ and $f$, we get
$$\sup_{B_{r_0/2}(x_0)\cap \Omega}|\nabla u|\le C_0\left(r_0^{-1}\sup_{B_{r_0}(x_0)\cap \Omega} |u|+
r_0^{1-n/q}\|g\|_{L^p({B_{r_0}(x_0)\cap \Omega})}\right),$$
hence in particular, setting $\Omega'=\{x\in\Omega;\ {\rm dist}(x,\partial\Omega)\le r_0/2\}$,
\begin{equation}\label{EstC1A}
\sup_{\Omega'}|\nabla u|\le C_0\left(r_0^{-1}\sup_{\Omega} |u|
+ r_0^{1-n/q}\|g\|_{p,r_0,\Omega}\right).
\end{equation}

Next, for each $x_0\in \Omega\setminus\Omega'$,
we take $r=r_0/2$, so that $B_r(x_0)\subset \Omega$, hence $\omega=B_1$.
 It follows from the interior gradient estimate in \cite[Chapter 8.11]{GT}, \cite[Chapter 5.5]{Mo}, that
$$\sup_{B_{1/2}} |\nabla v|\le C_0\left(\sup_{B_1} |v|+ \|\tilde g\|_{L^p(B_1)}\right).$$
Scaling back to $u$ and $f$, we deduce that
$$|\nabla u(x_0)|\le 2C_0\left(r_0^{-1}\sup_{B_{r_0}(x_0)} |u|+ r_0^{1-n/q}\|g\|_{L^p({B_{r_0}(x_0)})}\right),$$
hence
\begin{equation}\label{EstC1B}
\sup_{\Omega\setminus\Omega'}|\nabla u|\le C_0\left(r_0^{-1}\sup_{\Omega} |u|
+ r_0^{1-n/q}\|g\|_{p,r_0,\Omega}\right).
\end{equation}
Inequality \eqref{sharpC1gen} follows by combining \eqref{EstC1A}, \eqref{EstC1B} and \eqref{relMr0}.
The proof of \eqref{sharpC1alphagen} is similar.
\end{proof}

 We now prove Theorem~\ref{thm3gen}(ii)
as a consequence of Theorems~\ref{thm3gen}(i) and \ref{thm2gen}.

\begin{proof}[Proof of Theorem \ref{thm3gen}(ii)]
Since $\lambda_1(\mathcal{L},\Omega)> \lambda_1(\mathcal{L},\Omega')\ge 0$,
there exists a unique solution $u\in H^1_0(\Omega)$ of the equation $-\mathcal{L} u=f$.
It follows from \eqref{estimzinftysubsb} in Theorem \ref{thm3gen}(i) that
\be\label{estforuh}
\|u\|_{L^\infty(\Omega)}
\le e^{C_0(M+\hat M+r_\Omega^{-1}+\kappa^{-1})D} D^{2-\frac{n}{q}} \|f\|_{L^q(\Omega)},
\ee
since $d(x)\le D$. By combining this inequality with Theorem~\ref{thm2gen}, we obtain
$$\begin{aligned}
\|\nabla u\|_{L^\infty(\Omega)}
&\le C_0\Bigl({ \bigl(M+r_\Omega^{-1}\bigr)}\|u\|_{L^\infty(\Omega)}
+ \bigl(M+r_\Omega^{-1}\bigr)^{\frac{n}{q}-1}\|f\|_{q,r_0,\Omega}\Bigr)\\
&\le {\bigl(M+r_\Omega^{-1}\bigr)} e^{C_0(M+\hat M+r_\Omega^{-1}+\kappa^{-1})D} D^{2-\frac{n}{q}}\|f\|_{L^q(\Omega)}
+ C_0\bigl(M+r_\Omega^{-1}\bigr)^{\frac{n}{q}-1}\|f\|_{q,r_0,\Omega}\\
&=\Bigl({\bigl(M+r_\Omega^{-1}\bigr)D} \,e^{C_0(M+\hat M+r_\Omega^{-1}+\kappa^{-1})D}
+ C_0\bigl((M+r_\Omega^{-1})D\bigr)^{\frac{n}{q}-1}\Bigr) D^{1-\frac{n}{q}}\|f\|_{L^q(\Omega)}\\
&\le e^{C_0(M+\hat M+r_\Omega^{-1}+\kappa^{-1})D}  D^{1-\frac{n}{q}}\|f\|_{L^q(\Omega)},
\end{aligned}$$
where we used $(M+r_\Omega^{-1})D\ge 1$ and $q>n$ in the last inequality.
The proof of the bound for the H\"older bracket of $\nabla u$ is similar, taking $\alpha\le 1-n/q$ into account.
\end{proof}

\begin{proof}[Proof of Theorem \ref{thm3gen}(iii)]
We will apply Theorem \ref{thm3gen}(ii) with $f=1$. Let $u\in H^1_0(\Omega)$ be the solution of $-\mathcal{L} u=1$ in $\Omega$. By the maximum principle $u$ is positive in $\Omega$. By using that $\|1\|_{L^q(\Omega)}= |\Omega|^{1/q}\le D_0^{n/q}\le D^{n/q}$ in  inequality \eqref{estforuh},  we get
$$
-\mathcal{L} u=1\ge e^{-C_0(M+\hat M+r_\Omega^{-1}+\kappa^{-1})D} D^{-2}\,u \quad \mbox{ in }\Omega.
$$
By the characterization \eqref{charlambda1} of the first eigenvalue, the result follows.
\end{proof}

As a consequence of Theorem~\ref{thm3gen}, we now derive Theorem~\ref{thm1gen}.

\begin{proof}[Proof of Theorem \ref{thm1gen}]
(i) By \cite[Theorem~8.1]{GT} or \cite{TR7}, $-\ld$ satisfies the maximum principle.
Replacing $f$ with $f_+$, we may thus assume $f\ge 0$ and $v_{|\partial\Omega}=0$
without loss of generality, hence $v\ge 0$.
Define the operator
$$\hat\ld w:=\mathrm{div}(A\nabla w)+(b_1+b_2)\cdot \nabla w$$
(i.e., $\hat A=A$, $\hat b_1=\hat c=0$, $\hat b_2=b_1+b_2$).
There exists a unique $w\in H^1_0(\Omega)$ such that $-\hat\ld w=f$.
We claim that $v\le w$ in $\Omega$.
Indeed, formally, we have
$$\hat\ld w=-f=\ld v=\mathrm{div}(A\nabla v+b_1 v)+b_2\cdot \nabla v+cv=\hat \ld v+({\rm div}(b_1)+c)v\le \hat\ld v,$$
so the claim should follow from the maximum principle for $-\hat\ld$.
It is standard to make this rigorous, it suffices to argue as in the proof of \cite[Theorem~8.1]{GT} or \cite{TR7},
testing with $\varphi=(v-w-\eps)_+$ and letting $\eps\to 0^+$.

Now, taking $\Omega'=\Omega+B_{r_\Omega}$, hence $\kappa=r_\Omega$, we extend $\hat\ld$ by setting
$\hat A(x)=\lambda I$, $\hat b_1=\hat b_2=\hat c=0$ for $x\in \Omega'\setminus\Omega$.
We will apply Theorem \ref{thm3gen} to the function $w$.
The extended operator $\hat\ld$ satisfies the assumptions of Theorem \ref{thm3gen} (the inequality \eqref{hypLambda1b2}  holds since $\hat\ld 1=0$ in $\Omega'$).
In addition, we have
$$\begin{aligned}
r_0=r_0(\hat\ld,\Omega)
&= \sup\Bigl\{r>0:\: r\bigl(r^{-1}_\Omega+ [\hat A]^{\frac{1}{\alpha}}_{\alpha, r, \Omega} +
\|\hat b_2\|^{\beta_q}_{q,r,\Omega}\bigr)\le 1\Bigr\}\\
&= \sup\Bigl\{r>0:\: r\bigl(r^{-1}_\Omega+ [A]^{\frac{1}{\alpha}}_{\alpha, r, \Omega} +
\|b_1+b_2\|^{\beta_q}_{q,r,\Omega}\bigr)\le 1\Bigr\},
\end{aligned}$$
and we easily check that
$$\begin{aligned}
\hat r_0=\hat r_0(\hat\ld,\Omega',\kappa)
&= \sup\Bigl\{r>0:\: r\bigl(\kappa^{-1}+\|\hat b_2\|^{\beta_q}_{q,r,\Omega'}\bigr)\le 1\Bigr\}\\
&=\sup\Bigl\{r>0:\: r\bigl(r^{-1}_\Omega+\|b_1+b_2\|^{\beta_q}_{q,r,\Omega}\bigr)\le 1\Bigr\}.
\end{aligned}$$
Consequently, $\hat r_0\ge r_0$ and it follows from \eqref{relMr0}, \eqref{relMhatr0} that
$$\kappa^{-1}+\hat M=r^{-1}_\Omega+\hat M=\hat r_0^{-1}\le r_0^{-1}=r^{-1}_\Omega+ M.$$
Inequality \eqref{ineqG02} then readily follows from Theorem \ref{thm3gen}(i)
applied to $w$ and the fact that $v\le w$.

\smallskip

(ii) The proof is similar to that of Theorem \ref{thm3gen}(ii), using
\eqref{ineqG02} instead of \eqref{estimzinftysubsb}.
\end{proof}

Finally, we deduce Theorems~\ref{thm1} and \ref{thm3}
from Theorems~\ref{thm1gen} and \ref{thm3gen}, respectively.

\begin{proof}[Proof of Theorem~\ref{thm1}]
Set $\Omega=B_R$. We claim that $M_0=M(\ld,\Omega)$ and $M_*=M_*(\ld,\Omega)$ satisfy
\be\label{estimMstar}
M_*\le C(r_\Omega^{-1}+M_0).
\ee
If $r_*\ge r_0$, then \eqref{estimMstar} with $C=1$ follows from \eqref{relMr0}, \eqref{relMstarr0}.
Thus assume $r_*\le r_0$. Then
\be\label{estimMstar2}
M_*
\le [A]^{\frac{1}{\alpha}}_{\alpha, r_0, \Omega} + \|b_2\|^{\beta_q}_{q,r_0,\Omega}+ \|b_1\|^{\beta_q}_{q,r_0,\Omega}
\le M_0+\|b_1\|^{\beta_q}_{q,r_0,\Omega}
\ee
and, using $\|b_1\|_{L^\infty(\Omega)}\le M_0\le r_0^{-1}$,
$((n/q)-1)\beta_q=-1$ and \eqref{relMr0}, we obtain
$$
\|b_1\|^{\beta_q}_{q,r_0,\Omega}\le C_0(r_0^{n/q}\|b_1\|_{L^\infty(\Omega)}))^{\beta_q}\le
C_0r_0^{((n/q)-1)\beta_q}=C_0(r^{-1}_\Omega+ M_0),$$
so that \eqref{estimMstar2} implies \eqref{estimMstar}.
Since $D=2R$ and $r_\Omega=c(n)R$,
the conclusion  follows from Theorem \ref{thm1gen}.
\end{proof}

\begin{proof}[Proof of Theorem~\ref{thm3} and Theorem \ref{thm4}.]
By a similar argument as in the proof of Theorem~\ref{thm1},
we obtain that
$\hat M(\ld,B_{R+\kappa},\kappa)\le C(\kappa^{-1}+M(\ld,B_{R+\kappa}))$.
On the other hand, since $\kappa\le R$, we have $M(\ld,B_R)\le CM(\ld,B_{R+\kappa})$ by Proposition~\ref{scaleinvtilde0}.
 Since $D=2R$ and $r_\Omega=c(n)R$,
Theorem \ref{thm3} follows from Theorem~\ref{thm3gen} (ii), while Theorem \ref{thm4} is a consequence of Theorem~\ref{thm3gen} (iii).
\end{proof}

\section{Proofs of Theorem \ref{ThmLandisDuality} and Proposition~\ref{PropOptimality}}
\label{sec-proof2}

We start with the proof of Theorem~\ref{ThmLandisDuality}.
We will actually establish the following, slightly more precise result.

\begin{thm} \label{ThmLandisDualityB}
Let $R\ge1$ and $\kappa\in(0,R]$.
Assume \eqref{hyp1} with $\Omega=B_{R+\kappa}$, \eqref{hypLambda1} and
\be\label{hyp2dual}
A, b_2\in C^{\alpha}(B_{R+\kappa}),\quad b_1, c \in L^q(B_{R+\kappa}),
\quad \hbox{ for some $q\in(n,\infty]$.}
\ee
Any nontrivial weak solution  of $\mathcal{L}u=0$ in $B_{R+\kappa}$ satisfies the lower estimate
$$\int_{\partial B_R}\, |u| d\sigma \ge e^{-C_0 (M_1+\kappa^{-1})R} R^{-1}\int_{B_R} |u|,$$
where $M_1=M(\ld^*,B_{R+\kappa})$ {\rm(}cf.~\eqref{defM}{\rm)}.
\end{thm}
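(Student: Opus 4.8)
\emph{Proof strategy.} The plan is to run a duality argument, in the spirit of \cite{LeB}, feeding into it the optimized $C^{1,\alpha}$-estimate of Theorem~\ref{thm3}(ii). Since $\overline{B_R}\subset B_{R+\kappa}$ and $u$ solves $\mathcal{L}u=0$ with $A\in C^\alpha$, standard interior elliptic regularity gives $u\in C^{1,\alpha}(\overline{B_R})$; in particular $\mathrm{sgn}(u)$ (with $\mathrm{sgn}(0)=0$) is measurable, $u\,\mathrm{sgn}(u)=|u|$ a.e., and $\|\mathrm{sgn}(u)\|_{L^q(B_R)}\le|B_R|^{1/q}=c(n)R^{n/q}$. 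First I would note that the adjoint operator $\mathcal{L}^*$ from \eqref{adj} has principal part $A^T\in C^\alpha$, inside-divergence first-order coefficient $-b_2\in C^\alpha$, drift coefficient $-b_1\in L^q$, and zero-order coefficient $c\in L^q$, so that $\mathcal{L}^*$ satisfies \eqref{hyp1}--\eqref{hyp2} — this is exactly why the H\"older hypotheses on $b_1,b_2$ are reversed in \eqref{hyp2dual}. Moreover $\lambda_1(-\mathcal{L}^*,B_{R+\kappa})=\lambda_1(-\mathcal{L},B_{R+\kappa})\ge0$, so by strict monotonicity of $\lambda_1$ in the domain $\lambda_1(-\mathcal{L}^*,B_R)>0$, and the Dirichlet problem
$$-\mathcal{L}^*w=-\mathrm{sgn}(u)\quad\text{in }B_R,\qquad w=0\quad\text{on }\partial B_R$$
has a unique solution $w\in H^1_0(B_R)$.

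Next I would estimate $w$ by applying Theorem~\ref{thm3}(ii) to the operator $\mathcal{L}^*$ on $B_R$, whose spectral hypothesis on $B_{R+\kappa}$ was just verified and whose associated quantity $M$ is $M(\mathcal{L}^*,B_{R+\kappa})=M_1$, with right-hand side $-\mathrm{sgn}(u)\in L^q(B_R)$. Using $\|\mathrm{sgn}(u)\|_{L^q(B_R)}\le c(n)R^{n/q}$ and absorbing dimensional constants into the exponential — legitimate since $(M_1+\kappa^{-1})R\ge\kappa^{-1}R\ge1$ because $\kappa\le R$ — this yields
$$\|\nabla w\|_{L^\infty(B_R)}+R^\alpha[\nabla w]_{\alpha,B_R}\le e^{C_0(M_1+\kappa^{-1})R}\,R^{1-\frac nq}\,\|\mathrm{sgn}(u)\|_{L^q(B_R)}\le R\,e^{C_0(M_1+\kappa^{-1})R}.$$
The finiteness of $[\nabla w]_{\alpha,B_R}$ in particular forces $w\in C^1(\overline{B_R})$.

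The third step is the integration by parts. Both $u$ and $w$ lie in $C^1(\overline{B_R})$ and are weak solutions of divergence-form equations with right-hand sides in $L^q(B_R)$ (namely $0$ and $-\mathrm{sgn}(u)$), so the integration by parts formula of Lemma~\ref{weakdiv2} applies, exactly as in the proof of Theorem~\ref{thm3gen}(i), with $\mathcal{L}$ there replaced by $\mathcal{L}^*$ and the two functions taken to be $u$ and $w$: the interior term $\int_{B_R}w\,\mathcal{L}u$ vanishes since $\mathcal{L}u=0$, the boundary terms carrying a factor $w$ vanish since $w=0$ on $\partial B_R$, and what survives is, up to sign, $\int_{\partial B_R}u\,(\nu\cdot A^T\nabla w)\,d\sigma$. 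Since $\int_{B_R}u\,\mathcal{L}^*w=\int_{B_R}u\,\mathrm{sgn}(u)=\int_{B_R}|u|$ and $|\nu\cdot A^T\nabla w|\le C_0\|\nabla w\|_{L^\infty(B_R)}$ on $\partial B_R$, we get
$$\int_{B_R}|u|\le C_0\,\|\nabla w\|_{L^\infty(B_R)}\int_{\partial B_R}|u|\,d\sigma\le C_0\,R\,e^{C_0(M_1+\kappa^{-1})R}\int_{\partial B_R}|u|\,d\sigma.$$
Rearranging and once more absorbing $C_0^{-1}$ into the exponential, using $(M_1+\kappa^{-1})R\ge1$, gives the claimed estimate $\int_{\partial B_R}|u|\,d\sigma\ge e^{-C_0(M_1+\kappa^{-1})R}R^{-1}\int_{B_R}|u|$.

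I expect the main difficulty to be the integration by parts under limited regularity: because $\mathcal{L}^*$ carries a term $\mathrm{div}(b_2w)$ with $b_2$ only H\"older (not Lipschitz), the solution $w$ need not belong to $W^{2,q}_{\mathrm{loc}}(B_R)$, and $u$ — a weak solution of a divergence equation with merely $L^q$ lower-order coefficients — need not be $W^{2,q}_{\mathrm{loc}}(B_R)$ either, so the Green-type identity above cannot be produced by a naive double integration by parts and must rest on the weak-form integration by parts of Lemma~\ref{weakdiv2}, whose validity for $C^1$ solutions of divergence equations with $L^q$ data is precisely what is invoked in the proof of Theorem~\ref{thm3gen}(i). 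A secondary, purely bookkeeping point is to check that $\mathcal{L}^*$ meets \eqref{hyp1}--\eqref{hyp2} and that the constant governing Theorem~\ref{thm3}(ii) applied to $\mathcal{L}^*$ is exactly $M_1=M(\mathcal{L}^*,B_{R+\kappa})$.
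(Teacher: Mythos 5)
Your proposal is correct and follows essentially the same route as the paper: the duality argument with the adjoint Dirichlet problem $-\mathcal{L}^*w=\pm\,\mathrm{sgn}(u)$, the bound on $w$ from Theorem~\ref{thm3} applied to $\mathcal{L}^*$ (the paper quotes the $\sup(v_R/d)$ estimate and recovers the boundary gradient from $v_R=0$ on $\partial B_R$, while you quote the gradient bound directly — an immaterial difference), and the Green identity of Lemma~\ref{weakdiv2}. One small inaccuracy: since the inside-divergence coefficient $b_1$ of $\mathcal{L}$ is only in $L^q$, $u$ is not $C^{1,\alpha}(\overline{B_R})$ in general but only continuous (cf.\ \cite[Theorem 8.29]{GT}), which is harmless because Lemma~\ref{weakdiv2} requires only $u\in C(\overline\Omega)\cap H^1(\Omega)$ together with $w\in C^1(\overline\Omega)$.
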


\begin{proof}
Let $u$ be a solution of $\mathcal{L}u=0$ in $B_{R+\kappa}$.
By Theorem \ref{thm3}, in view of \eqref{hyp2dual} and
since
$\lambda_1(-\mathcal{L}^*,B_R)=\lambda_1(-\mathcal{L},B_R)>\lambda_1(-\mathcal{L},B_{R+\kappa})\ge 0$, the adjoint problem
\be\label{solvR1}
\left\{\hskip 2mm\begin{aligned}
-\mathcal{L}^*v_R&=sgn(u), &\quad&x\in B_R,\\
v_R&=0, &\quad&x\in S_R
\end{aligned}
\right.
\ee
has a unique solution and it satisfies
\be\label{solwRestim}
\Bigl\|\frac{v_R}{d}\Bigr\|_\infty\le e^{C_0 (M_1+\kappa^{-1})R}\,R^{1-n/q}\|1\|_{L^q(B_R)}
\le Re^{C_0 (M_1+\kappa^{-1})R}.
\ee
By \cite[Theorem 8.29]{GT} (see also the remark at the end of \cite[Section 8.10]{GT}) we know that $u\in C(\overline B_R)$.
Also, by \cite[Chapter 8.11]{GT}, \cite[Chapter 5.5]{Mo}, in view of assumption \eqref{hyp2}, we have $v_R\in C^1(\overline B_R)$.
We may thus apply Lemma~\ref{weakdiv2} with $v=v_R$, to obtain
$$
\int_{B_R} |u| = - \int_{B_R} u\mathcal{L}^*v_R = -\int_{S_R}\nu\cdot A^T(x)u\nabla v_R\, d\sigma
\le \|A\|_\infty \int_{S_R}\, |u| |\nabla v_R|\,d\sigma.
$$
Since
$v_R=0$ on $S_R$, it follows from \eqref{solwRestim} that
$$|\nabla v_R(x_0)|=\lim_{t\to 0^+} t^{-1}|v_R(x_0-t\nu)|\le Re^{C_0 (M_1+\kappa^{-1})R},\quad\hbox{ for all $x_0\in S_R$.}$$
Consequently,
$$\int_{S_R}\, |u| d\sigma \ge \|A\|_\infty^{-1} R^{-1}e^{-C_0 (M_1+\kappa^{-1})R} \int_{B_R} |u|,$$
which implies the conclusion.
\end{proof}

\begin{proof} [Proof of Theorem~\ref{ThmLandisDuality}.]
Let $R\ge 1$. Applying Theorem~\ref{ThmLandisDualityB} with $\kappa=R$, we have
\be\label{estLandisStar}
\int_{\partial B_R}\, |u| d\sigma \ge e^{-C_0 (1+RM_1)} R^{-1}\int_{B_R} |u| \ge e^{-C_0 R(1+M_1)} \int_{B_R} |u|,
\ee
where $M_1=M(\ld^*,B_{2R})$.
If $r_1=r_0(\ld^*,B_{2R})\le 1$, then $M_1\le K$,
where $K$ is defined in \eqref{defK}, and otherwise $M_1\le r_1^{-1}\le 1\le K$
owing to \eqref{relMr0}. The conclusion thus follows from~\eqref{estLandisStar}.
\end{proof}

\begin{proof}[Proof of Proposition~\ref{PropOptimality}]
We set $v(x)=w(r):=e^{-r}\cos r$ for $r=|x|\ge 1$,
and extend $v$ to a $C^\infty$ function to the whole of $\R^n$, with $v>0$ for $|x|\le 1$.
We next set
$$b_1(x)=\Bigl(2-\frac{n-1}{|x|}\Bigr)\frac{x}{|x|},\quad |x|\ge 1$$
and extend $b_1$ to a $C^\infty$
function on $\R^n$.
We then let
$$c_1(x)=
\begin{cases}
2,&|x|\ge 1\\
\noalign{\vskip 1mm}
v^{-1}\bigl(-\Delta v-b_1\cdot\nabla v\bigr),& |x|<1.
\end{cases}
$$
For $|x|<1$ we have $\mathcal{L}_1v:=\Delta v+b_1\cdot\nabla v+c_1v=0$ by definition.
On the other hand, since $w'(r)=-e^{-r}(\cos r+\sin r)$ and $w''(r)=2e^{-r}\sin r=-2(w'+w)(r)$ for $r>1$, we obtain
$$\Delta v(x)=w''(r)+\frac{n-1}{r}w'(r)=-2w(r)+\Bigl[\frac{n-1}{r}-2\Bigr]w'(r)=
-[b_1\cdot\nabla v+c_1v](x),\quad |x|>1.$$
It follows that $\mathcal{L}_1v(x)=0$ for $|x|>1$ and that the expression in the definition of $c_1(x)$ for $|x|<1$ also
corresponds to $c_1(x)$ for $|x|>1$.
Since $v$ is $C^2$ in $\R^n$, we deduce that $c_1$ is continuous
near $\{|x|=1\}$,
hence in $\R^n$, and that $v$ solves
$\mathcal{L}_1v=0$ in $\R^n$.
\smallskip

Now, since $v(x)=0$ whenever $|x|=(k+\frac12)\pi$ with $k\in\N^*$, it follows from Theorem~\ref{ThmLandisDuality} that
$$\Lambda:=\lambda_1(\mathcal{L}_1,\R^n)<0.$$
For any $\eps>0$, we set $\tilde b_\eps(x)=\eps b_1(\eps x)$, $\tilde c_\eps(x)=\eps^2 c_1(\eps x)$
and we define the operator $\mathcal{L}_\eps:=\Delta+\tilde b_\eps\cdot\nabla+\tilde c_\eps$.
If for some $\mu\in\R$, $\phi$ is a positive solution of $-\mathcal{L}_1\phi\ge \mu\phi$ in $\R^n$, then
the function $\phi_\eps(x)=\phi(\eps x)$ satisfies
$$[-\mathcal{L}_\eps\phi_\eps](x)=-\eps^2\Delta\phi(\eps x)-\eps b_1(\eps x)\cdot\eps[\nabla\phi](\eps x)
-\eps^2 c_1(\eps x)\phi(\eps x)=-\eps^2[\mathcal{L}_1\phi](\eps x)\ge\eps^2\mu\phi_\eps(x).$$
Consequently, by the properties of first eigenvalue (see Appendix),
we have $\lambda_1(-\mathcal{L}_\eps,\R^n)=\eps^2\Lambda$.
But, by the same computation, $u_\eps=v(\eps x)$ is a solution of $\mathcal{L}_\eps u_\eps=0$ in $\R^n$.
Finally choosing $\eps=\sqrt{\lambda/\Lambda}$, the operator $\mathcal{L}=\mathcal{L}_\eps$ and the function
$u=u_\eps$ have all the required properties and the proof of the proposition is complete.
\end{proof}

\section{Proof of the optimized quantitative Hopf lemma (Theorems~\ref{OptimizedHopf} and \ref{thm4gen}(i))}
\label{proofHopf}

Let $\Omega$ be a domain with $C^{1,\bar\alpha}$ boundary and $\tilde \ld$ be an operator of the form \eqref{defdiv} defined in $\Omega$, with coefficients $\tilde A$, $\tilde b_i$, $\tilde c$ satisfying \eqref{hyp1}-\eqref{hyp2}. For  $B_r=B_r(x_0)$, $x_0\in \partial\Omega$, we denote $B_r^+=B_r\cap\Omega$, $B_r^0=B_r\cap\partial\Omega$.
 In view of the proof of Theorem~\ref{thm4gen}(i), we prepare
the following lemma, which provides quantitative interior and boundary Hopf type estimates
for normalized domains and  coefficients with small Lebesgue norms.

\begin{lem} \label{LemOptimizedHopf2}
 Assume that either (i) $\omega= B_2\subset\Omega$ or (ii) $\omega$ is a $C^{1,\bar\alpha}$ domain  such that   $B_1^+\subset\omega\subset B_2^+$ and the $C^{1,\bar\alpha}$ norm of $\partial\omega$ is bounded by $C=C(n,\bar\alpha)>0$. There exist constants $\ep_0, c_0>0$ depending on $n,\lambda,\Lambda, q, \alpha, \bar\alpha$, such that if
\be\label{hyplambda1b}
[\tilde A]_{\alpha,\omega}+\|\tilde b_1\|_{C^\alpha(\omega)}\le1, \qquad  \|\tilde b_1\|_{L^q(\omega)}
+ \|\tilde b_2\|_{L^q(\omega)}+\|\tilde c\|_{L^{q}(\omega)}\le \ep_0,
\ee
and $v\ge0$ is a weak supersolution of $-\tilde{\mathcal{L}}v\ge g$ in $\omega$ for some $g\in L^q(\omega)$, then
$$\inf_{\omega} \frac{v}{d}\ge c_0 \int_{\omega}g d,\qquad d(x)=\mathrm{dist}(x,\partial\omega).$$
\end{lem}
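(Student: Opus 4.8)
The plan is to reduce, via the smallness conditions in \eqref{hyplambda1b}, to the model case $\tilde{\mathcal{L}} = \Delta$ treated by Brezis--Cabr\'e, and to produce a quantitative lower bound for $v/d$ by constructing an explicit subsolution. First I would fix a small reference ball $B_r(y_0) \Subset \omega$, with $r$ and $y_0$ depending only on the normalization of $\omega$ (in case (i) take $B_r(0)$; in case (ii) take a ball of controlled radius sitting well inside $\omega$, away from $\partial\omega$, whose existence follows from $B_1^+ \subset \omega$ and the $C^{1,\bar\alpha}$ bound on $\partial\omega$). On $B_r(y_0)$ the nonnegative supersolution $v$ satisfies $-\tilde{\mathcal{L}} v \ge g \ge$ (the negative part of $g$), and by the weak Harnack inequality for $\tilde{\mathcal{L}}$ with small coefficients --- here one can invoke Theorem~\ref{BHIoptim} or Proposition~\ref{sharpharn} on a slightly larger ball, noting that under \eqref{hyplambda1b} the relevant quantity $M$ (or $\hat M$) is bounded by a universal constant --- one gets
\be\label{plan-wh}
\inf_{B_{r/2}(y_0)} v \ge c_1\Bigl( \Bigl(\int_{B_r(y_0)} v^{\epsilon}\Bigr)^{1/\epsilon} - C\|g^-\|_{L^q(\omega)}\Bigr).
\ee
What actually drives the estimate is a lower bound on $\int_{B_r(y_0)} v$ in terms of $\int_\omega g\, d$; this is the crux and I address it next.

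The key step is a global comparison argument. Let $w$ solve the auxiliary Dirichlet problem $-\tilde{\mathcal{L}} w = g^+$ in $\omega$, $w = 0$ on $\partial\omega$; under \eqref{hyplambda1b} (smallness of the lower-order coefficients) the operator $-\tilde{\mathcal{L}}$ satisfies the maximum principle on $\omega$ (its first eigenvalue is bounded below by a universal positive constant, e.g. by Proposition~\ref{upperboundlambda1}-type reasoning together with smallness, or directly by a Poincar\'e argument), so $w$ exists, is nonnegative, and $v \ge w$ in $\omega$ by the maximum principle. Then I would test the equation for $w$ against the (nonnegative) solution $\phi$ of the \emph{adjoint} problem $-\tilde{\mathcal{L}}^* \phi = 1$ in $\omega$, $\phi = 0$ on $\partial\omega$: integration by parts (Lemma~\ref{weakdiv2}, as used in the proof of Theorem~\ref{thm3gen}(i)) gives $\int_\omega w = \int_\omega \phi\, g^+$. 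By the boundary Harnack / Hopf lemma for $\tilde{\mathcal{L}}^*$ with small coefficients on the normalized domain $\omega$, one has $\phi(x) \ge c_2\, d(x)$, hence $\int_\omega w \ge c_2 \int_\omega g^+ d \ge c_2 \int_\omega g\, d$ (absorbing the negative part into $\|g^-\|_{L^q}$ via $d \le \mathrm{diam}(\omega) \le 4$ if needed). Combined with $v \ge w$ and restricting the integral to $B_r(y_0)$ where $w$ is comparable to its average (again by Harnack for $w$, which is a nonnegative solution of $-\tilde{\mathcal{L}} w = g^+$ on interior balls), this yields $\inf_{B_{r/2}(y_0)} v \ge c_3\bigl(\int_\omega g\, d - C\|g^-\|_{L^q(\omega)}\bigr)$, and after noting $g \in L^q$ with $q>n$ so that $\|g^-\|_{L^q(\omega)}$ is controlled by $\int_\omega g\, d$ on the bounded normalized domain up to absorbing, one arrives at a clean interior lower bound $\inf_{B_{r/2}(y_0)} v \ge c_4 \int_\omega g\, d$.

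Finally I would transfer this interior bound to the boundary-weighted statement $\inf_\omega (v/d) \ge c_0 \int_\omega g\, d$. For this, apply the sharp boundary Harnack inequality \eqref{sharpBHI}-type estimate, or rather its lower half: since $v$ is a nonnegative supersolution of $-\tilde{\mathcal{L}} v \ge g \ge -\,g^-$ vanishing nowhere in the interior and the coefficients are small, $\inf_\omega (v/d)$ is bounded below by a universal constant times $\inf_{B_{r/2}(y_0)} v$ minus a controlled multiple of $\|g^-\|_{L^q(\omega)}$ --- this is exactly the content of the weak-type boundary Harnack on the normalized domain, using that $d \ge c_5$ on $B_{r/2}(y_0)$. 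Putting the pieces together gives the claim with $c_0, \epsilon_0$ depending only on $n, \lambda, \Lambda, q, \alpha, \bar\alpha$. The main obstacle I anticipate is the boundary case (ii): one must ensure all the comparison and Harnack estimates are \emph{uniform} over the class of admissible $\omega$ (those squeezed between $B_1^+$ and $B_2^+$ with $C^{1,\bar\alpha}$-bounded boundary), which requires that the Hopf lower bound $\phi \ge c_2 d$ for the adjoint problem, and the interior ball $B_r(y_0)$, be chosen with constants depending only on that class --- this is where the precise geometry encoded in $r_\Omega$ and the flattening charts from Section~\ref{sec-ext} is used, and where smallness of $\|\tilde b_i\|_{L^q}, \|\tilde c\|_{L^q}$ (rather than mere boundedness) is essential to keep the adjoint operator within the maximum-principle regime on a domain of size $O(1)$.
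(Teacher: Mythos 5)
Your overall template --- replace $v$ by the Dirichlet solution via the maximum principle, run a duality argument against an auxiliary adjoint problem, prove a Hopf-type bound $\ge c\,d$ for the auxiliary adjoint solution, and finish with weak Harnack inequalities --- is the same as the paper's, but the way you arrange the duality leaves a gap I do not see how to close. You pair $w$ (with $-\tilde\ld w=g^+$) against $\phi$ solving $-\tilde\ld^*\phi=1$, which yields a lower bound on the \emph{global mass} $\int_\omega w\ge c\int_\omega g^+d$, and you then claim that restricting to $B_r(y_0)$, ``where $w$ is comparable to its average by Harnack on interior balls,'' converts this into $\inf_{B_{r/2}(y_0)}v\gtrsim\int_\omega g\,d$. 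Interior Harnack compares $w$ to its average over that interior ball only; it says nothing about what fraction of $\int_\omega w$ is carried by $B_r(y_0)$, and a priori the mass of $w$ may sit in the boundary layer. The global estimate \eqref{sharpWBHI} bounds only an $L^\eps$-mean of $w/d$ (small $\eps$) by $\inf_\omega(w/d)$, and an $L^\eps$-mean does not dominate the $L^1$-mean, so neither $\int_{B_r(y_0)}w\ge c\int_\omega w$ nor $\inf_\omega(w/d)\ge c\int_\omega w$ follows from what you invoke. The paper reverses the roles precisely to avoid this: it solves the adjoint problem with right-hand side $\chi_{B_\rho}$ supported in a fixed interior ball, so that the pairing $\int_\omega v(-\tilde\ld^*w)=\int_{B_\rho}v$ is an interior quantity controlled by $\inf_{B_\rho}v$ through the interior weak Harnack, while the Hopf bound $w\ge c_0 d$ for that adjoint solution produces the factor $\int_\omega g\,d$.

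Second, the bound $\phi\ge c_2 d$ for the adjoint problem, which you cite as ``the boundary Harnack / Hopf lemma for $\tilde\ld^*$ with small coefficients,'' is not an off-the-shelf fact but the technical core of the lemma. The adjoint operator carries $-\tilde b_2$ as its divergence-form first-order coefficient, and $\tilde b_2$ is only in $L^q$; hence solutions of the adjoint Dirichlet problem need not be $C^1$ up to $\partial\omega$, and the hypotheses of Theorem~\ref{BHIoptim} (which require the divergence-form coefficients to be $C^\alpha$) are not met by $\tilde\ld^*$. The paper devotes three steps to exactly this point: a direct argument for the pure second-order operator via the first eigenfunction on a small interior ball, a perturbation adding a small $L^q$ drift, and a multiplier change of unknown $w=\phi w_1$ with $\tilde\ld^*\phi=0$, $\phi$ close to $1$, which removes the zero-order and divergence-form first-order terms. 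A complete write-up must either reproduce this reduction or give an independent proof of the Hopf estimate for $\tilde\ld^*$. (A minor further point: your claim that $\|g^-\|_{L^q(\omega)}$ can be absorbed into $\int_\omega g\,d$ is false for sign-changing $g$; the lemma is only applied with $g\ge0$, and you should simply assume this throughout, as the negative-part terms you generate cannot otherwise be controlled.)
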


\smallskip

The proof of Lemma~\ref{LemOptimizedHopf2} uses some ideas from \cite{BrC}
(where the case $\tilde \ld=\Delta$ was treated),
as well as additional arguments based on  the boundary Harnack inequality and a reduction argument which permits us to consider simpler operators and right-hand sides.

\begin{proof}[Proof of Lemma~\ref{LemOptimizedHopf2}] We will give the proof of statement (ii) (the proof of (i) is simpler and goes the same way).

We note that we can assume that $\lambda_1(-\tilde{\mathcal{L}},\omega)>0$ and that $v>0$ is a solution of $-\tilde{\mathcal{L}}v= g$ in $\omega$ with $v=0$ on $\partial \omega$, and so $v\in C^{1,\alpha}(\overline{\omega})$. Indeed, the positivity of the first eigenvalue follows from Proposition \ref{lowerbdeig} and an appropriate choice of $\ep_0$, and  then  it is sufficient to replace $v$ by the solution of the Dirichlet problem $-\tilde{\mathcal{L}}\hat v= g$ in $\omega$ with $\hat v=0$ on $\partial \omega$, and note that $v\ge \hat v$ by the maximum principle (the maximum principle and the solvability of the Dirichlet problem are available thanks to $\lambda_1(-\tilde{\mathcal{L}},\omega)>0$, see the appendix).

It is easy to see that  there is a universal $\rho>0$ and a point $\xi$ such that the ball $B_{4\rho}(\xi)\subset B_1^+$ (by the assumption on the norm of the boundary each point of $\partial \Omega$ can be touched by an interior cone with opening 1 and fixed size, see the beginning of Section \ref{sec-ext}).  Set $B_\rho=B_\rho(\xi)$.

By \eqref{hyplambda1b} and Proposition~\ref{upperboundlambda1}  we also  have
\be\label{upperlam2}
0<\lambda_1(-\tilde{\mathcal{L}^*},\omega)=\lambda_1(-\tilde{\mathcal{L}},\omega)\le \lambda_1(-\tilde{\mathcal{L}},B_{2\rho})\le C_0.\ee

{\bf Step 1.}
Set $\ld_0u= \mathrm{div}(A(x)Du)$, with $\|A\|_{C^\alpha(\omega)}\le \Lambda+1$. The solution of the auxiliary problem
$$
\left\{\hskip 2mm\begin{aligned}
-\ld_0w_0&=\chi_{B_\rho} &\quad&x\in \omega,\\
w_0&=0, &\quad&x\in \partial \omega,
\end{aligned}\right.
\qquad
\mbox{is such that}\qquad
\inf_{\omega} \frac{w_0}{d} \ge c_0.
$$

\noindent{\it Proof}. We  consider the first eigenvalue and eigenfunction  $\lambda_1,\varphi_1>0$
of $\ld_0^*$=div$(A^TD\cdot)$ in $B_{2\rho}$,
normalized by $\max_{B_{2\rho}}\varphi_1=1$. Clearly $0<\lambda_1\le C_0$.
By the boundary Harnack inequality we have
$\inf_{B_{2\rho}}\frac{\varphi_1}{d_0}\ge c_0 \sup_{B_{2\rho}}\frac{\varphi_1}{d_0}\ge c_0 \sup_{B_{2\rho}}\varphi_1$,
where $d_0(x)={\rm dist}(x,\partial B_{2\rho})$, hence in particular
$$
\inf_{B_{\rho}}\varphi_1\ge c_0.
$$

Since $-\ld_0w_0 \ge 0$, $w_0>0$ in $\omega$,   by  the boundary weak Harnack inequality \eqref{sharpWBHI}
\be\label{intharn1}\inf_{\omega} \frac{w_0}{d}\ge c_0 \Bigl\| \frac{w_0}{d}\Bigr\|_{L^\eps(\omega)}\ge c_0 \Bigl\|
\frac{w_0}{d}\Bigr\|_{L^\eps(B_{2\rho})}\ge c_0 \|{w_0}\|_{L^\eps(B_{2\rho})}\ge c_0 \inf_{ B_{2\rho}}w_0.\ee
whereas by the interior weak Harnack inequality
\cite[Theorem 8.18]{GT} (and remark at the end of \cite[Section 8.10]{GT}),
\be\label{intharn2}
\inf_{B_{2\rho}}w_0\ge c_0 \int_{B_{2\rho}} w_0,
\ee
and by using \eqref{upperlam2} and standard integration by parts
\be\label{HopfIPP}
\begin{aligned}
\int_{B_{2\rho}} w_0
&\ge \int_{B_{2\rho}} w_0\varphi_1= \frac{1}{\lambda_1} \int_{B_{2\rho}} w_0(-\ld_0\varphi_1)\\
&= \frac{1}{\lambda_1} \int_{B_{2\rho}} (-\ld_0w_0)\varphi_1 -\frac{1}{\lambda_1}\int_{\partial B_{2\rho}}  \nu\cdot A(x) w_0
\nabla\varphi_1\, d\sigma\\
&\ge c_0 \int_{B_{2\rho}} \chi_{B_\rho}\varphi_1 = c_0 \int_{B_{\rho}} \varphi_1 \ge  c_0 \inf_{B_{\rho}}\varphi_1\ge c_0,
\end{aligned}
\ee
since $\nabla\varphi_1=(\partial_\nu \varphi_1)\nu$ and $\partial_\nu\varphi_1\le 0$, $\nu\cdot A(x)\nu\ge 0$ on $\partial B_{2\rho}$.
Combining this with \eqref{intharn1}-\eqref{intharn2} gives the claim.

\smallskip

{\bf Step 2.}
Set $\ld_1u= \mathrm{div}(A(x)Du)+b(x)\cdot Du$, $b\in L^q(\omega)$.  There exists a constant $\ep_1\in(0,1)$ depending on $n,\lambda,\Lambda, q, \alpha,$ such that if $\|A\|_{C^\alpha(\omega)}\le \Lambda+1$ and $\|b\|_{L^q(\omega)}\le \ep_1$ then the solution of the auxiliary problem
$$
\left\{\hskip 2mm\begin{aligned}
-\ld_1w_1&=\chi_{B_\rho} &\quad&x\in \omega,\\
w_1&=0, &\quad&x\in \partial \omega,
\end{aligned}\right.
\qquad
\mbox{is such that}\qquad
\inf_{\omega} \frac{w_1}{d} \ge c_0.
$$

\noindent{\it Proof}. By standard $L^\infty$ and $C^1$ estimates (see \cite[Theorem 8.16]{GT}, \cite[Corollary 8.36]{GT},  the remark at the end of \cite[Section~8.10]{GT},  \cite[Theorem 5.5.5']{Mo}) if $\|b\|_{L^q(\omega)}\le 1$ any solution of $-\ld_iz_i = g$ in $\omega$, $z_i=0$ on $\partial\omega$ ($i=0,1$, $\ld_0$ is the operator from Step 1), $g\in L^q(\omega)$, is such that
$$
\|z_i\|_{C^1(\omega)}\le C_0\|g\|_{L^q(\omega)}.
$$

Now notice that $w_1 = w_0-z_0$, where $w_0$ is the function from Step 1, and $z_0$ solves
$$-\ld_0z_0 = - b\cdot Dw_1=:g\quad\mbox{ in }\quad \omega.$$
Since $\|g\|_{L^q(\omega)}\le C_0\ep_1$, by choosing $\ep_1$ small enough we have
$\|z_0\|_{C^1(\omega)}\le  c_0/2$, where $ c_0$ is the constant from Step 1. Thus
$$
\inf_{\omega} \frac{w_1}{d} \ge \inf_{\omega} \frac{w_0}{d} - \sup_{\omega} \frac{z_0}{d}\ge  c_0/2.
$$

\smallskip

{\bf Step 3.}
There exists a constant $\ep_0>0$ depending on $n,\lambda,\Lambda, q, \alpha,$ such that if \eqref{hyplambda1b} holds then the solution of the auxiliary problem
$$
\left\{\hskip 2mm\begin{aligned}
-\tilde \ld^*w&=\chi_{B_\rho} &\quad&x\in \omega,\\
w&=0, &\quad&x\in \partial \omega,
\end{aligned}\right.
\qquad
\mbox{is such that}\qquad
\inf_{\omega} \frac{w}{d} \ge c_0.
$$

\noindent{\it Proof}. We recall that $\ld^*w = \mathrm{div}(\tilde A^TDw - \tilde b_2w) - \tilde b_1Dw + \tilde cw$. Notice that the solutions of the Dirichlet problem for $\tilde \ld^* $ are not necessarily $C^1$, so we will need a workaround for
this lack of smoothness.

Let $\psi\in H^1_0(\omega)$ be the solution of the Dirichlet problem
$$
\tilde \ld^*\psi = -div(\tilde b_2) + \tilde c
$$
(this problem has a unique solution since $\lambda_1(-\tilde \ld^*,\omega)>0$, cf.~\eqref{upperlam2}).
By \cite[Theorem 8.16]{GT} we have \be\label{estlinfpsi}
\|\psi\|_{L^\infty(\omega)}\le C\eps_0.\ee
We can also write
$$
\mathrm{div}(\tilde A^TD\psi) - \tilde b_1D\psi  = -div(\tilde b_2(1-\psi)) + \tilde c(1-\psi)
$$
and apply  Morrey's $W^{1,q}$-regularity estimate (see \cite[Section~5.5]{Mo}) to this equation.
Together with  \eqref{estlinfpsi} and Sobolev embeddings  this gives
$$
\|\psi\|_{C^\alpha(\omega)}\le C\|\psi\|_{W^{1,q}(\omega)}\le
C(\|\psi\|_{L^\infty(\omega)} + \|\tilde b_2(1-\psi)\|_{L^q(\omega)} + \|\tilde c(1-\psi)\|_{L^q(\omega)})\le C_1\ep_0,
$$
provided  $\ep_0\le 1$.
We further diminish $\ep_0$ so that $C_1\ep_0<1/2$. Then the function $\phi=1-\psi$ is such that
\be\label{propfi}
\tilde \ld^*\phi= 0 \;\mbox{ in } \omega,\qquad 1/2\le\phi\le3/2 \;\mbox{ in } \omega\qquad \mbox{and} \qquad [\phi]_{\alpha,\omega}\le C\|D\phi\|_{L^q(\omega)}\le C\ep_0.
\ee

Set $w_1= w/\phi$. It is easy to compute that $\tilde \ld^*w = \tilde \ld^*(\phi w_1) = \hat L(w_1)$, where $\tilde L$ is an operator of the form \eqref{defdiv} whose coefficients are given by
$$
\hat A = \phi \tilde A^T, \quad \hat b_1 = \tilde A^TD\phi - \tilde b_2 \phi, \quad
\hat b_2 = \tilde A^TD\phi -\tilde b_1\phi,
\quad \hat c =- \tilde b_1D\phi + \tilde c\phi.
$$
Since div$(\hat b_1) + \hat c = \tilde \ld^*\phi= 0$ in the sense of distributions in $\omega$, we see that $-\tilde \ld^*w=-\hat L(w_1) = \chi_{B_\rho}$  is equivalent in the weak sense to
$-\ld_1w_1 = \chi_{B_\rho}$ where
$$
\ld_1 w_1 = \mathrm{div}(\phi \tilde A^T Dw_1) +(2 A^TD\phi -(\tilde b_1+\tilde b_2)\phi)Dw_1$$
is an operator to which Step 2 applies, thanks to \eqref{propfi} and an appropriate choice of $\ep_0$. Again by \eqref{propfi} $w_1\ge cd$ implies $w\ge (c/2)d$ in $\omega$.

\smallskip
{\bf Step 4.} We claim that
\be \label{HopfAuxClaim2}
\inf_{B_\rho}v\ge c_0  \int_{\omega} g d.
\ee
Indeed, by the interior weak Harnack inequality and assumption \eqref{hyplambda1b} with $\ep_0\le1$, we have $\inf_{B_\rho}v\ge c_0\int_{B_\rho} v$. Therefore,
$$\inf_{B_\rho}v\ge c_0\int_{\omega} v\chi_{B_\rho}=c_0\int_{\omega} v(-\tilde{\mathcal{L}}^*w)
\ge c_0\int_{\omega} w(-\tilde{\mathcal{L}}v)\ge c_0\int_{\omega} gw,$$
where the second inequality follows similarly as in \eqref{HopfIPP}, by using also Lemma \ref{weakdiv2}.
Step 3 then gives \eqref{HopfAuxClaim2}.

\smallskip

{\bf Step 5.} We introduce the auxiliary problem:
$$
\left\{\hskip 2mm\begin{aligned}
-\tilde{\mathcal{L}}z&=0, &\quad&x\in \omega\setminus B_\rho,\\
z&=0, &\quad&x\in \partial \omega,\\
z&=1, &\quad&x\in \partial B_\rho.
\end{aligned}
\right.
$$
Note that this problem is solvable owing to $\lambda_1(-\tilde{\mathcal{L}},\omega)>0$, and $z>0$ by the maximum principle.
We claim that
\be \label{HopfAux6}
\inf_{\omega\setminus B_\rho} \frac{z}{d} \ge c_0.
\ee

Indeed, by the H\"older estimate (cf.~\cite[Theorem 8.29]{GT} and the remark at the end of \cite[Section 8.10]{GT}) and \eqref{hyplambda1b},
there exists $\delta\in(0,\rho)$ depending only on $n,q,\lambda,\Lambda$ such that
\be \label{HopfAux4}
z\ge 1/2\ge c_0 d,\quad x\in B_{\rho+\delta}\setminus B_\rho
\ee
so  by the boundary  Harnack inequality,  with $\hat d(x)={\rm dist}(x,\partial (\omega\setminus B_{\rho}))$,
\be \label{HopfAux5}
\inf_{\omega\setminus B_{\rho}} \frac{z}{\hat d} \ge c_0 \sup_{\omega\setminus B_{\rho}} \frac{z}{\hat d}\ge c_0 \sup_{ B_{2\rho}\setminus B_{\rho}} {z}\ge c_0/2.
\ee
Combining \eqref{HopfAux4} and \eqref{HopfAux5} proves the claim.

\smallskip
{\bf Step 6.} Conclusion.
Set $Z=v-c_0(\int_{B_1^+} g d)z$.
We have $-\tilde{\mathcal{L}}Z\ge 0$ in $\omega\setminus B_\rho$, $Z=0$ on $\partial \omega$ and $Z\ge 0$ on $\partial B_\rho$
in view of \eqref{HopfAuxClaim2}.
By the maximum principle (which applies owing again to $\lambda_1(-\tilde{\mathcal{L}},\omega)>0$), we deduce that
$Z\ge 0$ in $\omega\setminus B_\rho$. Therefore, by \eqref{HopfAux6},
$$\inf_{\omega\setminus B_\rho} \frac{v}{d} \ge c_0 \int_{\omega} g d.$$
This combined with \eqref{HopfAuxClaim2} completes the proof of the lemma.
\end{proof}

The following lemma will permit us to cover the domain $\Omega$
with small balls such that a suitably rescaled version of the operator in each of these balls
satisfies the assumptions of Lemma \ref{LemOptimizedHopf2}.
\begin{lem} \label{LemOptimizedHopf3}
Let $A, b_1, b_2, c$ satisfy the assumptions of Theorem~\ref{thm4gen}(i).
Let $x_0\in \overline\Omega$
and $r_0$ be the number from \eqref{defr0}. There exists $\delta\in(0,1/2)$ depending only on $n, \lambda, \Lambda, q, \alpha$, such that
the (rescaled) operator
\be\label{def-A-resc1}
\tilde{\mathcal{L}} v=\tilde{\mathcal{L}}_\delta v=\mathrm{div}(\tilde A(y)\nabla v+\tilde b_1(y) v)+\tilde b_2(y)\cdot \nabla v+\tilde c(y)v,
\quad y\in  \omega,
\ee
where $$r=\delta r_0,\qquad\omega=\omega_\delta= r^{-1}(\Omega-x_0)\cap B_2,$$
\be\label{def-A-resc2}
\tilde A(y)=A(x_0+ry), \quad \tilde b_i(y)=rb_i(x_0+ry),\quad \tilde c(y)=r^2c(x_0+ry).
\ee
is such that \eqref{hyplambda1b} holds.
 If, moreover, $x_i\in\partial\Omega$ then
there exists a $C^{1,\bar\alpha}$ domain $\tilde\omega$
 with norm bounded by~$C=C(n,\bar\alpha)>0$, such that $r^{-1}(\Omega-x_0)\cap B_{3/2}\subset\tilde\omega\subset r^{-1}(\Omega-x_0)\cap B_2$.
\end{lem}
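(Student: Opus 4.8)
The plan is to establish the two assertions of the lemma separately — estimate \eqref{hyplambda1b} on the rescaled coefficients by a scaling computation, and the construction of $\tilde\omega$ by a local flattening — choosing $\delta\in(0,1/2)$ once to meet the only binding requirement, which comes from the coefficients. For the coefficient bound I would first record, via Proposition~\ref{basicr0} (in particular \eqref{relMr0}, $r_0^{-1}=r_\Omega^{-1}+M$), that each of the nonnegative numbers $r_0r_\Omega^{-1}$, $r_0[A]^{1/\alpha}_{\alpha,r_0,\Omega}$, $r_0\|b_1\|_{L^\infty(\Omega)}$, $r_0[b_1]^{1/(1+\alpha)}_{\alpha,r_0,\Omega}$, $r_0\|b_2\|^{\beta_q}_{q,r_0,\Omega}$, $r_0\|c\|^{\gamma_q}_{q,r_0,\Omega}$ is $\le1$; in particular $r_0\le r_\Omega$. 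With $\delta\in(0,1/2)$, so that $r=\delta r_0$ satisfies $2r\le r_0$, the points $x_0+ry,\,x_0+rz$ with $y,z\in\omega\subset B_2$ belong to $\Omega\cap B_{2r}(x_0)\subset\Omega\cap B_{r_0}(x_0)$; hence, taking $x=x_0$ in \eqref{deful}, \eqref{defHbracket} and changing variables $x=x_0+ry$ in the $L^q$-integrals exactly as in \eqref{rescaledLq}, one gets
\[
[\tilde A]_{\alpha,\omega}\le r^{\alpha}[A]_{\alpha,r_0,\Omega},\qquad [\tilde b_1]_{\alpha,\omega}\le r^{1+\alpha}[b_1]_{\alpha,r_0,\Omega},\qquad \|\tilde b_1\|_{L^\infty(\omega)}\le r\|b_1\|_{L^\infty(\Omega)},
\]
\[
\|\tilde b_1\|_{L^q(\omega)}\le C(n)\|\tilde b_1\|_{L^\infty(\omega)},\qquad \|\tilde b_2\|_{L^q(\omega)}\le r^{1-\frac{n}{q}}\|b_2\|_{q,r_0,\Omega},\qquad \|\tilde c\|_{L^q(\omega)}\le r^{2-\frac{n}{q}}\|c\|_{q,r_0,\Omega}.
\]
Writing $r=\delta r_0$ and using these bounds together with $\beta_q=(1-\tfrac{n}{q})^{-1}$, $\gamma_q=(2-\tfrac{n}{q})^{-1}$, every right-hand side is dominated by a positive power of $\delta$: e.g.\ $[\tilde A]_{\alpha,\omega}\le\delta^{\alpha}\bigl(r_0[A]^{1/\alpha}_{\alpha,r_0,\Omega}\bigr)^{\alpha}\le\delta^{\alpha}$ and $\|\tilde b_2\|_{L^q(\omega)}\le\delta^{1-n/q}\bigl(r_0\|b_2\|^{\beta_q}_{q,r_0,\Omega}\bigr)^{1-n/q}\le\delta^{1-n/q}$, and likewise $[\tilde b_1]_{\alpha,\omega}\le\delta^{1+\alpha}$, $\|\tilde b_1\|_{L^\infty(\omega)}\le\delta$, $\|\tilde c\|_{L^q(\omega)}\le\delta^{2-n/q}$. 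Since $q>n$ all exponents are positive, so I would fix $\delta\in(0,1/2)$ small enough, depending only on $n,\lambda,\Lambda,q,\alpha$ (through the threshold $\ep_0$ of Lemma~\ref{LemOptimizedHopf2}), that $[\tilde A]_{\alpha,\omega}+\|\tilde b_1\|_{C^\alpha(\omega)}\le1$ and $\|\tilde b_1\|_{L^q(\omega)}+\|\tilde b_2\|_{L^q(\omega)}+\|\tilde c\|_{L^q(\omega)}\le\ep_0$, i.e.\ \eqref{hyplambda1b}.

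For the domain part, assume $x_0\in\partial\Omega$. I would use the boundary graph description recalled in Section~\ref{sec-ext}: in suitable orthonormal coordinates $y=(y',y_n)$ centred at $x_0$ there is $\varphi\in C^{1,\bar\alpha}(\{|y'|<\bar\rho_\Omega\})$ with $\varphi(0)=0$, $D\varphi(0)=0$, $[D\varphi]_{\bar\alpha}\le k_\Omega$, such that $\Omega\cap\Sigma_0=\{y_n>\varphi(y')\}\cap\Sigma_0$, $\Sigma_0=\{|y'|<\bar\rho_\Omega\}\times(-\bar\rho_\Omega,\bar\rho_\Omega)$. Rescaling $z=y/r$ and setting $\psi(z')=r^{-1}\varphi(rz')$, the set $r^{-1}(\Omega-x_0)$ coincides with $\{z_n>\psi(z')\}$ on $r^{-1}\Sigma_0$. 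Since $r=\delta r_0\le r_0\le r_\Omega=\min\bigl(\rho_\Omega,\bar\rho_\Omega/4,(120k_\Omega)^{-1/\bar\alpha}\bigr)$ (cf.\ \eqref{def_romega}), we have $\bar\rho_\Omega/r\ge4$, hence $r^{-1}\Sigma_0\supset B_4$, and $\psi$ satisfies $\psi(0)=0$, $D\psi(0)=0$, $[D\psi]_{\bar\alpha}\le r^{\bar\alpha}k_\Omega\le r_\Omega^{\bar\alpha}k_\Omega\le\tfrac{1}{120}$ on $\{|z'|<4\}$, so in particular $|D\psi|\le\tfrac{1}{60}$ and $|\psi|\le\tfrac{1}{30}$ on $\{|z'|\le2\}$, and $r^{-1}(\Omega-x_0)\cap B_s=\{z_n>\psi(z')\}\cap B_s$ for all $s\le2$. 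It then remains to place a genuine $C^{1,\bar\alpha}$ domain between $\{z_n>\psi(z')\}\cap B_{3/2}$ and $\{z_n>\psi(z')\}\cap B_2$, which I would do by transporting a fixed model. The flattening $\Theta(z)=(z',z_n-\psi(z'))$ is a $C^{1,\bar\alpha}$-diffeomorphism of the slab $\{|z'|<4\}\times\R$ onto itself, with $C^{1,\bar\alpha}$-norm bounded by an absolute constant, sending $\{z_n>\psi(z')\}$ onto $\{w_n>0\}$, and satisfying $|\Theta^{\pm1}(z)-z|=|\psi|\le\tfrac{1}{30}$ on the region $|z'|\le2$. Fixing once and for all a bounded $C^\infty$ domain $\mathcal D$ with $B_{8/5}\cap\{w_n>0\}\subset\mathcal D\subset B_{19/10}\cap\{w_n>0\}$ — obtained by the classical rounding of the equatorial edge of a half-ball — I would set $\tilde\omega:=\Theta^{-1}(\mathcal D)$. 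Then $\tilde\omega$ is a bounded $C^{1,\bar\alpha}$ domain with norm $\le C(n,\bar\alpha)$, and $|\Theta^{\pm1}(z)-z|\le\tfrac{1}{30}$ gives $\{z_n>\psi(z')\}\cap B_{3/2}\subset\Theta^{-1}\bigl(B_{8/5}\cap\{w_n>0\}\bigr)\subset\tilde\omega\subset\Theta^{-1}\bigl(B_{19/10}\cap\{w_n>0\}\bigr)\subset\{z_n>\psi(z')\}\cap B_2$; recalling $r^{-1}(\Omega-x_0)\cap B_{3/2}=\{z_n>\psi(z')\}\cap B_{3/2}$, this is exactly the asserted inclusion.

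The scaling step is routine bookkeeping; the delicate point is the domain construction, specifically the claim that $\Theta$ has $C^{1,\bar\alpha}$-norm bounded in terms of $n,\bar\alpha$ only, which is what lets the transported model still fit between the prescribed radii $3/2$ and $2$. This is exactly where the two nontrivial constraints in \eqref{def_romega} enter: $r\le r_\Omega\le\bar\rho_\Omega/4$ makes the rescaled graph defined over a ball of radius $\ge4$, so $B_2$ need not be shrunk, while $r\le(120k_\Omega)^{-1/\bar\alpha}$ makes $[D\psi]_{\bar\alpha}$ absolutely small, so $\Theta$ is a near-identity perturbation. The existence of the smooth model $\mathcal D$ (rounding a half-ball) is standard; I would simply invoke it, with a reference such as \cite{AM} or \cite{Liebe} for the underlying geometric facts.
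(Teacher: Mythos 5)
Your proposal is correct and follows essentially the same route as the paper, whose proof of this lemma is only a two-line remark: the coefficient bounds come from the same change of variables as in \eqref{rescaledLq} combined with the definition \eqref{defr0} of $r_0$, and the domain statement from the definition of $r_\Omega$ together with $r_0\le r_\Omega$. You have simply carried out in full the computations and the flattening/model-domain construction that the paper leaves to the reader, and both parts check out (including the correct reading of the typo $x_i\in\partial\Omega$ as $x_0\in\partial\Omega$).
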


\begin{proof} The first statement follows by similar computations as in \eqref{rescaledLq},
whereas the second statement is clear from the definition of $r_\Omega$ at the beginning of Section~\ref{sec-ext}
and the fact that $r_0\le r_\Omega$.
\end{proof}

We are now in a position to prove Theorem~\ref{thm4gen}(i), of which Theorem \ref{OptimizedHopf} is a special case.

\begin{proof}[Proof of Theorem~\ref{thm4gen}(i)]
Since $f\ge 0$, the optimized weak
boundary Harnack inequality \eqref{sharpWBHI} gives
\be\label{BHIud}
\inf_\Omega \frac{u}{d}\ge
D^{-n/\eps}e^{-{C_0(r_\Omega^{-1}+M)D}} \Bigl\| \frac{u}{d}\Bigr\|_{L^\epsilon(\Omega)}.
\ee
Let $r$ be the number given  by the previous lemma.
By Proposition~\ref{geodes}(ii), we may cover $\Omega$ by $N$ balls $B_{r}(x_i)$ in such a way that:
$$\begin{aligned}
&\hbox{$N\le C(n)(D_0/r)^n$}\\
&\hbox{$|\Omega\cap B_r(x_i)|\ge c(n)r^n$}\\
&\hbox{either $x_i\in\partial\Omega$ or $d(x_i)\ge 3r/2$}.
\end{aligned}$$
Denote $\tilde\Omega=r^{-1}(\Omega-x_i)$, $B_R^+:=B_R\cap \tilde\Omega$, and set
$$
\omega_i=
\begin{cases}
B_2,& \hbox{if $x_i\in \Omega$}\\
\tilde \omega,& \hbox{if $x_i\in\partial\Omega$,}
\end{cases}
$$
where $\tilde \omega$ is the $C^{1,\bar\alpha}$ domain given by Lemma~\ref{LemOptimizedHopf3},
with norm less than~$C(n,\bar\alpha)$, which satisfies
$B_{3/2}^+\subset\tilde \omega\subset B_{2}^+$.
For each $i$, the function $v_i(y):=u(x_i+ry)$ solves
$$\tilde{\mathcal{L}}_iv_i
=g_i(y),
\quad y\in \omega_i$$
where $\tilde{\mathcal{L}}_i$ is defined by \eqref{def-A-resc1}-\eqref{def-A-resc2} with $x_0=x_i$, and $g_i(y)=r^2f(x_i+ry)$.
If $x_i\not\in\partial\Omega$, it follows from Lemma \ref{LemOptimizedHopf2}(i) that
$$\inf_{B_1}v_i\ge C_0\int_{B_1}g_i,
\quad\hbox{equivalently:}\ \
\inf_{B_r(x_i)}u\ge C_0r^{2-n}\int_{B_r(x_i)} f$$
hence,
\be\label{infvi1}
\inf_{B_r(x_i)} \frac{u}{d}\ge C_0D_0^{-2}r^{2-n}\int_{B_r(x_i)} fd.
\ee
If $x_i\in\partial\Omega$, it follows from Lemma \ref{LemOptimizedHopf2}(ii) that
$$\inf_{B_1^+}\frac{v_i}{d_i}\ge C_0 \int_{B_1^+} g_id_i,
\quad\hbox{where } d_i(y)={\rm dist}(y,\omega_i).$$
We claim that
\be\label{compdistomegai}
{\rm dist}(y,\partial B_{3/2}^+)\ge \min\bigl(1/2,{\rm dist}(y,\partial \tilde\Omega)\bigr),\quad y\in B_1^+.
\ee
Indeed, for given $y\in B_1^+$, denote $p$ the projection of $y$ onto $\partial B_{3/2}^+$.
If $p\in \partial B_{3/2}$, then $|p-y|\ge 1/2$. Otherwise, $p\in \partial\tilde\Omega$
and $B_{|p-y|}(y)\subset B_{3/2}^+\subset\tilde\Omega$, hence $|p-y|={\rm dist}(y,\partial \tilde\Omega)$.
This proves \eqref{compdistomegai}.
Since ${\rm dist}(y,\partial \tilde\Omega)=r^{-1}{\rm dist}(x_i+ry,\partial\Omega)$ and $r\le D_0$, it follows that
$$
\begin{aligned}
d_i(y)
&\ge {\rm dist}(y,\partial B_{3/2}^+)\ge
\min\bigl(1/2,{\rm dist}(y,\partial \tilde\Omega)\bigr)
\ge (2D_0)^{-1}d(x_i+ry),\quad y\in B_1^+.
\end{aligned}$$
Scaling back to $u$, we get
\be\label{infvi2}
\begin{aligned}
\inf_{\Omega\cap B_r(x_i)}\frac{u}{d}
&\ge (2D_0)^{-1}
\inf_{y\in B_1^+} \frac{u(x_i+ry)}{d_i(y)}\ge
C_0D_0^{-1}
\int_{B_1^+} g_i(z) d_i(z)\,dz \\
&\ge C_0 D_0^{-2}r^2
\int_{B_1^+} f(x_i+rz)d(x_i+rz)\,dz=C_0D_0^{-2}r^{2-n}
\int_{\Omega\cap B_r(x_i)} fd.
\end{aligned}
\ee

Now, since
$$
\sum_{i=1}^N \int_{\Omega\cap B_r(x_i)}fd\ge \int_{\Omega}fd,
$$
we may choose $i=i_0$ such that
\be\label{choice}\int_{\Omega\cap B_r(x_i)}fd\ge N^{-1}\int_{\Omega}fd.\ee
Inequalities \eqref{BHIud}, \eqref{infvi1} and \eqref{infvi2} then yield:
$$\begin{aligned}
\inf_\Omega \frac{u}{d}&\ge
D^{-n/\eps}e^{-{C_0(r_\Omega^{-1}+M)D}} \Bigl\| \frac{u}{d}\Bigr\|_{L^\epsilon(\Omega)}
\ge
D^{-n/\eps}e^{-{C_0(r_\Omega^{-1}+M)D}}
\Bigl\| \frac{u}{d}\Bigr\|_{L^\epsilon(\Omega\cap B_r(x_i))} \\
&\ge
D^{-n/\eps}e^{-{C_0(r_\Omega^{-1}+M)D}}
|\Omega\cap B_r(x_i)|^{1/\eps}
\inf_{\Omega\cap B_r(x_i)} \frac{u}{d} \\
&\ge e^{-{C_0(r_\Omega^{-1}+M)D} (r/D)^{(n/\eps)+2-n}D^{-n}}
\int_{\Omega\cap B_r(x_i)} fd.
\end{aligned}$$
By using \eqref{choice}, $N^{-1}\ge C(n) (r/D)^n$
and $r/D\ge \delta r_0/D\ge C_0\bigl((r_\Omega^{-1}+M)D\bigr)^{-1}$, owing to
\eqref{relMr0},
we deduce that
$$\inf_\Omega  \frac{u}{d}\ge
e^{-{C_0(r_\Omega^{-1}}+M)D}(r/D)^{(n/\eps)+2}D^{-n}
\int_{\Omega}fd
\ge e^{-{C_0(r_\Omega^{-1}+M)D}} D^{-n} \int_{\Omega}fd,$$
which is the desired result.
\end{proof}

Theorem~\ref{OptimizedHopf}
is a consequence of Theorem \ref{thm4gen} (i) and \eqref{scaleinvtilde1b}.

\section{Proof of the log-grad estimate (Theorems \ref{lgeu} and \ref{thm4gen}{\rm(ii)-(iii)})}
\label{proofloggrad}

We prove Theorem \ref{thm4gen} (ii)-(iii), of which Theorem \ref{lgeu} is a special case.

\begin{proof}[Proof of Theorem~\ref{thm4gen} (ii)-(iii)]
{Fix $x_0\in\Omega$
 with $d_0=d(x_0)= \mathrm{dist}(x_0,\partial\Omega)$ and set
$$
r= \min\{r_0, d_0\},
$$
where $r_0$ is the number from \eqref{defr0}.
We rescale our equation
$-\ldu=f$
setting $y=(x-x_0)/r$, $u(x)=\tilde u(y)$,
and the coefficients of $\tilde{\ld}$ being defined by \eqref{def-A-resc2}. We obtain
\begin{equation}\label{rescl1}
 -\tilde{\ld}[\tilde u] = r^2f(x_0+ry)=:\tilde f(y)
\end{equation}
in the unit ball, since $r\le d_0$. Since $r\le r_0$, by the choice of $r_0$ we know (see for instance the computations in \eqref{rescaledLq}) that
$$
\|\tilde b_1\|_{C^\alpha(B_1)}\le C_0, \qquad \|\tilde b_2,c\|_{L^q(B_1)}\le C_0.
$$
Applying successively the standard $C^1$-to-$C^0$ estimate and the interior Harnack inequality for \eqref{rescl1} we obtain
$$
\begin{aligned}
|\nabla \tilde u(0)|& \le C_0 \sup_{B_{3/4}}\tilde u + C_0\|\tilde f \|_{L^q(B_1)}\\
&\le C_0\tilde u (0) +  C_0r^{2-n/q}\|f\|_{L^q(B_r(x_0))}
\end{aligned}
$$
which implies that
$$
r|\nabla u (x_0)|\le C_0u(x_0) + C_0r^{2-n/q}\|f\|_{q,r_0,\Omega}.
$$
Hence, multiplying by $d_0/ru(x_0)$ and recalling the definition of $r$ and \eqref{relMr0}, we get
$$\begin{aligned}
\frac{ d_0|\nabla u(x_0)|}{u(x_0)}
&\le C_0 \max\Bigl\{ 1,\frac{d_0}{r_0}\Bigr\} +
C_0 \bigl(\min\{r_0,d_0\}\bigr)^{1-n/q}
 \frac{\|f\|_{q,r_0,\Omega}}{u(x_0)/d(x_0)}\\
&\le C_0 \max \bigl\{1, (r_\Omega^{-1}+M)d_0 \bigr\} +
C_0 \bigl(\min\{(r_\Omega^{-1}+M)^{-1},d_0\}\bigr)^{1-\frac{n}{q}}
 \frac{\|f\|_{q,r_0,\Omega}}{u(x_0)/d_0}.
\end{aligned}$$
Since $x_0\in \Omega$ is arbitrary, this in particular proves assertion~(ii) (case $f\equiv 0$). If $f\not\equiv 0$, we can next
apply the optimized Hopf estimate \eqref{conclHopfgen} to the last denominator in this expression, and deduce
$$
{\frac{ d|\nabla u|}{u}}\le C_0 \max\bigl\{1,(r_\Omega^{-1}+M)d\bigr\} +
C_0 \bigl(\min\{(r_\Omega^{-1}+M)^{-1},d\}\bigr)^{1-\frac{n}{q}} e^{C_0(r^{-1}_\Omega+ M)D}D^n\frac{\|f\|_{q,r_0,\Omega}}{\|f\|_{L^1_{d}(\Omega)}},
$$
hence assertion~(iii).
(We note that, in view of the exponential factor, we can discard the min in the last inequality without loss of information;
likewise, we cause no loss by replacing $\|f\|_{q,r_0,\Omega}$ with $\|f\|_{L^q(\Omega)}$ -- cf.~Remark~\ref{remnoloss}.)}
\end{proof}

\smallskip
We now turn to the second, completely different, proof, based on a contradiction and doubling-rescaling argument
instead of the Harnack inequality.
For simplicity we will give this proof only in the case $\Omega=B_1$, but it can be easily
modified to the case of $C^{1,\bar\alpha}$ domains,
as the reader could verify. This proof is somewhat longer but it allows to show the following logarithmic gradient bound independent of $f$ (cf.~Remark~\ref{remloggrad1d}).

\begin{prop}\label{propn1} Assume $n=1$ and \eqref{hyp1}-\eqref{hyp2}. For any nonnegative $f\in L^q(-R,R)$, $q>1$,
and any weak solution $u>0$ of $-\mathcal{L}u=f$ in $\Omega=(-R,R)$, there holds
$$\frac{d(x)|u'(x)|}{u(x)}\le C_0\max\{1, M d(x)\},\quad x\in (-R,R).$$
Here a weak solution is a function
\be\label{defsol1m}
u\in W^{1,p}(-R,R)\quad\hbox{with $p=\max(2,q')$,}
\ee
which satisfies the equation in the usual $H^1$ sense.\end{prop}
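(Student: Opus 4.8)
The plan is to run the contradiction-and-doubling-rescaling argument announced above, in the one-dimensional setting and keeping track only of the sign of $f$ (never of its size), which is exactly what makes the bound independent of $f$. First I reduce to $R=1$, $\Omega=(-1,1)$ by the one-dimensional analogue of the scaling \eqref{scaledcoeff} (cf.~Proposition~\ref{scaleinvtilde}), and recall that the notion of weak solution \eqref{defsol1m} together with one-dimensional elliptic regularity makes $u$ of class $C^1$ in the interior; writing $N(x)=|u'(x)|/u(x)$, the assertion is equivalent to $N(x)\min\{d(x),M^{-1}\}\le C_0$. The case $M=0$ is elementary: then $A$ is a positive constant $a$, $-au''=f\ge0$, so $u$ is positive and concave on $(-1,1)$, and comparing $u$ with its tangent line at $x$ evaluated at the nearer endpoint gives $d(x)|u'(x)|/u(x)\le1$; thus I may assume $M>0$. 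If the estimate failed, there would exist operators $\lcal_k$ as in \eqref{hyp1}-\eqref{hyp2} with fixed $\lambda,\Lambda,q,\alpha$, nonnegative $f_k\in L^q(-1,1)$, solutions $u_k>0$ of $-\lcal_ku_k=f_k$, and points $y_k$ with $N_k(y_k)\min\{d(y_k),M_k^{-1}\}>2k$, where $N_k=|u_k'|/u_k$ and $M_k=M(\lcal_k,(-1,1))>0$; in either case of the minimum this forces $N_k(y_k)>2kM_k$ (and in particular $N_k(y_k)>M_k$).

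Next I apply the doubling lemma from \cite{PQS} to the positive continuous function $\Psi_k:=\max\{N_k,M_k\}$ on $D=(-1,1)$ with $\Gamma=\{-1,1\}$, so that $\mathrm{dist}(\cdot,\Gamma)=d$. Since $\Psi_k(y_k)d(y_k)\ge N_k(y_k)d(y_k)>2k$, it produces a point $x_k\in(-1,1)$ with $\Psi_k(x_k)d(x_k)>2k$, $\lambda_k:=\Psi_k(x_k)\ge\Psi_k(y_k)=N_k(y_k)$, and $\Psi_k(z)\le2\lambda_k$ whenever $|z-x_k|\le k/\lambda_k$. From $\lambda_k\ge N_k(y_k)>2kM_k>M_k$ one reads off that the maximum defining $\lambda_k$ is attained by $N_k$, i.e. $\lambda_k=N_k(x_k)$, and — crucially — that $M_k/\lambda_k<1/(2k)\to0$, which is what will strip the limiting operator of its first- and zeroth-order terms; also $\lambda_k>2k\to\infty$. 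Set $r_k:=\min\{\lambda_k^{-1},r_0^{(k)}\}$ with $r_0^{(k)}=r_0(\lcal_k,(-1,1))$; by Proposition~\ref{basicr0}, $\tfrac1{2\lambda_k}\le r_k\le\lambda_k^{-1}$ for $k$ large, hence $r_k\lambda_k\in[\tfrac12,1]$, and $r_k\rho\le r_0^{(k)}$ holds for every fixed $\rho$ once $k$ is large (using $\lambda_k>2kM_k$). Let $v_k(y)=u_k(x_k+r_ky)/u_k(x_k)$, a positive solution of $-\ltil_kv_k=\tilde f_k$, $\tilde f_k\ge0$, on $B_k$ (rescaled coefficients as in \eqref{def-A-resc2}), with $v_k(0)=1$; the doubling property gives $|\nabla\log v_k|\le2$ on $B_k$, hence $e^{-2|y|}\le v_k\le e^{2|y|}$ there, while $|v_k'(0)|=r_k\lambda_k\ge\tfrac12$, so (passing to a subsequence, and reflecting $y\mapsto-y$ if needed) $v_k'(0)\ge\tfrac12$. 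Using $M_k/\lambda_k\to0$ one checks directly, for each fixed $\rho$, that $[\tilde A_k]_{\alpha,B_\rho}\le(r_kM_k)^\alpha\to0$, $\|\tilde b_{1,k}\|_{L^\infty(B_\rho)}\le r_kM_k\to0$, $\|\tilde b_{2,k}\|_{L^q(B_\rho)}\le(r_kM_k)^{1-1/q}\to0$, $\|\tilde c_k\|_{L^q(B_\rho)}\le(r_kM_k)^{2-1/q}\to0$, while $\tilde A_k$ stays valued in $[\lambda,\Lambda]$.

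By Arzel\`a--Ascoli and a diagonal argument, along a subsequence $v_k\to v$ in $C^0_{loc}(\R)$ with $v_k'\rightharpoonup v'$ weak-$*$ in $L^\infty_{loc}(\R)$ and $\tilde A_k\to a_\infty\in[\lambda,\Lambda]$ locally uniformly, and $e^{-2|y|}\le v\le e^{2|y|}$, so $v>0$ on $\R$. Passing to the limit in the weak supersolution inequality $-\ltil_kv_k\ge0$ (the lower-order contributions vanish since $v_k,v_k'$ are bounded on compacts and $\tilde b_{1,k},\tilde b_{2,k},\tilde c_k\to0$ in $L^q_{loc}$) yields $a_\infty\int_\R v'\varphi'\,dy\ge0$ for all $0\le\varphi\in C^\infty_c(\R)$, i.e. $v$ is concave on $\R$. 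To see that $v$ is \emph{not constant} I use $f_k\ge0$ and the divergence form: the flux $q_k:=\tilde A_kv_k'+\tilde b_{1,k}v_k$ is absolutely continuous with $q_k'=-\tilde f_k-\tilde b_{2,k}v_k'-\tilde c_kv_k$, so for a fixed small $\rho_1>0$ and $y\in[-\rho_1,0]$ one gets $q_k(y)\ge q_k(0)-o(1)$ (since $\int_y^0\tilde f_k\ge0$), while $q_k(0)=\tilde A_k(0)v_k'(0)+\tilde b_{1,k}(0)\ge\tfrac\lambda2-o(1)$; dividing by $\tilde A_k\le\Lambda$ gives $v_k'(y)\ge c_1(\lambda,\Lambda)>0$ on $[-\rho_1,0]$ for $k$ large, whence $v_k(0)-v_k(-\rho_1)\ge c_1\rho_1$ and, in the limit, $v(0)-v(-\rho_1)\ge c_1\rho_1>0$. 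But a positive concave function on all of $\R$ must be constant (if $v'(\xi)>0$ for some $\xi$, then $v'$ non-increasing forces $v(y)\to-\infty$ as $y\to-\infty$), a contradiction, which proves the proposition.

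I expect the real difficulty to lie in the non-constancy of the blow-up limit $v$: precisely because there is no a priori bound on $f_k$ — the very content of the statement — one has no $C^{1,\beta}_{loc}$ bound on the $v_k$ and cannot pass to the limit in $v_k'(0)$ directly; the flux-monotonicity argument circumvents this using only $f\ge0$, the divergence structure, and the bound $|\nabla\log v_k|\le2$ already furnished by the doubling lemma. A secondary technical point requiring care is the bookkeeping that yields $M_k/\lambda_k\to0$, which is exactly what guarantees that the limiting operator has no first- or zeroth-order terms, so that $v$ is genuinely concave and not merely log-concave.
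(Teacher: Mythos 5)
Your proof is correct, and it follows the same skeleton as the paper's second (doubling–rescaling) proof: argue by contradiction, apply the doubling lemma of \cite{PQS} to a normalized gradient quantity, rescale at the doubling point so that the lower-order coefficients vanish in the limit, use the sign $f\ge 0$ through a one-sided flux-monotonicity argument to keep the blow-up limit nondegenerate, and conclude with the Liouville property that a positive concave function on $\R$ is constant. All the quantitative bookkeeping ($\lambda_k=N_k(x_k)$, $M_k/\lambda_k\to 0$, $r_k\lambda_k\in[\tfrac12,1]$, the decay of the rescaled coefficient norms, the pairing of weak-$*$ convergent $v_k'$ with uniformly convergent $\tilde A_k$) checks out.

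The one genuine difference is that you blow up the normalized solution $v_k=u_k/u_k(x_k)$ directly, whereas the paper first performs the logarithmic change of variables $v=\log u$ and blows up $w_k=v_k(\rho_k\cdot+x_k)-v_k(x_k)$. The paper's choice produces a \emph{nonlinear} limit equation (containing $(w')^2$), which forces strong $W^{1,m}_{loc}$ convergence of $w_k'$ — obtained there from an $L^1_{loc}$ bound on the right-hand side $g_k$ and a $W^{1,1}_{loc}$ bound on the flux — followed by the test-function trick $\varphi=e^w\psi$ and a mollification argument to show that $e^w$ is concave. Your version keeps the limit inequality linear in $v$, so weak-$*$ convergence of $v_k'$ suffices and concavity of the limit is immediate; the price is that you must control $v_k$ itself (via $|{\log v_k}'|\le 2$ from the doubling bound) rather than only its gradient. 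The non-constancy step is the same flux argument in both proofs (the paper's ``thickening of the normalization'' $w_k'\le-\eta$ on $[0,1]$ is your $v_k'\ge c_1$ on $[-\rho_1,0]$). Net effect: your route is somewhat shorter and avoids the most delicate compactness step of the paper's $n=1$ case, at no loss of generality for this statement.
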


\begin{rem}\label{remdefsol1}
Note that under
assumption \eqref{defsol1m}
the term $bu'$ (as well as $cu$) belongs to $L^1(\Omega)$, and that this is not the case if $u$ is merely in $H^1(\Omega)$ when $q<2$.
Actually if we take $u\in W^{1,\infty}(-R,R)$, the above statement remains true if $b,c,f\in L^1(\Omega)$ instead of~$L^q(\Omega)$.
\end{rem}

\begin{proof}[Second proof of Theorem \ref{lgeu} and proof of Proposition \ref{propn1}.]
Let $\Omega=B_R$. By rescaling and using Proposition \ref{scaleinvtilde} (ii) we can suppose $R=1$ (see the proof of Proposition \eqref{Prop-Optim-Hopf1} in the next section for details).

We first transform the equation by the change of variable $v:=\log u$, $u=e^v$.
We compute
$$
\begin{aligned}
\mathcal{L}u
&=\nabla\cdot(A(x)e^v\nabla v+b_1(x)e^v)+e^vb_2(x)\cdot\nabla v+c(x)e^v \\
&=e^v\Bigl\{\nabla\cdot(A(x)\nabla v)+\nabla\cdot b_1(x)+\nabla v\cdot A(x)\nabla v
+b_1(x)\cdot\nabla v+b_2(x)\cdot\nabla v+c(x)\Bigr\} \\
\end{aligned}
$$
hence, setting $b=b_1+b_2$,
\be\label{eqforvk}
\mathcal{L}_1v
:=\nabla\cdot(A(x)\nabla v)+\nabla v\cdot A(x)\nabla v+b(x)\cdot\nabla v+\nabla\cdot b_1(x)+c(x)
=-e^{-v}f(x).
\ee
In terms of $v$, the sought-for estimate is equivalent to
$$
d|\nabla v| \le
\begin{cases}
C_0\max(1,Md),& \hbox{if $f\equiv 0$ or $n=1$}\\
\noalign{\vskip 1mm}
C_0\left(\max(1,Md)+d^{1-n/q}e^{C_1M}\frac{\|f\|_{q,r_0,B_1}}{\|f\|_{L^1_d(B_1)}}\right),
& \hbox{otherwise}
\end{cases}
$$
where $M=M(\ld,B_1)$, $r_0=r_0(\ld,B_1)$ (cf.~\eqref{defM}) and, in the second case, $C_1$ is the constant from the Hopf estimate
in Theorem~\ref{OptimizedHopf}.

Assume for contradiction that there exist  sequences of  operators $\ld_k$ in the form \eqref{eqforvk}, with coefficients
$A_k, b_k, b_{1,k}, c_k$
satisfing \eqref{hyp1}-\eqref{hyp2}, $f_k\in L^q(B_1)$, solutions $v_k$ of \eqref{eqforvk} and points $y_k\in B_1$ for which
$$L_k(y_k)\ge 2k (d(y_k))^{-1},$$
with $M_k=M(\ld_k,B_1)$, $r_k=r_0(\ld_k,B_1)$ and
\be\label{defMk}
L_k(x):=
\begin{cases}
\ds\frac{|Dv_k(x)|}{\max(1,M_kd(x))},& \hbox{if $f_k\equiv 0$ or $n=1$}\\
\noalign{\vskip 1mm}
\ds\frac{|Dv_k(x)|}{{\max(1,M_kd(x))+e^{C_1M_k}d^{1-n/q}(x)\frac{\|f_k\|_{q,r_k,B_1}}{\|f_k\|_{L^1_d(B_1)}}}
},& \hbox{otherwise}.
\end{cases}
\ee
By the doubling lemma (cf.~\cite[Lemma 5.1]{PQS}), there are points $x_k\in B_1$ such that
\be\label{largenessMk}
L_k(x_k)\ge L_k(y_k), \qquad |Dv_k(x_k)|\ge L_k(x_k)\ge 2k (d(x_k))^{-1} \ge 2k,
\ee
and
\be\label{doublingMk}
L_k(z)\le 2 L_k(x_k)\qquad \mbox{if }\: |z-x_k|< k (L_k(x_k))^{-1}.
\ee
Set
\be\label{defrk}
\rho_k= |Dv_k(x_k)|^{-1} \qquad (\rho_k\to 0 \mbox{ as } k\to \infty).
\ee
Observe for later purposes that
\be\label{comprk}
\rho_k
\le (2k)^{-1}\min\{M_k^{-1},d(x_k)\}
\ee
owing to \eqref{defMk}, \eqref{largenessMk}, \eqref{defrk}.
We now rescale
\be\label{rescalingvk}
x=\rho_ky + x_k, \quad w_k(y) =  v_k(\rho_ky+x_k)-v_k(x_k),
\ee
i.e., $v_k(x) = w_k(\rho_k^{-1}(x-x_k))+v_k(x_k)$.
Note that, while in \eqref{eqforvk} we have an equation in $v$, we do the rescaling in such a way that only the gradients $\nabla w_k$ of the rescaled $v_k$ (rather than the $w_k$ themselves) stay bounded.
We obtain from \eqref{eqforvk} (omitting the variable $y$):
\be\label{eqforwk}
\nabla\cdot(\tilde A_k\nabla w_k)+\nabla w_k\cdot \tilde A_k\nabla w_k+\tilde b_k\cdot\nabla w_k+
\nabla\cdot \tilde b_{1,k}+\tilde c_k=-g_k
\ee
where
$$
\tilde{A}_k(y) := A_k(\rho_ky + x_k),\quad
\tilde{b}_k(y) := \rho_k b_k(\rho_ky + x_k),\quad \tilde{b}_{1,k}(y) := \rho_k b_{1,k}(\rho_ky + x_k),$$
$$\tilde{c}_k(y) := \rho_k^2 c_k(\rho_ky + x_k),\quad \tilde{f}_k(y) :=  \rho_k^2 f(\rho_ky + x_k),\
\quad g_k(y):=e^{-w_k(y)}e^{-v_k(x_k)}\tilde f_k(y).$$
For $|y|\le k$, by \eqref{comprk}, we have
$d(x_k+\rho_ky)\le d(x_k)+\rho_k|y|\le  \frac32 d(x_k)$. This combined wih
\eqref{defMk}, \eqref{defrk} and \eqref{rescalingvk} implies
$$
|Dw_k(y)| = \rho_k |Dv_k(x_k+\rho_ky)|= \frac{|Dv_k(x_k+\rho_ky)|}{|Dv_k(x_k)|}
\,\le \frac32\frac{L_k(x_k+\rho_ky)}{L_k(x_k)},
$$
hence, owing to \eqref{doublingMk},
\be\label{boundDwk}
|Dw_k(y)| \le 3\quad\hbox{for $|y|<k$},
\ee
and
\be\label{boundDwk0}
|Dw_k(0)| = \rho_k |Dv_k(x_k)| = 1,\qquad w_k(0)=0.
\ee
As a consequence of the second part of \eqref{boundDwk0} and of \eqref{boundDwk}, we have
\be\label{boundwk}
|w_k(y)|\le 3|y|\quad\hbox{for $|y|<k$}.
\ee

 {Fix any $R_0>1$. Note that \eqref{comprk} and \eqref{relMr0} guarantee that for $k$ large (depending on $R_0$),
we have
{$\rho_k^{-1}\ge 2k\max\{M_k,d(x_k)^{-1}\}>
R_0(r_{B_1}^{-1}+M_k)=R_0r_k^{-1}$, hence} $R_0\rho_k<r_k$. We then obtain, as $k\to\infty$:}
\be\label{cvbk}
\|\tilde{b}_k\|_{L^q(B_{R_0})}\le
\rho_k^{1-n/q} \|{b}_k\|_{L^q({B_1 \cap B_{\rho_kR_0}(x_k))}}\le
\rho_k^{1-n/q} \|{b}_k\|_{q,r_k,B_1}\le (\rho_kM_k)^{1-n/q} \to 0,
\ee
\be\label{cvb11k}
\|\tilde{b}_{1,k}\|_{L^\infty(B_{R_0})}=\rho_k\|b_{1,k}\|_{L^\infty(B_{R_0})} \le
(2k)^{-1} M_k^{-1} \|b_{1,k}\|_{L^\infty(B_{R_0})} \le (2k)^{-1}
\to 0,
\ee
and similarly
\be\label{cvck}
\|\tilde{c}_k\|_{L^q(B_{R_0})}\to 0,\quad [\tilde{A}_k]_{\alpha, B_{R_0}}\to 0,\quad [\tilde{b}_{1,k}]_{\alpha, B_{R_0}}\to 0.
\ee
{Therefore, passing to a subsequence, we may assume that $x_k\to x_\infty\in \overline B_1$
and that $A_k(x_k)\to A_\infty$, where $A_\infty$ is a
positive definite matrix.
Moreover, by the compact embedding $C^\alpha\hookrightarrow C^0$
\be\label{cvAk2}
\hbox{$\tilde A_k(y)\to A_\infty$ uniformly for $y$ in any compact set.}
\ee

\medskip

{\bf Case 1: $n\ge 2$.}
If $f_k\not\equiv 0$, we also need to estimate the right hand side $g_k$.
As above, we first write
\be\label{cvfk}
\|\tilde{f}_k\|_{L^q(B_{R_0})}\le
\rho_k^{2-n/q} \|{f}_k\|_{L^q({B_1\cap B_{\rho_kR_0}(x_k))}}
\le  \rho_k^{2-n/q} \|{f}_k\|_{q,r_k,B_1}.
\ee
We then use the quantitative Hopf estimate in Theorem~\ref{OptimizedHopf} and the second part of \eqref{largenessMk} to write
$$u_k(x_k)\ge e^{-C_1(1+M_k)}\|f_k\|_{L^1_d(B_1)}\,d(x_k)\ge e^{-C_1(1+M_k)}\|f_k\|_{L^1_d(B_1)} \ 2k L_k^{-1}(x_k)$$
hence,
$$\begin{aligned}
e^{-v_k(x_k)}
&=u_k^{-1}(x_k)\le C_0e^{C_1M_k}\|f_k\|_{L^1_d(B_1)}^{-1} (2k)^{-1} L_k(x_k). \\
\end{aligned}$$
Therefore, using
 \eqref{boundwk}, \eqref{cvfk}, and the definition \eqref{defMk} of $L_k(x_k)$ we get, for any $R_0>1$ and $k>R_0$,
$$
\begin{aligned}
\|g_k\|_{L^q(B_{R_0})}
&= e^{-v_k(x_k)} \|e^{-w_k}\tilde f_k\|_{L^q(B_{R_0})}
\le  C_0e^{C_1M_k}\|f_k\|_{L^1_d(B_1)}^{-1} L_k(x_k) e^{3R_0}\rho_k^{2-n/q} \|{f}_k\|_{q,r_k,B_1}\\
&\le {(2k)^{-2+n/q} e^{3R_0}L_k(x_k)e^{C_1M_k} \frac{ \|{f}_k\|_{q,r_k,B_1}}{\|f_k\|_{L^1_d(B_1)}}  d^{1-n/q}(x_k) |Dv_k(x_k)|^{-1}} \\
&\le (2k)^{-2+n/q} e^{3R_0} \to 0.
\end{aligned}
$$
By the last inequality, together with  \eqref{boundDwk} and \eqref{cvbk}-\eqref{cvAk2}, the equation \eqref{eqforwk} is in the form
$$
\mathrm{div} (\tilde A_k\nabla w_k) = -\mathrm{div}(\tilde{b}_{1,k}) + h_k
$$
where $\tilde{b}_{1,k}$ is bounded in $C^\alpha(B_{R_0})$ while $h_k$ is a function bounded in $L^q(B_{R_0})$, as $k\to\infty$. It follows from the $C^{1,\nu}$ interior
estimate in \cite[Theorem 8.32]{GT}, \cite[Chapter 5.5]{Mo},
that, for some $\nu\in(0,1)$,
\be\label{boundDeltawk}
\hbox{$w_k$ is bounded in $C^{1,\nu}_{loc}(\R^n)$}
\ee
and then, up to a subsequence,
$w_k$ converges in $C^1_{loc}(\R^n)$ to a weak solution $w\in C^{1,\nu}_{loc}(\R^n)$ of the limiting equation of \eqref{eqforwk}, namely
$$\nabla\cdot(A_\infty\nabla w)+\nabla w\cdot A_\infty\nabla w=0,\quad
x\in\R^n$$
with $|Dw(0)|=1$.
Thus the function $U=e^w$ is  a positive solution of
$\nabla\cdot(A_\infty\nabla U)=0$ in $\R^n$ (and $U$ is in fact a classical solution by elliptic regularity). Since $A_\infty$ is a constant matrix we have $\nabla\cdot(A_\infty\nabla U) = \mathrm{tr}(A_\infty D^2U)$, so after a linear change of the independent variable we obtain a positive harmonic function in $\rn$,
which must be constant, by the mean value property.
Since $|DU(0)|\ne 0$, this is a contradiction.}

\medskip

{\bf Case 2: $n=1$.}
Pick any nonnegative, compactly supported $\varphi\in H^1(\R)$.
Testing \eqref{eqforwk} for $n=1$ with $\varphi$, we get
\be\label{eqforwk2}
\int\tilde A_kw'_k\varphi'-\int\tilde A_k(w_k')^2\varphi-\int\tilde b_k w'_k \varphi+\int \tilde b_{1,k}\varphi'-
\int\tilde c_k\varphi=\int g_k\varphi \ge 0.
\ee
In particular, taking $R_0>1$ and assuming
$supp(\varphi)\subset (-2R_0,2R_0)$, $0\le\varphi\le 1$,
$|\varphi'|\le1$ and $\varphi=1$ on $[-R_0,R_0]$, we obtain by \eqref{boundDwk}, \eqref{cvbk}-\eqref{cvck},
$$\int_{|y|<R_0} g_k\le 9 \Lambda R_0+3\|\tilde b_k\|_{L^1(B_{2R_0})}+4R_0\|b_{1,k}\|_\infty+\|\tilde c_k\|_{L^1(B_{2R_0})}
\le C(R_0).$$
Therefore, $g_k$ is bounded in $L^1_{loc}(\R)$.
{In view of \eqref{eqforwk}-\eqref{boundDwk} and \eqref{cvbk}-\eqref{cvck}, it follows that
$(\tilde A_kw_k'+\tilde b_{1,k})'$  is bounded in $L^1_{loc}(\R)$ hence, using again \eqref{boundDwk}-\eqref{cvb11k},
$\tilde A_kw_k'+\tilde b_{1,k}$  is bounded in $W^{1,1}_{loc}(\R)$.
By this along with \eqref{boundDwk} and \eqref{boundwk} (which say $w_k$ is bounded in $W^{1,\infty}_{loc}(\R)$), up to extracting a subsequence, there exist functions
$w\in W^{1,\infty}_{loc}(\R)$ and $z\in L^\infty_{loc}(\R)$ such that $\tilde w_k\to w$ in $L^\infty_{loc}(\R)$
and $\tilde A_kw_k'+\tilde b_{1,k}\to z$ in
$L^m_{loc}(\R)$ for all finite $m$ and a.e. Up to extracting a further subsequence, we may also assume
that $\tilde A_k(y)\to a_\infty$ in $L^\infty_{loc}(\R)$ for some number $a_\infty>0$. Therefore, recalling \eqref{cvb11k},
$w_k'$ converges in $L^m_{loc}(\R)$ and we deduce that
\be\label{convwkw}
w_k\to w \quad\hbox{ in $W^{1,m}_{loc}(\R)$ for all finite $m$.}
\ee

We next claim that $w$ is nonconstant. Namely we will show that
\be\label{nonvanishw}
\int_{-1}^1 |w'(x)|dx>0.
\ee
To this end, to overcome the lack of $C^1$ convergence,
we need to ``thicken'' the normalization condition \eqref{boundDwk0}.
By extracting a further subsequence, we may assume that $w_k'(0)=-1$ for all $k$
or that $w_k'(0)=1$  for all $k$.
In the first case, by \eqref{eqforwk}-\eqref{boundDwk} and \eqref{cvbk}-\eqref{cvck}, there exists a sequence $\eps_k\to 0^+$ such that, for all $x\in [0,1]$,
$$\bigl[\tilde A_kw_k'\bigr]_0^x\le \int_0^x|\tilde b_kw_k'+\tilde c_k|ds+\bigl|\bigl[\tilde b_{1,k}\bigr]_0^x\bigr|\le \eps_k.$$
Therefore, there exists $\eta>0$ such that, for all $k$ sufficiently large,
$$w_k'(x)\le \frac{-\tilde A_k(0)+\eps_k}{\tilde A_k(x)}\le -\eta,\ \hbox{ for all $x\in[0,1]$.}$$
In the second case, we similarly get $w_k'(x)\ge \eta$ for all $x\in[-1,0]$.
In either case, by the convergence \eqref{convwkw}, we  have
$\int_{-1}^1 |w'(x)|dx\ge \eta$, which proves the claim.

Now, by virtue of \eqref{convwkw}, we may pass to the limit
in \eqref{eqforwk2} with the help of \eqref{cvbk}--\eqref{cvck}, to deduce that, for any nonnegative, compactly supported $\varphi\in H^1(\R)$,
$\int a_\infty w'\varphi'-\int a_\infty(w')^2\varphi \ge 0$, hence
$\int w'\varphi'- (w')^2\varphi \ge 0$.
For any nonnegative $\psi\in C^\infty_0(\R)$, we may then take $\varphi:=e^w\psi$, which gives
$$\int (e^w)'\psi'=\int w'(e^w(w'\psi+\psi'))- (w')^2e^w\psi =\int w'\varphi'- (w')^2\varphi\ge 0.$$
In other words, $(e^w)''\le 0$ in the distributional sense.
Fix a mollifying sequence $(\rho_j)$. For all $j$, it follows in particular that
$(e^w\ast\rho_j)''\le 0$ in $\R$. The smooth function $e^w\ast\rho_j\ge 0$ is thus concave on $\R$ and
must therefore be constant, hence $(e^w\ast\rho_j)'=0$.
Recalling that $e^w\in W^{1,\infty}_{loc}(\R)$, we have $0=(e^w\ast\rho_j)'=(e^w)'\ast\rho_j\to (e^w)'$
in $L^1_{loc}(\R)$. Consequently $w^\prime e^w=(e^w)'=0$ a.e., hence $w'=0$ a.e.: a contradiction with \eqref{nonvanishw}.}
\end{proof}

\section{Proof of optimality of $L^\infty$ and Hopf estimates}
\label{sec-proofoptim}

We will  use the following simple lemma.

\smallskip

\begin{lem} \label{Mitoinfty}
Let $R,\eta>0$ and let $\ld_i$ be a sequence of operators of the form \eqref{defdiv}.
Assume that
$\displaystyle\lim_{i\to\infty}\,\Bigl\{\sup_{x\in\overline B_R} \inf_{B_\eta(x)\cap B_R} |c_i|\Bigr\}=\infty$
(resp. $|b_i|$).
Then $\displaystyle\lim_{i\to\infty} M(\ld_i,B_R)=\infty$.
\end{lem}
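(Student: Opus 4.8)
The plan is to argue by contradiction, the mechanism being that an a priori bound on $M(\ld_i,B_R)$ forces the numbers $r_0(\ld_i,B_R)$ to stay bounded away from zero, after which the uniformly local norm appearing in the definition \eqref{defM} of $M$ can be estimated from below by the quantity $\sup_{x}\essinf_{B_\eta(x)\cap B_R}|c_i|$. So suppose the conclusion fails; passing to a subsequence (not relabeled) there is $C>0$ with $M_i:=M(\ld_i,B_R)\le C$ for all $i$. By Proposition~\ref{basicr0} (relation \eqref{relMr0} with $\Omega=B_R$, and $r_{B_R}=c(n)R$), the radii $r_{0,i}:=r_0(\ld_i,B_R)$ satisfy $r_{0,i}=(r_{B_R}^{-1}+M_i)^{-1}\ge(r_{B_R}^{-1}+C)^{-1}=:\delta>0$, while always $r_{0,i}\le r_{B_R}$.

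Next I would convert the separation hypothesis into a lower bound for the norm. Write $a_i:=\sup_{x\in\overline{B_R}}\essinf_{B_\eta(x)\cap B_R}|c_i|$, so $a_i\to\infty$; for $i$ large (so that $a_i>0$) pick $x_i\in\overline{B_R}$ with $\essinf_{B_\eta(x_i)\cap B_R}|c_i|\ge a_i/2$, and set $\rho_i:=\min(\eta,r_{0,i})$, which lies in $[\min(\eta,\delta),\,r_{0,i}]$. Since $B_{\rho_i}(x_i)\cap B_R\subseteq B_\eta(x_i)\cap B_R$ we have $|c_i|\ge a_i/2$ a.e.\ on $B_{\rho_i}(x_i)\cap B_R$, and an elementary argument (a ball centred in $\overline{B_R}$ of radius $\rho\le 2R$ contains a ball of radius $\rho/4$ lying in $B_R$) gives $|B_{\rho_i}(x_i)\cap B_R|\ge c(n)\rho_i^n\ge c(n)\min(\eta,\delta)^n$. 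Hence, for finite $q$, using that the norm in \eqref{deful} is a supremum over $B_R$ that dominates $\|c_i\|_{L^q(B_R\cap B_{\rho_i}(x_i))}$,
\[
\|c_i\|_{q,r_{0,i},B_R}\ \ge\ \|c_i\|_{L^q(B_R\cap B_{\rho_i}(x_i))}\ \ge\ \bigl(c(n)\min(\eta,\delta)^n\bigr)^{1/q}\,\tfrac{a_i}{2}\ =:\ c'\,a_i,
\]
with $c'=c'(n,q,\eta,R,C)>0$.

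The conclusion is then immediate: $M_i\ge\|c_i\|_{q,r_{0,i},B_R}^{\gamma_q}\ge(c'a_i)^{\gamma_q}\to\infty$ since $a_i\to\infty$ and $\gamma_q>0$, contradicting $M_i\le C$; for $q=\infty$ one uses instead $\|c_i\|_{\infty,r_{0,i},B_R}=\|c_i\|_{L^\infty(B_R)}\ge a_i/2$ together with $\gamma_\infty=1/2$. The case where $\essinf_{B_\eta(x)\cap B_R}|b_i|\to\infty$ is handled in exactly the same way, using the term $\|b_2\|^{\beta_q}_{q,r_0,B_R}$ (or, if $b_i$ denotes the coefficient $b_1$, simply $\|b_1\|_{L^\infty(B_R)}$) in \eqref{defM}. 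The only delicate point is the bookkeeping around whether $r_{0,i}$ is larger or smaller than $\eta$ — resolved by working with $\rho_i=\min(\eta,r_{0,i})$ — and the real content is the lower bound $r_{0,i}\ge\delta$, which is precisely what Proposition~\ref{basicr0} extracts from the contradiction hypothesis $M_i\le C$.
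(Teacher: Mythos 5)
Your proof is correct and follows essentially the same route as the paper's: contradiction, the lower bound $r_0(\ld_i,B_R)\ge\delta>0$ extracted from \eqref{relMr0} under the assumption $\sup_i M_i<\infty$, and then a lower bound on the uniformly local $L^q$ norm over a ball of radius $\min(\eta,\delta)$ using the measure estimate $|B_\rho(x)\cap B_R|\ge c(n)\rho^n$. The only differences are cosmetic (you select a near-maximizing point $x_i$ and treat $q=\infty$ separately, while the paper keeps the supremum inside the chain of inequalities).
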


\begin{proof}
Set $M_i=M(\ld_i,B_R)$ and $r_i:=r_0(\ld_i,B_R)$.
If the conclusion fails we may assume that $\sup_i M_i<\infty$.
By \eqref{relMr0} it follows that $\bar r:=\inf_i r_i>0$.
Therefore, setting $\rho=\min(\bar r,\eta)$, we deduce that
$$M_i^{q/\gamma_q}\ge \|c_i\|^q_{q,r_i,B_R}\ge \sup_{x\in\overline B_R} \int_{B_\rho(x)\cap B_R} |c_i|^q\,dx
\ge c(n)\rho^n \sup_{x\in\overline B_R} \inf_{B_\eta(x)\cap B_R} |c_i|^q\to\infty$$
by our assumption: a contradiction. The same argument applies in the case of $b_i$.
\end{proof}

\begin{proof}[Proof of Proposition~\ref{Prop-Optim-Linfty}]
By rescaling as in \eqref{scaledcoeff} and by using Proposition \eqref{scaleinvtilde} (ii), we easily see that it suffices to consider the case $R=1$.
\smallskip

(i)
Let $\lambda\ge 2n$, $\phi=\frac12(1-|x|^2)$ and $v=e^{\lambda\phi}$.
By direct computation we have
$$\nabla\phi=-x,\quad \Delta\phi=-n,$$
$$\nabla e^{\lambda\phi}=\lambda e^{\lambda\phi}\nabla\phi=-\lambda e^{\lambda\phi}x,\qquad
\Delta e^{\lambda\phi}=e^{\lambda\phi}\bigl(\lambda \Delta\phi+\lambda^2 |\nabla\phi|^2\bigr)
=e^{\lambda\phi}\bigl(-\lambda n+\lambda^2 |x|^2\bigr).$$
Taking $c(x)=\lambda^2|x|^2-\lambda n$, it follows that
\be \label{optimLinfty1a}
-\ld v:=-\Delta v+cv=e^{\lambda\phi}\bigl(\lambda n-\lambda^2|x|^2+c\bigr)=0,\quad x\in\R^n.
\ee
Since $v>0$, we deduce in particular that $\lambda_1(-\ld,\rn)\ge 0$.
Next,
by using  \eqref{defM} and \eqref{culinfty}, we get
\be \label{optimLinfty1b}
M:=M(\ld,B_2)
=\|c\|^{\gamma_q}_{q,r_0,B_{2}}\le C(n) \|c\|_{L^\infty(B_{2})}^{1/2}\le C(n)\lambda.
\ee
Then, by \eqref{optimLinfty1a}, the function $u=v-1$ solves \eqref{optimLinfty1} with $f\equiv -c$
and we have
$$\frac{u(0)}{\|f\|_{L^q(B_1)}}\ge C(n)\frac{u(0)}{\|f\|_{L^\infty(B_1)}}\ge {C(n)\lambda^{-2}}\bigl(e^{\lambda/2}-1\bigr)\ge  C(n)e^{C(n)M}.$$
Since $M\to\infty$ as $\lambda\to\infty$ owing to Lemma~\ref{Mitoinfty} and the choice of $c$,
assertion (i) follows.
\smallskip

(ii) Fix any $\lambda\ge 1$ and set
$$u=e^{\lambda\sqrt{2}}-e^{\lambda\phi},\quad\hbox{where  } \phi(x)=\sqrt{1+|x|^2}.$$
By direct computation we have
$$\nabla\phi=\frac{x}{\phi},\quad \Delta\phi=\frac{n}{\phi}-\frac{|x|^2}{\phi^3},$$
\quad
$$\nabla e^{\lambda\phi}=\lambda e^{\lambda\phi}\nabla\phi=\lambda e^{\lambda\phi}\frac{x}{\phi},\quad
\Delta e^{\lambda\phi}=e^{\lambda\phi}\bigl(\lambda^2 |\nabla\phi|^2+\lambda \Delta\phi\bigr)
=\lambda e^{\lambda\phi}\Bigl(\lambda\frac{|x|^2}{\phi^2}+\frac{n}{\phi}-\frac{|x|^2}{\phi^3}
\Bigr),$$
hence
$$-\ld u:=-\Delta u+b\cdot \nabla u
=\lambda {\phi}^{-1}e^{\lambda\phi}\Bigl(\lambda\frac{|x|^2}{\phi}+n-\frac{|x|^2}{\phi^2}
-b\cdot x\Bigr).
$$
Let $\psi$ be a smooth cut-off such that $0\le \psi\le1$, with $\psi(x)=1$ for $|x|\le 3/8$ and $\psi(x)=0$ for $|x|\ge 3/4$.
We then choose
$$b(x)=\frac{x}{|x|}\Bigl(\lambda\frac{|x|}{\phi}+\frac{n}{|x|}-\frac{|x|}{\phi^2}\Bigr)(1-\psi(x)),\quad  x\in\R^n$$
and
$$
f:=-\Delta u+b\cdot \nabla u
=\lambda  e^{\lambda\phi}\Bigl(\lambda\frac{|x|^2}{\phi^2}+\frac{n}{\phi}-\frac{|x|^2}{\phi^3}\Bigr)\psi(x).
$$
By the support properties of $\psi$, we see that $b\in C^\infty(\R^n)$ and, using \eqref{bulinfty}, that
\be \label{optimLinfty1c}
M:=M(\ld,B_2)
=\|b\|^{\beta_q}_{q,r_0,B_{2}}\le C(n) \|b\|_{L^\infty(B_{2})}\le C_1\lambda
\quad\hbox{for all $\lambda\ge 1$,}
\ee
for some constant $C_1=C_1(n)>0$.
Also, since $\phi(x)=\sqrt{1+|x|^2}\le 5/4$ for $|x|\le 3/4$, we have $\|f\|_{L^q(B_1)}\le C\|f\|_{L^\infty(B_1)}\le C\lambda^2  e^{5\lambda/4}$.
Noting that
$$u(0)=e^{\lambda\sqrt{2}}-e^\lambda\ge Ce^{\lambda\sqrt{2}}$$
it follows from \eqref{optimLinfty1c} that
$$\frac{u(0)}{\|f\|_{L^q(B_1)}}\ge C\lambda^{-2}e^{{\lambda(\sqrt{2}-\frac54)}}
\ge Ce^{C\lambda}\ge Ce^{CM}.$$
Since $M\to\infty$ as $\lambda\to\infty$ owing to Lemma~\ref{Mitoinfty},
the assertion follows
(we of course have $\lambda_1(-\Delta+b\cdot\nabla,\R^n)\ge 0$ owing to $(-\Delta+b\cdot\nabla)1=0$).
\end{proof}

\begin{rem}\label{gradopt} Observe that the functions $u,f$ and the operator $\ld$ just constructed are such that
$
\sup_{B_1}u = e^{\lambda\sqrt{2}}- e^\lambda$, $\sup_{B_1}|\nabla u| = C\lambda e^{\lambda\sqrt{2}}$, so a similar argument as above gives the optimality of the gradient  estimate  in Theorem \ref{thm2} for drift operators. On the other hand, the optimality of the same estimate for Schr\"odinger operators is exhibited even by the eigenfunctions of the Laplacian for large eigenvalues. This is trivial to see if $\Omega$ is a cube, since then the eigenfunctions are products of trogonometric functions; for the unit ball it similarly follows from the explicit form of the eigenfunctions in terms of Bessel functions. A computationally simpler example is given by the function $u(x) = \sin(\lambda \phi(x))$, $\phi=\frac12(1-|x|^2)$, which satisfies $\Delta u + cu= f$, with $c= \lambda^2|x|^2$ and $f= -n\lambda\cos(\lambda\phi)$; then an argument similar to the one in (i) above shows \eqref{sharpC1gen} is optimal, since $\sup_{B_1}u =1$, $\sup_{B_1}|\nabla u| = \lambda$. The optimality of \eqref{sharpC1alphagen} can be proved similarly too.
\end{rem}

\begin{proof}[Proof of Proposition~\ref{Optim-spectral1}]
By rescaling as in the previous Proposition,  since $\lambda_1(-\mathcal{L},B_R)=R^{-2}\lambda_1(-\tilde{\mathcal{L}},B_1)$,
 it suffices to consider the case $R=1$.
\smallskip

(i) Let $\varphi_1>0$ be the first eigenfiunction of $-\ld^*$ in $B_1$  (normalized by $\|\varphi_1\|_\infty=1$).
Pick any $u\in C^2(\overline {B_1})$ such that $u>0$ in $B_1$ and $u=0$ on $\partial B_1$, and set $f=-\ld u$.
Since $\lambda_1=\lambda_1(\ld^*,B_1)$, we have
$$\lambda_1\int_{\{f>0\}} u\varphi_1\le\lambda_1\int_{B_R} u\varphi_1=-\int_{B_1} u\ld^*\varphi_1
=-\int_{B_1} \varphi_1\ld u=\int_{B_1} f\varphi_1\le \int_{\{f>0\}} f\varphi_1,$$
where we used integration by parts (cf.~e.g.~Lemma~\ref{weakdiv2}) in the second equality,
hence
\be \label{ratiol1}
\lambda_1\le \frac{\sup_{B_1} f}{\inf_{\{f>0\}} u}.
\ee

We first apply this to the function $u=\exp[\frac\lambda 2(1-|x|^2)]-1$ and the Schr\"odinger operator $\ld$
in the proof of Proposition~\ref{Prop-Optim-Linfty}(i),
with $\lambda\ge 2n$, for which $f=\lambda(n-\lambda|x|^2)$, hence $\{f>0\}\subset B_{1/\sqrt 2}$.
Inequalities \eqref{ratiol1} and \eqref{optimLinfty1b} then yield
$\lambda_1\le n\lambda e^{-\lambda/4}\le C_1e^{-C_0M}$.

We next apply this to the function $u=e^{\lambda\sqrt 2}-e^{\lambda\sqrt{1+|x|^2}}$
and the drift operator $\ld$ in the proof of Proposition~\ref{Prop-Optim-Linfty}(ii), for which $f\le C\lambda^2e^{\lambda\sqrt{1+|x|^2}}\chi_{\{|x|<3/4\}}$. Inequalities \eqref{ratiol1} and \eqref{optimLinfty1c} yield
$\lambda_1\le C\lambda^2e^{\lambda\frac54}(e^{\lambda\sqrt{2}} - e^{\lambda\frac54})^{-1}
\le C_1e^{-C_0\lambda}\le C_1e^{-C_0M}$.

\smallskip

(ii) Let $\lambda\ge 1$. First take $c\equiv-\lambda$ and $\ld=\Delta+c$. By \eqref{culinfty}, we get
$M:=M(\ld,B_1)
=\|c\|^{\gamma_q}_{q,r_0,B_1}\le C(n)\lambda^{1/2}$.
Denoting by $\mu_1$ the first eigenvalue of the Dirichlet Laplacian on $B_1$,
we  have $\lambda_1(-\ld,B_1)=\lambda+\mu_1\ge C_0(M+1)^2$.
\smallskip

Next take $b=-2\lambda e_1$ and $\ld=\Delta+b\cdot\nabla$. By \eqref{culinfty}, we get
$M:=M(\ld,B_1)=\|b\|^{\beta_q}_{q,r_0,B_1}\le C(n)\lambda$.
Setting $\phi(x)=e^{\lambda x_1}>0$, we have
$-\ld\phi=(-\lambda^2-\lambda b\cdot e_1)\phi=\lambda^2\phi$, hence
$\lambda_1(-\ld,B_1)\ge \lambda^2\ge C_0(M+1)^2$.
\end{proof}

\begin{proof}[Proof of Proposition~\ref{Prop-Optim-Hopf1}]
We can assume again that $R=1$. Indeed, applying the transformation $\tilde u(x)=u(y)=u(Rx)$, by the scaling property \eqref{scaleinvtilde5} in Proposition \ref{scaleinvtilde}, the fact that $Rd_1(x)=d_R(Rx)$ (here $d_R(y)=\mathrm{dist}(y, \partial B_R)$), and
$\int_{B_1}\tilde fd_1=
R^{1-n}\int_{B_R}fd_R$, we  have
$$u(y)= \tilde u(x)
\ge e^{-C_0(1+\tilde M)}\left(\int_{B_1}\tilde fd_1\right)d_1(x)=e^{-C_0(1+MR)}R^{-n}\left(\int_{B_R}fd_R\right)d_R(y).$$

Fix any $\lambda>0$ and let $\phi=\frac12(1-|x|^2)$.
We look for a (radial) solution of the form
$$u=e^{\lambda\phi}-1+K\phi,$$
with $K\ge 0$ to be determined.
We note  that $u>0$ in $B_1$ and $u=0$ on~$\partial B_1$.
By direct computation we have
$$\nabla\phi=-x,\quad \Delta\phi=-n,$$
$$\nabla e^{\lambda\phi}=\lambda e^{\lambda\phi}\nabla\phi=-\lambda e^{\lambda\phi}x,\qquad
\Delta e^{\lambda\phi}=e^{\lambda\phi}\bigl(\lambda \Delta\phi+\lambda^2 |\nabla\phi|^2\bigr)
=e^{\lambda\phi}\bigl(-\lambda n+\lambda^2 |x|^2\bigr).$$
For any smooth functions $b, c$ such that $b\cdot x\le 0$ and $c\ge 0$,
choosing $K=n^{-1}\|c\|_\infty$, it follows that
$$\begin{aligned}
f:=-\Delta u+b\cdot\nabla u+cu
&=e^{\lambda\phi}\bigl(\lambda n-\lambda^2|x|^2-\lambda b\cdot x+c\bigr)-c+K(n-b\cdot x+c\phi) \\
&\ge e^{\lambda\phi}\bigl(\lambda n-\lambda^2|x|^2-\lambda b\cdot x+c\bigr).
\end{aligned}$$
We now choose
$$
\begin{cases}
c=c_\lambda=\lambda^2|x|^2,\ \ b=0,& \hbox{in case of assertion (i)}\\
\noalign{\vskip 1mm}
b=b_\lambda=-\lambda x,\ \ c=0,& \hbox{in case of assertion (ii),}
\end{cases}
$$
hence
\be \label{optimHopf3}
\lambda=\|c\|_\infty^{1/2} \quad\hbox{(resp., $\|b\|_\infty$).}
\ee
In either case, we have $f\ge\lambda n e^{\lambda\phi}$, hence
\be \label{optimHopf4}
\int_{B_{1}} fd
\ge \lambda n\ds\int_{0<|x|<1/2} e^{\lambda(1-|x|^2)/2} (1-x)dx\ge C(n)\lambda e^{3\lambda/8}.
\ee
On the other hand, on the boundary $\partial B_1$ we have $|u_\nu|=-x\cdot \nabla u = \lambda+K=\lambda+n^{-1}\lambda^2$ (resp. $|u_\nu|=\lambda$ in case of assertion (ii)),
hence $|u_\nu|\le \lambda(1+\lambda)$ in either case.
Consequently,
we deduce from \eqref{optimHopf4}  that
$$|u_\nu|\,\Bigl(\int_{B_1} fd\Bigr)^{-1}\le C(n)\lambda(1+\lambda)e^{-3\lambda/8}
\le C(n)e^{-\lambda/4}\le C(n)e^{-C_1(n)M},$$
where we used \eqref{optimHopf3} and Proposition~\ref{lemr0ul}(i) to get the last inequality.
Finally, by Proposition~\ref{lemr0ul}(ii), the function
$M(\lambda):=\|c_\lambda\|^{\gamma_q}_{q,r_\lambda,B_1}$ {with $r_\lambda=r_0(\Delta-c_\lambda, B_1)$}
is continuous on $(0,\infty)$ and $\lim_{\lambda\to \infty}M(\lambda)=\infty$,
whereas $\lim_{\lambda\to 0}M(\lambda)=0$ owing to \eqref{bulinfty}-\eqref{culinfty}.
Therefore the range of $M(\lambda)$ is the whole half-line $(0,\infty)$ (and similarly for $b_\lambda$),
which completes the proof.
\end{proof}

\begin{rem} \label{Hopf-optL1d}
As mentioned near the end of Section~\ref{sec-optim}, the $L^1_d$ norm in the right-hand side of the quantitative Hopf inequality \eqref{conclHopf} cannot be replaced by the stronger norms $\|\cdot\|_{L^p_d}$ or $\|\cdot\|_{L^1_{d^\epsilon}}$
for $p>1$ or $\epsilon<1$.
Indeed, consider for instance the case of the Laplacian, and assume for contradiction that the estimate
$u(x)\ge C_1 \|f\|d(x), \ x\in\Omega,$ is true for one such norm with some constant $C_1>0$,
for each $u$ and $f$ such that $-\Delta u \ge f\ge0$ in $B_1$.
Denoting by $G(x,y)$ the Green function of the Dirichlet Laplacian in $B_1$
we have, by classical estimates (see e.g.~\cite{Zh} and the references therein):
$$G(x,y)\le Cd(x)d(y)\quad\hbox{for all $x,y\in B_1$ such that $|x-y|\ge 1/4$}.$$
Consequently, for any nonnegative $f\in C(\overline B_1)$ with support in $\overline B_1\setminus B_{1/2}$,
$$C_1\|f\| \le  u(0)=\int_\Omega G(0,y)f(y)dy\le C\int_\Omega f(y)d(y)dy= C\|f\|_{L^1_d},$$
a contradiction.
\end{rem}

\section{Optimality of global Harnack and log-grad estimates}
\label{sec-proofoptim2}

We first state and prove the optimality of the exponential constant in front of the right-hand side $f$ in the Harnack inequality with respect to the coefficients.

\begin{prop}\label{Optimality_Harnack2}
Let $n\ge 1$, $n<q\le\infty$, and $R>0$.
There exist a constant $\lambda_0=\lambda_0(n)>0$,  sequences $b_j, c_j,f_j\in C^\infty(\overline{B_R})$,
and a classical solution $u_j>0$ of
\be \label{optimLinfty1d}
\left\{\hskip 2mm\begin{aligned}
-\ld_ju_j:=-\Delta u_j+c_ju_j&=f_j, &\quad&x\in B_R,\\
u_j&=0, &\quad&x\in \partial B_R,
\end{aligned}
\right.
\ee
resp. of
\be \label{optimLinfty1db}
\left\{\hskip 2mm\begin{aligned}
-\ld_ju_j:=-\Delta u_j + b_j\cdot\nabla u_j - \lambda_0R^{-2} u_j&=f_j, &\quad&x\in B_R,\\
u_j&=0, &\quad&x\in \partial B_R,
\end{aligned}
\right.
\ee
such that
\be \label{optimLinfty24}
\inf_{B_R}(u_j/d)=0\qquad\mbox{and}\qquad\sup_{B_R}(u_j/d)\ge CR^{1-n/q}\exp\left(CM_jR\right) \|f\|_{q,r_j,B_R},
\ee
where $M_j=\|c_j\|^{\gamma_q}_{q,r_j,B_R}\to\infty$, resp.~$M_j=\|b_j\|^{\beta_q}_{q,r_j,B_R}\to\infty$,
with $r_j=r_0(\ld_j,B_R)$ and $C=C(n)>0$.

\end{prop}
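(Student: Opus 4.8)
The approach is to reduce to $R=1$ by scaling and then, for each of the two operator families, to exhibit an explicit sequence of solutions which vanish together with their full gradient on all of $\partial B_1$ --- so that $\inf_{B_1}(u_j/d)=0$ holds automatically --- which are exponentially large in $\lambda$ somewhere in $B_1$, while the equation they solve has a right-hand side that is only polynomially large, because the exponentially large parts of $\Delta u_j$ and of the lower-order terms cancel. The reduction to $R=1$ is routine: for $\tilde u(x)=u(Rx)$ one has $-\tilde\ld_j\tilde u=\tilde f$ with $\tilde f(x)=R^2 f(Rx)$, and by Proposition~\ref{scaleinvtilde}(ii), together with $\mathrm{dist}(Rx,\partial B_R)=R(1-|x|)$ and the identities $(2-n/q)\gamma_q=1$, $(1-n/q)\beta_q=1$, the quantities $\inf(u/d)$, $\sup(u/d)$, the relevant power $\|\cdot\|^{\gamma_q}$ or $\|\cdot\|^{\beta_q}$ of the uniformly local norm of the coefficient, and $\|f\|_{q,r_0,\cdot}$ scale so that \eqref{optimLinfty24} for $B_R$ is equivalent to \eqref{optimLinfty24} for $B_1$. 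From now on $R=1$ and $d(x)=1-|x|$.

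For the Schr\"odinger operator we set $\phi(x)=\tfrac14(1-|x|^2)^2$, a fixed polynomial which is positive in $B_1$ and vanishes to second order on $\partial B_1$, and for $\lambda=\lambda_j\to\infty$ we put $u_j:=\cosh(\lambda\phi)-1$, $c_j:=\lambda^2|\nabla\phi|^2+\lambda(\Delta\phi)\tanh(\lambda\phi)$ and $f_j:=-c_j$. Using $\Delta u_j=\lambda^2\cosh(\lambda\phi)|\nabla\phi|^2+\lambda\sinh(\lambda\phi)\Delta\phi$ and $\cosh(\lambda\phi)\tanh(\lambda\phi)=\sinh(\lambda\phi)$, a direct computation shows that all exponentially large terms cancel and $-\Delta u_j+c_j u_j=f_j$, so $u_j$ solves \eqref{optimLinfty1d}; moreover $u_j>0$ in $B_1$ while $u_j=0$ and $\nabla u_j=\lambda\sinh(\lambda\phi)\nabla\phi=0$ on $\partial B_1$, hence $u_j/d$ is positive in $B_1$ and tends to $0$ at the boundary, that is $\inf_{B_1}(u_j/d)=0$. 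Since $\phi$ is fixed we have $\|c_j\|_{L^\infty(B_1)}\le C(n)\lambda^2$, so by \eqref{culinfty} and $\gamma_q=1/(2-n/q)$, $M_j=\|c_j\|^{\gamma_q}_{q,r_j,B_1}\le C(n)\|c_j\|_{L^\infty(B_1)}^{1/2}\le C(n)\lambda$, while $M_j\to\infty$ by Lemma~\ref{Mitoinfty} because $c_j\ge c(n)\lambda^2$ on a fixed annulus. Finally $\|f_j\|_{q,r_j,B_1}=\|c_j\|_{q,r_j,B_1}=M_j^{2-n/q}$ and $\sup_{B_1}(u_j/d)\ge u_j(0)=\cosh(\lambda/4)-1\ge\tfrac13 e^{\lambda/4}$, and combining these with $M_j\le C(n)\lambda$ gives, for $j$ large, $\sup_{B_1}(u_j/d)\ge Ce^{CM_j}\|f_j\|_{q,r_j,B_1}$ with $C=C(n)>0$, which is \eqref{optimLinfty24}.

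For the drift operator the lower-order term $-\lambda_0 u_j$ cannot be absorbed by the drift near the point where $u_j$ is largest (there $\nabla u_j$ is too small), so the previous ansatz fails and we use instead a radial construction on three zones. Fix a small $\delta=\delta(n)\in(0,\tfrac12)$ and any fixed $\lambda_0=\lambda_0(n)>0$. On $\{\delta\le|x|\le1\}$ take the radial function $u_j(r)=\cosh(\lambda(1-r))-1$ (so $u_j(1)=u_j'(1)=0$, whence $\nabla u_j=0$ on $\partial B_1$) together with the radially directed drift $b_j=\beta_j(|x|)\,x/|x|$, where on $\{\delta\le|x|\le1-\lambda^{-1}\}$ we set $\beta_j(r)=\frac{n-1}{r}-(\lambda+\lambda_0\lambda^{-1})\coth(\lambda(1-r))$; a short computation then gives $-\Delta u_j+b_j\cdot\nabla u_j-\lambda_0 u_j=\lambda_0$ on that annulus, and there $|\beta_j|\le C(n)\lambda$ because $\coth(\lambda(1-r))\le\coth 1$. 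We extend $u_j$ smoothly and radially into $B_\delta$ as a positive solution of the homogeneous equation $-\Delta u_j+b_j\cdot\nabla u_j-\lambda_0 u_j=0$ there (the radial ODE has a one-parameter family of solutions regular at the origin, and imposing on the matching function the normalization $\Delta u_j(0)=-\lambda_0 u_j(0)$ makes the resulting coefficient $\beta_j$ bounded by $C(n)\lambda$ and smooth across the origin), so that $f_j=0$ on $B_\delta$; and on the thin shell $\{1-\lambda^{-1}\le|x|\le1\}$, where $u_j=O(\lambda^2(1-|x|)^2)$ and $|\nabla u_j|=O(\lambda)$, we keep the same $u_j$ but replace the (unbounded) formula for $\beta_j$ by any smooth interpolation bounded by $C(n)\lambda$, producing $f_j=O(\lambda^2)$ on a set of width $\lambda^{-1}$. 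Altogether $\|b_j\|_{L^\infty(B_1)}\le C(n)\lambda$, $\|f_j\|_{q,r_j,B_1}\le C(n)\lambda^2$, $\sup_{B_1}(u_j/d)\ge u_j(\delta)/(1-\delta)\ge c(n)e^{(1-\delta)\lambda}$, and by \eqref{bulinfty} and $\beta_q=1/(1-n/q)$, $M_j=\|b_j\|^{\beta_q}_{q,r_j,B_1}\le C(n)\lambda$ with $M_j\to\infty$ by Lemma~\ref{Mitoinfty}; combining these exactly as above yields \eqref{optimLinfty24}.

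The Schr\"odinger part reduces to the single cancellation identity above once $\phi$ has been chosen, the only point to watch being the passage from $\|c_j\|_{L^\infty}$ to $M_j=\|c_j\|^{\gamma_q}_{q,r_j,B_1}$, which is exactly what \eqref{culinfty} provides. The step I expect to be most delicate is the drift case: because the drift term is negligible wherever $u_j$ is near its maximum, the exponentially large contribution $-\lambda_0 u_j$ there must be cancelled at the level of the construction, which forces the three-zone radial scheme; the technical burden is then the $C^\infty$ matching of the three pieces of $u_j$ and, above all, checking that the associated radial coefficient $\beta_j$ is simultaneously bounded by $C(n)\lambda$ and smooth at the origin --- the reason for imposing $\Delta u_j(0)=-\lambda_0 u_j(0)$ on the inner piece.
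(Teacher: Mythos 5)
Your reduction to $R=1$ and your Schr\"odinger example are correct, and the latter is a legitimate variant of the paper's construction: where the paper takes $u_j=e^{jh}-1$ with $h=\varphi^2$ ($\varphi$ the first Dirichlet eigenfunction) and $c_j=f_j=-j\Delta h-j^2|\nabla h|^2$, you take $u_j=\cosh(\lambda\phi)-1$ with the polynomial $\phi=\frac14(1-|x|^2)^2$ and $c_j=\lambda^2|\nabla\phi|^2+\lambda(\Delta\phi)\tanh(\lambda\phi)$; the cancellation identity, the bound $M_j\le C\|c_j\|_\infty^{1/2}\le C\lambda$ via \eqref{culinfty}, the use of Lemma~\ref{Mitoinfty}, and the vanishing of $u_j/d$ at the boundary (order $d^4$ here, $d^2$ in the paper) all go through. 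This half of your argument is complete.

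The drift case, however, has a genuine gap exactly at the point you flag as delicate. On $B_\delta$ you define $\beta_j=(\Delta u_j+\lambda_0 u_j)/u_j'$ for some smooth positive radial extension $u_j$, and you claim that the single normalization $\Delta u_j(0)=-\lambda_0 u_j(0)$ makes $\beta_j$ smooth across the origin \emph{and} bounded by $C(n)\lambda$. That condition (together with $u_j''(0)\ne0$) does give smoothness of $b_j=\beta_j(r)x/|x|$ at $0$, but it is far from sufficient for the bound: you need $|\Delta u_j+\lambda_0 u_j|\le C\lambda|u_j'|$ on all of $(0,\delta]$, and this is a constraint on the entire profile. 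Near the origin $u_j\approx u_j(0)$ is of size $e^{(1-\delta)\lambda}$ while $u_j'(r)\approx u_j''(0)r$ with $u_j''(0)=-\lambda_0u_j(0)/n$, so the required inequality becomes $|\Delta u_j(r)+\lambda_0u_j(r)|\lesssim \lambda\lambda_0 u_j(0)\,r$, i.e.\ a quantitative Taylor-remainder bound that your one-point normalization does not provide; moreover no function of the form $cJ(r)$ (the regular radial solution of $\Delta J+\lambda_0J=0$, which would make $\beta_j\equiv0$) can match $\cosh(\lambda(1-r))-1$ to infinite order at $r=\delta$, so a nontrivial transition profile must actually be exhibited and estimated. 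The paper sidesteps this entirely by a structural device: it takes $u_j=\theta\varphi\,e^{jh}$ with $\lambda_0$ equal to the first Dirichlet eigenvalue, $\varphi$ the eigenfunction, and $h$ \emph{locally constant} near the origin and near $\partial B_1$, so that near the origin $u_j$ is an exact multiple of $\varphi$, the residual $2j\nabla(\theta\varphi)\cdot\nabla h+\theta\varphi(j\Delta h+j^2|\nabla h|^2)$ vanishes identically, and $b_j\equiv0$ there --- no division by $u_j'$ near $r=0$ and no matching estimate are needed; the lower bound $|\nabla(\theta\varphi)+j\theta\varphi\nabla h|\ge\theta|\nabla\varphi|>0$ away from the origin then makes the explicit formula for $b_j$ well defined and $O(j)$ everywhere. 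To close your argument you would either have to carry out the inner-zone construction with the full quantitative bound (not just the value at $r=0$), or adopt the paper's eigenfunction-plus-locally-constant-exponent ansatz.
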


\begin{proof}

It suffices to assume $R=1$.
Indeed, as in the proof of Proposition~\ref{Prop-Optim-Hopf1} above, if $\tilde u_j(x)=u_j(y)=u_j(Rx)$, by the properties \eqref{scaleinvtilde4}-\eqref{scaleinvtilde5}
and  $Rd_1(x)=d_R(Rx)$, we  have
$$R\sup_{y\in B_R}\frac{u_j(y)}{d_R(y)}=\sup_{x\in B_1}\frac{\tilde u_j(x)}{d_1(x)}\ge C\exp(C\tilde M_j) \|\tilde f_j\|_{q,\tilde r_j,B_1}
=C\exp(CM_jR) R^{2-n/q}\|f_j\|_{q,r_j,B_R}.$$

Let $\lambda_0>0$ be the first eigenvalue of $-\Delta$ in $B_1$ with Dirichlet boundary conditions
and $\varphi(x)=\tilde\varphi(r)\in C^\infty(\overline B_1)$ be the corresponding (radial) eigenfunction with $\varphi(0)=1$.

\smallskip

{\bf Case 1:} Problem \eqref{optimLinfty1d}.
Let $h=\varphi^2$, $u_j=e^{j h}-1$, $c_j=-j\Delta h-j^2|\nabla h|^2$ and $f_j=c_j$. Then
$$
-\Delta u_j=
-\bigl[j\Delta h+j^2|\nabla h|^2\bigr]e^{j h}=c_ju_j+f_j.
$$
Since $u_j=e^{j h}-1\sim j \varphi^2=O(d^2)$ as $x\to\partial B_1$, we have $\inf_{B_1}(u_j/d)=0$.
On the other hand, $\sup_{B_1}(u_j/d)\ge u_j(0)=e^j-1$.
Since
{$M_j\le C \|c_j\|_\infty^{1/2} \le Cj$ owing to \eqref{culinfty}, we get
$$\bigl(\|f_j\|_{L^q(B_1)}\bigr)^{-1}\sup_{B_1}(u_j/d)=\bigl(\|c_j\|_{L^q(B_1)}\bigr)^{-1}\sup_{B_1}(u_j/d)\ge j^{-1}(e^j-1)\ge Ce^{j/2}\ge Ce^{CM_j}.$$
Since $M_j\to\infty$ by Lemma~\ref{Mitoinfty}}, the conclusion
follows.

\smallskip

{\bf Case 2:} Problem \eqref{optimLinfty1db}. This case is more delicate.
Let $h(x)=\tilde h(r)$ and $\theta(x)=\tilde\theta(r)$ be radial nonincreasing $C^\infty$ functions, such that
$$\hbox{$\tilde\theta(r)=1$ for $r\le 1/2$, \ \ $\tilde\theta_1(r)<0$ for $1/2<r<1$,\ \ $\tilde\theta(1)=\tilde\theta^\prime(1)=0$,}$$
$$\hbox{$\tilde h(r)=1$ for $r\le 1/3$, \ \ $\tilde h(r)=0$ for $2/3\le r\le 1$, \ \ $h(1/2)=1/2$}$$
and
\be \label{optimLogg-nabla0}
\hbox{$\tilde h'(r)<-1/10$ for $3/8\le r\le 5/8$}.
\ee
For given integer $j\ge 1$, we set
$$u_j:=\theta\varphi e^{j h},$$
which in particular satisfies $u_j=0$ on $\partial B_1$ and $u_j>0$ in $B_1$.
We compute
\be \label{optimLogg-nabla1}
\nabla u_j=\bigl[\nabla(\theta\varphi)+j\theta\varphi\nabla h\bigr]e^{j h},
\ee
$$\Delta u_j
=\bigl[\Delta(\theta\varphi)+2j\nabla(\theta\varphi)\cdot\nabla h+\theta\varphi(j\Delta h+j^2|\nabla h|^2)\bigr]e^{j h}.$$
Setting $f_j=-\bigl(2\nabla\theta\cdot\nabla\varphi+\varphi\Delta\theta\bigr)e^{j h}$ and using $\Delta(\theta\varphi)=-\lambda_0\theta\varphi+2\nabla\theta\cdot\nabla\varphi+\varphi\Delta\theta$,
it follows that
\be \label{optimLogg-nabla11}
\Delta u_j+\lambda_0 u_j+f_j
=\bigl[2j\nabla(\theta\varphi)\cdot\nabla h+\theta\varphi(j\Delta h+j^2|\nabla h|^2)\bigr]e^{j h(x)}.
\ee
Also we observe that
\be \label{optimLogg-nabla11a}
\inf_{B_1}\frac{u_j}{d}\le\lim_{|x|\to 1}\frac{u_j}{d}=|(\tilde\theta\tilde\varphi)'(1)|=0
\ee
and
$$\sup_{B_1}\frac{u_j}{d}\ge \frac{u_j}{d}(0)=e^j,
\qquad |f_j|\le C\chi_{\{1/2\le |x|\le 1\}}e^{j h(x)}\le Ce^{j/2},$$
hence
\be \label{optimLogg-nabla11b}
\sup_{B_1}\frac{u_j}{d}\ge Ce^{j/2}\|f_j\|_\infty.
\ee
Now, since $\varphi$ is radially strictly decreasing and  $\theta$, $h$ are also radially decreasing, we have
\be\label{gradsy}
|\nabla(\theta\varphi)+j\theta\varphi\nabla h|
\ge \max\{|\nabla(\theta\varphi)|, j|\theta\varphi\nabla h|\}\ge \theta|\nabla\varphi|>0, \quad \mbox { for }0<|x|<1.
\ee
We may then set
\be \label{Deltaujbj21}
b_j:=\frac{\nabla(\theta\varphi)+j\theta\varphi\nabla h}{|\nabla(\theta\varphi)+j\theta\varphi\nabla h|^2}
\bigl[2j\nabla(\theta\varphi)\cdot\nabla h+\theta\varphi(j\Delta h+j^2|\nabla h|^2)\bigr],\quad x\in B_1\setminus\{0\}.
\ee
Since $h$ is constant, hence $b_j=0$, in $B_{1/3}$ and in $\overline B_1\setminus B_{2/3}$, the function $b_j$ extends to a function $b_j\in C^\infty(\overline B_1)$,
and \eqref{optimLogg-nabla1}, \eqref{optimLogg-nabla11} and \eqref{Deltaujbj21} imply
$$-\Delta u_j-\lambda_0 u_j+b_j\cdot\nabla u_j=f_j.$$
Moreover, since $h=$const in $B_{1/3}$ and $B_1\setminus B_{2/3}$, by \eqref{gradsy} we obtain
$$\begin{aligned}|b_j|
&=\frac{\bigl|2j\nabla(\theta\varphi)\cdot\nabla h+\theta\varphi(j\Delta h+j^2|\nabla h|^2)\bigr|}{|\nabla(\theta\varphi)+j\theta\varphi\nabla h|}
\le j\frac{\bigl|2\nabla(\theta\varphi)\cdot\nabla h+\theta\varphi\Delta h\bigr|}{|\nabla(\theta\varphi)|}
+\frac{j^2\theta\varphi|\nabla h|^2}{j|\theta\varphi\nabla h|}\\
&\le j\frac{\bigl|2\nabla(\theta\varphi)\cdot\nabla h+\varphi\Delta h\bigr|}{|\theta\nabla \varphi|}+j|\nabla h|\le j\frac{C}{\inf_{1/3\le|x|\le 2/3}\theta|\nabla \varphi|}+Cj \le Cj,
\end{aligned}$$
hence
$M_j\le C\|b_j\|_\infty \le Cj$ owing to \eqref{culinfty}.
Since also by~\eqref{optimLogg-nabla0}, for large $j$
$$
|b_j(x)|\ge \frac{Cj^2|\nabla h|^2- Cj}{C(1+j)}\ge Cj\quad\mbox{ for } \;3/8\le |x|\le 5/8,$$
 we have $M_j\to\infty$ by Lemma~\ref{Mitoinfty}.
This along with \eqref{optimLogg-nabla11a}-\eqref{optimLogg-nabla11b}
yields the conclusion.
\end{proof}

We next turn to the proof of Proposition~\ref{Optimality_Harnack}.

\begin{proof}[Proof of Proposition~\ref{Optimality_Harnack}]
Again it suffices to consider the case $R=1$.
\smallskip

Let $\lambda_0>0$ be the first eigenvalue of $-\Delta$ in $B_1$ with Dirichlet boundary conditions
and $\varphi(x)=\tilde\varphi(r)\in C^\infty(\overline B_1)$ be the corresponding (radial) eigenfunction with $\varphi(0)=1$.
Let $h\ge 0$ be a radial nonincreasing $C^\infty$ function to be fixed below, such that
$h(0)=1$, $h(x)=0$ for $|x|=1$ and
$|\nabla h|\ge c_0>0$ in a neighborhood of $x=1/2$.
For any given integer $j\ge 1$, we set
$$u_j(x):=\varphi(x)e^{j h(x)},$$
which in particular satisfies $u_j=0$ on $\partial B_1$ and $u_j>0$ in $B_1$.
We compute
\be \label{optimLogg-nabla}
\nabla u_j=\bigl[\nabla\varphi+j\varphi\nabla h\bigr]e^{j h(x)},
\ee
$$\begin{aligned}
\Delta u_j
&=\bigl[\Delta\varphi+2j\nabla\varphi\cdot\nabla h+\varphi(j\Delta h+j^2|\nabla h|^2)\bigr]e^{j h(x)}\\
&=\bigl[2j\nabla\varphi\cdot\nabla h+\varphi(j\Delta h+j^2|\nabla h|^2-\lambda_0)\bigr]e^{j h(x)}.
\end{aligned}$$
On the other hand, since $\varphi, h$ are radially decreasing, we have
$$\frac{|\nabla u_j|}{u_j}=|j\nabla h+\varphi^{-1}\nabla\varphi|\ge j |\nabla h|,$$
hence there exists a constant $C_0>0$ such that
\be \label{optimLogg}
\frac{|\nabla u_j(x)|}{u_j(x)}\ge C_0j\quad\hbox{on $\{|x|=1/2\}$}.
\ee
Moreover, $\frac{u_j}{d}(0)=e^j$ and $\displaystyle\lim_{|x|\to 1}\frac{u_j}{d}=|\tilde\varphi'(1)|>0$ by Hopf lemma applied to $\varphi$,
hence
\be \label{optimBHI}
\frac{\sup_{B_1}(u_j/d)}{\inf_{B_1}(u_j/d)}\ge|\tilde\varphi'(1)|^{-1}e^j.
\ee

$\bullet$ For problem \eqref{optimLinfty1ci}, we choose $h=\varphi^2$, hence
$$\Delta u_j
=\varphi\bigl[4j|\nabla\varphi|^2+j\Delta h+j^2|\nabla h|^2-\lambda_0\bigr]e^{j h(x)}$$
and we set
$$c_j:=\frac{-\Delta u_j}{u_j}=\lambda_0-4j|\nabla\varphi|^2-j\Delta h-j^2|\nabla h|^2,$$
hence $\Delta u_j+c_ju_j=0$.
There exist $C_1, C_2>0$ such that $C_1j^2\le\|c_j\|_\infty\le C_2j^2$ for all sufficiently large $j$.
This along with \eqref{optimLogg}-\eqref{optimBHI} and Proposition~\ref{lemr0ul}(i) yields \eqref{optimLinfty23}-\eqref{optimLinfty2b}.
\smallskip

$\bullet$ For problem \eqref{optimLinfty1bi}, we take $h$ a radial decreasing $C^\infty$ function such that $h=1$ for $0\le |x|\le 1/3$
and $h=0$ for $2/3\le |x|\le 1$ and $|\nabla h|>C_0$ for $x=1/2$. We have
\be \label{Deltaujbj1}
\Delta u_j+\lambda_0 u_j=
\bigl[2j\nabla\varphi\cdot\nabla h+\varphi(j\Delta h+j^2|\nabla h|^2)\bigr]e^{j h(x)}.
\ee
Similarly to \eqref{gradsy}, since $\varphi$ is strictly radially decreasing and $h$ is radially decreasing, we have $|\nabla\varphi+j\varphi\nabla h|
\ge \max\{|\nabla\varphi|, j|\varphi\nabla h|\}>0$ in $\overline B_1\setminus\{0\}$.
We may then set
\be \label{Deltaujbj2}
b_j=-\frac{\nabla\varphi+j\varphi\nabla h}{|\nabla\varphi+j\varphi\nabla h|^2}
\bigl[2j\nabla\varphi\cdot\nabla h+\varphi(j\Delta h+j^2|\nabla h|^2)\bigr],\quad x\in\overline B_1\setminus\{0\}.
\ee
Since $h=1$, hence $b_j=0$, in $B_{1/3}$, the function $b_j$ extends to a function $b_j\in C^\infty(\overline B_1)$,
and \eqref{optimLogg-nabla}, \eqref{Deltaujbj1} and \eqref{Deltaujbj2} imply
$$\Delta u_j+\lambda_0 u_j+b_j\cdot\nabla u_j=0.$$
Moreover,
$$\begin{aligned}|b_j|
& =\frac{\bigl|2j\nabla\varphi\cdot\nabla h+\varphi(j\Delta h+j^2|\nabla h|^2)\bigr|}{|\nabla\varphi+j\varphi\nabla h|}
\le j\frac{\bigl|2\nabla\varphi\cdot\nabla h+\varphi\Delta h\bigr|}{|\nabla\varphi|}
+\frac{j^2\varphi|\nabla h|^2}{j|\varphi\nabla h|}\\
&\le j\frac{\bigl|2\nabla\varphi\cdot\nabla h+\varphi\Delta h\bigr|}{\min_{1/3\le|x|\le1}|\nabla\varphi|}+j|\nabla h|\le Cj.
\end{aligned}$$
We have $M_j\to\infty$ as in the previous proof. This along with \eqref{optimLogg}-\eqref{optimBHI} and Proposition~\ref{lemr0ul}(i)
yields \eqref{optimLinfty23}-\eqref{optimLinfty2b}.
\end{proof}

We now turn to the partial optimality of the log-grad estimate \eqref{concllgeu2} in the inhomogeneous case $f\ne 0$.

First of all, as mentioned in Remark~\ref{remloggrad1d2}, \eqref{concllgeu2} is false without the assumption $f\ge 0$, even for $n=1$.
It suffices to consider for instance the functions $u,f\in C^\infty([0,1])$, with $u>0$ in $(0,1)$, given by
\be\label{counterfpositive}
u(x)=\exp(-x^{-1}),\qquad f(x):=-x^{-3}\bigl(2+x^{-1}\bigr)\exp(-x^{-1}),
\ee
which satisfy
$-u''=f$ in $(0,1)$, whereas $\frac{x|u'|}{u}=x^{-1}$ is unbounded.

Next, we have the following proposition, announced in Remark~\ref{remloggrad1d}. It shows that, in the case $n=2$, for any $p>1$ estimate \eqref{concllgeu2} fails if $\|f\|_{L^1_d}$
is replaced by $\|f\|_{L^p_d}$ and $q$ is strictly larger but close to $n$.
In other words, in this situation $C^1$-estimates are available but the log-grad estimate fails.
In dimension $n\ge 3$, we have the same conclusion only for $p>n/2$, so that the optimality
of the $L^1_d$ norm in \eqref{concllgeu2} remains unclear in this case.

\begin{prop} \label{PropLoggradOptimality}
Let $n\ge 2$ and $\frac{n}{2}<p<n<q<\frac{np}{n-p}$.
There exists a sequence of functions $f_k\in C^\infty(\overline B_1)$
with $f_k>0$ in $\overline B_1$, such that the solutions $u_k>0$ of
$$
\left\{\hskip 2mm\begin{aligned}
-\Delta u_k&=f_k, &\quad&x\in B_1,\\
u_k&=0, &\quad&x\in \partial B_1
\end{aligned}
\right.
$$
satisfy, for all $k\ge 1$,
$$\sup_{B_1}\frac{d|\nabla u_k|}{u_k} \ge k\Bigl(1+\frac{\|f_k\|_{L^q(B_1)}}{\|f_k\|_{L^p_d(B_1)}}\Bigr).$$
\end{prop}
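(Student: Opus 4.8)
The plan is to build a sequence $f_k$ concentrated near the boundary $\partial B_1$, so that the solution $u_k$ is small (comparable to $d^2$) near $\partial B_1$, while $d|\nabla u_k|/u_k$ blows up near the boundary; at the same time, by concentrating $f_k$ in a thin annular shell $A_\delta=\{1-\delta<|x|<1\}$, we will make $\|f_k\|_{L^p_d(B_1)}$ much smaller than $\|f_k\|_{L^q(B_1)}$, exploiting the gap $p<q<\frac{np}{n-p}$ (which is exactly the gap that allows a bump of the right profile to have controlled $L^q$ norm but tiny weighted $L^p$ norm). Concretely, I would look for $u_k$ of the form $u_k(x)=\varphi_k(|x|)$ where $\varphi_k$ is a smooth positive radial profile vanishing to first order at $r=1$ but with $\varphi_k'$ large and $\varphi_k$ small on an intermediate sphere $\{|x|=1-\eta_k\}$, so that $\frac{d(x)|\nabla u_k(x)|}{u_k(x)}=\frac{(1-|x|)|\varphi_k'(|x|)|}{\varphi_k(|x|)}$ is at least $k$ there; then set $f_k:=-\Delta u_k$ and adjust $\varphi_k$ so that $f_k>0$ on $\overline B_1$ (a harmless global perturbation — e.g.\ add a small multiple of the Dirichlet torsion function or a fixed positive radial solution of $-\Delta w=1$, which changes none of the estimates qualitatively since it adds a term bounded above and below).

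The key steps, in order, are: (1) choose a one-parameter profile — something like $\varphi_k(r)=(1-r)\,\psi_k(r)$ with $\psi_k$ a positive function that dips sharply on a scale $\sim k^{-1}$ near some radius $r_k$ close to $1$, or more simply $\varphi_k(r)$ built from $(1-r)e^{-k\chi(r)}$ with $\chi$ a smooth bump supported in a shell of width $\delta_k$ abutting the boundary and $\chi'\sim k^{0}$ large enough, so that on $\{|x|=r_k\}$ one has $(1-r_k)|\varphi_k'|/\varphi_k\gtrsim k$; (2) compute $f_k=-\Delta u_k=-\varphi_k''-\frac{n-1}{r}\varphi_k'$ and check that $f_k$ is supported essentially in the shell $A_{\delta_k}$, with $|f_k|\lesssim$ (an explicit power of $k$ and $\delta_k$) there; (3) estimate $\|f_k\|_{L^q(B_1)}\sim (\text{power of }k)\,\delta_k^{1/q}$ and $\|f_k\|_{L^p_d(B_1)}\sim (\text{power of }k)\,\delta_k^{1/p+1}$, using that $d(x)\sim\delta_k$ on the shell; then the ratio $\|f_k\|_{L^q}/\|f_k\|_{L^p_d}\sim \delta_k^{1/q-1/p-1}$, which $\to 0$ as $\delta_k\to 0$ since $1/q-1/p-1<0$; (4) conclude that for a suitable choice $\delta_k\to0$ (e.g.\ $\delta_k=k^{-N}$ for $N$ large) one has simultaneously $\sup_{B_1} d|\nabla u_k|/u_k\gtrsim k$ and $k\bigl(1+\|f_k\|_{L^q}/\|f_k\|_{L^p_d}\bigr)\lesssim k$, after absorbing constants by relabeling (replace $k$ by $ck$, or demand the displayed inequality with a fixed constant and then rescale the index). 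One subtlety: the constraint $q<\frac{np}{n-p}$, i.e.\ $\frac{1}{q}>\frac{1}{p}-\frac1n$, must be used to guarantee the existence of an admissible triple and to keep $f_k$ genuinely in $L^q$ with the required decay of the ratio; I would double-check that the exponents in step (3) indeed give $\delta_k^{1/q-1-1/p}\to 0$, which only needs $1/q<1+1/p$ — automatic — but that the $L^q$ norm stays bounded below relative to the blow-up rate uses the sharper inequality. (Here the dimension enters: for $n\ge 3$ the weighted $L^p_d$ bound and the positivity/profile constraints force $p>n/2$, which is why the statement is restricted that way.)

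The main obstacle I anticipate is arranging the profile $\varphi_k$ so that \emph{all three} requirements hold at once with clean scaling: (a) $f_k=-\Delta u_k>0$ on all of $\overline B_1$, including the region away from the shell where $\varphi_k$ is essentially $(1-r)$ times a slowly varying function (there $-\Delta u_k$ is a bounded-below quantity provided we add the fixed positive torsion-type term); (b) the large log-gradient is attained at a point where $d(x)$ is \emph{not} itself tiny enough to kill the product — this is why the bump cannot be pushed arbitrarily close to $\partial B_1$ relative to its width, and it is the reason one must carefully balance the location $r_k$, the width $\delta_k$, and the height parameter $k$; (c) the weighted norm computation must use $d(x)\asymp 1-|x|$ uniformly on the shell, which is fine on $B_1$. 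Once the profile is pinned down, steps (2)–(4) are routine one-dimensional calculus, so the real work is the construction in step (1) and verifying the sign of $f_k$. I would present $\varphi_k$ explicitly (a product of $(1-r)$, a fixed smooth positive factor, and $\exp\bigl(k\,\Theta((r_k-r)/\delta_k)\bigr)$ for a fixed smooth bump $\Theta$), then read off everything.
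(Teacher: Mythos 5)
There is a genuine gap, and it is located exactly where you placed your hopes: step (3)--(4). Your own asymptotics give
$\|f_k\|_{L^q}/\|f_k\|_{L^p_d}\sim \delta_k^{\frac1q-\frac1p-1}$, and since the exponent $\frac1q-\frac1p-1$ is \emph{negative}, this ratio tends to $+\infty$ as $\delta_k\to 0$, not to $0$ as you claim. This is not a typo but a structural problem with the boundary-shell idea: concentrating $f_k$ in a shell at distance $\sim\delta_k$ from $\partial B_1$ makes the weight $d\sim\delta_k$ there, which \emph{shrinks} $\|f_k\|_{L^p_d}$ relative to $\|f_k\|_{L^q}$ and thus inflates the right-hand side $k\bigl(1+\|f_k\|_{L^q}/\|f_k\|_{L^p_d}\bigr)$ far beyond $k$, while your profile $\exp\bigl(k\,\Theta((r_k-r)/\delta_k)\bigr)$ only delivers $\sup d|\nabla u_k|/u_k\sim \delta_k\cdot(k/\delta_k)=k$ on the shell (and, by the boundary Harnack/Hopf behaviour, nothing better elsewhere near $\partial B_1$, cf.\ \eqref{optimLinfty2b2}). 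So step (4) fails. A second warning sign is that the hypothesis $q<\frac{np}{n-p}$ and the dimension $n$ never actually enter your computation: a thin shell abutting the boundary is effectively one-dimensional, so it cannot reproduce a dimension-dependent threshold.

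The paper's construction is the opposite geometry: a point singularity at the \emph{center}, where $d\approx 1$ and the weight is irrelevant. One takes $u_\eps=(1+\eps)^\gamma-(|x|^2+\eps)^\gamma$ with $\gamma>0$ small, so that $f_\eps=-\Delta u_\eps\asymp \gamma(|x|^2+\eps)^{\gamma-1}>0$ everywhere. Using the $n$-dimensional volume element one gets $\|f_\eps\|_{L^p_d}\ge c\,\eps^{\gamma-1+\frac{n}{2p}}$ and $\|f_\eps\|_{L^q}\le C\eps^{\gamma-1+\frac{n}{2q}}$ (the latter needs $\gamma<1-\frac{n}{2q}$), so the ratio blows up only like $\eps^{-\frac n2(\frac1p-\frac1q)}$, while $d|\nabla u_\eps|/u_\eps$ at $|x|=\sqrt\eps$ blows up like $\eps^{\gamma-\frac12}$. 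The hypothesis $q<\frac{np}{n-p}$, i.e.\ $\frac1p-\frac1q<\frac1n$, is precisely what allows choosing $\gamma$ with $\frac n2(\frac1p-\frac1q)<\frac12-\gamma$, so the left-hand side wins. If you want to keep a self-contained argument, this interior-singularity route is the one to follow; the boundary-layer route would at best require a completely different balance of parameters and, as written, proves the wrong inequality.
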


\begin{proof} [Proof of Proposition~\ref{PropLoggradOptimality}]
Let $\eps>0$. Define
$$\phi(r)=(r^2+\eps)^\gamma,\quad u(x)=u_\eps(x)=(1+\eps)^\gamma-\phi(r)$$
with $\gamma\in(0,1)$ to be chosen below. For $r>0$, we compute
\be\label{computphiprime}
\phi'=2\gamma r(r^2+\eps)^{\gamma-1},
\ee
$$\begin{aligned}
\phi''
&=2\gamma (r^2+\eps)^{\gamma-1}+4\gamma(\gamma-1)r^{2}(r^2+\eps)^{\gamma-2}\\
&=2\gamma (r^2+\eps)^{\gamma-2}\bigl[(r^2+\eps)+2(\gamma-1)r^2\bigr].
\end{aligned}$$
Therefore,
$$\begin{aligned}
f:=-\Delta u
&=\phi''+(n-1)r^{-1}\phi'
=2\gamma (r^2+\eps)^{\gamma-2}\bigl[n(r^2+\eps)+2(\gamma-1) r^2\bigr] \\
&=2\gamma (r^2+\eps)^{\gamma-2}\bigl[(n-2+2\gamma)r^2+n\eps],
\end{aligned}$$
hence
\be\label{computf}
2\gamma n (r^2+\eps)^{\gamma-1}\ge f\ge 4\gamma(r^2+\eps)^{\gamma-2}\bigl[\gamma r^2+\eps\bigr]\ge 4\gamma^2(r^2+\eps)^{\gamma-1}>0.
\ee
Let $p\ge 1$ and
$\eps\in(0,1/4)$.
Denoting by $C$ a generic positive constant depending only on $p$ and $\gamma$, we compute
$$
\|f\|^p_{L^p_d}\ge C \int_0^1 (r^2+\eps)^{(\gamma-1)p}r^{n-1}(1-r)dr
\ge C\eps^{(\gamma-1)p} \int_0^{\sqrt{\eps}} r^{n-1}dr
= C\eps^{(\gamma-1)p+n/2}
$$
hence
$\|f\|_{L^p_d}\ge C\eps^{\gamma-1+\frac{n}{2p}}$.
Also, if $q\ge 1$ and ${2(\gamma-1)q+n}<0$, then
$$\begin{aligned}
\|f\|_q^q
&\le C \int_0^1 (r^2+\eps)^{(\gamma-1)q}r^{n-1}dr\\
&{\le C\eps^{(\gamma-1)q} \int_0^{\sqrt{\eps}} r^{n-1}dr
+C \int_{\sqrt{\eps}}^1 r^{2(\gamma-1)q+n-1}dr\le C\eps^{(\gamma-1)q+n/2}},
\end{aligned}$$
hence $\|f\|_q\le C\eps^{\gamma-1+\frac{n}{2q}}$,
so that
$$\frac{\|f\|_q}{\|f\|_{L^p_d}}
\le C \eps^{-\frac{n}{2}(\frac{1}{p}-\frac{1}{q})}.$$
But on the other hand, by \eqref{computphiprime}, we have
$$\sup_{B_1}\frac{d|\nabla u|}{u}\ge \Bigl[\frac{d|\nabla u|}{u}\Bigr]_{|x|=\eps^{1/2}} \ge C|\nabla u|_{|x|=\eps^{1/2}}
\ge C\eps^{\gamma-\frac12}.$$
By our assumption, which implies $\frac{1}{p}-\frac{1}{q}<\frac{1}{n}$,
we may choose $\gamma>0$ small such that $0<\gamma<1-\frac{n}{2q}$ and
$\frac{n}{2}(\frac{1}{p}-\frac{1}{q})<\frac12-\gamma$, hence
$\eps^{-\frac{n}{2}(\frac{1}{p}-\frac{1}{q})}\ll \eps^{\gamma-\frac12}$ as $\eps\to 0$.
The proposition is proved.
\end{proof}

\section{Appendix} \label{sec-app}

In this appendix we state and/or prove a number of auxiliary or technical results that we have used, and which
were postponed in order not to interrupt the main line of arguments.

\subsection{First eigenvalue}
Let $\Omega$ be a bounded
domain of $\rn$. Using the weak version of the Krein-Rutman theorem (see for instance \cite[Proposition 5.4.32]{DM}), it was proved in \cite{Ch1}, \cite{Ch2}, that any operator $\ld$ satisfying  \eqref{hyp1}, $b_1,b_2\in L^q(\Omega)$, $c,f\in L^{q/2}(\Omega)$, $q>n$,  possesses a first eigenvalue $\lambda_1=\lambda_1(-\ld,\Omega)$ (for the reader's convenience we note that what we call $\lambda_1$ is $-\lambda_1$ in \cite{Ch1}, \cite{Ch2}). This eigenvalue has the usual basic properties, specifically, it is proved in these works that  $\lambda_1$ is a simple eigenvalue, has smallest real part among all eigenvalues, decreases (strictly) with respect to the domain, and corresponds to a positive eigenfunction $\varphi_1\in H^1_0(\Omega)$. We have the characterization
\begin{equation}\label{charlambda1}
\lambda_1=\mathrm{sup}\{\lambda>0\::\: \mbox{there exists } w\in H^1(\Omega),\; w>0,\;  (\ld+\lambda)w\le0 \mbox{ in }\Omega\}.
\end{equation}
Furthermore the validity of the maximum principle for $\ld$ in $\Omega$ is equivalent to the existence of a nonnegative solution $w$ of $\ld[w]<0$ (or nontrivial nonnegative solution of $\ld[w]\le0$) and hence  {\it the positivity of $\lambda_1$ is equivalent to the validity of the maximum principle for $\ld$ in $\Omega$}.  The latter in turn easily implies that $\lambda_1>0$ guarantees the general solvability of the Dirichlet problem for \eqref{defeq}  (see for instance the beginning of the proof of Proposition 4.1 in \cite{SS2}). We also know that $\varphi_1\in C^\alpha(\Omega)$, by \cite[Theorem 8.29]{GT} and the remark at the end of \cite[Section 8.10]{GT}.

For a more general approach to the existence and properties of the first eigenvalue, see the recent preprint \cite{FGM}. It is also worth observing that under \eqref{hyp1}-\eqref{hyp2} the strong version of the Krein-Rutman theorem (see for instance \cite[Theorem 5.4.33]{DM}) is also applicable, since for sufficiently large $C>0$ the operator $-\ld+C$ is invertible, its inverse is a compact operator from $C^1(\overline{\Omega})$ to itself (by the $C^{1,\alpha}$ estimates in \cite[Section~8.11]{GT}, \cite[Chapter 5.5]{Mo}), as well as strictly monotone on the positive cone of $C^1(\overline{\Omega})$ which has non-empty interior (by the strong maximum principle and the Hopf lemma).

In the case $\Omega=\rn$,
if $\ld$ is an operator whose coefficients are defined on $\rn$ and satisfy the above assumptions
in $B_R$ for every $R>0$, we define
$$ \lambda_1(-\mathcal{L},\R^n):=\lim_{R\to\infty}\lambda_1(-\mathcal{L},B_R)\in[-\infty,\infty).$$

The following proposition gives a basic lower bound on the first eigenvalue, which is used in the proof of
the optimized quantitative Hopf lemma in Theorem~\ref{OptimizedHopf}.

\begin{prop}\label{lowerbdeig}
We have
$$
\lambda_1(-L, \Omega) \ge \frac{c_0}{|\Omega|^\sigma} - \|b_1\|_{L^q(\Omega)} - \|c\|_{L^{q/2}(\Omega)},
$$
where $\sigma$ depends on $n, q$, and $c_0$ depends on $n,\lambda, \Lambda, q$, and upper bounds for
$\|b_2\|_{L^q(\Omega)} $, $|\Omega|$.
\end{prop}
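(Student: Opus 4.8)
The plan is to establish the lower bound $\lambda_1(-\ld,\Omega)\ge c_0|\Omega|^{-\sigma}-\|b_1\|_{L^q(\Omega)}-\|c\|_{L^{q/2}(\Omega)}$ by first reducing to an operator without the coefficients $b_1$ and $c$, and then invoking a Faber--Krahn-type lower bound for the pure drift operator $\ld_2 v:=\mathrm{div}(A(x)Dv)+b_2(x)\cdot Dv$, for which the first eigenvalue is controlled purely in terms of $n,\lambda,\Lambda,q$, upper bounds for $\|b_2\|_{L^q(\Omega)}$ and $|\Omega|$. The natural tool is the variational/test-function characterization: to bound $\lambda_1(-\ld,\Omega)$ from below it suffices, by \eqref{charlambda1} (or equivalently by testing the eigenfunction equation against $\varphi_1$ itself), to produce a positive supersolution of $(\ld+\lambda)w\le 0$ for $\lambda$ slightly below the claimed value, or to estimate directly the Rayleigh-type quotient.

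The first step is the reduction. Writing $\ld=\ld_2+\mathrm{div}(b_1\cdot)+c\cdot\mathrm{Id}$, one tests the equation $-\ld\varphi_1=\lambda_1\varphi_1$ against $\varphi_1/\|\varphi_1\|_{L^2}^2$ and uses Cauchy--Schwarz together with the Sobolev inequality $\|\varphi_1\|_{L^{2^*}}\le C(n)\|D\varphi_1\|_{L^2}$ (and H\"older with exponent $q>n$, resp.\ $q/2>n/2$) to absorb the contributions $\int b_1\cdot D\varphi_1\,\varphi_1$ and $\int c\varphi_1^2$. The term with $b_1$ is handled by $|\int b_1\cdot D\varphi_1\,\varphi_1|\le \|b_1\|_{L^q}\|D\varphi_1\|_{L^2}\|\varphi_1\|_{L^{2q/(q-2)}}$ and then interpolating $\|\varphi_1\|_{L^{2q/(q-2)}}$ between $L^2$ and $L^{2^*}$, and similarly for $c$; the ellipticity $\langle A D\varphi_1,D\varphi_1\rangle\ge\lambda|D\varphi_1|^2$ provides the coercive term needed to close the estimate. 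One sees that the ``cost'' of dropping $b_1$ and $c$ is precisely a subtraction of $\|b_1\|_{L^q(\Omega)}+\|c\|_{L^{q/2}(\Omega)}$ (up to combining the small Sobolev/interpolation constants into the leading term), so that $\lambda_1(-\ld,\Omega)\ge \lambda_1(-\ld_2,\Omega)-\|b_1\|_{L^q(\Omega)}-\|c\|_{L^{q/2}(\Omega)}$, modulo adjusting constants.

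The second step is the Faber--Krahn bound for $\ld_2$: $\lambda_1(-\ld_2,\Omega)\ge c_0(n,\lambda,\Lambda,q,\|b_2\|_{L^q},|\Omega|)\,|\Omega|^{-\sigma}$ with $\sigma=\sigma(n,q)$. Here one argues again variationally, testing against the eigenfunction: $\lambda\int\varphi^2=\int\langle A D\varphi,D\varphi\rangle+\int (b_2\cdot D\varphi)\varphi\ge \lambda\int|D\varphi|^2-\|b_2\|_{L^q}\|D\varphi\|_{L^2}\|\varphi\|_{L^{2q/(q-2)}}$, and one uses Young's inequality to absorb a fraction $\tfrac\lambda2\int|D\varphi|^2$ at the price of a term $C\|b_2\|_{L^q}^2\|\varphi\|_{L^{2q/(q-2)}}^2/\lambda$, which by interpolation and Sobolev again reduces to $\|\varphi\|_{L^2}$-norms times a power of $|\Omega|$ (coming from H\"older, since $\Omega$ is bounded). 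Combining with the Faber--Krahn/Poincar\'e-type inequality $\|D\varphi\|_{L^2}^2\ge c(n)|\Omega|^{-2/n}\|\varphi\|_{L^2}^2$ yields the claimed lower bound with an explicit $\sigma$ (one can track $\sigma=\max(2/n,\text{power from the }b_2\text{ interpolation})$). I expect the main technical obstacle to be the bookkeeping of the exponents and the interpolation constants so that the final expression has exactly the structure $c_0|\Omega|^{-\sigma}-\|b_1\|_{L^q(\Omega)}-\|c\|_{L^{q/2}(\Omega)}$ with $\sigma$ depending only on $n,q$ and $c_0$ depending on the listed quantities, rather than a more complicated nonlinear dependence; the scaling heuristic (under $x\mapsto Rx$, $\lambda_1$ scales like $R^{-2}$, $\|b_1\|_{L^q}$ like $R^{-1+n/q}$, $\|c\|_{L^{q/2}}$ like $R^{-2+2n/q}$, $|\Omega|$ like $R^n$) is the right guide to check that the exponents are forced and consistent.
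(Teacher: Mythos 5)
Your route is genuinely different from the paper's and, as written, it does not close. The paper rewrites the eigenfunction equation as $-\mathrm{div}(A\,D\varphi_1)-b_2\cdot D\varphi_1=(\lambda_1+c)\varphi_1+\mathrm{div}(b_1\varphi_1)$, i.e.\ it keeps only $A$ and $b_2$ as coefficients and treats the $b_1$- and $c$-terms as \emph{data}; the generalized maximum principle \cite[Theorem 8.16]{GT} then gives $\sup_\Omega\varphi_1\le C\bigl(\|(\lambda_1+c)\varphi_1\|_{L^{\tilde q/2}}+\|b_1\varphi_1\|_{L^{\tilde q}}\bigr)\le C|\Omega|^{\sigma}\bigl(\lambda_1+\|c\|_{L^{q/2}}+\|b_1\|_{L^q}\bigr)\sup_\Omega\varphi_1$ with $C$ depending on $n,\lambda,\Lambda,q$ and upper bounds for $\|b_2\|_{L^q}$ and $|\Omega|$, and one concludes by cancelling $\sup_\Omega\varphi_1$. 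The \emph{linearity} of that $L^\infty$ estimate in the data is exactly what produces the first powers of $\|b_1\|_{L^q}$ and $\|c\|_{L^{q/2}}$ in the statement.

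Two steps of your energy argument would fail. First, your ``Faber--Krahn'' bound for $\ld_2=\mathrm{div}(A\,D\cdot)+b_2\cdot D$ is out of reach this way: the cross term is controlled by $\|b_2\|_{L^q}\|D\varphi\|_{L^2}\|\varphi\|_{L^{2q/(q-2)}}$, and since $2q/(q-2)>2$ you cannot, as you assert, reduce $\|\varphi\|_{L^{2q/(q-2)}}$ to $\|\varphi\|_{L^2}$ times a power of $|\Omega|$ — H\"older goes the wrong way. Interpolating with $L^{2^*}$ and applying Young leaves $\lambda_1(-\ld_2,\Omega)\ge c\,|\Omega|^{-2/n}-C\|b_2\|_{L^q}^{2\beta_q}$, which is vacuous once $\|b_2\|_{L^q}$ is large, whereas the proposition requires a \emph{positive} leading term for any prescribed upper bound on $\|b_2\|_{L^q}$. (Some severe loss in $\|b_2\|$ is genuinely present: by Proposition~\ref{Optim-spectral1}, $\lambda_1$ of a pure drift operator can be exponentially small in the drift norm; an $L^2$ energy estimate cannot capture a constant depending on $\|b_2\|_{L^q}$ in this way, while the Moser-iteration constant of \cite[Theorem 8.16]{GT} can.) Second, the same mechanism defeats your claim that dropping $b_1$ and $c$ costs ``precisely'' $\|b_1\|_{L^q}+\|c\|_{L^{q/2}}$: after interpolation and Young the subtracted terms are $C\|b_1\|_{L^q}^{2\beta_q}$ and $C\|c\|_{L^{q/2}}^{\beta_q}$ with $\beta_q=(1-n/q)^{-1}>1$, so even if completed your argument yields a strictly weaker inequality than the one stated. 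Note finally that $\lambda_1$, $\|b_1\|_{L^q}$ and $\|c\|_{L^{q/2}}$ all have different scaling dimensions, so the stated inequality is not scale invariant and the scaling heuristic you invoke cannot ``force'' these exponents; the first powers come from the structure of the maximum-principle argument, not from dimensional analysis.
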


\begin{proof} We have
$$
-\mathrm{div}(A(x) D\varphi_1) - b_2(x) D\varphi_1  = (\lambda_1+ c(x))\varphi_1 + \mathrm{div}(b_1(x)\varphi_1),
$$
in $\Omega$, $\varphi_1=0$ on $\partial \Omega$, so by
\cite[Theorem 8.16]{GT},
$$
\sup_\Omega \varphi_1\le
C( \| (\lambda_1+ c)\varphi_1 \|_{L^{\tilde q/2}(\Omega)} + \|b_1\varphi_1\|_{L^{\tilde{q}}(\Omega)} )
$$
where $\tilde{q} = (q+n)/2>n$ and $C$ is bounded in terms of $n,\lambda, \Lambda,  q$, and upper bounds for
$\|b_2\|_{L^q(\Omega)} $ and  $|\Omega|$. By H\"older inequality the latter term is smaller than
$$
C|\Omega|^\sigma (\lambda_1+ \|c\|_{L^{q/2}(\Omega)} + \|b_1\|_{L^{q}(\Omega)}) \sup_\Omega \varphi_1$$
and the result follows by cancelling $\sup_\Omega \varphi$.
\end{proof}

We next prove Proposition~\ref{upperboundlambda1}, which provides an optimized upper bound on the first eigenvalue in terms
of the  uniformly local norms of the coefficients of $\ld$ and the domain,
and is required in the proof of some of the main theorems above.

\begin{proof}[Proof of Proposition~\ref{upperboundlambda1}]
{We will show that there exist $C_0, K>0$, depending only on $n,\lambda, \Lambda, \alpha, q$, such that
\be\label{upperbdeig0}
\begin{aligned}
&\lambda_1:=\lambda_1(-\ld, B_R)\le C_0(R^{-1}+\eta)^2,\ \hbox{ where } \\
&\eta:=[A]^{\frac{1}{\alpha}}_{\alpha,\frac{K}{\sqrt{\lambda_1}},B_R}
+\|b_1\|_{L^\infty(B_R)}+[b_1]^{\frac{1}{1+\alpha}}_{\alpha,\frac{K}{\sqrt{\lambda_1}},B_R}
+\|b_2\|^{\beta_q}_{q,\frac{K}{\sqrt{\lambda_1}},B_R}+\|c\|^{\gamma_q}_{q,\frac{K}{\sqrt{\lambda_1}},B_R}.
\end{aligned}
\ee
We observe that \eqref{upperbdeig0} is equivalent to \eqref{upperbdeig1}.
Indeed, if \eqref{upperbdeig0} holds and $\eta\le M$ then \eqref{upperbdeig1}
is true, whereas
$\eta> M$ implies $K/\sqrt{\lambda_1}\ge r_0:=r_0(\ld,B_R)$ by the definition of $M$, and then $\lambda_1\le K^2r_0^{-2}\le C(R^{-1}+M)^2$ by \eqref{relMr0},
hence again~\eqref{upperbdeig1}.}
Conversely, \eqref{upperbdeig1}
implies $\lambda_1\le Cr_0^{-2}$ by \eqref{relMr0}, i.e. $r_0\le \sqrt{C/\lambda_1}$,
so that $M\le \eta$ with $K= \sqrt{C}$ and \eqref{upperbdeig0} is true.

Let us show \eqref{upperbdeig0}.
By rescaling
\begin{equation}\label{loc11}\tilde A(y)=A(Ry),\quad \tilde b_i(y)=Rb_i(Ry),\quad \tilde c(y)=R^2c(Ry)\end{equation}
we see that $\lambda_1=R^{-2}\tilde\lambda_1$ where $\tilde\lambda_1$ is the eigenvalue of
the corresponding operator $\tilde\ld$ in $B_1$ and we have
$$\|\tilde c\|^{\gamma_q}_{q,K(\tilde\lambda_1)^{-1/2},B_1}=\Bigl\{R^2\|c(R\cdot)\|_{q,\frac{K}{R\sqrt{\lambda_1}},B_1}\Bigr\}^{\gamma_q}
=\Bigl\{R^{2-n/q}\|c\|_{q,\frac{K}{\sqrt{\lambda_1}},B_R}\Bigr\}^{\gamma_q}
=R\|c\|^{\gamma_q}_{q,\frac{K}{\sqrt{\lambda_1}},B_R}$$
\begin{equation}\label{loc12}[\tilde A]^{\frac{1}{\alpha}}_{\alpha,K(\tilde\lambda_1)^{-1/2},B_R}=
\Bigl\{[A(R\cdot)]_{q,\frac{K}{R\sqrt{\lambda_1}},B_1}\Bigr\}^{\frac{1}{\alpha}}
=\Bigl\{R^\alpha[A]_{q,\frac{K}{\sqrt{\lambda_1}},B_R}\Bigr\}^{\frac{1}{\alpha}}
=R[A]^{\frac{1}{\alpha}}_{q,\frac{K}{\sqrt{\lambda_1}},B_R}\end{equation}
and analogous relations for $\tilde b_1, \tilde b_2$.
Consequently it is sufficient to establish \eqref{upperbdeig0} for $R=1$.

Assume for contradiction that \eqref{upperbdeig0} fails. Then, for each integer $j\ge 1$, there exist coefficients $A_j,b_{1,j},b_{2,j},c_j$ such that,
for the corresponding operator $\ld_j$, the eigenvalue $\mu_j:=\lambda_1(-\ld_j, B_1)$ satisfies
\be\label{mujlarge}
\mu_j\ge j\Bigl(1+
[A]^{\frac{1}{\alpha}}_{\alpha,\frac{j}{\sqrt{\mu_j}},B_1}+
\|b_1\|_{L^\infty(B_1)}+[b_1]^{\frac{1}{1+\alpha}}_{\alpha,\frac{j}{\sqrt{\mu_j}},B_1}
+\|b_{2,j}\|^{\beta_q}_{q,\frac{j}{\sqrt{\mu_j}},B_1 }+ \|c_j\|^{\gamma_q}_{q,\frac{j}{\sqrt{\mu_j}},B_1} \Bigr)^2.
\ee
Let $\varphi_j>0$ be the corresponding eigenfunction normalized by
\be\label{mujlarge2}
\sup_\Omega \varphi_j= \varphi_j(x_j)=1
\ee
and rescale
$\psi_j(y) = \varphi_j(x)  = \varphi_j(x_j+r_jy)$, where $r_j = 1/\sqrt{\mu_j} \to 0$.
The function $\psi_j$ then satisfies
\be\label{mujlarge3}
-\tilde\ld_j\psi_j=\psi_j\quad\hbox{in $G_j:=r_j^{-1}(B_1-x_j)$,}
\ee
where the coefficients of $\tilde\ld_j$ are given by
$$\tilde A_j(y)=A(x_j+r_jy),\quad \tilde b_{i,j}(y)=r_jb_{i,j}(x_j+r_jy),\quad \tilde c_j(y)=r_j^2c(x_j+r_jy).$$
Note that $G_j\to G$ where $G$ is either the whole space or a half-space.
For each fixed $L\ge 1$ and large $j$, we have
$$\|\tilde c_j\|_{L^q(B_L\cap G_j)}=r_j^{2-\frac{n}{q}}\|c_j\|_{L^q( B_{Lr_j}(x_j)\cap B_1)}
\le r_j^{2-\frac{n}{q}}\|c_j\|_{q,Lr_j,B_1}=\bigl(\mu_j^{-1}\|c_j\|^{2\gamma_q}_{q,L/\sqrt{\mu_j},B_1}\bigr)^{\frac{1}{2\gamma_q}},$$
$$[\tilde A_j]_{\alpha, B_L\cap  G_j}=r_j^\alpha[A_j]_{\alpha, B_{Lr_j}(x_j) \cap B_1}
\le r_j^\alpha[A_j]_{\alpha,Lr_j,B_1}=\bigl(\mu_j^{-1}[A_j]^{\frac{2}{\alpha}}_{\alpha,L/\sqrt{\mu_j},B_1}\bigr)^{\frac{\alpha}{2}},$$
hence $\|\tilde c_j\|_{L^q(B_L\cap G_j)}, [\tilde A_j]_{\alpha, B_L\cap  G_j}\to 0$ as $j\to\infty$ by \eqref{mujlarge},
and similarly we obtain that $\|b_{1,j}\|_{C^\alpha(B_L\cap  G_j)}\to 0$, $\|b_{2,j}\|_{L^q(B_L\cap G_j)}\to0$.
By \eqref{mujlarge2}, \eqref{mujlarge3}, the $C^{1,\alpha}$ estimates and compact embeddings, up to a subsequence $x_j\to x_0\in \bar B_1$,
we have $A_j\to A^0$ in $C_{loc}^{\alpha/2}(\overline G)$,
where $A^0=A(x_0)$ is a constant matrix satisfying \eqref{hyp1},
and $\psi_j\to \psi^0\ge 0$ in $C^1_{loc}(\overline G)$, where $\psi^0$ satisfies $\psi^0(0)=1$ and
$$
-\ld^0\psi^0= -\mathrm{tr}(A^0D^2\psi^0)= -\mathrm{div}(A^0D\psi^0) = \psi^0\quad \hbox{ in $G$.}
$$
By the standard characterization of the first eigenvalue \eqref{charlambda1} this implies that the first eigenvalue of
$-\ld^0$ is larger or equal to $1$ in any subdomain of $G$.
But $G$ contains balls of arbitrary radius and
$$
\lambda_1(-\ld^0, B_\rho) = \frac{\lambda_1(-\ld^0, B_1)}{\rho^2}
\to 0 \; \mbox{ as } \; \rho\to \infty,
$$
a contradiction.
\end{proof}

\subsection{Properties associated with uniformly local norms and scaling}

In this subsection we prove Propositions~\ref{basicr0}--\ref{lemr0ul}.

\begin{proof}[Proof of Proposition~\ref{basicr0}]
We only prove \eqref{relMr0}, the proof of \eqref{relMstarr0} and \eqref{relMhatr0} being similar.
The functions
$$h_1(r)=[A]_{\alpha, r, \Omega}, \quad  h_2(r)= [b_1]_{\alpha, r, \Omega},
\quad h_3(r) =\|b_2\|_{q,r,\Omega}, \quad h_4(r) = \|c\|_{q, r, \Omega}$$
are clearly nondecreasing on $[0,r_\Omega]$.
We claim that they are continuous.

The continuity of $h_3, h_4$ on the left is easy to show by using monotone convergence, whereas the continuity on the right follows from the fact that $\|c\|_{L^q(B_r(x_r)\setminus B_{r_0}(x_r)\cap\Omega)}\to 0$ if $r\searrow r_0$, $x_r\in \Omega$.

The continuity of $h_1, h_2$ on the left follows easily from the continuity of $A, b_1$.
Assume for contradiction that $h_2$ is not continuous on the right (the argument for $h_1$ is the same). Then there exist $r_0\in[0,r_\Omega)$,
$\eta>0$ and sequences $r_i\to r_0$ and $x_i\in\overline \Omega$, $y_i\ne z_i\in\Omega$ with $|y_i-x_i|, |z_i-x_i|<r_i$,
such that $|y_i-z_i|^{-\alpha}|b_1(y_i)-b_1(z_i)|\ge h_2(r_0)+\eta$.
By passing to a subsequence we may assume that $x_i\to x_0$, $y_i\to y_0$, $z_i\to z_0$ for some $x_0,y_0,z_0\in \overline \Omega$.
Note also that $b_1$ extends to a $C^\alpha$ function on $\overline \Omega$ and that $B_r(x)\cap \Omega$ can be replaced by
$\overline B_r(x)\cap \overline \Omega$ in definition \eqref{defHbracket}.
If $y_0\ne z_0$, then $h_2(r_0)+\eta\le |y_0-z_0|^{-\alpha}|b_1(y_0)-b_1(z_0)|\le h_2(r_0)$ (where the last inequality follows
from $|y_0-x_0|, |z_0-x_0|\le r_0$, and definition \eqref{defHbracket} with $x=x_0$): a contradiction.
If $y_0=z_0$, then we have $|y_i-y_0|, |z_i-y_0|<r_0$ for $i$ large enough, hence
$|y_i-z_i|^{-\alpha}|b_1(y_i)-b_1(z_i)|\le h_2(r_0)$, which is again a contradiction.
The claim is proved.

Let now
$$h(r)=r\Bigl(r^{-1}_\Omega +
[A]^{1/\alpha}_{\alpha, r, \Omega} + \|b_1\|_{L^\infty(\Omega)} + [b_1]^{1/(\alpha+1)}_{\alpha, r, \Omega}+
\|b_2\|^{\beta_q}_{q,r,\Omega} + \|c\|^{\gamma_q}_{q, r, \Omega}\Bigr), \quad r\in [0,r_\Omega].$$
By the above, $h$ is strictly increasing and continuous on $[0,r_\Omega]$ and, moreover, $h(0)=0$ and $h(r_\Omega)\ge 1$.
Consequently there exists a unique $r\in (0,r_\Omega]$ such that $h(r)=1$ and $r_0=r$, which implies \eqref{relMr0}.
\end{proof}

\begin{proof}[Proof of Proposition~\ref{scaleinvtilde0}]
If $r_0(\ld,\omega)\ge \theta^{-1} r_0(\ld,\Omega)$,
then \eqref{relMr0} and the assumption $r_{\Omega}\ge \theta r_{\omega}$ yield
$$M(\ld,\omega)=r_0^{-1}(\ld,\omega)-r_{\omega}^{-1}\le  \theta r_0^{-1}(\ld,\Omega)-\theta r_{\Omega}^{-1}
= \theta M(\ld,\Omega)\le M(\ld,\Omega).$$
If $r_0(\ld,\omega)\le \theta^{-1} r_0(\ld,\Omega)$ then, since any ball of radius
$r_0(\ld,\omega)$ can be covered by $C(n) \theta^{-n}$ balls of radius $r_0(\ld,\Omega)$,
it follows from \eqref{deful}-\eqref{defM}
that $M(\ld,\omega)\le C(n,p,q,\alpha,\theta)M(\ld,\Omega)$.
\end{proof}

\begin{proof}[Proof of Proposition~\ref{scaleinvtilde}]
For any $\tilde r>0$,
we have
$$\begin{aligned}
[\tilde A]_{\alpha, \tilde r, B_1}
&=\sup_{x\in\overline{B_1}}\sup_{y,z\in B_{\tilde r}(x)\cap \Omega} |y-z|^{-\alpha}|A(Ry)-A(Rz)| \\
&=R^{\alpha}\sup_{\hat x\in\overline{B_R}}\sup_{\hat y,\hat z\in B_{R\tilde r}(\hat x)\cap \Omega} |\hat y-\hat z|^{-\alpha}|A(\hat y)-A(\hat z)|
= R^{\alpha}[A]_{\alpha, R\tilde r, B_R}.
\end{aligned}$$
This and a similar argument for $b_1$ yields
\begin{equation}\label{scaleinvtildePf0}
[\tilde A]_{\alpha, \tilde r, B_1}= R^{\alpha}[A]_{\alpha, R\tilde r, B_R},\
[\tilde b_1]_{\alpha, \tilde r, B_1}= R^{1+\alpha}[b_1]_{\alpha, R\tilde r, B_R},\
\|\tilde b_1\|_{L^\infty(B_1)}=R\|b_1\|_{L^\infty(B_R)}.
\end{equation}
On the other hand, for $x\in B_1$, we have
$$\|\tilde b_2\|_{L^q(B_{\tilde r}(x)\cap B_1)}=R^{1-n/q}\|b_2\|_{L^q(B_{R\tilde r}(Rx)\cap B_R)},\quad
\|\tilde c\|_{L^q(B_{\tilde r}(x)\cap B_1)}=R^{2-n/q}\|c\|_{L^q(B_{R\tilde r}(Rx)\cap B_R)}$$
hence
\begin{equation}\label{scaleinvtildePf1}
\|\tilde b_2\|_{q,\tilde r,B_1}=R^{1-n/q}\|b_2\|_{q,R\tilde r,B_R},\quad
\|\tilde c\|_{q,\tilde r,B_1}=R^{2-n/q}\|c\|_{q,R\tilde r,B_R)}.
\end{equation}
Using also $r_{B_R}=c(n)R$, it follows that, for any $\tilde r>0$,
$$\tilde r\bigl(r^{-1}_{B_1} + [\tilde A]^{\frac{1}{\alpha}}_{\alpha, \tilde r, B_1} + \|\tilde b_1\|_{L^\infty(B_1)} + [\tilde b_1]^{\frac{1}{\alpha+1}}_{\alpha, \tilde r, B_1}
+\|\tilde b_2\|^{\beta_q}_{q,\tilde r,B_1} + \|\tilde c\|^{\gamma_q}_{q, \tilde r,B_1}\bigr)\le 1 \Longleftrightarrow $$
$$\hbox{$r:=R\tilde r$ satisfies }
r \bigl(r^{-1}_{B_R}+ [A]^{\frac{1}{\alpha}}_{\alpha, r, B_R} + \|b_1\|_{L^\infty(B_R)} + [b_1]^{\frac{1}{\alpha+1}}_{\alpha, r, B_R}
+\|b_2\|^{\beta_q}_{q,r,B_R} + \|c\|^{\gamma_q}_{q,r,B_R}\bigr)\le 1.$$
This yields \eqref{scaleinvtilde2}.
Combining \eqref{scaleinvtilde2} and \eqref{scaleinvtildePf0} gives \eqref{scaleinvtilde2b}.
Next, by \eqref{scaleinvtilde2}, \eqref{scaleinvtildePf1},
$$\|\tilde b_2\|_{q,\tilde r_0,B_1}=R^{1-n/q}\|b_2\|_{q,r_0,B_R}$$
which, together with the similar properties for $\tilde c$, $\tilde f$, gives \eqref{scaleinvtilde4}.
\end{proof}

 \begin{proof}[Proof of Proposition~\ref{lemr0ul}]
(i) Recalling the definition \eqref{defM} and Proposition \ref{basicr0},
we have
$$ r_0^{1-n/q} \|b_2\|_{q,r_0,B_R}\le 1,\quad
r_0^{2-n/q} \|c\|_{q,r_0,B_R}\le 1.$$
By H\"older's inequality it follows that
$$ \|b_2\|_{q,r_0,B_R} \le C(n)r_0^{n/q} \|b_2\|_{L^\infty(B_R)}
\le C(n)\|b_2\|^{-\frac{n}{q-n}}_{q,r_0,B_R}\|b_2\|_{L^\infty(B_R)},$$
hence
$$ \|b_2\|^{\frac{q}{q-n}}_{q,r_0,B_R}
\le C(n)\|b_2\|_{L^\infty(B_R)}$$
i.e., \eqref{bulinfty}, and \eqref{culinfty} is obtained similarly.

\smallskip

(ii) The upper estimates follow from assertion (i) (since the $L^\infty$ norm does not involve $r_0$).
To prove the lower estimate for $b$ (the case of $c$ is similar)
set $K_\lambda=\|\lambda b\|_{q,r_\lambda,B_R}\equiv \sup_{x\in B_R} \|\lambda b\|_{L^q(B_R\cap B_{{r_\lambda}}(x))}$.
By \eqref{relMr0}, we have
\be\label{Krlambda0}
r_\lambda\Bigl(r_{B_R}^{-1}+\lambda^{\beta_q}\sup_{x\in B_R} \|b\|^{\beta_q}_{L^q(B_R\cap B_{{r_\lambda}}(x))}\Bigr)
=r_\lambda(r_{B_R}^{-1}+K_\lambda^{\beta_q})=1=r_1(r_{B_R}^{-1}+K_1^{\beta_q}).
\ee
Assume $\lambda\ge 1$. Then \eqref{Krlambda0} implies
$$r_\lambda\Bigl(r_{B_R}^{-1}+\sup_{x\in B_R} \|b\|^{\beta_q}_{L^q(B_R\cap B_{{r_\lambda}}(x))}\Bigr)
\le r_1\Bigl(r_{B_R}^{-1}+\sup_{x\in B_R} \|b\|^{\beta_q}_{L^q(B_R\cap B_{{r_1}}(x))}\Bigr).
$$
By the definition of $r_1$ we get $r_\lambda\le r_1$ and, going back to \eqref{Krlambda0}, that $r_\lambda K_\lambda^{\beta_q} \ge r_1K_1^{\beta_q}$, hence
\be\label{Krlambda}
r_1^{-n/q}K_1^{-n\beta_q/q}K_\lambda^{n\beta_q/q}\ge r_\lambda^{-n/q}.
\ee
On the other hand, since any ball with radius $r_1$ can be covered by $c(n)(r_1/r_\lambda)^n$ balls with radius $r_\lambda$, we have
$$K_1=\sup_{x\in B_R} \|b\|_{L^q(B_R\cap B_{{r_1}}(x))}
\le C(n)  \Bigl(\frac{r_1}{r_\lambda}\Bigr)^{n/q}\sup_{x\in B_R} \|b\|_{L^q(B_R\cap B_{{r_\lambda}}(x))}
=C(n) \Bigl(\frac{r_1}{r_\lambda}\Bigr)^{n/q}\lambda^{-1} K_\lambda.$$
Combining this with \eqref{Krlambda}, we get
$$r_1^{-n/q}K_1^{-n\beta_q/q}K_\lambda^{1+n\beta_q/q} \ge r_\lambda^{-n/q}K_\lambda\ge C(n) \lambda r_1^{-n/q}K_1,$$
hence $K_\lambda^{1+n\beta_q/q} \ge C(n) \lambda K_1^{1+n\beta_q/q}$.
Since $1+\frac{n\beta_q}{q}=(1-\frac{n}{q})^{-1}$ this yields the lower estimate for $b$.
\smallskip

Let us finally check the continuity statement.
 As above, by \eqref{Krlambda0} $r_\lambda$ is nonincreasing with respect to $\lambda\in(0,\infty)$
and \eqref{Krlambda0} guarantees that
$$r_\lambda H(\lambda)=\lambda^{-\beta_q},
\quad\hbox{ where }
H(\lambda):=\lambda^{-\beta_q}{r_{B_R}^{-1}}+ \sup_{x\in B_R} \|b\|^{\beta_q}_{L^q(B_R\cap B_{{r_\lambda}}(x))}.$$
Since the functions $r_\lambda$ and $H(\lambda)$ are nonincreasing on $(0,\infty)$
and their product is a continuous function,
both functions are necessarily continuous. This implies that $K_\lambda=\|\lambda b\|_{q,r_\lambda,B_R}
=\lambda \sup_{x\in B_R} \|b\|_{L^q(B_R\cap B_{{r_\lambda}}(x))}$ is itself continuous.
\end{proof}

\subsection{Harnack inequalities}

 In this section, we prove the optimized interior and boundary Harnack inequalities that
are used throughout the whole paper.
 They rely on modifications of Harnack chain arguments from \cite{SS2},
adapted to general domains (and not only for balls).
To this end, we need the following proposition,
which guarantees the existence of suitable coverings and associated Harnack chains,
with optimal length estimate in terms of the geodesic diameter.
 This may be known but we could not find a reference in the literature,
so we provide a proof.
Here we use the notation given at the beginning of Section \ref{sec-ext}.

\begin{prop}\label{geodes}
 (i) Let $\Omega$ be an arbitrary bounded domain of $\R^n$ and let $r\in(0,D)$.
There exist constants $c_0,c_1>0$ depending only on $n$, integers $N,I\ge 2$ satisfying
\bel{coverA0}
N=N_r\le 2+\frac{6D}{r},\qquad I=I_r\le c_0\Bigl(1+\frac{D_0}{r}\Bigr)^n
\ee
and points $x_1,\dots,x_I\in \Omega$ such that
\bel{coverA1}
\Omega\subset\bigcup_{j=1}^I B_r(x_j)
\ee
and
\bel{coverA3}
\begin{aligned}
&\hbox{For each $k,l\in \{1,\ldots,I\}$ there exist $p\in\{2,\dots,N\}$ and $(j_1,\dots,j_p)\in \{1,\dots,I\}^p$}\\
&\hbox{such that $j_1=k$, $j_p=l$ and $|B_r(x_{j_{i-1}})\cap B_r(x_{j_i})|\ge c_1r^n$ for all $i\in \{2,\ldots,p\}$.}
\end{aligned}
\ee
\smallskip

(ii) Let $\Omega$ be a bounded domain of $\R^n$ with $C^{1,\bar\alpha}$ boundary and let $r\in(0,r_\Omega)$.
There exist constants $c_0,c_1>0$ depending only on $n$, integers $N,I\ge 2$
satisfying \eqref{coverA0}
and points $x_1,\dots,x_I$ satisfying \eqref{coverA1}, \eqref{coverA3},
\bel{coverA2}
\hbox{for each $j\in\{1,\dots,I\}$, either dist$(x_j, \partial \Omega)\ge 3r/2$ or $x_j\in \partial \Omega$}
\ee
and
\bel{coverA2b}
\hbox{for each $j\in\{1,\dots,I\}$, $|\Omega\cap B_r(x_j)|\ge c_0r^n$}.
\ee
\end{prop}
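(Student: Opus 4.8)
The plan is to build the covering greedily and then establish the Harnack-chain / length estimate via a geodesic-distance argument. First I would treat part (i). Fix $r\in(0,D)$ and let $\{x_1,\dots,x_I\}\subset\Omega$ be a maximal $(r/3)$-separated set in $\Omega$; maximality gives $\Omega\subset\bigcup_j B_{r/3}(x_j)\subset\bigcup_j B_r(x_j)$, which is \eqref{coverA1} (in fact with a smaller radius, giving room to spare). The cardinality bound $I\le c_0(1+D_0/r)^n$ in \eqref{coverA0} follows from a standard volume packing argument: the balls $B_{r/6}(x_j)$ are pairwise disjoint and all contained in a ball of radius $D_0+r/6\le 2D_0$ (since $r\le D\le$ \dots, using $r<D$ and $D_0\le$ something comparable; if $r$ is comparable to $D_0$ the bound is trivial), so $I\,c(n)(r/6)^n\le c(n)(2D_0)^n$. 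For the chain property \eqref{coverA3} and the count $N\le 2+6D/r$, I would argue as follows. Given $k,l$, pick $x,y\in\Omega$ with $x\in B_{r/3}(x_k)$, $y\in B_{r/3}(x_l)$ and a path $\sigma\in\Sigma(x_k,x_l)$ — more precisely a path in $\Omega$ joining $x_k$ to $x_l$ — of length at most $D+ \eta$ for arbitrarily small $\eta$ (such a path exists since $\Omega$ is open and connected and the geodesic diameter is $D$; one may take $\eta<r$). Subdivide $\sigma$ into consecutive points $y_0=x_k,y_1,\dots,y_m=x_l$ with $|y_{i-1}-y_i|\le r/6$ and $m\le 1+6(D+\eta)/r\le 2+6D/r$. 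Each $y_i$ lies in some $B_{r/3}(x_{j_i})$ by \eqref{coverA1} (with $j_0=k$, $j_m=l$); consecutive balls $B_r(x_{j_{i-1}})$ and $B_r(x_{j_i})$ then both contain a ball of radius $\sim r/2$ around the midpoint of $[y_{i-1},y_i]$ — indeed that midpoint is within $r/3+r/12<r/2$ of both centers and its $(r/3)$-ball is inside $\Omega$ — so the overlap has measure $\ge c_1 r^n$. After deleting repetitions from the sequence $(j_0,\dots,j_m)$ we get a chain of length $p\le m+ \text{(trivial adjustments)}$, giving \eqref{coverA3} with $N=2+6D/r$.

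Next I would do part (ii), where $\Omega$ has $C^{1,\bar\alpha}$ boundary and $r<r_\Omega$. Here the covering must be adapted near $\partial\Omega$ so that each center is either well inside $\Omega$ (dist $\ge 3r/2$) or exactly on $\partial\Omega$, and each $B_r(x_j)$ captures a definite fraction of volume of $\Omega$. I would construct it in two layers. Let $\Omega_{2r}:=\{x\in\Omega:\ \mathrm{dist}(x,\partial\Omega)>2r\}$ and take a maximal $(r/3)$-separated set in $\Omega_{2r}$ for the interior centers; these automatically satisfy dist $\ge 2r>3r/2$ and $B_r(x_j)\subset\Omega$ so $|\Omega\cap B_r(x_j)|=c(n)r^n$, which is \eqref{coverA2b}. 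For the boundary layer, take a maximal $(r/3)$-separated subset $\{\xi_k\}$ of $\partial\Omega$ (in the intrinsic metric, or just the Euclidean one) and use these as the remaining centers. Because $r<r_\Omega$, the definition of $r_\Omega$ at the start of Section~\ref{sec-ext} (the cone/paraboloid condition: each $\xi\in\partial\Omega$ is touched by an interior $C^{1,\bar\alpha}$-paraboloid of fixed opening, and the domain is a uniform $C^{1,\bar\alpha}$-graph in $B_{\bar\rho_\Omega}(\xi)$) guarantees $|\Omega\cap B_r(\xi_k)|\ge c(n)r^n$, giving \eqref{coverA2b} on the boundary layer; \eqref{coverA2} holds by construction. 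One checks the union of interior and boundary $B_r$-balls still covers $\Omega$: a point $x$ with dist$(x,\partial\Omega)\le 2r$ is within $2r$ of some $\xi\in\partial\Omega$, hence within $2r+r/3<3r$ of some $\xi_k$ — so one should run the boundary construction with slightly larger radii (say $(r/9)$-separation and capture radius $r$) to make the covering genuinely hold with the stated $r$; the constants in \eqref{coverA0} absorb this. Finally, the chain property \eqref{coverA3} in this case is obtained exactly as in part (i): given any two centers, join corresponding nearby points in $\Omega$ by a path of length $\le D+\eta$, subdivide finely, and read off the chain; the overlap estimate $|B_r(x_{j_{i-1}})\cap B_r(x_{j_i})|\ge c_1 r^n$ uses that each consecutive pair of path-points is $\le r/6$ apart and that a small ball around their midpoint lies in $\Omega$ — near the boundary this again uses the interior-cone property encoded in $r_\Omega$ to find such a small ball inside $\Omega$. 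The bound $N\le 2+6D/r$ is unchanged.

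The main obstacle, I expect, is the bookkeeping near the boundary in part (ii): one must simultaneously arrange that (a) every center is either in $\Omega_{3r/2}$ or exactly on $\partial\Omega$, (b) the $B_r$-balls still cover all of $\Omega$ (including the thin boundary collar), (c) each $B_r(x_j)$ meets $\Omega$ in volume $\ge c_0 r^n$, and (d) consecutive balls in a chain overlap in $\Omega$ with volume $\ge c_1 r^n$. Conditions (b) and (d) force the separation/capture radii to be chosen with a definite safety margin relative to $r$, and one has to verify that the $C^{1,\bar\alpha}$ regularity of $\partial\Omega$ (through the explicit paraboloid inclusion $\{|y'|<\bar\rho_\Omega,\ k_\Omega|y'|^{1+\bar\alpha}<y_n<\bar\rho_\Omega\}\subset\Omega$ and the choice $r_\Omega\le(120k_\Omega)^{-1/\bar\alpha}$) indeed yields the uniform lower volume bounds in (c)–(d) — this is where the precise form of $r_\Omega$ in \eqref{def_romega} is used. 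Everything else is elementary packing and the standard "chop a short path into small pieces" Harnack-chain construction; the geodesic diameter $D$ enters only through the length of the path one chops, which is what makes $N$ linear in $D/r$ and hence optimal.
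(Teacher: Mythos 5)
Part (i) of your proposal is essentially sound and follows the same route as the paper: a greedy net whose covering radius is strictly smaller than $r$, a volume-packing bound for $I$, and a chain obtained by chopping a near-geodesic into arcs of length $\sim r/6$ (the paper instead walks along the curve and records successive exit times from the current ball, but the two bookkeepings are equivalent and both give $N\lesssim D/r$). Two small remarks: your count gives $p\le 3+6D/r$ rather than $2+6D/r$ (harmless, the paper only needs $N\le 8D/r$), and your claim that the midpoint's $(r/3)$-ball lies in $\Omega$ is neither needed — \eqref{coverA3} only asks for the Lebesgue measure of $B_r(x_{j_{i-1}})\cap B_r(x_{j_i})$, not of its intersection with $\Omega$ — nor true in general.

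The genuine gap is in the covering step of part (ii). Your two layers are centers deep inside ($d\ge 2r$, or $3r/2$) and centers on $\partial\Omega$, and you acknowledge a coverage problem for the collar but propose to fix it by refining the boundary net. That fix cannot work: a point $x$ with $d(x)=r$, say, satisfies $|x-\xi|\ge r$ for \emph{every} $\xi\in\partial\Omega$, so it lies in no ball $B_r(\xi_k)$ with $\xi_k\in\partial\Omega$ no matter how fine the separation is. On the other side, $\mathrm{dist}\bigl(x,\{d\ge 3r/2\}\bigr)\ge r/2$ (since $d$ is $1$-Lipschitz), and there is no a priori reason in a general domain why this distance should be small enough for $x$ to fall inside a $B_r$-ball around an interior center. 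Establishing exactly this is the one nontrivial geometric step of the proof: for $x$ in the collar $\{3r/4\le d(x)\le 3r/2\}$ the paper constructs $z=p_x-\bigl(d(x)+\tfrac{10}{13}r\bigr)\nu_{p_x}$ and uses the interior paraboloid inclusion behind the definition \eqref{def_romega} of $r_\Omega$ (this is precisely where the factor $(120 k_\Omega)^{-1/\bar\alpha}$ is consumed) to show $d(z)\ge 3r/2$ while $|x-z|<4r/5$; only then do the ``interior'' centers reach down far enough to cover the collar. You invoke the cone/paraboloid property only for the volume bound \eqref{coverA2b}, not where it is actually indispensable. A secondary, fixable issue: in part (ii) your construction covers points only to within distance arbitrarily close to $r$ of a center, whereas the chain argument needs every point within $\theta r$ of a center for a fixed $\theta<1$ (the paper uses $5r/6$); otherwise consecutive centers along the subdivided path can be more than $2r$ apart and the corresponding $B_r$-balls need not intersect at all.
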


\begin{rem}
The upper bound \eqref{coverA0} on $N$ in Proposition~\ref{geodes}
(which implies $N\le \frac{8D}{r}$) is qualitatively optimal,
since any $N$ with such properties necessarily satisfies
\bel{optimN}
N\ge \frac{D}{2r}.
\ee
Indeed, fix any $x,y\in \Omega$, take $k,l$ such that $x\in B_r(x_k)$, $y\in B_r(x_l)$
and a chain $k=j_1<j_2<\dots<j_p=x_l$ with $p\le N$.
Then, using that $B_r(x_{j_{i+1}})\cap B_r(x_{j_i})\ne\emptyset$, we see that the geodesic distance of $x,y$ can be estimated by
$$d_\Omega(x,y)\le |x-x_k|+|y-x_l|+\sum_{i=2}^p |x_{j_{i}}-x_{j_{i-1}}|\le r+r+2(p-1)r=2pr\le 2Nr.$$
Taking supremum over $x,y\in \Omega$, we obtain \eqref{optimN}.
As for the upper bound on $I$ in \eqref{coverA0}, it is clearly optimal in general
(unless $\Omega$ has a specific, e.g.~tubular, geometry).
\end{rem}

\begin{proof}[Proof of Proposition~\ref{geodes}]
 We will only prove assertion~(ii), the proof of assertion~(i) being similar and easier.
We first claim that
\be\label{coverA00}
\exists I\le c_0(n)\Bigl(1+\frac{D_0}{r}\Bigr)^n
\hbox{and $x_1,\dots,x_I$ satisfying (\ref{coverA2}) and }
\Omega\subset\bigcup_{j=1}^I B_{5r/6}(x_j)
\ee
(hence in particular \eqref{coverA1}).

For $\eps>0$, we denote
$$\Omega_\eps=\{x\in\Omega;\ {\rm dist}(x,\partial\Omega)>\eps\},\quad
\omega_\eps=\{x\in\Omega;\ {\rm dist}(x,\partial\Omega)<\eps\}.$$
\indent $\bullet$ Since $\{B_{4r/5}(x);\ x\in\partial\Omega\}$ is an open covering of the compact $\overline\omega_{3r/4}$,
we can cover $\overline\omega_{3r/4}$ by finitely many such balls, whose set of centers we denote by $\Sigma_1$.

$\bullet$ Also, we can obviously cover the compact $\overline\Omega_{3r/2}$ by finitely many balls
of radius $4r/5$ and centered at points $x$ with $d(x)\ge3r/2$. We denote the set of their centers by $\Sigma_2$.

$\bullet$ Next consider the case when $x\in K:=\overline\omega_{3r/2}\setminus\omega_{3r/4}=\{3r/4\le d(x)\le 3r/2\}$.
Set $d=d(x)$, denote by $p_x$ the projection of $x$ on $\partial\Omega$ and by $\nu_x$ the outer normal vector at $p_x$,
hence
$x=p_x-d\nu_x$.
Let $\ell=\frac{10}{13}$ and $z=p_x-(d+\ell r)\nu_x$.
Note that $|x-z|=\ell r<4r/5$.
We claim that
\be\label{claimdz}
d(z)\ge 3r/2.
\ee
 Indeed, recalling the first paragraphs of Section \ref{sec-ext} and the notation therein,
after an orthonormal change of coordinates,
we may assume that $p_x=0$, $x=(0,d)\in\R^{n-1}\times\R$, $z=(0,d+\ell r)$
and that
\be\label{claimdz2}
\Omega\supset\bigl\{y=(y';y_n)\in \R^{n-1}\times\R:\ |y'|< \bar\rho_\Omega\ \hbox{and}\ k_\Omega |y'|^{1+\alpha}<y_n< \bar\rho_\Omega\bigr\}.
\ee
Working in the new coordinate, for any $y$ such that $|y-z|<3r/2$,
using $r<r_\Omega\le\min(\bar\rho_\Omega/4,$ $(120 k_\Omega)^{-1/\alpha})$, we obtain $y_n<d+\ell r+3r/2<4r<\bar\rho_\Omega$, $|y'|<3r/2<\bar\rho_\Omega$
and
$$y_n>d+\ell r-3r/2\ge \ell r-3r/4=r/52\ge k_\Omega (3r/2)^{1+\alpha}\ge k_\Omega |y'|^{1+\alpha},$$
hence $y\in\Omega$ owing to \eqref{claimdz2}. This proves \eqref{claimdz}.
Consequently,
 $\{B_{4r/5}(z)\::\: d(z)\ge 3r/2\}$ is an open covering of the compact
$K$ and we can extract a finite covering, whose set of centers we denote by $\Sigma_3$.

Enumerating $\Sigma:=\Sigma_1\cup\Sigma_2\cup\Sigma_3$ as $\{x_1,\dots,x_m\}$, we then have
$$\Omega\subset\bigcup_{i=1}^m B_{4r/5}(x_i).$$
Since $m$ need not be bounded by $c_0(n)\bigl(\frac{D_0}{r}\bigr)^n$, we will now define a subset of $\{1,\dots,m\}$ as follows.
We may fix $\eps=\eps(n)>0$ such that, for any $y,z\in\R^n$, $|y-z|\le \eps r$ implies $B_{4r/5}(y)\subset B_{5r/6}(z)$.
Set $i_1=1$.
We first remove all $j$ with $1<j\le m$ such that $|x_j-x_{i_1}|< \eps r$,
and we note that $B_{4r/5}(x_j)\subset B_{5r/6}(x_{i_1})$ for all such $j$.
Let $i_2$ be the smallest remaining index $>i_1$ (if any).
We then remove all $j$ with $i_2<j\le m$ such that $|x_j-x_{i_2}|< \eps r$,
and we note that $B_{4r/5}(x_j)\subset B_{5r/6}(x_{i_2})$ for all such~$j$.
Repeating the process, we obtain $I\le m$ and $1=i_1<\dots<i_I\le m$ such that
\bel{disteps}
|x_{i_j}-x_{i_k}|\ge \eps r\quad\hbox{ for all $1\le j< k\le I$}
\ee
 and we have
\bel{cover1}
\Omega\subset\bigcup_{j=1}^I B_{5r/6}(x_{i_j}).
\ee
Picking $x_0$ such that $\Omega\subset B(x_0,D_0)$, we have
 $B_{\eps r/2}(x_{i_j})\subset B(x_0,D_0+\eps r/2)$.
Since \eqref{disteps} guarantees that $B_{\eps r/2}(x_{i_j})\cap B_{\eps r/2}(x_{i_k})=\emptyset$ for all $1\le j<k\le I$,
it follows that $I |B_{\eps r/2}(0)|\le |B_{D_0+\eps r/2}(0)|$, hence
$$I\le \Bigl(1+\frac{2D_0}{\eps r}\Bigr)^n\le c_1(n)\Bigl(1+\frac{D_0}{r}\Bigr)^n.$$
Relabelling these points, we have thus proved claim \eqref{coverA00}.

\smallskip

Let us next prove  \eqref{coverA3} for some $N\le 1+\frac{6D}{r}$.
We observe that, for any $x,y\in\R^n$,
\bel{measure-c0}
\bar B_r(x)\cap \bar B_{5r/6}(y)\ne\emptyset\Longrightarrow |B_r(x)\cap B_r(y)|\ge c_0(n)r^n.
\ee
Fix $1\le k<l\le I$.
By the definition of $D$, there exists a Lipschitz curve $\gamma:[0,1]\to \Omega$
such that $\gamma(0)=x_k$, $\gamma(1)=x_l$ and $s(1)\le D$,
where $[0,1]\ni t\mapsto s(t)$ denotes the increasing curvilinear abscissa along $\gamma$.
We set $t_1=0$, $j_1=k$.
\smallskip

$\bullet$
If $\gamma([t_1,1])\subset B_r(x_{j_1})$ then we set $j_2=l$ and $p=2$.
Since $x_l=x_{j_2}\in B_r(x_{j_1})$, \eqref{measure-c0} guarantees that
\bel{measure-c010}
|B_r(x_{j_1})\cap B_r(x_{j_2})|\ge c_0(n)r^n.
\ee

$\bullet$ Otherwise, there exists a minimal $t_2\in(t_1,1]$ such that $\gamma(t_2)\in \partial B_r(x_{j_1})$.
In particular, $s(t_2)-s(t_1)\ge|\gamma(t_2)-x_{j_1}|=r$.
And owing to \eqref{cover1}, there exists $j_2\in\{1,\dots,I\}$ such that $\gamma(t_2)\in B_{5r/6}(x_{j_2})$.
Moreover, since $\gamma(t_2)\in \bar B_r(x_{j_1})\cap \bar B_{5r/6}(x_{j_2})$, \eqref{measure-c0} guarantees that
\eqref{measure-c010} is still true.

\smallskip
$\bullet$ If $\gamma([t_2,1])\subset B_r(x_{j_2})$  then we set $j_3=l$, $p=3$
and, similar to the case $p=2$, we obtain
$|B_r(x_{j_2})\cap B_r(x_{j_3})|\ge c_0(n)r^n$.

\smallskip
$\bullet$ Otherwise, there exists a minimal $t_3\in(t_2,1]$ such that $\gamma(t_3)\in \partial B_r(x_{j_2})$.
In particular,
\bel{curvilineardiff}
s(t_3)-s(t_2)\ge |\gamma(t_3)-\gamma(t_2)| \ge |\gamma(t_3)-x_{j_2}|-|\gamma(t_2)-x_{j_2}|\ge r-(5r/6)=r/6.
\ee
Then, owing to \eqref{cover1}, there exists $j_3\in\{1,\dots,I\}$ such that $\gamma(t_3)\in B_{5r/6}(x_{j_3})$.

\smallskip
$\bullet$ We can repeat this process as long as $\gamma([t_i,1])\not\subset B_r(x_{j_i})$.
Also, at the $i$-th step,
since $\gamma(t_i)\in \bar B_r(x_{j_{i-1}})\cap \bar B_{5r/6}(x_{j_i})$, \eqref{measure-c0} guarantees that
\bel{measure-c02}
|B_r(x_{j_{i-1}})\cap B_r(x_{j_i})|\ge c_0(n)r^n
\ee
and, similar to \eqref{curvilineardiff}, we have $s(t_i)-s(t_{i-1})\ge r/6$.
Since $s$ is an increasing function with $s(1)-s(0)\le D$, it follows that $i$ cannot exceed the value $1+(6D/r)$
and we eventually reach $i$ such that $\gamma([t_i,1])\subset B_r(x_{j_i})$.
Consequently we obtain an integer $p\le 2+(6D/r)$ and indices
$k=j_0,j_1,\dots,j_p=l$ such that \eqref{measure-c02} holds for all $i\in\{1,\dots,l\}$.

Finally, property \eqref{coverA2b} is trivial in case dist$(x_i, \partial \Omega)\ge 3r/2$.
In case $x_i\in \partial \Omega$, it easily follows from
\eqref{claimdz2} and the definition of $k_\Omega, \bar\rho_\Omega$ at the beginning of Section~\ref{sec-ext}.
This completes the proof.
\end{proof}

We turn to the proof of Theorem \ref{BHIoptim}, for which we will use the following particular case of the results in~\cite{GSS}. Here we denote $B_R^+ = B_R\cap \{x\::\: x_n>0\}$.

\begin{thm}[\cite{GSS}] \label{fromGSS} Assume $u\ge0$ in $B_1^+$ is a weak solution of $-\mathcal{L}u\ge f $ in $B_1^+$, for some $f\in L^q(B_1^+)$, and the coefficients of $\mathcal{L}$ satisfy \eqref{hyp1}-\eqref{hyp2} in $\Omega = B_1^+$.
There exist  constants $\epsilon, C>0$ depending only on $n,q,\alpha,\lambda,\Lambda$, and upper bounds on the $C^\alpha$-norms of $A,b_1$ and the $L^q$-norms of $b_2,c$ in $B_1^+$, such that for any $\omega\subset B_{3/4}^+$
\begin{equation}\label{fixWBHI}
\left(\int_{\omega} \left(\frac{u}{d}\right)^\epsilon\right)^{1/\epsilon} \le
C \left( \inf_{\omega} \frac{u}{d} +
\|f\|_{L^{q}(B_1^+)}\right).
\end{equation}
If on the other hand  $-\mathcal{L}u\le f $  in $B_1^+$, $u\le0$ on $\partial B_1^+$, then
\begin{equation}\label{fixlocmax}
\sup_{\omega} \frac{u^+}{d}\le C\left( \left(\int_{\omega} \left(\frac{u^+}{d}\right)^\epsilon\right)^{1/\epsilon} +
\|f\|_{L^{q}(B_1^+)} \right).
\end{equation}
\end{thm}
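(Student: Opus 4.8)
The plan is to derive both inequalities from the classical interior De Giorgi--Moser--Krylov--Safonov theory (\cite[\S8.5, \S8.8, \S8.11]{GT}) combined with barrier constructions at the flat boundary portion $B_1^0$. After the flattening already built into the statement, $d(x)=\mathrm{dist}(x,\partial B_1^+)$ satisfies $d(x)\simeq x_n$ for $x\in B_{3/4}^+$ near $B_1^0$, while the curved part $\partial B_1\cap\{x_n>0\}$ behaves like an interior boundary; it is harmless there and I would pass from $B_1^+$ to $B_{7/8}^+$ to ignore it. For \eqref{fixWBHI} I would split $\omega$ into a near-boundary layer $\{d<\delta\}$ and an interior part $\{d\ge\delta\}$ with $\delta$ universal. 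On the interior part $d\simeq 1$, and the ordinary weak Harnack inequality \cite[Theorem 8.18]{GT}, propagated along a controlled Harnack chain exactly as in the proof of Proposition~\ref{geodes}, bounds $\int_{\omega\cap\{d\ge\delta\}}u^\epsilon$ by $C(\inf_\omega(u/d)+\|f\|_{L^q(B_1^+)})^\epsilon$. For the near-boundary layer I would use a Whitney-type covering by balls $B_j=B_{c\,d(x_j)}(x_j)\subset B_1^+$ on which $d\simeq d(x_j)$; applying \cite[Theorem 8.18]{GT} to $u$ on $2B_j$ and dividing by $d(x_j)^\epsilon$ gives $\fint_{B_j}(u/d)^\epsilon\le C(\inf_{B_j}(u/d)+d(x_j)^{-n/q}\|f\|_{L^q(B_1^+)})^\epsilon$, and the crux is to bound $\inf_{B_j}(u/d)$ from above by $\inf_\omega(u/d)+C\|f\|$ uniformly in $j$.

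That last bound is supplied by a \emph{boundary Hopf/growth lemma}: a nonnegative supersolution of $-\mathcal{L}u\ge f$ in $B_1^+$ satisfies $u(x)\ge c\,x_n\,(\inf_{B_*}u)-C\,x_n\|f\|_{L^q}$ near $B_1^0$, for a fixed interior ball $B_*\subset B_{1/2}^+$. This is proved by comparing $u$ from below on a half-annular region abutting $B_1^0$ with $c\,(\inf_{B_*}u)\,w-C\|f\|\,z$, where $w$ is an $\mathcal{L}$-subsolution vanishing on $B_1^0$ with $w\simeq x_n$ there (a Hopf barrier of the type $e^{\mu x_n}-1$ times a cutoff, with $\mu$ large and the scale chosen so that the $C^\alpha$-control of $A,b_1$ and the $L^q$-smallness of $b_2,c$ keep the lower-order terms subordinate), and $z$ absorbs the right-hand side; the interior infimum $\inf_{B_*}u$ is in turn compared to $\inf_\omega(u/d)$ by the same Harnack chain. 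Summing the Whitney estimates, the geometric series in the $d(x_j)$-weights converges because $\epsilon<1$, which closes the near-boundary bound and yields \eqref{fixWBHI}.

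For \eqref{fixlocmax} I would run the dual argument for subsolutions. Since $u\le0$ on all of $\partial B_1^+$, the truncation $u^+$ is a nonnegative subsolution of $-\mathcal{L}u^+\le f^+$ vanishing on $\partial B_1^+$. The local boundedness estimate up to the flat boundary (\cite[Theorem 8.17]{GT} and its boundary analogue) gives $\sup_{B_{7/8}^+}u^+\le C(\|u^+\|_{L^\epsilon(B_1^+)}+\|f\|_{L^q(B_1^+)})$, and since $d\le1$ one has $\|u^+\|_{L^\epsilon(B_1^+)}^\epsilon=\int_{B_1^+}(u^+/d)^\epsilon d^\epsilon\le\int_{B_1^+}(u^+/d)^\epsilon$, so the unweighted $L^\epsilon$ norm is dominated by the weighted one. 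Finally, $u^+$ being now bounded and vanishing on $\partial B_1^+$, the interior--boundary $C^{1,\alpha}$ estimate of \cite[\S8.11]{GT} (which uses precisely the $C^\alpha$ bound on $A,b_1$ and the $L^q$ bounds on $b_2,c$) gives $\sup_{B_{3/4}^+}|\nabla u^+|\le C(\sup_{B_{7/8}^+}u^+ +\|f\|_{L^q})$; combined with $u^+(x)\le C\,d(x)\sup|\nabla u^+|$ near $B_1^0$ and a chaining to localize the $L^\epsilon$ norm to $\omega$, this yields $\sup_{\omega}(u^+/d)\le C(\|u^+/d\|_{L^\epsilon(\omega)}+\|f\|_{L^q})$.

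I expect the main obstacle to be the near-boundary part of \eqref{fixWBHI}. One must obtain the boundary Hopf lower bound for $u/d$ with a constant depending only on the quantities in \eqref{hyp1}--\eqref{hyp2} and not on finer norms, which forces a careful choice of scale in the barrier so that $[A]_\alpha$, $\|b_2\|_{L^q}$, $\|c\|_{L^q}$ enter only through admissible combinations; and one must control the summation over the Whitney family so that the weighted $L^\epsilon$ norm stays finite and is bounded by $\inf_\omega(u/d)+\|f\|$ rather than by a larger quantity — making $\epsilon$ small and universal is exactly what makes that series converge. A secondary point is that, to get \eqref{fixWBHI} in its stated form with the sharp structure, one should really run a De Giorgi-type dyadic iteration at the flat boundary (the boundary analogue of the Krylov--Safonov measure-to-pointwise lemmas) rather than the cruder covering sketched here.
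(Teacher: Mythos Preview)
The paper does not prove this theorem: it is quoted verbatim from \cite{GSS}, with the remark that \eqref{fixWBHI} is a particular case of \cite[Theorem~1.1]{GSS} and \eqref{fixlocmax} is ``a simple application of the local maximum principle, see for instance \cite[p.~9]{GSS}''. So there is no in-paper argument to compare against; your proposal is an attempt at an independent proof.

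For \eqref{fixlocmax} your outline (local boundedness up to the flat boundary, then the $C^{1,\alpha}$ estimate to turn $\sup u^+$ into $\sup(u^+/d)$) is essentially what \cite{GSS} does and is fine. For \eqref{fixWBHI}, however, your ``crux'' step has a directional gap. The Hopf barrier you describe yields $u(x)/x_n \ge c\,\inf_{B_*}u - C\|f\|_{L^q}$ near $B_1^0$, i.e.\ a \emph{lower} bound on $u/d$; from it you can extract $\inf_{B_*}u \le C(\inf_\omega(u/d)+\|f\|)$, but not the upper bound $\inf_{B_j}(u/d)\le C(\inf_\omega(u/d)+\|f\|)$ that your Whitney summation needs. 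That inequality is precisely the Carleson/boundary-Harnack comparison for supersolutions, and for supersolutions the interior weak Harnack does not chain in the direction you require (it controls $L^\varepsilon$ averages by infima, not conversely). This is why, as you yourself suspect in your last paragraph, the actual proof in \cite{GSS} runs a De Giorgi--Moser iteration directly at the flat boundary (a boundary growth lemma for $u/x_n$ proved via the weak Harnack inequality for the quotient, after a reduction to a simpler operator), rather than piecing together interior estimates with a barrier.
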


The inequality \eqref{fixWBHI} is a particular case of  \cite[Theorem 1.1]{GSS}, while \eqref{fixlocmax} is a simple application of the local maximum principle, see for instance \cite[p. 9]{GSS}.

\begin{proof}[Proof of Theorem \ref{BHIoptim}]
We
consider the collection of balls with radius $r_0/2$ which covers $\overline{\Omega}$ ($r_0$ is the number from \eqref{defr0}, \eqref{relMr0}) and the numbers $I,N$
 given by Proposition~\ref{geodes} (ii) applied with $r=r_0/2$.

Fix one such ball $B=B_{r_0/2}(\bar x)$ whose center $\bar x$ is on $\partial\Omega$. Let $B^\prime=B_{r_0}(\bar x)$ and $\Phi$ be the $C^{1,\bar\alpha}$ diffeomorphism defined in Section~\ref{sec-ext} which sends $(B^\prime\cap\Omega-\bar x)/r_0$ to $B_1^+$.

For $x\in B^\prime\cap\Omega$, let $y=\Phi(\frac{x-\bar x}{r_0})\in B_1^+$. Obviously for any $\hat x, \tilde x\in B^\prime\cap\Omega$ we have
\begin{equation}\label{compdist0}(1/2r_0)|\hat x- \tilde x|\le \frac{1}{\sup|D\Phi^{-1}|r_0}|\hat x- \tilde x|\le |\hat y -\tilde y|\le \frac{\sup|D\Phi|}{r_0}|\hat x- \tilde x|\le (2/r_0)|\hat x- \tilde x|,
\end{equation}
and in particular for each $y\in B_1^+$
\begin{equation}\label{compdist}
(1/2r_0)\mathrm{dist}(x,\partial\Omega)\le\mathrm{dist}(y, B_1^0) \le (2/r_0)\mathrm{dist}(x,\partial\Omega)
\end{equation}

We now make the change of variable $y= \Phi(\frac{x-\bar x}{r_0})$, $x= \bar x+ r_0\Phi^{-1}(y)$ in the inequality $-\mathcal{L}u\ge f $ in $B^\prime\cap\Omega$. Setting $\hat u (y) = u(x)$, a straightforward computation shows that $\hat u$ satisfies an equation $-\widehat{\mathcal{L}}\hat u\ge \hat f $ in $B_1^+$, where the coefficient $\hat A$ satisfies \eqref{hyp1} with modified constants $0<\hat \lambda\le \hat \Lambda$ depending only on upper bounds for $|D\Phi|$, $|D\Phi^{-1}|$ (which in our case are bounded by 2), and coefficients $\hat b_1, \hat b_2, \hat c$ which satisfy \eqref{hyp2}. In addition,  the $C^\alpha$-norms of $\hat A,\hat b_1$ as well as the $L^q$-norms of $\hat b_2,\hat c$ in $B_1^+$ are bounded above by a universal constant which again depends only on $|D\Phi|$, $|D\Phi^{-1}|$. The latter fact is due to the choice of $r_0$ -- see  Proposition \ref{scaleinvtilde} and its proof where the particular case $\Phi=I$ is considered in detail. For instance, we have for some universal $C_0$
$$\begin{aligned}
\|\hat b_2\|_{L^q(B_1^+)}
&\le C_0 r_0\Bigl(\int_{B_1^+} |b_2(\bar x+r_0\Phi^{-1}(y))|^q\,dy\Bigr)^{1/q}\\
&\le C_0 r_0^{1-n/q}\Bigl(\int_{B^\prime\cap\Omega} |b_2(x)|^q |\mathrm{det}\,D\Phi|\,dx\Bigr)^{1/q}
\le C_0 r_0^{1-n/q}\|b_2\|_{L^q(B'\cap\Omega)}\le C_0
\end{aligned}
$$
(the last inequality follows from \eqref{defr0}). Note also that the change of variables sends $B$ to a subset $\omega$ of $B_{3/4}^+$, because of \eqref{compdist0}. Hence we can apply Theorem \ref{fromGSS} to $\hat u$ and $-\widehat{\mathcal{L}}\hat u\ge \hat f $ in $B_1^+$, getting \eqref{fixWBHI} for $\hat u(y) $ and $\hat d(y) = \mathrm{dist}(y, B_1^0)$. Changing back into the $x$ variable and using \eqref{compdist} we get
\begin{equation}\label{ineqbdry11}
\left(\int_{B\cap \Omega}\left(\frac{u}{d}\right)^\epsilon\right)^{1/\epsilon} \le
C_0 r_0^{n/\epsilon} \left( \inf_{B\cap \Omega} \frac{u}{d} +  r_0^{1-n/q}\|f\|_{L^{q}(B'\cap\Omega)}\right).
\end{equation}

In the simpler case when a given ball $B=B_{r_0/2}(\bar x)$ is such that $\bar x\in \Omega$ (with dist$(\bar x,\partial\Omega)\ge 3r_0/4$) we use the change of variables $y= \frac{x-\bar x}{\tilde r_0}$, with $\tilde r_0 = \min\{r_0, \mathrm{dist}(\bar x,\partial\Omega)\}\in [3r_0/4, r_0]$, which sends $B_{\tilde r_0}$ to $B_1$ (and $B$ to $B_{\delta}$, $\delta\in[1/2,2/3]$), and the new operator again has uniformly bounded coefficients in the corresponding norms. By the interior weak Harnack inequality (as in Step 1 in the proof of  \cite[Theorem 2.1]{SS2}) this gives
\begin{equation}\label{ineqinside}
\left(\int_{B} u^\epsilon\,dx\right)^{1/\epsilon} \le C_0 {\tilde r_0}^{n/\epsilon} \left( \inf_{B} u + {\tilde r_0}^{2-n/q}\|f\|_{L^{q}(B')}\right).
\end{equation}
which implies
\begin{equation}\label{ineqbdry12}
\left(\int_{B}\left(\frac{u}{d}\right)^\epsilon\right)^{1/\epsilon} \le
C_0 r_0^{n/\epsilon} \left( \inf_{B} \frac{u}{d} +  r_0^{1-n/q}\|f\|_{L^{q}(B'\cap\Omega)}\right),
\end{equation}
since for $x\in B$ we have  dist$( x,\partial\Omega)\ge r_0/4$, hence $\sup_B d/\inf_B d\le (\inf_B d + r_0)/\inf_B d\le 5$.

Thanks to \eqref{ineqbdry11} and \eqref{ineqbdry12} we can repeat the iteration argument on \cite[p. 12]{SS2}, and deduce that for any two balls $B_k, B_l$ from the covering, $k,l\in \{1,\ldots,I\}$,
\begin{equation}\label{iterineq}
\left(\int_{B_l\cap\Omega} u^\epsilon\,dx\right)^{1/\epsilon} \le C_0^N  \left( \left(\int_{B_k\cap\Omega} u^\epsilon\,dx\right)^{1/\epsilon} + { r_0}^{1-n/q+n/\epsilon}\|f\|_{L^{q}_{ul}(\Omega)}\right)
\end{equation}
(we note that $m$ and $d$ in \cite{SS2} are our $I$ and $N$ here). Summing over $l\in \{1,\ldots,I\}$, and then using  \eqref{ineqbdry11}-\eqref{ineqbdry12} again for each $k\in \{1,\ldots,I\}$,
along with \eqref{coverA0} and \eqref{relMr0}, we obtain \eqref{sharpWBHI}.

The optimized full Harnack inequality \eqref{sharpBHI} follows exactly as in \cite[Step 3]{SS2} after the same rescaling as above and using \eqref{fixlocmax}, combined with \eqref{sharpWBHI}.
\end{proof}

\begin{rem} \label{remnoloss}
We note that the above argument actually yields $r_0^{1-n/q}$ and $\|f\|_{q,r_0,\Omega}$
on the right hand side of \eqref{sharpBHI},
instead of $D^{1-n/q}$ and $\|f\|_{L^q(\Omega)}$, respectively.
However, in view of the exponential factor in \eqref{sharpBHI}, of the inequality
$$\|f\|_{r_0,q,\Omega} \le \|f\|_{L^q(\Omega)}\le  c(n)(D/r_0)^{n/q} \|f\|_{r_0,q,\Omega}$$
(since one can cover $\Omega$ by $c(n)(D/r_0)^n$ balls of radius $r_0$),
and of the fact that
$(M+r^{-1}_\Omega)D=D/r_0$ by \eqref{relMr0},
there is no loss coming from this replacement.
\end{rem}

\subsection{Green's formula.}

We finally give a suitable integration by parts formula, used at several places in this work,
which handles situations when one of the two functions
has limited regularity near the boundary. It may be known but we give a proof since we have not found
a statement suitable to our needs in the literature.

\begin{lem} \label{weakdiv2}
Let $\Omega\subset \rn$ be a bounded $C^1$ domain,
let $\ld$ be an operator as in \eqref{defdiv}, where $A\in C(\overline \Omega)$,
$b_1,c \in L^1(\Omega)$, $b_2 \in L^2(\Omega)$ and let
\begin{equation}\label{Hypweakdiv2}
u\in C(\overline \Omega)\cap H^1(\Omega),\quad v\in C^1(\overline \Omega),\quad\hbox{ with $v=0$ on $\partial \Omega$.}
\end{equation}
Assume that $\mathcal{L}u, \mathcal{L^*}v\in L^1(\Omega)$,
where  $\mathcal{L}u, \mathcal{L^*}v$ are understood in the sense of distributions. Then
$$\int_{\Omega}\bigl\{v\mathcal{L}u-u\mathcal{L}^*v\bigr\}\, dx
=-\int_{\partial\Omega}\nu\cdot A^T(x)u\nabla v\, d\sigma.$$
\end{lem}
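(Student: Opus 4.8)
\textbf{Proof plan for Lemma \ref{weakdiv2}.}

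The plan is to reduce the formula to the classical divergence theorem by an approximation-exhaustion argument, being careful that $u$ is only $H^1$ (not $C^1$) and that $v$ vanishes on $\partial\Omega$ but $\nabla v$ need not. First I would record the formal computation. Writing out $v\mathcal{L}u - u\mathcal{L}^*v$ and expanding the divergence terms, all the genuinely ``bulk'' contributions cancel: $v\,\mathrm{div}(A Du+b_1u) - u\,\mathrm{div}(A^TDv - b_2 u) = \mathrm{div}\bigl(v(ADu+b_1u) - u(A^TDv-b_2u)\bigr) + \bigl(-\nabla v\cdot(ADu+b_1u) + \nabla u\cdot(A^TDv - b_2 u)\bigr)$, and one checks $-\nabla v\cdot ADu + \nabla u\cdot A^TDv = 0$ pointwise (transpose), while the remaining first-order pieces $-u\nabla v\cdot b_1 + v b_2\cdot\nabla u$ exactly match what comes from the $b_2\cdot Du + b_1\cdot$ (in the dual) terms $vb_2\cdot Du - u(-b_1\cdot Dv)$ once one also uses $\mathrm{div}(uvb_1)$-type bookkeeping; the zero-order terms $vcu - ucv$ cancel outright. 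The upshot is the pointwise (distributional) identity $v\mathcal{L}u - u\mathcal{L}^*v = \mathrm{div}\,W$ in $\mathcal{D}'(\Omega)$, where $W = v(ADu + b_1 u) + u(b_2 u - A^TDv)$, and the boundary integrand is $\nu\cdot W = -\nu\cdot A^T(x)\,u\nabla v$ on $\partial\Omega$ since $v=0$ there kills the $v(ADu+b_1u)$ and $ub_2u$ parts.

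Next I would justify that the distributional identity plus the regularity hypotheses allow integration. The hypothesis $\mathcal{L}u,\mathcal{L}^*v\in L^1(\Omega)$ means the left side is an $L^1$ function; and $W\in L^1(\Omega;\R^n)$ because $v\in C^1(\overline\Omega)$ is bounded, $u\in C(\overline\Omega)$ is bounded, $Du\in L^2\subset L^1$, $b_1,c\in L^1$, $b_2\in L^2$. So $W$ has distributional divergence in $L^1$. I would then take an exhausting sequence of $C^1$ subdomains $\Omega_k\uparrow\Omega$ with $\Omega_k\Subset\Omega$ and apply the classical Gauss–Green theorem on each $\Omega_k$ — valid after mollifying $u$ by $u_\varepsilon=u*\rho_\varepsilon$ on a slightly larger set, proving $\int_{\Omega_k}\mathrm{div}\,W_\varepsilon = \int_{\partial\Omega_k}\nu\cdot W_\varepsilon$ for the smooth $W_\varepsilon$ built from $u_\varepsilon$, then letting $\varepsilon\to 0$ using $u_\varepsilon\to u$ in $H^1_{loc}$ and uniformly on $\overline{\Omega_k}$ (continuity of $u$). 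This gives $\int_{\Omega_k}(v\mathcal{L}u - u\mathcal{L}^*v) = \int_{\partial\Omega_k}\nu\cdot W\,d\sigma$.

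Finally I would pass to the limit $k\to\infty$. The left side converges to $\int_\Omega(v\mathcal{L}u - u\mathcal{L}^*v)$ by dominated convergence since the integrand is in $L^1(\Omega)$. For the right side, the main obstacle — and the only delicate point — is controlling $\int_{\partial\Omega_k}\nu\cdot W\,d\sigma$, because the term $v(ADu+b_1u)$ in $W$ involves $Du\in L^2$ which has no pointwise trace on $\partial\Omega_k$. Here I would exploit $v=0$ on $\partial\Omega$: choosing $\Omega_k = \{x\in\Omega: d(x)>1/k\}$ we have $\sup_{\partial\Omega_k}|v|\le \|\nabla v\|_\infty/k\to 0$, while $\int_{\partial\Omega_k}|ADu + b_1 u|\,d\sigma$ stays bounded on average — more precisely, by the coarea formula $\int_0^\delta\bigl(\int_{\{d=t\}}|ADu+b_1u|\,d\sigma\bigr)dt = \int_{\{d<\delta\}}|ADu+b_1u|\,dx<\infty$, so along a suitable sequence $t_k\to 0$ the inner integral is $o(1/t_k)$, hence (taking $\Omega_k=\{d>t_k\}$) $\sup_{\partial\Omega_k}|v|\cdot\int_{\partial\Omega_k}|ADu+b_1u|\,d\sigma\to 0$; similarly the $ub_2u$ term vanishes. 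The remaining term $\int_{\partial\Omega_k}\nu\cdot(-A^Tu\nabla v)\,d\sigma$ converges to $-\int_{\partial\Omega}\nu\cdot A^Tu\nabla v\,d\sigma$ since $A,u,\nabla v$ are all continuous up to $\overline\Omega$ and $\partial\Omega_k\to\partial\Omega$ in the $C^1$ sense. This yields the claimed formula.
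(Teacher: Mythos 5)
Your overall strategy is the same as the paper's: identify $v\mathcal{L}u-u\mathcal{L}^*v$ as $\operatorname{div}W$ for an $L^1$ vector field $W$ whose normal component reduces to $-\nu\cdot A^Tu\nabla v$ on $\partial\Omega$, then exhaust $\Omega$ by interior level sets, and use $|v|\le Cd$ together with the coarea formula to kill the part of the boundary flux that involves $Du$ and the unbounded coefficients. Two points, however, need repair. First, your $W$ is miscomputed: expanding $v\mathcal{L}u-u\mathcal{L}^*v$ with $\mathcal{L}^*v=\operatorname{div}(A^TDv-b_2v)-b_1\cdot Dv+cv$ gives $W=vADu+uvb_1+uvb_2-uA^TDv$, i.e. the $b_2$-contribution is $uv\,b_2$, not $u^2b_2$ as you wrote. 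This is not merely cosmetic: your argument that this term vanishes in the limit on $\partial\Omega_k$ relies precisely on the factor $v=O(d)$, and $u^2b_2$ carries no such factor (nor does $b_2\in L^2$ have a trace), so with the $W$ as written that step fails. With the corrected $W$ the term is absorbed into $v\phi$, $\phi=ADu+(b_1+b_2)u\in L^1$, exactly as in the paper.

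Second, taking $\Omega_k=\{d>t_k\}$ is not justified for a merely $C^1$ domain: $d$ need then not be $C^1$ near $\partial\Omega$, so the level sets $\{d=t\}$ need not be $C^1$ hypersurfaces, Gauss--Green on $\{d>t_k\}$ is not available in the classical form you invoke, and the asserted convergence $\int_{\partial\Omega_k}\nu\cdot(-A^Tu\nabla v)\,d\sigma\to\int_{\partial\Omega}\nu\cdot(-A^Tu\nabla v)\,d\sigma$ ("$\partial\Omega_k\to\partial\Omega$ in the $C^1$ sense") is unproved. The paper circumvents both difficulties by replacing $d$ with Lieberman's regularized distance $\rho\in C^1(\overline\Omega)$ satisfying $C_1d\le\rho\le C_2d$ and $|\nabla\rho|\ge C_1>0$ near $\partial\Omega$: its level sets are genuine $C^1$ hypersurfaces by the implicit function theorem, and the weak convergence $\nu_\eps\,d\sigma_\eps\rightharpoonup\nu\,d\sigma$ is proved by testing Green's formula against $C^1$ fields. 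A related, more minor looseness is the order of limits: for a fixed level set, $Du$ has no pointwise trace, so the intermediate identity $\int_{\Omega_k}\operatorname{div}W=\int_{\partial\Omega_k}\nu\cdot W$ must be interpreted for a.e.\ level only; the paper handles this cleanly by keeping the mollification parameter and averaging the defect over a band of levels $[\eps,2\eps]$ via coarea and Fubini, rather than selecting individual good levels. Your coarea selection of $t_k$ can be made to work, but the quantifiers (first choose good levels for the traces of $Du_\eps-Du$ and of $|\phi|$, then let $\eps\to0$) need to be stated carefully.
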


\begin{proof} By \cite{Liebe}, there exists a regularized distance, namely a function $\rho\in C^1(\overline\Omega)$
such that, for some constants $C_1,C_2,\eps_0>0$, there holds  $C_1d\le\rho\le C_2d$ in $\Omega$
and $C_1\le |\nabla\rho|\le C_2$ in $\{x\in\overline{\Omega}, d(x)\le\eps_0\}$.
For $\eps\in(0,\eps_0]$, we set
$$G_\eps=\{x\in\Omega,\ \rho(x)>\eps\},\quad \Gamma_\eps=\{x\in\Omega,\ \rho(x)=\eps\}.$$
By the implicit function theorem,
$G_\eps$ is a $C^1$-smooth open set for each $\eps\in(0,\eps_0]$.
We denote the outer normal vector field of $\partial G_\eps$ by $\nu_\eps$
and the surface measure on~$\Gamma_\eps$ by $d\sigma_\eps$.
We also have
$\nu_\eps\,d\sigma_\eps\rightharpoonup \nu\,d\sigma$ as $\eps\to 0$, weakly in the sense of measures, that is
\be\label{CvSurfMeas}
\lim_{\eps\to  0}\int_{\Gamma_\eps} V\cdot\nu_\eps\,d\sigma_\eps=\int_{\partial\Omega} V\cdot\nu\,d\sigma,
\quad V\in C(\overline\Omega).
\ee
Indeed, if $V\in C^1(\overline\Omega)$, by Green's formula and dominated convergence. we have
$$\int_{\partial\Omega} V\cdot\nu\,d\sigma-\int_{\Gamma_\eps} V\cdot\nu_\eps\,d\sigma_\eps
=\int_{\Omega\setminus G_\eps} {\rm div}\, V\, dx\to 0,$$
and the general case $V\in C(\overline\Omega)$ then follows by using the density of $C^1(\overline\Omega)$ in $C(\overline\Omega)$,
along with the fact that $|\Gamma_\eps|\le C$ for $\eps\in(0,\eps_0]$.

Let $\psi\in C^\infty_0(\R^n)$ with ${\rm Supp}(\psi)= B_1$ satisfy $0\le\psi\le 1$ and $\int_{B_1}\psi\,dx=1$.
Let $(\psi_j)_{j\ge 1}$ be the corresponding sequence of mollifiers, defined by $\psi_j(x)=j^n\psi(jx)$.
Set
$$U=-A^T(x) u\nabla v\in C(\overline\Omega),\quad \phi=A(x)\nabla u+(b_1+b_2)u \in L^1(\Omega),\quad W=U+v\phi.$$
An easy computation (in the distribution sense) yields
\begin{equation}\label{DefWstar}
{\rm div}\, W=v\mathcal{L}u-u\mathcal{L}^*v \in L^1(\Omega).
\end{equation}
Let $\eps\in(0,\eps_0/3)$. Then, for $j> C_2\eps^{-1}$, $W_j:=W\ast \psi_j$ is well defined and smooth  in $\overline G_\eps$
and the divergence formula yields
$$\int_{G_\eps} {\rm div}\, W_j\, dx=\int_{\Gamma_\eps} W_j\cdot\nu_\eps\,d\sigma_\eps.$$
By assumption \eqref{Hypweakdiv2}, we have $|v|\le Cd(x)$ for some constant $C>0$.
Setting $\phi_j=|\phi|\ast \psi_j$, it follows that
$$\delta_j(\eps)
:=\Bigl|\int_{G_\eps} {\rm div}\, W_j\, dx-\int_{\Gamma_\eps} (U\ast \psi_j)\cdot\nu_\eps\,d\sigma_\eps\Bigr|
=\Bigl|\int_{\Gamma_\eps} \bigl((v\phi)\ast \psi_j\bigr)\cdot\nu_\eps\,d\sigma_\eps\Bigr|
\le  C\eps\int_{\Gamma_\eps} \phi_j \,d\sigma_\eps.$$
Since $\Gamma_\eps$ is the $\eps$-level set of the function $\rho$, by the co-area formula, we have
$$\int_\eps^{2\eps} \left(\int_{\Gamma_t}\phi_j\,d\sigma_t\right)\,dt
=\int _{G_\eps\setminus G_{2\eps}} \phi_j|\nabla \rho(x)|\,dx
 \le C_2\int _{G_\eps\setminus G_{2\eps}} \phi_j\,dx.$$
Also, for all $j>C_2\eps^{-1}$, by Fubini's theorem, we have
$$
\int _{G_\eps\setminus G_{2\eps}} \phi_j\,dx
=\int _{G_\eps\setminus G_{2\eps}}\int_{|y|<1/j} |\phi(x-y)| \psi_j(y) \,dy\,dx
\le \int _{\Omega\setminus G_{3\eps}} |\phi(x)| \,dx.
$$
Combining the last three formulas, we obtain
\be\label{Limweakdiv0}
\frac{1}{2\eps}\int_\eps^{2\eps} \delta_j(r)\,dr
\le CC_2\int_\eps^{2\eps} \left(\int_{\Gamma_t}\phi_j\,d\sigma_t\right)\,dt
\le CC_2\int _{\Omega\setminus G_{3\eps}} |\phi(x)| \,dx,\quad j> C_2\eps^{-1}.
\ee

Now, by \eqref{DefWstar}, we have
$$
\hbox{${\rm div}\,W_j= ({\rm div}\,W)\ast\psi_j\to {\rm div}\,W$ in $L_{loc}^1(\Omega)$},\quad j\to\infty.
$$
Since $U\in C(\overline \Omega)$, we also have
$$
\hbox{$U\ast \psi_j\to U$ in $C_{loc}(\Omega)$}, \quad j\to\infty.
$$
Setting
$\delta_\infty(r):=\bigl|\int_{\Omega_r} {\rm div}\, W\, dx-\int_{\Gamma_r} U\cdot\nu_r\,d\sigma_r\bigr|$,
it follows that $\lim_{j\to\infty}\delta_j(r)=\delta_\infty(r)$ uniformly for $r\in(\eps,2\eps)$ hence,
in view of \eqref{Limweakdiv0},
\be\label{Limweakdiv1}
\frac{1}{\eps}\int_\eps^{2\eps} \delta_\infty(r)\,dr\le 2CC_2\int _{\Omega\setminus G_{3\eps}} |\phi(x)| \,dx.
\ee

Finally, by
\eqref{CvSurfMeas}, we have
$\lim_{\eps\to  0}\int_{\Gamma_\eps} U\cdot\nu_\eps\,d\sigma_\eps=\int_{\partial\Omega} U\cdot\nu\,d\sigma$,
 and since also
$\lim_{\eps\to  0}$ $\int_{G_\eps} {\rm div}\, W\, dx=\int_\Omega {\rm div}\, W\, dx$
by dominated convergence, we deduce
\be\label{Limweakdiv2}
\lim_{\eps\to  0}\delta_\infty(\eps)=\int_\Omega {\rm div}\, W\, dx-\int_{\partial\Omega} U\cdot\nu\,d\sigma.
\ee
Passing to the limit $\eps\to 0$ in \eqref{Limweakdiv1}, respectively using \eqref{Limweakdiv2} on the left hand side
and dominated convergence on the right hand side, we conclude that
$$\int_\Omega {\rm div}\, W\, dx-\int_{\partial\Omega} U\cdot\nu\,d\sigma=0,$$
which is what we wanted to prove.
\end{proof}


\begin{thebibliography}{99}

\bibitem{AM}
R. Alvarado, D. Brigham, V. Maz'ya, M. Mitrea, E. Ziad\'e,
On the regularity of domains satisfying a uniform hour-glass condition and a sharp version of the Hopf-Oleinik boundary point principle,
Problems in mathematical analysis. No. 57.
{\it J. Math. Sci.} (N.Y.) 176 (2011), 281-360.

\bibitem{AN}
D.E. Apushkinskaya, A.I. Nazarov,
The normal derivative lemma and surrounding issues,
{\it Russian Math. Surveys}, 77(2), (2022), 189-249.

\bibitem{ABG}
A. Arapostathis, A. Biswas, D. Ganguly,
Certain Liouville properties of eigenfunctions of elliptic operators,
{\it Trans. Amer. Math. Soc.} (6) 371 (2019), 4377-4409.

\bibitem{LeB}
K. Le Balc'h,
Exponential bounds for gradient of solutions to linear elliptic and parabolic equations,
{\it J. Funct. Anal.} 281(2021), Paper No. 109094, 29 pp.

\bibitem{LeBS}
K. Le Balc'h, D. Souza,
Quantitative unique continuation for real-valued solutions to second order elliptic equations in the plane,
Preprint, arXiv: 2401.00441.

\bibitem{BNV}
H. Berestycki, L. Nirenberg, S.R.S. Varadhan,
The principal eigenvalue and maximum principle for second-order elliptic operators in general domains,
{\it Comm. Pure Appl. Math.} 47 (1994), 47-92.

\bibitem{BK}
J. Bourgain, C. Kenig,
On localization in the continuous Anderson-Bernoulli model in higher dimension,
{\it Invent. Math.} 161 (2005), 389-426.

\bibitem{BrC}
H. Brezis, X. Cabr\'e,
Some simple nonlinear PDE's without solutions,
{\it Boll. Unione Mat. Ital.} (8) 1-B (1999), 223--262.

\bibitem{Ch1}
M. Chicco,
Principio di massimo generalizzato e valutazione del primo autovalore per problemi ellittici del secondo ordine di tipo variazionale,
{\it Ann. Mat. Pura Appl.} (4) 87 (1970), 1-9.

\bibitem{Ch2}
M. Chicco,
Some properties of the first eigenvalue and the first eigenfunction of linear second order elliptic partial differential equations in divergence form,
{\it Boll. Un. Mat. Ital.} (4) 5 (1972), 245-254.

\bibitem{DP}
U. Das,  Y. Pinchover,
 The Landis conjecture via Liouville comparison principle and criticality theory,
 Preprint, arXiv: 2405.11695.

\bibitem{B1}
B. Davey,
Quantitative unique continuation for Schr\"odinger operators,
{\it J. Funct. Anal.} 279 (2020),
108566,

\bibitem{DKW}
B. Davey, C. Kenig, J.-N.  Wang,
The Landis conjecture for variable coefficient second-order elliptic PDEs,
{\it Trans. Amer. Math. Soc.} 369 (2017),  8209-8237.

\bibitem{DW}
B. Davey, J.-N. Wang,
Landis' conjecture for general second order elliptic equations with singular lower order terms in the plane,
{\it J. Differential Equations} 268 (2020), 977-1042.

\bibitem{DZ}
B. Davey, J. Zhu,
Quantitative uniqueness of solutions to second-order elliptic equations with singular lower order terms,
{\it Comm. Partial Differential Equations} 44 (2019), 1217-1251.

\bibitem{DM}
P. Drabek, J. Milota,
Methods of nonlinear analysis,
Birkh\"auser Adv. Texts Basler, Basel, 2013, x+649~pp.

\bibitem{Du}
Y. Du,
Order structure and topological methods in nonlinear partial differential equations,
Series in Partial Differential Equations and Applications,
World Scientific Publishing, 2006. x+190 pp.

\bibitem{EDR}
N. El Berdan, J. D\'\i az, J.-M. Rakotoson,
The uniform Hopf inequality for discontinuous coefficients and optimal regularity in BMO for singular problems,
{\it J. Math. Anal. Appl.} 437 (2016) 350-379.

\bibitem{EK}
L. Escauriaza, C. Kenig, G. Ponce, L. Vega,
On uniqueness properties of solutions of Schr\"odinger equations,
{\it Comm. Partial Differential Equations} 31 (2006),  1811-1823.


\bibitem{FQS}
P. Felmer, A. Quaas, B. Sirakov,
Solvability of nonlinear elliptic equations with gradient terms,
{\it J. Differential Equations} 254 (2013), 4327-4346.

\bibitem{FGM}
C. Fonte Sanchez, P. Gabriel, S. Mischler,
On the Krein-Rutman theorem and beyond,
Preprint,  arXiv: 2305.06652.

\bibitem{GT}
D. Gilbarg, N. Trudinger,
Elliptic Partial Differential Equations of Second Order,
2nd edition, Springer Verlag, 2001.

\bibitem{GV}
J. Ginibre, G. Velo,
The Cauchy problem in local spaces for the complex Ginzburg Landau equation. II. Contraction methods,
{\it Comm. Math. Phys.} 187 (1997), 45-79.

 \bibitem{HOS}
 M. Haque, T. Ogawa, R. Sato,
Existence of weak solutions to a convection-diffusion equation in a uniformly local Lebesgue space,
{\it Commun. Pure Appl. Anal.} 19 (2020),  677-697.

\bibitem{IKR}
M. Ignatova, I. Kukavica, L. Ryzhik,
The Harnack inequality for second-order elliptic equations with divergence-free drifts,
{\it Commun. Math. Sci.} 12 (2014), 681-694.

 \bibitem{IsSa}
K. Ishige, R. Sato,
Heat equation with a nonlinear boundary condition and uniformly local $L^r$ spaces,
{\it Discrete Contin. Dyn. Syst.} 36, (2016) 2627-2652.

\bibitem{J}
V. Julin,
Generalized Harnack inequality for semilinear elliptic equations,
{\it J. Math. Pures Appl.} (9) 106 (2016), 877-904.

\bibitem{Ka}
 T. Kato,
  The Cauchy problem for quasi-linear symmetric hyperbolic systems,
{\it Arch. Rational Mech. Anal.} 58 (1975), 181-205.

\bibitem{K2}
C. Kenig,
Some recent applications of unique continuation.
Recent developments in nonlinear partial differential equations, 25-56,
Contemp. Math., 439, Amer. Math. Soc., Providence, RI, 2007.

\bibitem{KSW}
C. Kenig, L. Silvestre, J.-N. Wang,
On Landis' conjecture in the plane,
{\it Comm. Partial Differential Equations} 40 (2015), 766-789.

\bibitem{KW}
C. Kenig, J.-N. Wang,
Quantitative uniqueness estimates for second order elliptic equations with unbounded drift,
{\it Math. Res. Lett.} 22 (2015),  1159-1175.

\bibitem{KK}
T. Klimsiak, T. Komorowski,
Hopf type lemmas for subsolutions of integro-differential equations,
{\it Bernoulli} 29 (2023), 1435-1463.

\bibitem{KL}
V. Kondratiev, E. Landis,
On qualitative properties of solutions of a nonlinear equation of second order (Russian),
{\it Mat. Sb.} (N.S.) 135 (177) (1988),  346-360;
translation in {\it Math. USSR-Sb.} 63 (1989),  337-350.

\bibitem{Ko}
 A.A. Konkov,
On solutions of quasilinear elliptic inequalities containing terms with lower-order derivatives,
{\it Nonlinear Anal.} 90 (2013), 121-134.

\bibitem{Kr}
N. Krylov,
Nonlinear elliptic and parabolic equations of second order,
Reidel, Dordrecht, 1987.

\bibitem{LU}
O.A. Ladyzhenskaya, N.N. Ural'tseva,
Linear and quasilinear elliptic equations.
Academic Press, New York-London, 1968.

\bibitem{Liebe}
G. Lieberman,
Regularized distance and its applications,
{\it Pacific J. Math.} 117 (1985), 329-352.

\bibitem{LiJ}
J. Li,
Gradient estimates and Harnack inequalities for nonlinear parabolic and nonlinear elliptic equations on Riemannian manifolds,
{\it J. Funct. Anal.} 100 (1991) 233-256.

\bibitem{LY}
P. Li, S.-T. Yau,
On the parabolic kernel of the Schr\"odinger operator,
{\it Acta Math.} 156 (1986) 153-201.

\bibitem{LN}
Y. Li, L. Nirenberg,
Generalization of a well known inequality,
in: Progress in Nonlinear Differential Equations and Their Applications, Vol. 66, 365-370, 2005.

\bibitem{LM}
A. Logunov, E. Malinnikova, N. Nadirashvili, F. Nazarov,
The Landis conjecture on exponential decay,
Preprint, arXiv: 2007.07034.

\bibitem{MZ}
B. Ma, F. Zeng,
Hamilton-Souplet-Zhang's gradient estimates and Liouville theorems for a nonlinear parabolic equation,
{\it C. R. Math. Acad. Sci. Paris} 356 (2018), 550-557.

 \bibitem{MaTe}
 Y. Maekawa, Y. Terasawa,
The Navier-Stokes equations with initial data in uniformly local $L^p$ spaces
{\it Differ. Integral Equ.} 19  (2006), 369-400.

\bibitem{Mo}
C.B. Morrey,
Multiple integrals in the calculus of variations,
Classics Math. Springer-Verlag, Berlin, 2008.

\bibitem{RM}
R. M\"uller,
Differential Harnack inequalities and the Ricci flow.
EMS Series of Lectures in Mathematics.  Z\"urich, 2006.

\bibitem{N1}
G. Nornberg,
$C^{1,\alpha}$ regularity for fully nonlinear elliptic equations with superlinear growth in the gradient,
{\it J. Math. Pures Appl.} (9) 128 (2019), 297-329.

\bibitem{NS}
G. Nornberg, B. Sirakov,
A priori bounds and multiplicity for fully nonlinear equations with quadratic growth in the gradient,
{\it J. Funct. Anal.} 276 (2019), 1806-1852.

\bibitem{OP}
L. Orsina, A.C. Ponce,
Hopf potentials for the Schr\"odinger operator,
{\it Anal. PDE} 11 (2018), 2015-2047.

 \bibitem{PQS}
 P. Pol\'a\v cik, P. Quittner, Ph. Souplet,
 Singularity and decay estimates in superlinear problems via Liouville-type theorems. I. Elliptic equations and systems,
 {\it Duke Math. J.} 139 (2007), 555-579.

\bibitem{GSS}
F.  Rend\'on, B. Sirakov, M. Soares,
Boundary weak Harnack estimates and regularity for elliptic PDE in divergence form,
{\it Nonlinear Anal.} 235 (2023), Paper No. 113331.

\bibitem{R}
L. Rossi,
The Landis conjecture with sharp rate of decay,
{\it Indiana Univ. Math. J.} 70 (2021), 301-324.

\bibitem{Sir1}
B. Sirakov,
Boundary Harnack estimates and quantitative strong maximum principles for uniformly elliptic PDE,
 {\it Int. Math. Res. Notices} (2018) (24) 7457-7482.

\bibitem{Sir3}
B. Sirakov,
Global integrability and weak Harnack  estimates for elliptic PDE in divergence form,
{\it Analysis and PDE} 15 (2022), 2849-2868.

 \bibitem{SS2}
 B. Sirakov, Ph. Souplet,
 The V\'azquez maximum principle and the Landis conjecture for elliptic PDE with unbounded coefficients,
{\it Adv. Math.} 387 (2021), Paper No. 107838, 27 pp.

\bibitem{TR7}
N. Trudinger,
Linear elliptic equations with measurable coefficients,
{\it Ann. Scuola Norm. Sup. Pisa} (3) 27, 265-308 (1973).

\bibitem{Zh}
Z.X. Zhao,
Green function for Schr\"odinger operator and conditioned Feynman-Kac gauge,
{\it J. Math. Anal. Appl.} 116 (1986), 309-334.

\end{thebibliography}
\end{document}